\documentclass[11pt, reqno]{amsart}
\usepackage{amsmath, euscript}
\usepackage{mathtools}
\usepackage{}
\usepackage{graphicx}
\usepackage{xcolor}
\definecolor{mutedcyan}{RGB}{68,114,196}
\definecolor{othermutedcyan}{RGB}{70,103,150}
\usepackage{amsfonts}
\usepackage{amsthm}
\usepackage[colorlinks=true, allcolors=othermutedcyan]{hyperref}
\usepackage{cleveref}
\usepackage{newlfont}
\usepackage{amscd}
\usepackage{amsgen}
\usepackage{amssymb} 
\usepackage{mathrsfs}	
\usepackage{longtable}
\usepackage{extarrows}
\usepackage{tikz}
\usepackage{tikz-cd}
\usepackage{verbatim}
\numberwithin{equation}{section}
\usepackage[all]{xy}
\usepackage{color}
\usepackage{amssymb}
\usepackage[left=3.2cm,right=3.2cm, top=3.25cm, bottom=3.25cm]{geometry}
\usepackage{tikz}
\usepackage{tikz-cd}
\usepackage{mathtools}
\usepackage{bm} 
\usepackage{dynkin-diagrams}
\usetikzlibrary{matrix,shapes,arrows,decorations.pathmorphing}
\usepackage{calligra}
\usepackage{mathrsfs}
\usepackage{enumitem} 
\usepackage{tgpagella} 
\usepackage[parfill]{parskip} 
\usepackage{float}
\restylefloat{table}
\usepackage{ytableau}
\usepackage{adjustbox}
\usetikzlibrary{calc,matrix,positioning}
\usepackage{xargs}                     
\usepackage{todonotes}
\usepackage[dvipsnames]{xcolor}
\usepackage{xcolor}
\usetikzlibrary{arrows.meta}

\usepackage{listings}
\usepackage{tcolorbox} 
\definecolor{codegray}{gray}{0.9}
\definecolor{background}{RGB}{250,250,250}
\definecolor{string}{RGB}{214,157,133}
\definecolor{comment}{RGB}{87,166,74}
\definecolor{identifier}{RGB}{36,36,36}
\definecolor{keyword}{rgb}{0.0, 0.2, 0.8}        
\definecolor{type}{rgb}{0.5, 0.0, 0.5}
\definecolor{function}{rgb}{0.0, 0.5, 0.5}

\lstdefinelanguage{macaulay2}{
  sensitive=true,
  morecomment=[l]{--},
  morestring=[b]",
  keywords={if, then, else, for, from, to, do, while, return, break, continue, try, catch, when, new, in, is},
  keywordstyle=\color{keyword}\bfseries,
  morekeywords=[2]{ZZ, QQ, RR, CC, Matrix, Ideal, Ring, Module, List, Set, HashTable, MutableHashTable, String, Option, Symbol},
  keywordstyle=[2]\color{type}\bfseries,
  morekeywords=[3]{matrix, mutableMatrix, ideal, ker, dim, rank, target, append, entries, flatten, toList, set, product, sum, needsPackage, print, apply, select, toSequence, contract, dual, sub, mingens, source, map, transpose},
  keywordstyle=[3]\color{function},
}

\lstdefinestyle{macaulay2}{
  language=Macaulay2,
  basicstyle=\ttfamily\small,
  commentstyle=\color{comment}\itshape,
  stringstyle=\color{string},
  numbers=left,
  numberstyle=\tiny\color{gray},
  stepnumber=1,
  numbersep=8pt,
  showstringspaces=false,
  breaklines=true,
  breakatwhitespace=true,
  tabsize=4,
  frame=single,
  rulecolor=\color{gray!20},
  backgroundcolor=\color{background},
  captionpos=b,
  keepspaces=true
}

\lstdefinestyle{python}{
  language=Python,
  basicstyle=\ttfamily\small,
  commentstyle=\color{comment}\itshape,
  stringstyle=\color{string},
  numbers=left,
  numberstyle=\tiny\color{gray},
  stepnumber=1,
  numbersep=8pt,
  showstringspaces=false,
  breaklines=true,
  breakatwhitespace=true,
  tabsize=4,
  frame=single,
  rulecolor=\color{gray!20},
  backgroundcolor=\color{background},
  captionpos=b,
  keepspaces=true
}

\tikzset{ 
    table/.style={
        matrix of nodes,
        row sep=-\pgflinewidth,
        column sep=-\pgflinewidth,
        nodes={rectangle, text width=2.5em, text height = 1.5em, align=center},
        text depth=1.25ex,
        text height=2.5ex,
        nodes in empty cells
    },
}

\let\blb\mathbb

\def\CC{{\blb C}}

\def\GG{{\blb G}}

\def\LL{{\blb L}}

\def\PP{{\blb P}}

\def\WW{{\blb W}}

\def\ZZ{{\blb Z}}

\let\cal\mathcal

\def\Cc{{\cal C}}

\def\Ec{{\cal E}}

\def\Hc{{\cal H}}

\def\Oc{{\cal O}}
\def\Pc{{\cal P}}
\def\Qc{{\cal Q}}

\def\Uc{{\cal U}}

\def\Bl{\operatorname{Bl}}
\def\wt{\widetilde}

\def\arw{\longrightarrow}
\def\Hom{\operatorname{Hom}}

\def\Aut{\operatorname{Aut}}

\def\Ext{\operatorname{Ext}}
\def\rk{\operatorname{rk}}
\def\Crit{\operatorname{Crit}}

\def\Pf{\operatorname{Pf}}

\def\Tot{\operatorname{Tot}}

\newcommand{\proj}{\PP}

\DeclareMathOperator{\sHom}{\mathscr{H}\text{\kern -3pt {\calligra\large om}}\,}

\newcommand\quotient[2]{
        \mathchoice
            {
                \text{\raise1ex\hbox{$#1$}\Big/\lower1ex\hbox{$#2$}}%
            }
            {
                #1\,/\,#2
            }
            {
                #1\,/\,#2
            }
            {
                #1\,/\,#2
            }
    }

\makeatletter
\def\namedlabel#1#2{\begingroup
    #2%
    \def\@currentlabel{#2}
    \phantomsection\label{#1}\endgroup
}
\makeatother

\newcommand{\Y}{\mathcal{Y}}
\newcommand{\V}{\mathcal{V}}
\newcommand{\U}{\mathcal{U}}
\newcommand{\N}{\mathcal{N}}
\newcommand{\LG}{(\mathfrak{X},\omega)}
\newcommand{\rchar}{\C_R^*}
\newcommand{\W}{\mathcal{W}}
\newcommand{\Q}{\mathcal{Q}}
\newcommand{\til}{\tilde}
\newcommand{\wtil}{\widetilde}
\newcommand{\lam}{\lambda}
\newcommand{\I}{\mathcal{I}}
 \newcommand{\X}{\mathcal{X}}
\renewcommand{\P}{\mathbb{P}} 
\newcommand{\Z}{\mathbb{Z}}
\renewcommand{\O}{\mathcal{O}} 
\newcommand{\C}{\mathbb{C}} 
\newcommand{\blp}{\widetilde{\P}^8} 
\newcommand{\bly}{\widetilde{Y}} 
\newcommand{\kb}{U/\Gam} 
\newcommand{\bunNeg}{\mathcal{U}^\vee} 
\newcommand{\bunPos}{\mathcal{S}^\vee} 
\renewcommand{\S}{\mathcal{S}}
\newcommand{\GamEl}{\begin{pmatrix}
\alpha & \beta \\
0 & \delta
\end{pmatrix}}
\newcommand{\LGpos}{(U/\Gam, \omega)}
\newcommand{\coordsOfV}{(B,\underline v,\underline w)}
\newcommand{\coordsOfU}{(B, v,\ul w)}
\newcommand{\G}{\mathcal G}
\newcommand{\chk}{\check}
\newcommand{\ul}{\underline} 
\renewcommand{\sp}{\hspace{1 mm}}
\newcommand{\eq}{\equiv}
\newcommand{\Gam}{\Gamma}
\newcommand{\al}{\alpha}
\newcommand{\del}{\delta}
\newcommand{\Sym}{\mathrm{Sym}}
\newcommand{\F}{\mathcal{F}}
\newcommand{\E}{\mathcal{E}}
\newcommand{\potneg}{\chk s_-}
\newcommand{\forallsp}{\sp\forall\sp}
\renewcommand{\H}{\mathcal{H}}
\newcommand{\derTensor}{\stackrel{L}{\otimes}}
\newcommand{\st}{\sp|\sp}
\newcommand{\matTwo}[4]{\begin{pmatrix} #1 & #2 \\ #3 & #4 \end{pmatrix}}
\newcommand{\colTwo}[2]{\begin{pmatrix} #1 \\ #2 \end{pmatrix}}
\newcommand{\Srep}{\mathrm{S}}

\newcommandx{\prajwal}[2][1=]
{\todo[linecolor=blue!20,backgroundcolor=blue!20,bordercolor=blue!20, size=\scriptsize, size=\scriptsize, #1]{#2}}
\newcommandx{\marco}[2][1=]
{\todo[linecolor=red!20,backgroundcolor=red!20,bordercolor=red!20, size=\scriptsize, #1]{#2}}
\newcommandx{\michal}[2][1=] {\todo[linecolor=green!20,backgroundcolor=green!20,bordercolor=green!20, size=\scriptsize, #1]{#2}}

\theoremstyle{definition}

\newtheorem{theorem}{Theorem}[section]
\newtheorem{proposition}[theorem]{Proposition}
\newtheorem{lemma}[theorem]{Lemma}
\newtheorem{corollary}[theorem]{Corollary}
\newtheorem{definition}[theorem]{Definition}
\newtheorem{remark}[theorem]{Remark}
\newtheorem{observation}[theorem]{Observation}
\newtheorem{example}[theorem]{Example}
\newtheorem{conjecture}[theorem]{Conjecture}

\newtheorem{mainThm}{Theorem}
\newtheorem{mainConj}[mainThm]{Conjecture}

\title{The Last Picard Rank 1 Double--Mirror Calabi--Yau Pair?}

\author[M. Kapustka]{Micha\l\ Kapustka}
\address{M. Kapustka:
IMPAN \\ 
Św. Tomasza 30 \\
30-056\\
Krakow, Poland}
\email{mkapustka@impan.pl}

\author[M. Rampazzo]{Marco Rampazzo}
\address{M. Rampazzo: 
Yau Mathematical Sciences Center \\ 
Tsinghua University \\ 
Shuangqing Complex Bldg.\\ 
Haidian district, Beijing, China.}
\email{marcorampazzo@tsinghua.edu.cn, marco.rampazzo.90@icloud.com}

\author[P. Samal]{Prajwal Samal}
\address{P. Samal: 
IMPAN \\ 
Św. Tomasza 30 \\
30-056\\
Krakow, Poland}
\email{psamal@impan.pl, prajwal.samal@gmail.com}

\subjclass[2020]{Primary 14F08, 14J32; Secondary 14J33, 14L24}

\begin{document}

\begin{abstract}
We consider a certain pair of families of Picard rank 1 Calabi--Yau threefolds, that have appeared earlier in mathematical literature in unrelated contexts: the family $\mathcal{X}$ of degree 33 threefolds in $G(2,6)$ (constructed by Miura) and the family $\mathcal{Y}$ of arithmetically Gorenstein degree 21 threefolds in $\mathbb{P}^8$ (constructed by Schenck--Stillman--Yuan). After establishing a natural geometric correspondence between their general members, we go on to show that any pair of corresponding threefolds 
in these families satisfy certain classical dualities. We moreover discover that their geometries may be related by a mathematical gauged linear sigma model, using which we prove that they are derived equivalent. This settles a conjecture of Miura, who predicted the existence of non-trivial Fourier--Mukai partners to members of $\mathcal{X}$, based on a study of its mirror moduli. This conjecture was also formulated later by Gerhardus--Jockers, in a physical context. In fact, starting from the other family $\mathcal{Y}$,
we conjecturally arrive at the same mirror moduli. Thus, such a pair is a new, and quite possibly the last, addition to the small list of deformation families of non-birational, double-mirror Calabi--Yau threefolds having Picard rank 1.

\end{abstract}

\maketitle

\setcounter{tocdepth}{1}
\tableofcontents

\section{Introduction}

While the derived category of coherent sheaves has proven to be an invariant up to isomorphism for smooth Fano and canonically polarized varieties \cite{bondalorlovreconstruction}, it has been known for many years that this property does not extend to Calabi--Yau varieties. Due to this, a systematic study of the interplay between several different invariants, such as Hodge structure, derived category and class in the Grothendieck ring, has been carried out, leading to the failure of categorical Torelli theorems for Calabi--Yau varieties \cite{ottemrennemo, borisovcaldararuperry} and examples of \emph{non-trivial Fourier--Mukai partners}. The latter consist in pairs of non-birational Calabi--Yau varieties which are derived equivalent\footnote{Here, the terminology "non-trivial" is justified by the fact that in dimension three or lower, birational Calabi--Yau varieties are derived equivalent (see \cite{bridgeland_flops} for the threefold case).}.

The construction and study of such pairs is of considerable interest, as they represent testing grounds to investigate the role of the derived category as an invariant for Calabi-Yau threefolds, especially in contrast to the birational class treated as an invariant. Another reason for their consideration is their surprising relevance in the context of mirror symmetry, where one often finds that they have the same Hodge theoretic mirror. We refer to such pairs as \textit{double mirror}. In fact, usually one deduces a given Calabi-Yau pair to be double mirror prior to proving the subsequent mirror symmetric expectation of them being derived equivalent. That said, we should point out that except for the case of Calabi-Yau complete intersections in toric varieties, most double mirror computations are conjectural, in that, one deduces relevant properties of the mirror without actually constructing it nor proving its existence. We will use this phrase in the same spirit. Such computations are somewhat tractable for a Calabi-Yau variety that has Picard rank 1, as its mirror moduli is expected to be one dimensional. Our focus, in particular, will be on this case.

Up to date, pairs of non-trivial Fourier-Mukai partner Calabi--Yau threefolds having Picard rank 1 are quite rare. To the authors' knowledge, only three examples (up to deformation) are known.  

\subsubsection*{The Pfaffian--Grassmannian pair}
    Consider a vector space $V_7\simeq \C^7$ and a general linear subspace $L\subset \wedge^2V_7^\vee$ of dimension 7. Then, the Grassmannian section $X\equiv G(2,V_7)\cap\P(L^\perp)$ and the Pfaffian section $Y\equiv \Pf(4,V_7^\vee)\cap\P(L)$ are non-birational Calabi-Yau threefolds having Picard rank 1. This pair was conjectured to be derived equivalent by Rødland \cite{rodland}, based on computations that indicated that they are double mirror. Subsequently, a proof was established by Borisov--Caldararu by explicitly constructing a Fourier--Mukai kernel \cite{borisovcaldararu}. A new proof, within the framework of window categories and variation of GIT, was later provided by Addington, Donovan and Segal \cite{Seg2014}. A degeneration of this family of pairs can be realized as follows: given the complete flag variety $G_2/B$ of type $G_2$, call $p_i$ the two projections to the $G_2$-Grassmannians $G_2/P_i$. In this notation, $G_2/P_1$ is a five-dimensional quadric and $G_2/P_2$ is the so-called ``$G_2$-Grassmannian''. Then, for a general section $\sigma$ of $p_1^*\Oc(1)\otimes p_2^*\Oc(1)$, the vanishing loci $Z(p_{i*}\sigma)$ are Calabi--Yau threefolds. This picture was studied in \cite{imou} in the context of the so-called ``$\LL$-equivalence'' and a proof of derived equivalence was given in \cite{kuznetsov_g2_cy3}.
    
\subsubsection*{The Hosono--Takagi pair}
        Given a vector space $V$, let $\eta: \Sym^2\PP(V)\to \PP(\Sym^2 V)$ be the natural morphism which acts as $\eta([v_1], [v_2]) = [v_1\otimes v_2 + v_2\otimes v_1]$. Also let $\zeta:\til Z\to H$ be a double cover of a determinantal quintic hypersurface $H\subset\P(\Sym^2V^\vee)$ (called the \emph{quintic symmetroid}). Then, for every subspace $L\subset \Sym^2 V^\vee$, one can consider the stack $X \coloneqq \eta^{-1}(\PP(L^\perp))$ and the variety $Y\coloneqq\zeta^{-1}(\P(L^\vee))$. In \cite{hosono-takagi}, Hosono and Takagi focus on the case of $\dim V = \dim L = 5$, where they show that the resulting spaces $X$ and $Y$ are derived equivalent, non-birational Calabi--Yau threefolds having Picard rank 1. This derived equivalence was later re-proved by Rennemo \cite{rennemo_hpd_sym2V} in the context of homological projective duality for $\Sym^2\PP(V)$, for general $V$.

\subsubsection*{The intersections of general translates of $G(2, 5)$}
    Consider the Grassmannian $G(2,5)$ under its Pl\"ucker embedding $G(2,5)\subset \P^{9}$. For a general automorphism $g\in\Aut(\PP^{9})$, the variety $X_g\equiv G(2, 5)\cap g(G(2, 5))$ is a Calabi--Yau threefold having Picard rank 1. A general pair $(X_g$, $X_{g^T})$ of such threefolds were shown to be double mirror in \cite{MS_Kap}. Subsequently, in \cite{ottemrennemo, borisovcaldararuperry}, using a generalized version of homological projective duality \cite{categorical_plucker_formula}, it was shown that such a pair are Fourier-Mukai partners, but not birational. A degeneration of this construction was considered in \cite{KapRam17, KapRam2025}. 

\subsection*{Relationship with physics} In mathematical physics, one expects double mirror Calabi--Yau varieties to appear as target spaces of some supersymmetric quantum field theories called \emph{gauged Landau-Ginzburg models}, which in turn appear as different limits of a \emph{gauged linear sigma model} (GLSM), connected by phase transitions \cite{hori}. In the original approach of the foundational paper \cite{witten_glsm}, a phase transition connects the quintic Calabi--Yau phase to an exotic, stacky geometry supported on a point. However, one can engineer more sophisticated examples, where geometric phases appear on both sides of the phase transitions. The physical properties of D-branes then translate to the mathematical expectation of the associated Calabi--Yau varieties being derived equivalent. This has been proven to be true in many examples.

\subsection{Results}
We consider two families of Picard rank 1 Calabi-Yau threefolds $\X$ and $\Y$ whose general members are non-birational, and relate them by constructing a natural correspondence that takes a general member $X$ of $\X$ and sends it to a general member $Y$ of $\Y$. The first family $\X$ consists of sub-varieties of $G(2,6)$, and was first constructed and studied in \cite{miuraMirrorSymmetry}. It also later appeared in \cite[Table 1, no. 7]{ionue_ito_miura_CY3s}, in the context of classification of Calabi-Yau threefolds given as zero loci of sections of homogeneous vector bundles on Grassmannians.  The second family $\Y$ first appeared in \cite{SSY}, in a strictly algebraic context. Its members are arithmetically Gorenstein Calabi-Yau threefolds of codimension 5 in $\P^8$. The central thesis of this paper is that these Calabi-Yau families are dual in the sense that they have the same mirror family. We showcase evidence for this double mirror duality both in the intersection-theoretic and homological senses.

For the intersection-theoretic side, we prove certain classical equivalences (Hodge equivalence and L-equivalence) between these families and moreover show that the mirror moduli spaces of both these families have the same Picard-Fuchs operator. The latter is done using a conjectural method of Batyrev using "small" toric degenerations of certain Fano varieties that are ambient to our Calabi-Yau varieties.

\begin{mainThm}[\Cref{sec:geometry}]\label{thm:thmA}
Let $\mathcal X$ be the family of Calabi-Yau threefolds appearing in \cite[Table 1, no. 7]{ionue_ito_miura_CY3s}. And $\mathcal Y$ be the family of arithmetically Gorenstein Calabi-Yau threefolds of codimension 5 in $\PP^8$ appearing in \cite[Section 3.1]{SSY}. Then the following holds: for a general member $X\in \mathcal X$ there exists a member $Y\in\mathcal Y$ which is Hodge-equivalent, $\LL$-equivalent to $X$, but not birational to $X$.
\end{mainThm}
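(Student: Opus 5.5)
The plan is to realize both threefolds as degeneracy or zero loci coming from a single section on a common ambient space, which is the usual "roof" or Cayley trick mechanism. Concretely, I would first construct a smooth variety $Z$ with two projective-bundle-type morphisms $p_1\colon Z\to G(2,6)$ and $p_2\colon Z\to \PP^8$, together with a line bundle $\mathcal L$ on $Z$ satisfying the following. Pushing a section $\sigma\in H^0(Z,\mathcal L)$ forward along $p_1$ gives the section of the homogeneous bundle on $G(2,6)$ defining $X$ as in \cite[Table 1, no. 7]{ionue_ito_miura_CY3s}. Pushing it forward along $p_2$ gives the data cutting out an arithmetically Gorenstein codimension $5$ threefold $Y\subset\PP^8$ of the type in \cite[Section 3.1]{SSY}. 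For instance, $Y$ could be a degeneracy locus of a skew or symmetric map whose Pfaffians or minors reproduce the Schenck--Stillman--Yuan resolution.

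I would then check that this correspondence is dominant onto $\mathcal Y$. This amounts to comparing the number of moduli: $h^{1,2}$ of $X$ computed by Borel--Weil--Bott, against the dimension of the relevant linear system modulo automorphisms. I would also check that $Y$ is smooth for general $\sigma$, using a Bertini-type argument on $Z$ away from the locus where the fibres of $p_2$ jump.

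The key geometric input is the hypersurface $M=Z(\sigma)\subset Z$. Over $G(2,6)\setminus X$ the fibres of $M\to G(2,6)$ are hyperplanes in the fibres of $p_1$, while over $X$ they are the whole fibre; the same holds for $p_2$, with the appropriate stratification over $\PP^8$. This stratification drives both the motivic and the Hodge comparisons.

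\begin{itemize}
\item \emph{$\LL$-equivalence.} Using the fibration structure, I would write
\[
[M]=[G(2,6)]\,[\PP^{a-1}]+\LL^{a-1}[X]=[B]\,[\PP^{b-1}]+\LL^{b-1}[Y]+(\text{correction from jumping strata}),
\]
where $B$ is the base appropriate to $p_2$. The correction terms are computed explicitly, since they are built from Grassmannians and projective spaces. Equating the two expressions and using $[Z]$ computed both ways gives $\LL^k([X]-[Y])=0$ in $K_0(\mathrm{Var})$.
\item \emph{Hodge equivalence.} The same decomposition identifies $H^3(X,\ZZ)$ and $H^3(Y,\ZZ)$, with Tate twists, with the vanishing (primitive) part of $H^{*}(M,\ZZ)$ in a fixed middle degree, via Gysin and pullback maps. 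The intersection forms are compatible up to sign, which yields an integral Hodge isometry.
\end{itemize}

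Non-birationality is the easy step. Both varieties are Calabi--Yau threefolds of Picard rank $1$, so any birational map between them is a composition of flops. With Picard rank $1$ there are no flops, so a birational map would be an isomorphism preserving the ample generator. That is impossible because $H^3=33$ on $X$ and $H^3=21$ on $Y$. I would also double-check that $c_2\cdot H$ differs, as a consistency check.

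I expect the main obstacle to be the first step. One must find the right $Z$ and $\mathcal L$ and prove that the $p_2$ side produces exactly the Schenck--Stillman--Yuan threefolds. In particular, one must control the locus in $\PP^8$ where the fibres of $p_2$ jump in dimension, since that locus contributes to both the motivic and the cohomological computations. My first attempt would be to guess $Z$ from the shape of the minimal free resolution of $Y$ in \cite{SSY}: its middle term should be the pushforward of the relevant bundle on $Z$. I would then confirm by a Macaulay2 computation of the Betti table and the degree on a random section.
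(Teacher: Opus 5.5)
\textbf{Gap: the correspondence $X\mapsto Y$ is never constructed.} The statement is existential, so producing $Y$ from $X$ \emph{is} the content. Every later step in your plan (that $Y$ lies in $\mathcal{Y}$, the motivic comparison, the Hodge comparison) depends on a $Z$ and $\mathcal{L}$ you have not found. Guessing $Z$ from the Betti table and testing one random example in Macaulay2 does not supply them.

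The missing idea, which the paper uses, goes through $F(1,2,V_6)$. It gives $H^0(G(2,V_6),\mathcal{U}^\vee(\xi)\oplus L\otimes\mathcal{O}(\xi))\cong\Hom_{\PP(V_6)}(\mathcal{O}(-h)\oplus L^\vee\otimes\mathcal{O},\widetilde{\mathcal{Q}}^\vee(h))$. Equivalently, $s$ is a skew $6\times 6$ matrix $M(x,w)$ of linear forms on $V_6\oplus L^\vee$. The corank-one degeneracy locus $\widetilde Y\subset\PP(V_6)$ lives in $\Bl_{\PP(L^\vee)}\PP(V_6\oplus L^\vee)$. A Koszul argument identifies its image with $Y=V(Mx,\Pf M)$, and $\widetilde Y=\Bl_C Y$ for the plane cubic $C=Y\cap\PP(L^\vee)$.

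Two of your expectations are also wrong.
\begin{itemize}
\item The jumping strata are not built only from Grassmannians and projective spaces. The elliptic curve $C$ necessarily appears. The paper handles it with $[\widetilde Y]=[Y]+[C]\LL$ together with Laterveer's relation for the codimension-three linear section $\mathbb{G}_L$ of $G(2,6)$.
\item A Bertini argument away from where $p_2$ jumps proves $Y$ smooth only off $C$. Along $C$ is exactly where the paper needs the Fujiki--Nakano criterion applied to $\widetilde Y\to Y$.
\end{itemize}

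\textbf{How your route can be completed.} Once this datum is supplied, your Cayley-trick strategy works and differs usefully from the paper's. Take $Z=\PP_{G(2,V_6)}(\mathcal{U}\oplus L^\vee\otimes\mathcal{O})$, $p_2(U,[x,w])=[x,w]$ and $\mathcal{L}=p_1^*\mathcal{O}(\xi)\otimes p_2^*\mathcal{O}(H)$. Then $\sigma(U,[x,w])=M(x,w)|_{\wedge^2U}$. Note that $p_2$ is a $\PP^4$-bundle only off $\PP(L^\vee)$; over $\PP(L^\vee)$ its fibre is all of $G(2,6)$. For general $s$ the fibres of $Z(\sigma)\to\PP^8$ are:
\begin{itemize}
\item a hyperplane of $\PP(V_6/x)$ off $Y\cup\PP(L^\vee)$;
\item all of $\PP(V_6/x)$ over $Y\setminus C$;
\item the isotropic Grassmannian $IG(2,6)$ of $M(0,w)$ over $\PP(L^\vee)\setminus C$;
\item over $C$, a singular hyperplane section of $G(2,6)$. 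Stratifying by $\dim(U\cap\ker M(0,w))$ shows its class is $[IG(2,6)]+\LL^4$.
\end{itemize}
After you check Zariski-local triviality of these families, the $[C]$-terms cancel. Comparing with $[G(2,6)][\PP^3]+\LL^4[X]$ from the $p_1$ side gives $\LL^4([X]-[Y])=0$. This is one power of $\LL$ weaker than the paper's $\LL^3$, but needs no appeal to Laterveer.

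Applying the Hodge--Deligne polynomial to this relation already gives equal Hodge numbers. That is all the paper means by Hodge equivalence; the paper obtains it by computing $h^{p,q}(\widetilde Y)$ with Koszul complexes and Borel--Weil--Bott. The integral Hodge isometry you aim for is stronger than needed. As sketched it is also unsupported: you would have to show that the $H^1(C)$-contributions of the strata over $C$ cancel in $H^{11}(Z(\sigma))$.

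\textbf{Non-birationality.} Your argument is the paper's. State it as $21/33$ not being a rational cube; this avoids assuming the hyperplane classes generate the Picard groups.
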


For the homological side, we will construct a Fourier-Mukai functor between corresponding general members ($X$ and $Y$) of our families, by setting up a mathematical GLSM. This is an extensively developing technique, pioneered in \cite{Seg2011} and \cite{Seg2014}, motivated by the physics of \cite{hori}. Here, one replaces the geometries of the Calabi-Yau threefolds by certain birational Landau-Ginzburg models and then one constructs a functor at the level of B-brane categories of these models. Using this approach we arrive at the following result.

\begin{mainThm}[\Cref{thm:derivedEquivalence}]\label{thm:thmB}
    Let $X$ and $Y$ be threefolds as in \Cref{thm:thmA}. Then, there is an equivalence of triangulated categories $D^b(\operatorname{coh}(X))\simeq D^b(\operatorname{coh}(Y))$.
\end{mainThm}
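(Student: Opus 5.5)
The plan is to realize both $X$ and $Y$ as critical loci of a single gauged Landau--Ginzburg model. We then relate the two GIT phases by the window (grade restriction) technique of Segal and Addington--Donovan--Segal, and finish with Kn\"orrer periodicity / Orlov-type results identifying each phase with $D^b(\operatorname{coh})$ of a threefold.

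\emph{Setup.} From the correspondence of \Cref{sec:geometry}, $X\subset G(2,6)$ is cut out by a section of a homogeneous bundle. I expect a quotient of the form $\U/\Sym^2\U^\vee$-type data, with $Y\subset\P^8$ related to it through a common linear datum. Concretely, I would choose a vector space $V$ carrying an action of a reductive group $G$. The natural candidate is $G=GL_2$, or the parabolic $\Gamma$ of upper-triangular matrices suggested by the notation $\GamEl$ and $U/\Gamma$. On $V$ I put an invariant superpotential $\omega$ that is linear in fibre coordinates and has R-charge 2. There are two GIT chambers for a character $\theta$ of $G$:
\begin{enumerate}
\item In the positive chamber, $V^{ss}_+/G$ is the total space of a vector bundle $\E$ over a smooth ambient $M_+$, for example a bundle over $G(2,6)$. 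The potential there is $\omega=\langle p,s\rangle$ for a section $s$ of $\E^\vee$ with $Z(s)=X$.
\item In the negative chamber, $V^{ss}_-/G$ is a bundle over $M_-$, a blow-up $\blp$ of $\P^8$ or a related space. The potential there has critical locus $Y$, possibly only after an additional step.
\end{enumerate}
In case (1), the Isik/Shipman theorem (Kn\"orrer periodicity) gives $D^b(X)\simeq D(V^{ss}_+/G,\omega)$. For case (2), since $Y$ is codimension-5 arithmetically Gorenstein, hence Pfaffian by Buchsbaum--Eisenbud, I expect the negative phase to be a ``Pfaffian-type'' nonabelian phase, as in the R\o dland case. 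Equivalently, it is the Landau--Ginzburg model $(\kb,\omega)$ fibred over $\P^8$ with generic fibre a Kn\"orrer-trivial quadratic potential. Its degeneracy locus, where the rank of a $\Sym^2$ or $\wedge^2$ form drops, is exactly $Y$.

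\emph{Key steps, in order.}
\begin{enumerate}
\item Write down $V$, $G$, $\omega$, and check semistable loci and smoothness of both quotients.
\item Identify the positive phase with $D^b(X)$ via Kn\"orrer periodicity.
\item Compute the Kempf--Ness strata of the wall crossing, with their weights $\eta_\lambda$. Choose a window $\W\subset D(V/G,\omega)$ generated by $G$-representations whose $\lambda$-weights lie in the prescribed range. Invoke Halpern-Leistner / Ballard--Favero--Katzarkov to get fully faithful embeddings of both phases into $D(V/G,\omega)$ through $\W$.
\item Use the Calabi--Yau condition ($\omega_V$ trivial as a $G$-character, i.e.\ $\eta_\lambda$ balanced on each stratum) to conclude that both restrictions $\W\to D(V^{ss}_\pm/G,\omega)$ are equivalences.
\item Identify the negative phase with $D^b(Y)$.
\end{enumerate}
For step (5) I would first reduce along the $\P^8$ direction: locally over $\P^8\setminus Y$ the potential is a nondegenerate quadratic form in the fibres, so its category vanishes. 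Near $Y$ the corank is exactly one, or two in the even case, and generically the fibre category is a matrix factorization category of a rank-deficient quadric. Following Segal's treatment of the Pfaffian phase, I would exhibit a sheaf of (Clifford/noncommutative) algebras on $\P^8$ that is Morita equivalent to $\O_Y$, using that $Y$ is smooth for general data.

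\emph{Main obstacle.} The hard step is (5), together with showing in step (4) that the window conditions match exactly. The negative phase is genuinely nonabelian or stacky, so one must check three things. First, the generic stabilizers are trivial over $Y$. Second, the residual LG model has no contributions from strata where the rank drops further; general position and $\operatorname{Sing}Y=\emptyset$ should rule these out. Third, the grade-restriction window equals a set of exceptional-type objects generating on both sides. If the window does not match directly, I would enlarge one side by exceptional objects, obtaining a semiorthogonal decomposition. I would then argue that, because both phases are Calabi--Yau of the same dimension, the extra pieces cancel. This cancellation is where I expect the most delicate computation.
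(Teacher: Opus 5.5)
Your framework (a $GL_2$ GLSM, Kn\"orrer periodicity on the Grassmannian side, Halpern-Leistner windows) is the paper's. But steps (4) and (5), which carry the argument, do not go through as proposed, and the idea that actually closes the proof is missing.

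\emph{Step (4).} The balancing you invoke fails for this model, $V=\Hom(S,V_6)\oplus(S\otimes\det S)\oplus\det(S)^{\oplus3}$. The canonical character is trivial, but the weights are not quasi-symmetric: the line through $(1,1)$ carries only the weight $(1,1)$, three times. Moreover, the Kempf--Ness strata of the two sides are not all paired by inverse one-parameter subgroups with equal $\eta_\lambda$. On the stacky side they are $(-1,-1)$ and $(1,-2)$, with $\eta=12,9$. On the Grassmannian side, where the unstable locus is $\rk B\le1$, they are $(1,1)$ and $(0,1)$, with $\eta=12,5$. So nothing forces a common window, and the paper does not produce one. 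Its $\W$ restricts to a mutation of Kuznetsov's full exceptional collection on $G(2,V_6)$, so it is a window on that side; on the stacky side it is only shown to be a partial window. Your fallback, that surplus pieces ``cancel because both phases are Calabi--Yau'', is not a mechanism. What you need is to prove that the image of the fully faithful functor out of $D(X)$ lands in a copy of $D(Y)$. Then that image is admissible, and since the Serre functor of $D(Y)$ is a shift, a proper admissible subcategory would split $D(Y)$ orthogonally, contradicting indecomposability.

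\emph{Step (5).} Your local model is wrong for this GLSM. Over $[x:\ul w]\in\P^8$ with $x=B\ul v\neq0$, gauge-fix $\ul v=(1,0)$. What remains is a subgroup of the Borel $\Gam$, the fibre is $V_6/\langle x\rangle$, and $\chk s=M(x,\ul w)(x,\ul d)$ is \emph{linear} in the fibre. Away from $Y$ the category vanishes because this linear form is nonzero, not because of a nondegenerate quadric, so there is no Clifford algebra to use. (Also, Buchsbaum--Eisenbud is a codimension-$3$ statement; $Y$ has codimension $5$ and a Huneke--Ulrich ideal.) The Pfaffian-type nonabelian behaviour sits only over $\ul v=0$, i.e.\ over the plane $\P(L^\vee)$, which meets $Y$ in the elliptic curve $C$.

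The paper handles this in two moves. First, a stacky blow-up $W/\Gam\to V_+/G$ of $V_+$ along $V(\ul v)$, realized as $G\times_\Gam W\to V_+$; its pullback is fully faithful. Second, restriction to the open substack $\kb$ where $\ul b\neq0$. This substack is $\Tot(\pi^*\wt\Q(-h-H))$ over $\blp=\Bl_{\P^2}\P^8$ with a fibrewise-linear potential, so Kn\"orrer periodicity gives $D(\bly)$ with $\bly=\Bl_C Y$, not $D(Y)$. Showing the restriction is fully faithful on the window needs local cohomology along the codimension-$6$ locus $\ul b=0$ plus explicit Borel--Weil--Bott vanishings.

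The decisive step you are missing is the orthogonality $\Phi(D(X))\perp\nu_*\bar q^*D(C)(-2E)$ inside $D(\bly)=\langle Lq^*D(Y)(-2E),\nu_*\bar q^*D(C)(-2E)\rangle$. Via $j_*\circ\kappa^{-1}=Li_0^*$, it reduces to $H^*(E,\F|_E(-lH+3E))=0$ on $E\cong\P^5\times\P^2$, for every window bundle $\F$ and every $l\in\Z$. Only this places the image of $D(X)$ inside $D(Y)$.
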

In particular, our construction amounts to a new family of pairs of non-birational derived-equivalent Calabi-Yau threefolds having Picard rank 1. Moreover, based on the current status of enumeration of Calabi-Yau type operators \cite{database}, it seems that our pair is the last one of its kind. More precisely, to our knowledge, there is no additional Calabi-Yau type operator in the database having two MUM points, for which monodromy calculations conjecturally predict the existence of a new double mirror pair of Calabi-Yau threefolds having Picard rank one. 

A posteriori, this result also serves as a further validation of Batyrev's conjectural mirror construction \cite{batyrevToricDegen}, which has traditionally been used to predict that certain Calabi-Yau threefold pairs are double mirror. More concretely, it settles \cite[Conjecture 5.8]{miuraMirrorSymmetry} (see also \cite[Section 4]{Gerhardus_Jockers} and \cite{galkin_talk}), where Miura predicts the existence of a non-birational Fourier-Mukai partner to $X$ having appropriate numerical properties. His prediction was based on a (conjectural) computation of the Picard-Fuchs operator of the mirror moduli of $X$ and the discovery of the fact that this operator has two maximal unipotent monodromy (MUM) points. We verify these numerical properties of $Y$ as part of the proof of \Cref{thm:thmA}.

 A new and unexpected feature of our GLSM-based proof of \Cref{thm:thmB} is that we were forced to resolve the GIT quotient in the ``difficult" phase of the GLSM by performing a stacky blow-up, to obtain a critical locus having the desired geometry. In fact, for this reason, our GLSM relates $X$ to the blow-up of $Y$ along an elliptic curve (call this blow-up $\wtil Y$).
 The mysterious role of the elliptic curve is further indicated by the fact that the GIT quotient is ``stacky" along the fiber over the linear span of the elliptic curve (this fiber is exactly the center of the stacky blow-up).

Finally, in the light of above results and the discussion in the introduction, we are motivated to make the following conjecture. We state and discuss it in a more precise language in \Cref{sec:mirrorEquivalence}. 

\begin{mainConj}[\Cref{conj:doubleMirror}]\label[conjecture]{conj:conjectureC}
    There exists a family $\mathcal{Z}$ of Calabi-Yau threefolds having two MUM points, with 
    $\X$ and $\Y$ being Hodge theoretic mirrors to $\mathcal{Z}$ at these points, respectively. 
\end{mainConj}

\subsection{Plan of the Paper} Here we review the section-wise plan of the paper, while also highlighting the strategy of proof of the derived equivalence.

In \Cref{sec:geometry}, we will recall the family $\X$, construct the family $\Y$ as well as a geometric correspondence relating a general member of one family  with a suitable general member of the other family  by using the generalized roof $F(1,2,6)$. In particular, we will prove \Cref{thm:thmA}. 

In \Cref{sec:mirrorEquivalence}, we use the Batyrev's conjectural method of using "small" toric degenerations to find the Picard-Fuchs operator of the mirror moduli of $\Y$. We then compare it with that of $\X$ and as a result motivate \Cref{conj:conjectureC}. We also provide a candidate birational model for the common mirror family $\mathcal{Z}$. 

In \Cref{sec:LG}, we will recall the notions of Landau-Ginzburg models and B-branes on them. We will also recall Kn\"orrer periodicity, which serves as the "bridge" that we use to relate derived categories and B-brane categories.  

In \Cref{sec:GLSMsetup}, we will set up the GLSM that relates the geometry of $X$ to $\wtil Y$ (the blow-up of $Y$ along an elliptic curve) by realizing them as critical loci of two birational Kn\"orrer models (\Cref{def:knorrerModel}). Our starting point is the description of $X$ as the vanishing of a section of a homogeneous vector bundle. After a variation of GIT of the dual bundle, we arrive at a stacky GIT quotient, which we resolve by certain birational modifications. These modifications amount to a blow-up followed by an open sub-stack restriction. 

In \Cref{sec:constructionOfFunctor}, working within our GLSM setup, we will construct a natural fully-faithful functor from $D^b(\operatorname{coh}(X))$ to $D^b(\operatorname{coh}(\til{Y}))$. This is done in several steps, which all amount to the construction of a functor at the level of derived categories of the Kn\"orrer models (using a window sub-category of the central quotient stack) followed by the construction of its lifting to a functor between the B-brane categories of those models (also called "turning on the superpotential" ).

In \Cref{sec:proofOfEquiv}, we will prove that our functor in fact induces an equivalence between $D^b(\operatorname{coh}(X))$ to $D^b(\operatorname{coh}(Y))$, thus proving \Cref{thm:thmB}. We do this by showing that its image is semi-orthogonal to a copy of the category of the elliptic curve in $D^b(\operatorname{coh}(\til{Y}))$

\subsection*{Notations \& Conventions}
We work over $\C$. For a finite-dimensional vector space $V$, the projective space $\P(V)$ parameterizes one-dimensional subspaces of $V$ and the Grassmannian $G(k,V)$ parameterizes $k$-dimensional subspaces of $V$.

When we refer to a vector bundle, we usually mean its locally free sheaf of sections on the base scheme. For a vector bundle $\E$ over a base $B$, we denote its projectivization as $\P(\E)\equiv\operatorname{Proj}_B(\Sym^{\bullet}\E^\vee)$, parameterizing lines in the fibers of $\E$. So, if $\pi:\P(\E)\rightarrow B$ is its natural projection, then $\O_{\pi}(-1)\subset\pi^*\E$ is the tautological line bundle and $\pi_*\O_{\pi}(1)\simeq\E^\vee$. We denote the total space of $\E$ as $\Tot(\E)\equiv\operatorname{Spec}_B(\Sym^{\bullet}\E^\vee)$. For a section $s$ of $\E$, we use $Z(s)$ to denote its zero locus, with its natural scheme structure.

For any scheme or algebraic stack $\mathfrak{X}$, we use $D(\mathfrak{X})$ to denote $D^b(\operatorname{coh}(\mathfrak{X}))$, the bounded derived category of coherent sheaves on $\mathfrak{X}$. And for a Landau-Ginzburg model $(\mathfrak{X},\omega)$, we use $D(\mathfrak{X},\omega)$ to denote its category of B-branes, as defined in \Cref{sec:LG}.

\subsection*{Acknowledgments}
The authors gratefully acknowledge the contribution of Enrico Fatighenti, Luigi Martinelli and Giovanni Mongardi to this project. They also wish to thank Will Donovan, Mauricio Romo, and Ying Xie for useful and inspiring discussions. PS would like to thank Jørgen Rennemo for pedagogical and technical assistance, on the tools that went into the proof of \Cref{thm:thmB} (especially for pointing out \Cref{obs:cartesianSquare}), during the author's research visit at the University of Oslo. MR was supported by the Tsinghua-YMSC--Imperial College joint postdoctoral program in algebraic geometry. MK and MR were supported by the Polish National Sciences center project number 2024/55/B/ST1/02409. PS was supported by Polish National Sciences center project number
2021/43/O/ST1/02676.

\section{A Tale of two Calabi-Yau Threefolds}\label{sec:geometry}
The first titular character in our tale is the Calabi-Yau threefold \cite[Table 1, no. 7]{ionue_ito_miura_CY3s}. Let us recall its construction. 

\subsection{The Grassmannian Calabi-Yau Family $\X$}\label{sec:X}
Fix two vector spaces $V_6\simeq\CC^6$ and $L\simeq\CC^3$. Consider the Grassmannian $G(2, V_6)$ parameterizing two-dimensional subspaces of $V_6$. Denote by $\Uc$ and $\Q $, the universal and quotient bundle over it, respectively. They appear in the tautological sequence, written below.
\begin{equation*}
    0\arw \Uc\arw V_6\otimes\Oc \arw\Qc\arw 0.
\end{equation*}
Similarly, consider the projective space $\P(V_6)$ paramaterizing lines in $V_6$. Denote by $\wt\Q$, its quotient bundle, constructed under the identification $\P(V_6)\simeq G(1,V_6)$. Here, the tautological sequence is just the twisted Euler sequence, as written below. 
\begin{equation}
    0\arw \O(-h)\arw V_6\otimes\O \arw\wt\Q\arw 0.
\end{equation}
This shows, in particular, that $\wt\Q$ is just the tangent bundle twisted by $-h$, where $h$ denotes the hyperplane class of $\PP(V_6)$.

Denote by $X$, the vanishing locus of a general regular section $s\in H^0(\Uc^\vee(\xi)\oplus L\otimes\Oc(\xi))$. Then, $X$ is a smooth Calabi-Yau threefold of degree 33 having the following Hodge diamond \cite[Table 1, no. 7]{ionue_ito_miura_CY3s}.  As $s$ varies, $X$ varies in a family of threefolds, which we will denote by $\X$.

\begin{equation*}
    \begin{array}{ccccccc}
            &&& 1 &&& \\
            && 0 && 0 && \\
            & 0 && 1 && 0 & \\
            1 && 52 && 52 && 1 \\
            & 0 && 1 && 0 & \\
            && 0 && 0 && \\
            &&& 1 &&&
        \end{array}
\end{equation*}

Now consider the the flag variety $F(1, 2, V_6)$ parameterizing pairs consisting of a line and a two-space in $V_6$ such that the line is contained in the two-space. One has the following standard projections out of this flag variety, which endow it with two different projective bundle structures. 
\begin{equation}
    G(2,V_6)\xleftarrow{q}\P(\U(-\xi))\simeq F(1,2,V_6)\simeq\P(\wt\Q(-2h))\xrightarrow{p}\P(V_6)
\end{equation}
Here, $\xi$ denotes the hyperplane class of the Grassmannian (under its Plucker embedding) and $h$ denotes the hyperplane class of the projective space. By abuse of notation, we will denote their pullbacks to the flag variety by the same symbols. The choices of twists, appearing in these projectivizations, are such that both give rise to the same relative $\O(1)$ on the the flag variety. It is
\begin{equation}
    \O_p(1)=\O(h+\xi)=\O_q(1)
\end{equation}
Therefore, we have the following pushforward formulas
\begin{equation}
    \begin{aligned}
        q_*(\O(h))&=\U^\vee \quad &q_*(\O(\xi))&=\O(\xi)\\
        p_*(\O(h))&=\O(h)\quad &p_*(\O(\xi))&=\wt\Q^\vee(h)
    \end{aligned}
\end{equation}
Using these formulas and the natural isomorphisms between section spaces and Hom spaces of vector bundles, one deduces the following natural chain of isomorphisms.
\begin{equation}\label{eq:chainOfIsos}
\begin{aligned}
    H^0(G(2, V_6), \Uc^\vee(\xi)\oplus L\otimes\Oc(\xi))&\simeq H^0(F(1, 2, V_6), \Oc(h+\xi)\oplus L\otimes\Oc(\xi))\\
    &\simeq H^0(\PP(V_6), \wt\Qc^\vee(2h)\oplus L\otimes\wt\Qc^\vee(h))\\
    & \simeq \Hom_{\PP(V_6)}(\Oc(-h)\oplus L^\vee\otimes\Oc, \wt\Qc^\vee(h)).
\end{aligned}
\end{equation}

In this way, the section $s$ naturally induces a morphism $f_s:\Oc(-h)\oplus L^\vee\otimes\Oc \rightarrow \wt\Qc^\vee(h)$ of bundles on the projective space. Consider its rank 3 degeneracy locus:
\begin{equation}
    \wt Y\equiv  D_3(f_s)\equiv\{[\ul v]\in\PP(V_6) \ |\  f_s|_{[\ul v]} \text{ has rank } \leq 3 \}
\end{equation}

It is well-known \cite[Section 7.2]{lazarsfeld_positivity_II} that, given a general morphism of vector bundles $f\in\Hom_Z(\E, \F)$ on a smooth projective variety $Z$, such that $\E^\vee\otimes \F$ is globally generated, the dimension and singular locus of each of the degeneracy loci are given as follows.
\begin{equation}
\begin{aligned}
    \dim D_k(f) &= \dim Z - ( \rk E - k) (\rk F - k)\\
    \operatorname{Sing}D_k(f) &= D_{k-1}(f)
\end{aligned}
\end{equation}
Moreover, assuming $\rk \E \leq \rk \F$, a standard construction (see \cite[Section 7.2]{lazarsfeld_positivity_II} or \cite[Section 2.2]{tanturri_thesis} for a more recent perspective) allows one to resolve the first degeneracy locus $D_{\rk E - 1}(f)$ as the zero locus of a section of $\pi^*\F\otimes \O(H)$, where $\pi$ is the projective bundle map $\PP(\E) \arw Z$ and $H$ is its relative hyperplane class. Note that, under this notation we have: $\pi_*\Oc(H) \simeq \E^\vee$.

Therefore, one finds that $\wt Y$ is a threefold. Moreover, as $D_2(f_s)$ is empty, we know that $\wt Y$ is smooth and hence is isomorphic to its resolution, which sits inside $\PP(\Oc(-h)\oplus L^\vee\otimes\Oc)\xrightarrow{\pi}\PP(V_6)$ as the vanishing locus of a section of $\pi^*\wt\Qc^\vee(h+H)$. We also call such a resolution $\wt Y$. Now, the natural embedding of bundles $\O(-h)\oplus L^\vee\otimes\O\hookrightarrow (V_6\oplus L^\vee)\otimes \O$ on $\P(V_6)$, induces the embedding $\PP(\Oc(-h)\oplus L^\vee\otimes\Oc)\subset\P(V_6\oplus L^\vee)\times \P(V_6)$. Projecting to the first factor, we get exactly the blow-up morphism $\beta$ for the blow-up of $\P(V_6\oplus L^\vee)$ along $\P(L^\vee)$. Essentially, the two maps $\pi$ and $\beta$ may be seen as the resolution of the projection of $\PP(V_6\oplus L^\vee)$ from $\PP(L^\vee)$, as we depict in the following diagram:
\begin{equation*}
    \begin{tikzcd}[row sep = huge]
        & \Bl_{\PP(L^\vee)}\PP(V_6\oplus L^\vee) \simeq \PP(\Oc(-h)\oplus L^\vee\otimes\Oc) \ar[swap]{dl}{\beta} \ar{dr}{\pi} & \\
        \PP(V_6\oplus L^\vee) \ar[dashed]{rr} & & \PP(V_6)
    \end{tikzcd}
\end{equation*}
Under our notation, the relative hyperplane class of $\pi$ is the pullback of the hyperplane class $H$ of $\P(V_6\oplus L^\vee)$. By abuse of notation, we will refer to this pullback as $H$ again. Finally, we denote by $Y$, the image of $\wt Y$ through the blow-up morphism $\beta$.

\subsection{The Arithmetically Gorenstein Calabi-Yau Family $\Y$} \label{sec:Y}
It turns out, as we will see, that the variety $Y$ obtained above is a Calabi-Yau threefold and moreover is the second titular threefold in our tale. We will sometimes refer to it as the \textit{dual Calabi-Yau threefold} to $X$.

Let us first show that $Y$, embedded in $\P^8$, is described by  a Huneke-Ulrich ideal, as in \cite[Section 3.1]{SSY}.
\begin{proposition}\label[proposition]{prop:explicitDescriptionOfY}There exist a $6\times 6$ skewsymmetric matrix $M$ of linear forms in $\mathbb P(V_6\oplus L^\vee)$ and a $6\times 1$ matrix of linear forms $v$ cutting out $\mathbb P(L^\vee)\subset \mathbb P(V_6\oplus L^\vee)$, such that $Y\subset \mathbb P(V_6\oplus L^\vee)$ has the following description 
$$Y=\{[x]\in \mathbb P(V_6\oplus L^\vee) | M(x)v(x)=0, \Pf(M(x))=0 \}.$$
Moreover the exceptional divisor of the map $\beta_Y$ is described in $\mathbb P(L^\vee)\times \mathbb P(V_6)$ as the locus 
$$\{([l],[w])\in \mathbb P(L^{\vee})\times \mathbb P(V_6) | M(l,0) w=0\}.$$

\end{proposition}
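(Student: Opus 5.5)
The plan is to make the section $s$ and the map $f_s$ explicit in coordinates $x=(v,l)\in V_6\oplus L^\vee$, and then read off $Y=\beta(\wt Y)$ directly from the description of $\wt Y$ as a zero locus inside $\PP(\Oc(-h)\oplus L^\vee\otimes\Oc)=\Bl_{\PP(L^\vee)}\PP(V_6\oplus L^\vee)$.

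\emph{Step 1: coordinates for $s$ and $f_s$.} The summand $L\otimes\Oc(\xi)$ of $s$ is a triple of $2$-forms, i.e.\ a linear map $L^\vee\to\wedge^2V_6^\vee$, $l\mapsto\omega_l$. The summand in $H^0(\Uc^\vee(\xi))$ lifts, as a section of $\Oc(h+\xi)$ on $F(1,2,V_6)$, to a tensor $T\in V_6^\vee\otimes\wedge^2V_6^\vee$. This lift is well defined only modulo $\wedge^3V_6^\vee$, and we fix any lift. Define the skew matrix of linear forms
$$M(v,l)(w,w')=T(v;w,w')+\omega_l(w,w'),$$
and let $v(x)=v$ be the projection $V_6\oplus L^\vee\to V_6$; its six linear forms cut out $\PP(L^\vee)$. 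Unwinding \eqref{eq:chainOfIsos}, at $[v]\in\PP(V_6)$ the map $f_s$ sends the generator of $\Oc(-h)$ to the form $T(v;v,\cdot)$ and sends $l\in L^\vee$ to $\omega_l(v,\cdot)$. Both are linear forms on $V_6/\langle v\rangle$, i.e.\ elements of $\wt\Qc^\vee(h)$.

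\emph{Step 2: the resolution.} A point of the fibre of $\pi$ over $[v]$ is $[tv:l]$. The section of $\pi^*\wt\Qc^\vee(h+H)$ defining $\wt Y$ vanishes there iff
$$tT(v;v,\cdot)+\omega_l(v,\cdot)=0.$$
By linearity of $T$ in its first slot, this expression equals $M(tv,l)\,v$. Hence $\wt Y$ consists of the pairs $(x,[v])$ with $x=(tv,l)$ and $M(x)v=0$.

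For $t\neq0$, i.e.\ off the exceptional divisor, this is exactly the condition $M(x)v(x)=0$ at $x$. Since $v(x)\neq0$ is then a kernel vector of the $6\times6$ skew matrix $M(x)$, we get $\Pf(M(x))=0$. Note that adding a $3$-form $\alpha$ to $T$ leaves $M(x)v(x)$ unchanged, because $\alpha(v,v,\cdot)=0$. So the choice of lift is harmless for the equations $Mv=0$, and $\Pf$ is automatically zero wherever those equations hold with $v\neq0$.

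For $t=0$, i.e.\ $x=(0,l)$, the fibre of $\beta_Y$ over $[l]$ is $\{[w] : \omega_l(w,\cdot)=0\}=\{[w] : M(l,0)w=0\}$. This gives the claimed description of the exceptional divisor inside $\PP(L^\vee)\times\PP(V_6)$. This fibre is nonempty iff $\Pf(\omega_l)=\Pf(M(l,0))=0$, which is a plane cubic in $\PP(L^\vee)$ (the elliptic curve).

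\emph{Step 3: set-theoretic equality.} On the side of the equations, points with $v\neq0$ satisfying $Mv=0$ lie in $\beta(\wt Y)$ by Step 2. Points $(0,l)$ satisfy $Mv=0$ trivially, and satisfy $\Pf=0$ iff $l$ lies on the cubic. By Step 2 these are exactly the points of $\PP(L^\vee)\cap\beta(\wt Y)$. Hence $\beta(\wt Y)$ equals the common zero locus of $M(x)v(x)$ and $\Pf(M(x))$. The role of $\Pf$ is precisely to cut the base locus $\PP(L^\vee)$ of the equations $Mv=0$ down to the cubic curve.

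\emph{Main obstacle.} The hard part is upgrading this to an equality of schemes, i.e.\ showing that the seven equations generate the (saturated, radical) ideal of $Y$, as a Huneke--Ulrich ideal. I would argue via \cite[Section 3.1]{SSY}: for general $(M,v)$ of this shape, the ideal generated by $Mv$ and $\Pf(M)$ is Gorenstein of codimension $5$ and defines a reduced threefold. Together with generality of $s$ (every $(T,\omega)$ arises, so the pair $(M,v)$ is general), this would identify the reduced scheme $Y$ with the scheme cut out by those equations. If needed, I would fall back on a Jacobian or Macaulay2 check of reducedness at a single member and conclude by openness.
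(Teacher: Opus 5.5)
Your proposal is correct. It follows the same overall strategy as the paper: write the section cutting out $\wt Y$ as $M(x)v(x)$ with $M$ skew, observe that $\Pf(M)$ then vanishes, and import the scheme structure from \cite{SSY}. The execution differs in two places.

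\emph{Construction of $M$.} The paper obtains $M$ indirectly. It twists $0\to\wt\Qc^\vee\to V_6\otimes\Oc\to\Oc(h)\to0$ to identify sections of $\pi^*\wt\Qc^\vee(h+H)$ with six quadrics $q_i$ vanishing on $\PP(L^\vee)$ and satisfying $\sum q_ix_i=0$. It then uses the Koszul complex of $x_1,\dots,x_6$ to write $q=Mx$ with $M$ skew. You instead write $M$ down directly as the tensor $(T,\omega)$. In your version, the $\wedge^3V_6^\vee$ ambiguity (the image of the next Koszul term in the paper) becomes tautological. So does the paper's ``clearly, a general matrix $M$ can appear'', since every skew matrix of linear forms is literally some $(T,\omega)$.

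\emph{Set-theoretic equality.} This is the more substantive difference. The paper shows only $Y=\overline{Y^\circ}\subseteq V(Mv,\Pf M)$, and then needs irreducibility of the general Huneke--Ulrich scheme to get equality, even set-theoretically. Your Step 3 proves $\beta(\wt Y)=V(Mv,\Pf M)$ as sets by handling the points of $\PP(L^\vee)$ directly: the Pfaffian cuts out exactly the cubic over which the fibres $\{M(0,l)w=0\}$ are nonempty. Consequently, the set-theoretic description and the description of the exceptional divisor hold for every choice of $s$ and of the lift $T$, with no input from \cite{SSY}. That citation is then needed only for reducedness and saturation of the ideal, which is what the paper ultimately uses as well. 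Your explicit fibre computation over $[l]$ also replaces the paper's one-line appeal to $\wt Y$ being the proper transform.

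When you invoke genericity from \cite{SSY}, say explicitly that a pair $(M,v)$ with six independent forms in $v$ can be normalized to $v=$ the projection onto $V_6$. This uses a linear change of coordinates on $\PP^8$ together with $GL_6$ acting on the index space. With that, ``general $M$, fixed $v$'' is the same as ``general $(M,v)$''.
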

\begin{proof}
Let us first consider the description of $Y$ in $\mathbb P(V_6\oplus L^\vee)\setminus \mathbb P(L^{\vee})$. Recall that $\wt Y$ is described on $\mathbb W:=\PP_{\PP(V_6)}(\Oc(-h)\oplus L^{\vee}\otimes\Oc)$ as the zero locus of a section of $ \pi^*\wt \Q^{\vee}(h+H)$. Also, recall the following short exact sequence on $\mathbb P(V_6)$, where the right map is given by coordinates on $\mathbb P(V_6)$.
\begin{equation}
    0\to \wt \Q^{\vee}\to V_6\otimes \O _{\mathbb P(V_6)}\to \O _{\mathbb P(V_6)}(h)\to 0 
\end{equation} 
It further induces the following exact sequence.
\begin{equation}
    0\to \pi^*\wt \Q^{\vee}(h+H)\to V_6\otimes \O _{\proj (V_6)}(2H-E)\to \O _{\mathbb P(V_6)}(2h+H)\to 0
\end{equation}
It follows that a section of $\pi^*\wt \Q^{\vee}(h+H)$ is a section of $V_6\otimes \O _{\proj(V_6)}(2H-E)$ satisfying the property that it is mapped to zero after multiplication with the vector of coordinates $\mathbb P(V_6)$. In particular,   $Y\setminus \mathbb P(L^{\vee})$ is defined in $\mathbb{P}^{\circ}:=\mathbb P(V_6\oplus L^\vee)\setminus \mathbb P(L^{\vee})$ by a system of quadrics $q_1\dots q_{6}$ vanishing on $\mathbb P(L^{\vee})$, such that 
\begin{equation}
    [q_1\dots q_{6}]* [x_1\dots x_6]^T=0
\end{equation}
where $x_1\dots x_6$ are the coordinates of $\mathbb P(V_6\oplus L^\vee)$ vanishing on $\PP(L^{\vee}) $ i.e. the ones which correspond to coordinates of $\mathbb P(V_6)$.
It follows from the Koszul sequence:
$$\wedge^2 V_6\otimes \O _{\mathbb{P}^{\circ}}(1)\to V_6\otimes \O _{\mathbb{P}^{\circ}}(2)\to \O _{\mathbb{P}^{\circ}}(3)\to 0 $$
that 
$[q_1\dots q_{6}]^T=M*[x_1\dots x_6]^T$ with $M$ a skew symmetric $6\times 6$ matrix of linear forms. Clearly, a general matrix $M$ can appear in this construction.

Now $Y$ in $\mathbb P(V_6\oplus L^{\vee})$ is the closure of 
$Y^{\circ}\subset \mathbb P^{\circ}$. For a description of the closure, observe that the vanishing of the quadrics at a point $p$ outside $\PP(L^\vee)$ implies that if we evaluate $M$ in $p$ it has a kernel and hence the Pfaffian of the matrix $M$ also vanishes at $p$. It follows that the Pfaffian of $M$ is also in the ideal of $Y$. Hence $Y$ is contained in a scheme defined by a Huneke-Ulrich ideal, which is known to be saturated, smooth and irreducible in general (see \cite[Section 3.1]{SSY}). It follows that the ideal of $Y$ is generated by the quadrics obtained from multiplication $M*[x_1\dots x_6]^T$ and the Pfaffian $\Pf(M)$. The equations of the exceptional divisor follow straight from the fact that $\beta$ is the blow-up of $\mathbb P(V_6\oplus L^\vee)$ in the locus $\{v(x)=0\}$ and $\tilde Y$ the proper transform of $Y$ via this blow-up. 
\end{proof}

\begin{proposition}
    $Y$ is an arithmetically Gorenstein threefold in $\P(V_6\oplus L^\vee)=\P^8$ of regularity 4.
\end{proposition}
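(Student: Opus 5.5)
By \Cref{prop:explicitDescriptionOfY}, the homogeneous ideal of $Y\subset\PP^8$ is the Huneke--Ulrich ideal
$$I=\bigl(\,M v,\ \Pf(M)\,\bigr),$$
generated by the six quadrics $Mv$ and the cubic $\Pf(M)$. Here $M$ is a general $6\times 6$ skew-symmetric matrix of linear forms and $v=(x_1,\dots,x_6)^T$. The plan is to read off both Gorensteinness and regularity from the known minimal free resolution of such ideals.

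\textbf{Step 1: the generic case is Gorenstein.} Consider the generic Huneke--Ulrich ideal in the polynomial ring $R=\C[m_{ij},y_k]$, where $m_{ij}$ ($i<j$) are indeterminate entries of a skew $6\times 6$ matrix and $y_1,\dots,y_6$ are indeterminates. This ideal is perfect Gorenstein of codimension $5$. This is the theorem of Huneke--Ulrich (via Kustin--Ulrich), and it is exactly the fact used in \cite[Section 3.1]{SSY}.

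\textbf{Step 2: specialize.} Our ideal $I$ is obtained from the generic one by substituting linear forms in the $9$ coordinates of $\PP^8$. Since $Y$ is a threefold in $\PP^8$ (as the image of $\wt Y$ under $\beta$), $I$ has codimension $5$, the same as the generic ideal. Perfection is preserved under specialization that keeps the codimension (Eagon--Northcott/Hochster). Hence $R/I$ is Cohen--Macaulay, and its minimal resolution is the specialization of the generic self-dual resolution, of length $5$ with last term of rank $1$. So $S/I$, with $S$ the coordinate ring of $\PP^8$, is Gorenstein. Because $I$ is saturated (also from \Cref{prop:explicitDescriptionOfY}), $S/I$ is the homogeneous coordinate ring of $Y$, and $Y$ is arithmetically Gorenstein.

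\textbf{Step 3: regularity.} Write down the graded Betti numbers of the self-dual resolution:
$$0\to S(-d)\to S(-d+2)^6\oplus S(-d+3)\to \cdots\to S(-2)^6\oplus S(-3)\to S.$$
The shift $d$ is determined by self-duality together with the canonical module. For an arithmetically Gorenstein variety, $\omega_{S/I}=S/I(d-9)$, and since $\omega_Y=\O_Y$ (the Calabi--Yau property, which follows from $K_{\wt Y}=\beta^*K_Y+\ldots$ or directly from this formula), we get $d=9$.

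We can also check $d=9$ directly from the generic degrees. In the generic resolution with $\deg m=\deg y=1$, the last shift is $9$; the terms are
$$S,\quad S(-2)^6\oplus S(-3),\quad S(-3)^{15},\quad S(-4)^{15}\ (\text{dual}),\quad S(-6)\oplus S(-7)^6,\quad S(-9).$$
Hence
$$\operatorname{reg}(S/I)=\max_i(a_i-i)=9-5=4,$$
and the regularity of the homogeneous coordinate ring is $4$ (equivalently, the $h$-vector is symmetric of length $5$).

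\textbf{Main obstacle.} The hardest part is pinning down the middle Betti numbers. I would cite \cite{SSY}, or compute them by verifying the Hilbert function with the $h$-vector $(1,4,11,4,1)$, which has sum $21=\deg Y$. The specialization argument itself is routine once the codimension is known.
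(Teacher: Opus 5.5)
Your argument is correct in its main line, but it takes a different route from the paper. The paper proves the statement computationally. It has Macaulay2 compute the minimal resolution of the universal Huneke--Ulrich ideal built from general linear forms in $\P^{20}$:
\[
\O \leftarrow \O(-2)^{6}\oplus\O(-3) \leftarrow \O(-3)\oplus\O(-4)^{21} \leftarrow \O(-5)^{21}\oplus\O(-6) \leftarrow \O(-6)\oplus\O(-7)^{6} \leftarrow \O(-9).
\]
It then asserts that a general $\P^8$-section has a resolution of the same shape. Gorensteinness is read off from self-duality, and regularity $9-5=4$ from the last term.

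You replace the computer check by the Huneke--Ulrich theorem. You replace ``general linear section'' by the generic perfection theorem, whose only hypothesis is that $I$ has height $5$; this comes for free from $\dim Y=3$ and \Cref{prop:explicitDescriptionOfY}. Your route gains two things. It is computer-free, in line with what the paper promises in \Cref{rem:23}. It also applies to every specialization of the right codimension, not only a ``general'' $\P^8$; in fact it is exactly what justifies the paper's step that the resolution survives restriction to $\P^8$.

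The paper's route, by contrast, delivers the full Betti table. You still have to import the last shift $d=9$, either from Kustin--Ulrich's explicit resolution or from $K_Y=0$. The latter is proved independently in \Cref{prop:gorCY3}, via $K_{\wt Y}=E_Y$ and the blow-up formula, so that use is not circular. Your parenthetical ``or directly from this formula'' is circular, though, and should be dropped.

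The explicit numbers you display are wrong, although they are not load-bearing.
\begin{itemize}
\item The correct middle terms are $S(-3)\oplus S(-4)^{21}$ and $S(-5)^{21}\oplus S(-6)$, with ranks $1,7,22,22,7,1$. The $S(-3)$ summand in the second term is the linear syzygy $v^T(Mv)=0$.
\item Your list is not even self-dual: dualizing $S(-3)^{15}$ and twisting by $-9$ gives $S(-6)^{15}$, not $S(-4)^{15}$.
\item The $h$-vector is $(1,5,9,5,1)$, not $(1,4,11,4,1)$. Here $h_1$ must be $5$ because $Y$ is a nondegenerate arithmetically Cohen--Macaulay subvariety of codimension $5$, and $h_2=\binom{6}{2}-6=9$.
\end{itemize}

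Your ``main obstacle'' is therefore not an obstacle at all. For a Cohen--Macaulay quotient the regularity is determined by the last module alone: $\operatorname{reg}(S/I)=d-c=9-5=4$. You can simply delete the middle Betti numbers and the $h$-vector check.
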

\begin{proof}
\label[remark]{rem:universalHunekeUlrich}
One can implement the universal Huneke-Ulrich ideal in Macaulay2 by considering 
general matrices of linear forms $\til M$ and $\til v$ (of the shape specified in \Cref{prop:explicitDescriptionOfY}) in $\P^{20}$. Then one can verify that the minimal free resolution of the vanishing of this ideal, that is $V\equiv V(\til M.\til v, \Pf(\til M))\subset\P^{20}$, has the following form.
    \begin{multline}
        0 \leftarrow \mathcal{O}_V \leftarrow \mathcal{O} \leftarrow \mathcal{O}(-2)^{\oplus 6} \oplus \mathcal{O}(-3) \leftarrow \mathcal{O}(-3) \oplus \mathcal{O}(-4)^{\oplus 21}\leftarrow\\
        \leftarrow \mathcal{O}(-5)^{\oplus 21} \oplus \mathcal{O}(-6) \leftarrow \mathcal{O}(-6) \oplus \mathcal{O}(-7)^{\oplus 6} \leftarrow \mathcal{O}(-9) \leftarrow 0
    \end{multline}
Taking any general linear section of $V$ yields the same minimal resolution in the linear space containing the section. In particular, $Y$, due to its description in \Cref{prop:explicitDescriptionOfY}, has a minimal resolution of the same form, but now in a $\P^8$. So, due to the self-duality of this resolution, one concludes that $Y$ is arithmetically Gorenstein. Then, from the length of the resolution and the minimum twist appearing in the resolution one deduces that the regularity is 4.
\end{proof}

\begin{remark}\label{rem:23}
Using the above resolution, one can then verify the middle cohomology vanishings of $Y$ and compute the Hodge numbers and degree of $Y$. Moreover, using Macaulay2, one can also verify that the singular locus of $V$ has codimension 7 and thus deduce the smoothness and irreducibility of its general $\P^8$ section, $Y$. Then, from the fact that $Y$ is an arithmetically Gorenstein threefold having regularity 4, one concludes that $Y$ is Calabi-Yau. Such computations were performed in \cite[Section 3.1  and Example 4.1]{SSY}. Regardless, in what follows we will provide purely conceptual computations and proofs for the above stated invariants and properties of $Y$, independent of such computer algebra computations.
\end{remark}

\begin{proposition}\label[proposition]{prop:gorCY3} The following hold for the varieties $Y$ and $\wt Y$.
    \begin{itemize}
        \item $Y$ is a smooth Calabi--Yau threefold of degree $21$ in $\PP(V_6\oplus L^\vee)$. 
        \item $\wt Y$ is the blowup of $Y$ in a smooth elliptic curve $C = Y\cap \PP(L^\vee)$.
    \end{itemize}
\end{proposition}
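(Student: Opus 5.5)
The plan is to work on the smooth model $\wt Y$ inside $\WW=\PP(\Oc(-h)\oplus L^\vee\otimes\Oc)\simeq\Bl_{\PP(L^\vee)}\PP(V_6\oplus L^\vee)$. We already know that $\wt Y$ is smooth and is the zero locus of a regular section of the rank $5$ bundle $\pi^*\wt\Qc^\vee(h+H)$. We then show three things in turn: that $\wt Y\to Y$ contracts a $\PP^1$-bundle over a plane cubic, that $Y$ is smooth with trivial canonical class, and that the degree is $21$. Throughout, $E$ denotes the exceptional divisor of $\beta$ on $\WW$, and we use $h=H-E$.

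\textbf{Canonical class via adjunction.} First compute $K_{\wt Y}$. Write $\mathcal E=\Oc(-h)\oplus L^\vee\otimes\Oc$, so that $\det\mathcal E^\vee=\Oc(h)$. Then
\[
K_{\WW}=\pi^*K_{\PP(V_6)}-4H+\pi^*c_1(\mathcal E^\vee)=-5h-4H .
\]
From $0\to\wt\Qc^\vee\to V_6\otimes\Oc\to\Oc(h)\to0$ we get $c(\wt\Qc^\vee)=(1+h)^{-1}$, so $c_1(\wt\Qc^\vee)=-h$. Hence $c_1(\pi^*\wt\Qc^\vee(h+H))=-h+5(h+H)=4h+5H$. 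Adjunction then gives
\[
K_{\wt Y}=(H-h)|_{\wt Y}=E|_{\wt Y}.
\]
This is exactly what one expects for the blow-up of a Calabi--Yau threefold along a curve.

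\textbf{The exceptional locus.} Next identify $E\cap\wt Y$. By \Cref{prop:explicitDescriptionOfY}, it is $\{([l],[w]) \mid M(l,0)w=0\}\subset\PP(L^\vee)\times\PP(V_6)$. The matrix $M(l,0)$ is a net of $6\times6$ skew forms parametrized by $\PP(L^\vee)\simeq\PP^2$. It is degenerate exactly along the Pfaffian cubic $C=\{\Pf(M(l,0))=0\}=Y\cap\PP(L^\vee)$. For general $M$:
\begin{itemize}
\item the rank $\le 2$ locus of skew forms has codimension $6$, so it misses a general $\PP^2$ and the rank on $C$ is exactly $4$;
\item the cubic $C$ is smooth, by Bertini-type genericity for the net.
\end{itemize}
So $E\cap\wt Y\to C$ is the $\PP^1$-bundle $\PP(\ker M(l,0))$ over a smooth elliptic curve. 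Each fibre is a line in the fibre $\PP^5$ of $E\to\PP(L^\vee)$. Since $E|_E=\Oc_{\PP(N)}(-1)$, we get $E\cdot f=-1$ on a fibre $f$, so $\Oc(E\cap\wt Y)$ has degree $-1$ on the fibres.

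By the Fujiki--Nakano contraction criterion, $\wt Y$ is then the blow-up of a smooth threefold along $C$. That threefold must be $Y=\beta(\wt Y)$, because $\beta|_{\wt Y}$ is an isomorphism off $E$ and contracts exactly these fibres. We also need the normality of $Y$ to identify it with the Fujiki--Nakano contraction. For this I would invoke the saturated, normal Huneke--Ulrich description from \Cref{prop:explicitDescriptionOfY}, or alternatively check smoothness of $Y$ at points of $C$ directly from the Jacobian of $M(x)v(x)$ and $\Pf(M(x))$. I expect this identification to be the main technical obstacle.

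\textbf{Calabi--Yau conclusion.} Once $\wt Y=\Bl_C Y$, we have $K_{\wt Y}=\beta^*K_Y+E|_{\wt Y}$. Comparing with the adjunction computation gives $K_Y=0$. For $h^1(\Oc_Y)=h^2(\Oc_Y)=0$, it suffices to prove them for $\wt Y$, since these are birational invariants. I would obtain them from the Koszul complex of the section on $\WW$ together with Bott vanishing on the projective bundle. Alternatively, they follow from the arithmetically Gorenstein property with regularity $4$, already established.

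\textbf{Degree.} The degree is $\deg Y=\int_{\wt Y}H^3$, since the center is a curve and blowing it up does not change $H^3$. This equals $\int_{\WW}H^3c_5(\pi^*\wt\Qc^\vee(h+H))$. Using $c(\wt\Qc^\vee)=\sum_i(-h)^i$ we expand
\[
c_5=\sum_{i=0}^{5}(-h)^i(h+H)^{5-i}.
\]
The Segre-class pushforward for $\mathcal E$ gives $\pi_*H^{3+m}=h^m$, so $\int_{\WW}H^{3+m}h^{5-m}=1$ for $0\le m\le5$. Therefore
\[
\int_{\WW}H^3c_5=\sum_{i=0}^{5}(-1)^i2^{5-i}=32-16+8-4+2-1=21 .
\]
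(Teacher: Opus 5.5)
Your proposal is correct and follows essentially the same route as the paper. The shared steps are: adjunction on the blow-up giving $K_{\widetilde Y}=E|_{\widetilde Y}$; identification of $E\cap\widetilde Y$ as a $\mathbb{P}^1$-bundle over the smooth plane cubic $C$ on whose fibres $\mathcal{O}(E)$ has degree $-1$; Fujiki--Nakano; the blow-up formula forcing $K_Y=0$; intermediate cohomology vanishing from the Koszul/Bott computation; and the degree computed on the projective bundle. Your additions fill in points the paper's proof leaves implicit: the explicit count $\sum_i(-1)^i2^{5-i}=21$, the codimension-$6$ argument that $M(l,0)$ has rank exactly $4$ along $C$, and the observation that the Fujiki--Nakano contraction must be identified with $Y$ via normality. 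Your proposed Jacobian check does settle that last point: at $p\in C$ one has $v(p)=0$, so the quadric rows reduce to $M(l,0)$ (rank $4$), and the Pfaffian row has nonzero $L^\vee$-component because $C$ is smooth, giving rank $5$.
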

\begin{proof}
    The degree of $Y$ in $\P(V_6\oplus L^\vee)$ is same as  $[\til Y].H^3$ computed in $\Bl_{\P(L^\vee)}\PP(V_6\oplus L^\vee)$, which in turn can be evaluated by Grothendieck relation for the relative hyperplane class of $\Bl_{\PP(L^\vee)}\PP(V_6\oplus L^\vee))$ (seen as a projective bundle).

    By the description of $\wt Y$ as the zero locus of a section, one finds that its normal bundle in the blown-up projective space is $\pi^*\wt\Qc^\vee(h+H)|_{\wt Y}$. Hence, by adjunction, the canonical class is $K_{\wt Y} = H - h = E_Y$ where $E_Y$ is the restriction of the exceptional divisor $E$ of $\beta$. Hence, $\beta|_{\wt Y}$ contracts an exceptional divisor $E$ to the intersection $C\coloneqq \PP(L^\vee)\cap Y$. Note that $Y\setminus C$ is smooth because $\wt Y\setminus E_Y$ is. And $C$ and $E_Y$ themselves are smooth by the proof of \Cref{prop:explicitDescriptionOfY}. So, by the Fujiki--Nakano contraction criterion \cite{fujiki-nakano-contraction-criterion}, to prove that $Y$ is smooth in a neighborhood of $C$, we just need to show that the contracted divisor $E_Y$ is a $\PP^1$ -bundle over $C$, and that the restriction of $\Oc(E_Y)$ to a fiber of $E_Y$ is $\Oc(-1)$. The latter claim follows by observing that such divisor is a zero locus of a general section of the restriction of $\wt \Qc^\vee(h+H)$ to $\PP(L^\vee)\times \PP(V_6)$. In particular, the pre-image of a point $[l]\in\PP(L^\vee)$ is cut by linear equations (a section of $\wt \Q^\vee(H)$ on $\PP(V_6)$). Fibers over $\PP(L^\vee)$, therefore, must be linear spaces. Since $E_Y$ is a smooth irreducible surface, it cannot contain any $\PP^2$. Hence, since the contraction is surjective onto $C$, it must be a scroll. This proves that $\beta_Y\colon E_Y\arw C$ is a $\PP^1$-fibration. Now, let us consider the restriction of $\Oc_{\wt Y}(E_Y)$ to a fiber of $\beta_Y$: since $\Oc_{\wt Y}(E_Y)\simeq \Oc_{\wt Y}(H-h)$, such a restriction is just $\Oc_{\PP^1}(-1)$. This shows that $Y$ is smooth.

    Finally, we known by \Cref{prop:explicitDescriptionOfY} that $C$ is a plane cubic hence an elliptic curve and by the canonical bundle formula for a blow-up,  that $Y$ has trivial canonical bundle. Moreover, the intermediate cohomologies of $\Oc_Y$ vanish, by the Hodge number computations in the proof of \Cref{prop:hodge_numbers}. Therefore, $Y$ is Calabi-Yau.

\end{proof}

In the light of above results, we can regard $Y$ as a general member of the family $\Y$ of arithmetically Gorenstein Calabi-Yau threefolds in $\P(V_6\oplus L^\vee)=\P^8$, that we define below. 

\begin{equation}\label{def:Y}
\Y := \left\{ V(M.v, \Pf(M)) \subset \P^8 \ \middle|\ 
\begin{aligned}
    & M \text{ is a general } 6 \times 6 \text{ skew-symmetric matrix of linear forms in }\P^8\\
    & v \text{ is a general } 6 \times 1 \text{ column of linear forms in }\P^8
\end{aligned}
\right\}
\end{equation}

\subsection{Hodge Equivalence, L-equivalence and Non-Birationality }

\subsubsection{Hodge Equivalence} In the following, we compute the Hodge numbers of $Y$ by first computing those of $\wt Y$. Comparing these numbers to those of $X$, we find: 
\begin{proposition}\label[proposition]{prop:hodge_numbers}
        $X$ and $Y$ have the same Hodge numbers.
    \end{proposition}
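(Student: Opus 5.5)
We compute the Hodge numbers of $Y$ by first computing those of $\wt Y$ and then subtracting the contribution of the blow-up along the elliptic curve $C$. The target values are those of $X$ recorded above: $h^{1,1}=1$, $h^{2,1}=52$, and all $h^{p,0}$, $h^{0,q}$ the standard Calabi--Yau ones.

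\textbf{Step 1: the Euler characteristic of $\wt Y$.} Recall that $\wt Y$ is the zero locus of a regular section of the rank $5$ bundle $\pi^*\wt\Qc^\vee(h+H)$ on the $8$-dimensional projective bundle $\WW=\PP_{\PP(V_6)}(\Oc(-h)\oplus L^\vee\otimes\Oc)$. Its top Chern class and tangent bundle are therefore explicit, and
\[
c(T_{\wt Y}) = \frac{c(T_{\WW})}{c(\pi^*\wt\Qc^\vee(h+H))}\Big|_{\wt Y}.
\]
Hence $\chi_{top}(\wt Y)=\int_{\WW} c_3(T_{\wt Y})\,c_5(\pi^*\wt\Qc^\vee(h+H))$. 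We evaluate this in the Chow ring
\[
\ZZ[h,H]\big/\bigl(h^6,\ H^4(H-h)\bigr),
\]
where the second relation is the Grothendieck relation $H^3\cdot(H-h)=0$ up to the correct normalization, coming from $c(\Oc(-h)\oplus \Oc^3)$. The same integral computation, with $H^3$ in place of the Chern class, yields the degree $21$ promised in \Cref{prop:gorCY3}. This step is a mechanical but lengthy computation; I expect it to be the main obstacle. One could simplify it by using the blow-up formula for $\chi_{top}$ together with $\chi_{top}(C)=0$, which reduces the task to $\chi_{top}(\wt Y)=\chi_{top}(Y)$.

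\textbf{Step 2: $h^{1,1}(\wt Y)=2$ and the vanishing of the other intermediate groups.} We use Koszul resolutions of $\Oc_{\wt Y}$ on $\WW$ by exterior powers of $\pi^*\wt\Qc(-h-H)$, together with Bott vanishing on $\PP(V_6)$ and the projective bundle formula. From these we extract:
\begin{itemize}
\item $h^{i}(\Oc_{\wt Y})=0$ for $i=1,2$;
\item $h^{1,0}=h^{2,0}=0$;
\item $\operatorname{Pic}(\wt Y)$ is generated by the restrictions of $h$ and $H$, which is a Lefschetz-type statement.
\end{itemize}
For the last point, the simplest route is to compute $H^1(\Omega_{\wt Y})$ via the conormal sequence twisted by the Koszul complex, or to argue through a Lefschetz theorem for zero loci of ample-enough bundles. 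Since $\pi^*\wt\Qc^\vee(h+H)$ is only nef, I would instead compute $h^0$ and $h^1$ of $T_{\wt Y}$ via Koszul and Borel--Weil--Bott, and then obtain $h^{1,1}$ from Riemann--Roch.

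\textbf{Step 3: descending to $Y$.} By \Cref{prop:gorCY3}, $\wt Y=\Bl_C Y$ with $C$ an elliptic curve. The blow-up formula gives
\[
H^*(\wt Y)=H^*(Y)\oplus H^{*-2}(C).
\]
Consequently $h^{1,1}(Y)=h^{1,1}(\wt Y)-1=1$ and $h^{2,1}(Y)=h^{2,1}(\wt Y)-g(C)=h^{2,1}(\wt Y)-1$. Combining this with $\chi_{top}(\wt Y)=\chi_{top}(Y)$, we get $h^{2,1}(Y)=1-\chi_{top}(Y)/2$. It then remains to check that this equals $52$, i.e.\ that $\chi_{top}(\wt Y)=-102$. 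The vanishing of $h^{i}(\Oc_Y)$ for $i=1,2$ descends immediately, since $R\beta_*\Oc_{\wt Y}=\Oc_Y$. Comparing the resulting numbers with the Hodge diamond of $X$ concludes the proof.
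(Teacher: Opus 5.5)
Your proposal has a genuine gap in Step 2, at the determination of $h^{1,1}(\wt Y)=2$, which is exactly what gives $h^{1,1}(Y)=1$.

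Steps 1 and 3 are sound, and Step 1 is not really the obstacle you fear. The Grothendieck relation is $H^4=hH^3$; you wrote the generator as $H^4(H-h)$, which is the wrong relation. With it one gets $\int_{\wt Y}H^3=\int_{\wt Y}hH^2=21$ and $\int_{\wt Y}h^2H=18$. One also finds $c_3(T_{\wt Y})=6h^2H-9hH^2-H^3$, so $\chi_{top}(\wt Y)=-102$ follows quickly.

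The route you commit to for $h^{1,1}$ cannot work. $\wt Y$ is not Calabi--Yau, since $K_{\wt Y}=\Oc(E_Y)$. Hence $H^i(T_{\wt Y})\simeq H^{3-i}(\Omega^1_{\wt Y}(E_Y))^\vee$ are not Hodge groups, and $h^0,h^1$ of $T_{\wt Y}$ carry no information about $h^{1,1}$. Riemann--Roch only produces Euler characteristics, e.g.\ $\chi(\Omega^1_{\wt Y})=h^{2,1}-h^{1,1}=-\chi_{top}/2$, which is the information Step 1 already gives you. Nothing in your plan separates $h^{1,1}$ from $h^{2,1}$. As you note, the Lefschetz route is unavailable because the bundle is only nef.

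The paper computes the whole Hodge diamond of $\wt Y$ by the route you mention and set aside: the conormal sequence, Koszul complexes, pushforward along $\pi$, and Borel--Weil--Bott. It explicitly observes that these exact sequences leave $h^{1,1}$ ambiguous. It resolves this with the lower bound $h^{1,1}(\wt Y)\geq 2$ from the blow-up structure; in your language, $h^{1,1}(\wt Y)=h^{1,1}(Y)+1\geq 2$. So to close your argument you need that computation together with this lower bound, or some other input.

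One clean alternative uses the isomorphism $\Bl_X\mathbb{G}_L\simeq\Bl_{\wt Y}\PP(V_6)$, established later in the same section independently of the Hodge numbers. Comparing $H^4$ of the two blow-ups gives $b_4(\mathbb{G}_L)+b_2(X)=b_4(\PP^5)+b_2(\wt Y)$. Lefschetz gives $b_4(\mathbb{G}_L)=b_4(G(2,6))=2$, so $b_2(\wt Y)=2$, and with $h^{2,0}(\wt Y)=0$ this is $h^{1,1}(\wt Y)=2$. Combined with your Chern-class computation of $\chi_{top}$, this completes a route genuinely different from the paper's. It trades the paper's full cohomological computation for one intersection-theoretic integral and a topological comparison.
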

    \begin{proof}
        In the light of \Cref{prop:gorCY3} and \cite[Theorem 7.31]{voisin}, we just need to show that $\wt Y$ has the following Hodge numbers:
        \begin{equation*}
            \begin{array}{ccccccc}
                    &&& 1 &&& \\
                    && 0 && 0 && \\
                    & 0 && 2 && 0 & \\
                    1 && 53 && 53 && 1 \\
                    & 0 && 2 && 0 & \\
                    && 0 && 0 && \\
                    &&& 1 &&&
                \end{array}
        \end{equation*}
        The Hodge numbers of $\wt Y$ can be computed by its description as a zero locus of a general section of $\Ec \coloneqq \wt\Qc^\vee(H+h)$ in $\WW$, as follows. First, recall that to compute the Hodge numbers of a threefold one just needs to know the cohomology of its structure sheaf and its cotangent bundle. The latter can be resolved by the dual of the normal bundle sequence:
        \begin{equation*}
            0 \arw\Ec^\vee\big|_{\wt Y} \arw \Omega_\Pc^1\big|_{\wt Y} \arw \Omega_{\wt Y}^1 \arw 0.
        \end{equation*}
        Both terms of the resolution can be further resolved by vector bundles on $\Pc$ by means of the Koszul resolution of $\wt Y$ as a zero locus of a section of $\Ec$. Similarly, one can resolve $\Oc_{\wt Y}$. In this way, the calculation reduces to computing cohomology of vector bundles of the following shape:
        \begin{equation*}
            \pi^*\Sigma^\lambda\Qc(lh)\otimes\Oc(mH) \quad \text{for } l, m\in\ZZ, \quad \lambda \text{  partition}
        \end{equation*}
        where $\pi^*\Sigma^\lambda\Qc$ denotes the Schur power of $\Qc$ with respect to $\lambda$.\\
        \medskip
        The cohomology of such terms can be easily computed by pushing forward along $\pi$. In fact, one has:
        \begin{equation*}
            \begin{split}
                R\pi_*\bigl(\pi^*\Sigma^\lambda\Qc(lh)\otimes\Oc(mH)\bigr) & \simeq \Sigma^\lambda\Qc(lh)\otimes R\pi_*\Oc(mH) \\
                & \simeq \left\{
            \begin{array}{lc}
                \Sigma^\lambda\Qc(lh)\otimes\Sym^m(\Oc(h)\oplus L\otimes\Oc) & m\geq 0 \\
                0 & -3\leq m \leq -1 \\
                \Sigma^\lambda\Qc((l-1)h)\otimes\bigl(\Sym^{-4-m}(\Oc(h)\oplus L\otimes \Oc)\bigr)^\vee[-3] & m\leq -4
            \end{array}
            \right.
            \end{split}
        \end{equation*}
        The first two cases are straightforward. The case $m\leq -4$ can be deduced by the following computation:
        \begin{equation}
            \begin{split}
                R\pi_*\Oc(mH) & \simeq R\pi_*\Hc om\bigl(\Oc, \Oc(mH)\bigr) \\
                & \simeq R\pi_* \Hc om \bigl(\Oc((-4-m)H + h), \Oc(-4H+h)\bigr) \\
                & \simeq R\pi_*\Hc om\bigl(\Oc((-4-m)H+h), \pi^!\Oc\bigl)[-3] \\ 
                & \simeq \Hc om\bigl(R\pi_*\Oc((-4-m)H + h), \Oc\bigr)[-3] \\
                & \simeq \Oc(-h)\otimes\bigl(\Sym^{-4-m}(\Oc(h)\oplus L\otimes \Oc)\bigr)^\vee[-3],
            \end{split}
        \end{equation}
        where we used the Grothendieck duality and the fact that $\pi^!\Oc \simeq \omega_\Pc\otimes\pi^*\omega_{\PP(V_6)}^\vee[3]$.
        This allows to compute the cohomology of each bundle appearing in the resolutions by means of the Borel--Weil--Bott theorem. After a simple but tedious computation, one obtains the Hodge diamond above. The details of the computation can be found in Appendix \ref{app:hodgeCode}
    \end{proof}

The correspondence between $X$ and $Y$ can be made even more geometrically manifest by noting that there is a birational map between $\PP(V_6)$ and a codimension three linear section of $G(2, V_6)$, which is resolved on each side by blowing up $\wt Y$ and $X$, respectively. We elaborate on this construction below.

Recall the section $s$ that defines $X$ as its vanishing locus and the chain of isomorphisms (\ref{eq:chainOfIsos}). Denote by $Z$, the zero locus of the section $\sigma\in H^0(F(1,2,V_6),\Oc(h+\xi)\oplus L\otimes\Oc(\xi))$ such that $q_*\sigma = s$. Then, the restriction $q|_Z$ is a relative hyperplane section of a $\PP^1$-bundle. This can be seen by noting that $F(1, 2, V_6)$ is isomorphic to the projectivization of $\Uc(-\xi)$, and that intersecting $F(1, 2, V_6)$ with a section of $L\otimes\Oc(\xi)$ still yields a $\PP^1$-bundle over a codimension three section of $G(2, V_6)$. We choose this section to be the projection of $s$ to the $H^0(G(2,V_6),L\otimes\O(\xi))$ factor and denote the resulting linear section of $G(2, V_6)$ by $\mathbb{G}_L$. Therefore, $q|_Z$ is an isomorphism outside the pre-image of $X$, which is the projectivization of the normal bundle of $X$.

On the other hand, the contraction $F(1, 2, V_6)\arw \P(V_6)$ is a $\P^4$ -bundle, and the vector bundle $\Oc(h+\xi)\oplus L\otimes\Oc(\xi)$, on each fiber, restricts to four copies of $\Oc(1)$. This implies that the general fiber of $p:Z\arw \P(V_6)$ is a point. Higher-dimensional fibers occur precisely over the points of $\P(V_6)$ where the corresponding four linear sections are linearly dependent. This is the degeneracy locus $\wt Y$ of corank one: by the analysis above, no higher-corank loci appear, and therefore all fibers over $\wt Y$ are isomorphic to $\P^1$. Let us call $E_Y$ the divisor associated to such a $\P^1$-fibration over $\wt Y$, and introduce the notation $\bar p: E_Y\arw \wt Y$. For a fiber $\bar p^{-1}(x)$, by adjunction, one computes

\begin{equation}
    \omega_Z|_{\bar p^{-1}(x)}\simeq\Oc_{\bar p^{-1}(x)}(-5+4)=\Oc_{\bar p^{-1}(x)}(-1).
\end{equation}

Since $p$ is a birational morphism between smooth varieties with exceptional divisor $E$, by \cite[Section 2.3]{kollar_mori} we have
\begin{equation}
    \omega_Z=p^*\omega_{\PP(V_6)}(aE)
\end{equation}

for some positive integer $a$ (note that $Z$ has Picard rank two). Restricting to the fiber gives $a = 1$. The Fujiki--Nakano contraction criterion \cite{fujiki-nakano-contraction-criterion} therefore shows that $p$ is the blowup of $\P(V_6)$ with center $\wt Y$.

We summarize the above construction by the following diagram.

\begin{equation}\label{eq:blowup_diagram}
    \begin{tikzcd}[row sep = huge, column sep = huge]
        & E_X \ar[hook]{r}{\beta} \ar{dl}{\bar q} & Z \ar[hookleftarrow]{r}{\alpha} \ar{dl}{q} \ar[swap]{dr}{p} & E_Y \ar[swap]{dr}{\bar p} & \\
        X \ar[hook]{r} & \mathbb{G}_L & & \PP(V_6) \ar[dashed, <->]{ll} \ar[hookleftarrow]{r} & \wt Y
    \end{tikzcd}
\end{equation}

Here, by abuse of notation, we still use $p$ and $q$ to denote their restrictions to $Z$, and by $\bar p$ and $\bar q$ we denote their further restrictions to the exceptional divisors (which are $\PP^1$-bundles). This construction yields, in particular, the following isomorphism, that we will use going further.
\begin{proposition}
    There exists an isomorphism $\Bl_{X}\mathbb{G}_L\simeq \Bl_{\wt Y}\P(V_6)$
\end{proposition}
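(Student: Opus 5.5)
The plan is to show that the variety $Z \subset F(1,2,V_6)$, the zero locus of $\sigma$ with $q_*\sigma=s$, is simultaneously isomorphic to both blow-ups. Both isomorphisms are obtained by identifying each projection, $q|_Z$ and $p|_Z$, as a blow-up morphism. I would first fix $Z$ and check that it is smooth of dimension $5$: it is a general section of the globally generated bundle $\Oc(h+\xi)\oplus L\otimes\Oc(\xi)$ on $F(1,2,V_6)$, which has dimension $9$ and rank $4$, so Bertini applies. Equivalently, $Z$ is a relative hyperplane section of $\PP(\Uc(-\xi))|_{\mathbb{G}_L}$, where $\mathbb{G}_L$ is the smooth fivefold given by the $L\otimes\Oc(\xi)$-component of $s$.

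\textbf{Side of $X$.} Over a point $[U]\in\mathbb{G}_L$, the fiber of $q|_Z$ is the zero locus in $\PP(U)\simeq\PP^1$ of the linear form $s_1([U])\in U^\vee$, where $s_1$ is the $\Uc^\vee(\xi)$-component of $s$. This fiber is a point unless $s_1([U])=0$, and in that case it is all of $\PP^1$. Since $X=Z(s)=Z(s_1|_{\mathbb{G}_L})$, the map $q|_Z$ is therefore an isomorphism off $X$ and a $\PP^1$-bundle over $X$.

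To identify this with the blow-up, I would use the standard fact that the zero locus of the tautological section of $\Oc_{\PP(\Ec)}(1)$ induced by a regular section of $\Ec^\vee$ is the blow-up of the base along that section's zero locus. Here $\Ec=\Uc(-\xi)$, so $\Ec^\vee=\Uc^\vee(\xi)$. This is the rank-$2$ case of the degeneracy-locus resolution recalled from \cite{lazarsfeld_positivity_II}: locally, with $s_1=(f_1,f_2)$, the locus is $\{f_1y_2-f_2y_1=0\}\subset \mathbb{G}_L\times\PP^1$, which is exactly the local equation of $\Bl_{(f_1,f_2)}$. Hence $Z\simeq\Bl_X\mathbb{G}_L$.

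\textbf{Side of $\wt Y$.} Here I would follow the argument already sketched just before the statement. The map $p\colon Z\to\PP(V_6)$ restricts on each $\PP^4$-fiber to the intersection of four hyperplanes. Its fibers are points off the corank-one locus, which is $\wt Y=D_3(f_s)$, and copies of $\PP^1$ over $\wt Y$ because $D_2(f_s)=\emptyset$. So $p$ is birational with exceptional divisor $E_Y$, a $\PP^1$-bundle over the smooth threefold $\wt Y$.

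Adjunction gives $\omega_Z|_{\text{fiber}}\simeq\Oc(-1)$. Comparing with $\omega_Z=p^*\omega_{\PP(V_6)}(aE_Y)$, and using that $E_Y|_{\text{fiber}}$ must be $\Oc(-1)$ since $a=1$, the Fujiki--Nakano criterion \cite{fujiki-nakano-contraction-criterion}, together with the universal property of blow-ups, identifies $p$ with $\Bl_{\wt Y}\PP(V_6)\to\PP(V_6)$. A cleaner alternative I would try first is again a local determinantal description. Near a point of $\wt Y$, after trivializing, $Z$ is cut out in $\PP^4\times U$ by a $4\times5$ matrix of rank at least $3$. Eliminating three variables leaves a $1\times 2$ linear system whose two entries are local equations of $\wt Y$, because $\wt Y$ has codimension $2$ and these entries are its defining $2\times2$ minors up to units. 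This reproduces the blow-up equation $f_1y_2-f_2y_1=0$ directly, without the canonical-class argument.

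\textbf{Main obstacle.} The hardest step is justifying on the $p$-side that the two remaining entries generate the ideal of $\wt Y$, rather than some non-reduced thickening. This requires the reducedness and smoothness of $D_3(f_s)$ for general $f_s$, with the expected codimension $(5-3)(4-3)=2$, which comes from the generality statement in \cite{lazarsfeld_positivity_II}. Once both identifications $Z\simeq\Bl_X\mathbb{G}_L$ and $Z\simeq\Bl_{\wt Y}\PP(V_6)$ are in place, composing them gives the claimed isomorphism $\Bl_X\mathbb{G}_L\simeq\Bl_{\wt Y}\PP(V_6)$.
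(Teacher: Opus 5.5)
Your proposal is correct, and it shares the paper's architecture. $Z=Z(\sigma)\subset F(1,2,V_6)$ is the common roof, and your first argument on the $p$-side is exactly the paper's: fibres are intersections of four hyperplanes in $\PP^4$, they are lines precisely over $\wt Y=D_3(f_s)$ because $D_2(f_s)=\emptyset$, adjunction gives $a\,(E_Y\cdot\ell)=-1$ for a fibre $\ell$ (forcing $a=1$ and $E_Y\cdot\ell=-1$), and then Fujiki--Nakano applies.

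Where you genuinely differ is in the lemma used to recognise each projection as a blow-up. On the $q$-side the paper only notes that $q|_Z$ is an isomorphism off $X$ with $\PP(N_X)$ over $X$. On the $p$-side it relies on the canonical class and an analytic contraction criterion. You instead use, on both sides, the Rees-algebra description of the blow-up along a codimension-two complete intersection, $\{f_1y_2-f_2y_1=0\}\subset W\times\PP^1$. On the $p$-side you get there by reducing a $4\times 5$ matrix of rank $\geq 3$, via invertible row and column operations, to the block form with $I_3$ in the upper left and the row $(g_1\ g_2)$ in the lower right.

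This buys a uniform, algebraic, scheme-theoretic proof. It fills in the step the paper leaves implicit on the $X$-side. It also provides the invertibility of $p^{-1}\I_{\wt Y}\cdot\Oc_Z$ that your appeal to the universal property of blow-ups in the Fujiki--Nakano route would otherwise need; in that route one should instead factor $p$ through the Fujiki--Nakano contraction.

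Its price is exactly the input you flag: $D_3(f_s)$ must be smooth with its determinantal scheme structure. That is what makes $(g_1,g_2)$ the reduced ideal of $\wt Y$. Note that $(g_1,g_2)$ is the ideal of maximal ($4\times 4$, not $2\times 2$) minors of the original matrix. Also, your ``standard fact'' on tautological sections holds as stated only in rank $2$, which is the only case you use.
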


\subsubsection{L-Equivalence}
    In the following, we consider classes of varieties (denoted by [\_]) in the Grothendieck ring. We will denote by $\LL$ the class of the affine line, by $\mathbb{G}$ the Grassmannian $G(2,6)$. Let us begin by recalling some standard facts.
    \begin{lemma}\label{lem:class_projective_space_grothendieck_ring}
        We have:
        \begin{equation}
            [\PP^n] = 1+\LL+\LL^2+\cdots+\LL^n
        \end{equation}
    \end{lemma}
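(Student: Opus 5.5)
The plan is to argue by induction on $n$, using the standard stratification of projective space. Concretely, I would fix a hyperplane $\PP^{n-1}\subset\PP^n$, so that $\PP^n$ decomposes as the disjoint union of the closed subvariety $\PP^{n-1}$ and its open complement $\PP^n\setminus\PP^{n-1}\simeq \mathbb{A}^n$. The defining scissor relation of the Grothendieck ring, $[X]=[Z]+[X\setminus Z]$ for $Z\subset X$ closed, then gives
\begin{equation*}
[\PP^n]=[\PP^{n-1}]+[\mathbb{A}^n].
\end{equation*}

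Next I would identify $[\mathbb{A}^n]$ with $\LL^n$. This follows from the multiplicativity of the Grothendieck ring, $[X\times Y]=[X][Y]$, applied to $\mathbb{A}^n=\mathbb{A}^1\times\cdots\times\mathbb{A}^1$.

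The base case is $[\PP^0]=[\mathrm{pt}]=1$, since the class of a point is the unit of the ring. Induction then gives $[\PP^n]=1+\LL+\cdots+\LL^{n-1}+\LL^n$, as claimed.

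There is no genuine obstacle: the only inputs are the two defining relations of $K_0(\mathrm{Var}_\CC)$, namely the scissor relation and multiplicativity. The one point to state carefully is that the open complement of a hyperplane is isomorphic to affine space. In coordinates, it is the chart $\{x_0\neq 0\}$, which is exactly what justifies the decomposition used above.
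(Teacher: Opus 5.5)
Your proof is correct. The paper gives no proof of its own and simply recalls this lemma as a standard fact; your induction via the decomposition $\PP^n=\PP^{n-1}\sqcup\mathbb{A}^n$, together with $[\mathbb{A}^n]=\LL^n$, is exactly the standard argument behind it.
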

    \begin{lemma}\label{lem:class_blowup_grothendieck_ring}
        Consider a variety $Y$ and a subvariety $C\subset Y$ of codimension $c\geq 2$. Denote by $\Bl_C Y$, the blowup of $Y$ along $C$. We have:
        \begin{equation}
            [\Bl_C Y] = [Y] + [C](\LL+\cdots + \LL^{c-1}).
        \end{equation}
    \end{lemma}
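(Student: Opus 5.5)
The plan is to use the scissor relations in the Grothendieck ring $K_0(\mathrm{Var}_\CC)$. We need the hypothesis that $C$ is smooth and $Y$ is smooth along $C$, or at least that $C$ is locally complete intersection in a smooth variety; this is the setting in which the lemma will be applied.

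First decompose $\Bl_C Y$ into the open complement of the exceptional divisor and the exceptional divisor itself. The blowup morphism $\beta\colon\Bl_C Y\to Y$ is an isomorphism over $Y\setminus C$. Hence
\begin{equation*}
[\Bl_C Y]=[Y\setminus C]+[E]=[Y]-[C]+[E],
\end{equation*}
where $E=\beta^{-1}(C)$ and both equalities are scissor relations for the closed subvarieties $E\subset\Bl_C Y$ and $C\subset Y$.

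Next identify $E$. Since $C$ is smooth of codimension $c$ in the smooth (near $C$) variety $Y$, the exceptional divisor is the projectivized normal bundle: $E\simeq\PP(N_{C/Y})$, a $\PP^{c-1}$-bundle over $C$. I would then show $[E]=[C][\PP^{c-1}]$. The point is that a projective bundle coming from a vector bundle is Zariski locally trivial. So choose a finite open cover $C=\bigcup U_i$ trivializing $N_{C/Y}$ and stratify $C$ into locally closed pieces $S_j$, each contained in some $U_i$. Additivity gives
\begin{equation*}
[E]=\sum_j[S_j\times\PP^{c-1}]=\sum_j[S_j][\PP^{c-1}]=[C][\PP^{c-1}].
\end{equation*}

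Finally, apply \Cref{lem:class_projective_space_grothendieck_ring}:
\begin{equation*}
[\Bl_C Y]=[Y]-[C]+[C](1+\LL+\cdots+\LL^{c-1})=[Y]+[C](\LL+\cdots+\LL^{c-1}).
\end{equation*}

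The only real subtlety is the middle step, namely the multiplicativity $[E]=[C][\PP^{c-1}]$. This requires Zariski (not merely étale) local triviality, which holds because $E$ is the projectivization of a vector bundle. Without smoothness of $C$ or regularity of its embedding, $E$ need not be a projective bundle and the formula can fail.
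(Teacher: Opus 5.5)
Your proof is correct and is the standard argument: the scissor relations for $E\subset\Bl_C Y$ and $C\subset Y$, combined with $[E]=[C][\PP^{c-1}]$, which follows from Zariski-local triviality of $E\simeq\PP(N_{C/Y})$. The paper itself gives no proof and simply recalls the lemma as a standard fact.

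You are also right that the printed statement needs an extra hypothesis, namely that $C\subset Y$ be a regular embedding (for instance both smooth). Smoothness of $C$ alone is not enough: blowing up the vertex of the affine cone over a smooth plane cubic $C_0$ gives $[Y]-1+[C_0]\neq[Y]+\LL$. The hypothesis does hold in all three applications in the proof of \Cref{thm:L_equivalence}, namely $X\subset\mathbb{G}_L$, $\wt Y\subset\PP(V_6)$ and $C\subset Y$.
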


    \begin{theorem}\label{thm:L_equivalence}
        The following relation holds in the Grothendieck ring of varieties. In particular, $X$ and $Y$ are $\mathbb{L}$-equivalent.
        \begin{equation*}
            ([X] - [Y])\LL^3 = 0
        \end{equation*}
    \end{theorem}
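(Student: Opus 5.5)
We will compute the class of $Z$ in two ways, using the isomorphism $\Bl_{X}\mathbb{G}_L\simeq \Bl_{\wt Y}\P(V_6)$ from diagram (\ref{eq:blowup_diagram}), and then compare.

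\textbf{Step 1: the two expansions of $[Z]$.} On the Grassmannian side, $X\subset\mathbb{G}_L$ has codimension $2$, since $\dim\mathbb{G}_L=8-3=5$. Applying \Cref{lem:class_blowup_grothendieck_ring} gives
\begin{equation*}
[Z]=[\mathbb{G}_L]+[X]\LL.
\end{equation*}
On the projective side, $\wt Y\subset\P(V_6)$ also has codimension $2$. The same lemma, together with \Cref{lem:class_projective_space_grothendieck_ring}, gives
\begin{equation*}
[Z]=1+\LL+\cdots+\LL^5+[\wt Y]\LL.
\end{equation*}
By \Cref{prop:gorCY3}, $\wt Y=\Bl_C Y$ with $C$ an elliptic curve of codimension $2$ in $Y$, so $[\wt Y]=[Y]+[C]\LL$.

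\textbf{Step 2: the class of $\mathbb{G}_L$.} Here I would not compute $[\mathbb{G}_L]$ directly. Instead I would exploit the symmetry of the construction, since $C=Y\cap\P(L^\vee)$ is a plane cubic. The cleanest route is to consider the incidence $I=\{(x,\Lambda)\}\subset G(2,V_6)\times\P(L^\vee)$, with $x\in G(2,V_6)$ and $\Lambda\in\P(L^\vee)$ a hyperplane section containing $x$. This $I$ is a $\P^1$-bundle over $\mathbb{G}\setminus\mathbb{G}_L$ and a $\P^2$-bundle over $\mathbb{G}_L$. On the other side, it is a fibration over $\P^2$ whose fibers are hyperplane sections of $G(2,6)$.

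Hyperplane sections of $G(2,6)$ are given by 2-forms of rank $6$ (smooth) or rank $4$ (singular). The degeneracy locus in $\P(L^\vee)$ is the Pfaffian cubic curve, which is isomorphic to $C$; this is the key identification to check. Computing fiberwise, the motivic class of a smooth section is $[\mathbb{G}]-\LL^7$ type data, and the singular sections have a cone structure; the difference contributes $[C]$ times a power of $\LL$. Comparing the two fibrations of $I$ yields $[\mathbb{G}_L]$ in terms of $[\mathbb{G}]$, the powers of $\LL$, and $[C]$.

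\textbf{Step 3: comparison and main obstacle.} Subtracting the two expansions of $[Z]$ gives
\begin{equation*}
([X]-[Y])\LL=(\text{polynomial in }\LL)+[C](\text{terms})-[\mathbb{G}_L].
\end{equation*}
Substituting the expression for $[\mathbb{G}_L]$ from Step 2 should make the right side vanish up to multiplication by $\LL^2$. This accounts for the $\LL^3$ in the statement: cancellation can only be extracted after multiplying by the fiber classes, because $\LL$ is a zero divisor and we cannot divide by it.

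I expect Step 2 to be the main obstacle: the explicit motivic class of the singular hyperplane sections of $G(2,6)$, and the verification that the degeneracy curve in $\P(L^\vee)$ is isomorphic to $C$ rather than merely deformation equivalent to it. I would first try the stratification by the kernel $K=\ker\omega$ of a rank-$4$ form. A singular section is then stratified according to $\dim(U\cap K)$, and over the smooth locus away from $K$ it reduces to a $G(2,4)$-type quadric fibration. If instead the dependence on $C$ cancels between both sides, then $[\mathbb{G}_L]$ is a polynomial in $\LL$, and Step 3 becomes a direct check.

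Finally, $\LL$-equivalence follows immediately from the identity $([X]-[Y])\LL^3=0$.
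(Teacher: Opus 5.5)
Your Steps 1 and 3 are the paper's argument. The paper also writes $[Z]=[\mathbb{G}_L]+[X]\LL=[\PP^5]+[\wt Y]\LL$, substitutes $[\wt Y]=[Y]+[C]\LL$, and multiplies the resulting relation by $\LL^2$. For Step 2 the paper does not compute anything. It cites Laterveer's identity
\[
[\mathbb{G}_L]\LL^2+[\PP^1][\mathbb{G}]=[C]\LL^4+[\PP^2][\mathbb{G}\cap H],
\]
which is exactly the double count of your incidence $I$: the left side comes from the projection to $G(2,V_6)$, the right side from the projection to $\PP(L^\vee)$. It then inserts $[\mathbb{G}]=[\PP^4](1+\LL^2+\LL^4)$ and $[\mathbb{G}\cap H]=[\PP^3](1+\LL^2+\LL^4)$. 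Since $[\PP^2][\PP^3]-[\PP^1][\PP^4]=\LL^2+\LL^3$, this gives $[\PP^2][\mathbb{G}\cap H]-[\PP^1][\mathbb{G}]=\LL^2[\PP^5]$, hence $[\mathbb{G}_L]\LL^2=[C]\LL^4+[\PP^5]\LL^2$. That is precisely what your Step 3 needs. Note that you only ever obtain $\LL^2[\mathbb{G}_L]$, never $[\mathbb{G}_L]$ itself. Also, the $[C]$-term does not cancel on its own; it matches the term $[C]\LL^2\cdot\LL^2$ coming from Step 1. So your route is the paper's with the citation unpacked, which is fine, but it makes Step 2 depend on fiberwise classes you have not yet computed.

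One of those inputs, as you stated it, is wrong. A smooth section is $IG(2,6)$, whose class is $[\mathbb{G}]-\LL^4-\LL^6-\LL^8$, not ``$[\mathbb{G}]-\LL^7$''. Your kernel stratification of a rank-$4$ section does work, with three strata:
\begin{itemize}
\item $U=K$, of class $1$;
\item $\dim(U\cap K)=1$, of class $[\PP^1](\LL+\LL^2+\LL^3+\LL^4)$;
\item $U\cap K=0$, of class $\LL^4[\PP^3]$, since it is an $\mathbb{A}^4$-bundle over the Lagrangian Grassmannian $LG(2,4)$ of $V_6/K$.
\end{itemize}
These sum to the smooth class plus exactly $\LL^4$, which produces the $[C]\LL^4$ term.

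To multiply classes stratum by stratum, you also need two facts. First, a general net contains no rank-$2$ forms. Second, $I$ is Zariski-locally trivial over $\PP(L^\vee)\setminus C$ and over $C$; this holds because the strata are built from the kernel bundle and local symplectic frames.

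Finally, the curve identification you flag is not an obstacle. By \Cref{prop:explicitDescriptionOfY}, $C=Y\cap\PP(L^\vee)$ is cut out by the Pfaffian of $M$ restricted to $\PP(L^\vee)$. That restriction is $w\mapsto(l_{ipq}w^i)_{pq}$, the $2$-form of the net (see the proof of \Cref{lem:critPos}). So the degeneracy curve and $C$ are literally the same curve.
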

        \begin{proof}
            In particular, in light of Diagram \ref{eq:blowup_diagram}, by Lemma \ref{lem:class_blowup_grothendieck_ring} one has:
            \begin{equation}
                \begin{split}
                    [Z] & = [\mathbb{G}_L] + [X]\LL \\
                    & = [\PP^5] + [\wt Y]\LL.
                \end{split}
            \end{equation}
            However, since $\wt Y$ is also a blowup, we have $[\wt Y] = [Y] + [C]\LL$, and therefore:
            \begin{equation}\label{eq:main_relation_L_equivalence}
                0 = \bigl( [X]-[Y] \bigr)\LL + [\mathbb{G}_L] - [\PP^5] -[C]\LL^2.
            \end{equation}
            By \cite[Theorem 3.3]{laterveer} one has the following equality:
            \begin{equation}\label{eq:laterveers_equation}
                [\mathbb{G}_L]\LL^2 + [\PP^1][\mathbb{G}] = [C]\LL^4 + [\PP^2][\mathbb{G}\cap H]
            \end{equation}
            where $[\mathbb{G}\cap H]$ denotes the class of a hyperplane section in $G(2, 6)$.\\
            \\
            Here we use the fact that the two elliptic curves appearing in Equation \ref{eq:main_relation_L_equivalence} and Equation \ref{eq:laterveers_equation} are in fact the same, because they are defined by the same Pfaffian equations restricted to $\PP(L^\vee)$.
            Applying a result of \cite{martin_grothendieck_ring} we have
            \begin{equation}
                [\mathbb{G}] = [\PP^4](1+\LL^2+\LL^4)
            \end{equation}
            while \cite[Lemma 3.2]{laterveer} gives
            \begin{equation}
                [\mathbb{G}\cap H] = [\PP^3](1+\LL^2+\LL^4).
            \end{equation}
            plugging the last two relations into Equation \ref{eq:laterveers_equation} and rearranging the powers of $\LL$ by means of Lemma \ref{lem:class_projective_space_grothendieck_ring} yields:
            \begin{equation}
                [\mathbb{G}_L]\LL^2 = [C]\LL^4 + [\PP^5]\LL^2.
            \end{equation}
            Then, the claim follows by multiplying both sides of Equation \ref{eq:main_relation_L_equivalence} by $\LL^2$ and using the last equation to rewrite $[\mathbb{G}_L]\LL^2$.
        \end{proof}

\subsubsection{Non-Birationality}
The above stated geometric relationships are non--trivial, in the light of the fact that $X$ and $Y$ are non-birational. We prove this below.
\begin{proposition}
    $X$ and $Y$ are not birational.
\end{proposition}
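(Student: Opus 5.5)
Both $X$ and $Y$ have Picard rank $1$ (their Hodge diamonds give $h^{1,1}=1$, by \Cref{prop:hodge_numbers}). So a birational map between them would be an isomorphism. Indeed, birational Calabi--Yau threefolds are connected by flops, and a flop needs a small contraction, which cannot exist when the Picard rank is $1$. Alternatively, a birational map between minimal models is an isomorphism in codimension one, so it identifies the Picard groups and the ample cones; an isomorphism in codimension one sending an ample divisor to an ample divisor is a genuine isomorphism. The plan is therefore to show that $X\not\simeq Y$ by exhibiting a numerical invariant that differs.

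The natural invariant is the degree of the ample generator of the Picard group, i.e.\ $H^3$ for the generator $H$. For $X$ we would argue that $\xi|_X$ generates $\operatorname{Pic}(X)$, giving $\xi^3=33$. This should follow from Lefschetz-type arguments for zero loci of ample bundle sections, or simply from the Picard lattice computations in \cite{ionue_ito_miura_CY3s}. For $Y$, the hyperplane class $H|_Y$ has $H^3=21$ by \Cref{prop:gorCY3}.

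The key point is to check that $H|_Y$ is primitive; otherwise $21 = k^3 d$ would force $k=1$ anyway, since $21$ is cube-free. The same argument applies to $33$, which is also cube-free. So in both cases the hyperplane class is a generator (up to torsion) and the generator degree is exactly $21$, respectively $33$. Since $33\neq 21$, the threefolds are not isomorphic and hence not birational.

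The only potentially delicate step is the reduction from birational to isomorphic. I would cite the standard facts (Kawamata: birational Calabi--Yau threefolds are connected by flops, which do not exist when $\rho=1$) rather than reprove them. Alternatively, we could compare $c_2\cdot H$, which is also preserved under isomorphism, but the degree comparison suffices.
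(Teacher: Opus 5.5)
Your proposal is correct and follows the paper's argument. The paper also reduces birationality to isomorphism using Picard rank one, and then compares the degrees $33$ and $21$. It phrases that last step as $33/21$ not being a rational cube, which is equivalent to your cube-freeness argument pinning down the degree of the ample generator on each side.
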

\begin{proof}
    Recall that birational Calabi--Yau threefolds having Picard rank one are isomorphic. Now, if $X$ and $Y$ were isomorphic, then the ratio of the cube of any ample class on $X$ by the cube of any ample class on $Y$ must be a rational cube. This is clearly not the case, by the degree computations of $X$ and $Y$. Therefore, we conclude that $X$ and $Y$ cannot be birational.
\end{proof}

\subsection{Semi-Orthogonal Decompositions}
    One can use Orlov's blowup formula \cite{orlovblowup} with respect to the two blowup structures of Diagram \ref{eq:blowup_diagram}, obtaining the following semi-orthogonal decompositions:
    \begin{equation}\label{eq:short_sods}
        \begin{split}
            D(Z) & = \langle R\alpha_*\bar p^*D(\wt Y) \otimes\Oc(E_Y), Lp^*D(\PP(V_6^\vee)) \rangle \\
            & = \langle R\beta_*\bar q^*D(X) \otimes\Oc(E_X), Lq^*D(\mathbb{G}_L) \rangle
        \end{split}
    \end{equation}
    These decompositions can be further refined. First, recall the Beilinson decomposition \cite{beilinson}:
    \begin{equation}\label{eq:sod_P5}
        D(\PP(V_6)) = \langle \Oc, \Oc(h), \dots, \Oc(4h), \Oc(5h)\rangle.
    \end{equation}
    Moreover, a semi-orthogonal decomposition for $\mathbb{G}_L$ can be found by means of homological projective duality (see \cite[Corollary 10.9]{kuznetsov_grassmannians_of_lines} for details):
    \begin{equation}\label{eq:sod_G_L}
        D(\mathbb{G}_L) = \langle \Cc, \Oc, \Uc^\vee, \Oc(\xi), \Uc^\vee(\xi), \Oc(2\xi), \Uc^\vee(2\xi)\rangle.
    \end{equation}
    Here $\Cc$ denotes a subcategory which is equivalent to the derived category of an elliptic curve. By plugging \eqref{eq:sod_P5} and \eqref{eq:sod_G_L} into \eqref{eq:short_sods} we find two different decompositions of $D(Z)$, each consisting on six vector bundles, a Calabi--Yau type category of dimension three, and a Calabi--Yau type category of dimension one. The latter, on the $\PP(V_6)$-side, can be achieved by means of the structure of $\wt Y$ as a blowup of $Y$ in an elliptic curve. Hence, it is reasonable to expect that the subcategory $\langle R\beta_*\bar q^*D(X), \Cc \rangle$ can be proven to be equivalent to $D(\wt Y)$ via a sequence of mutations.

    This approach, in principle, could be generalized and refined in several directions. In fact, proving that $\Phi(\Cc)$ is semi-orthogonal to $D(Y)$ would produce a new proof of Theorem \ref{thm:thmB} which does not rely on matrix factorization categories. However, this proved to be technically involved.

    On the other hand, a similar pattern of mutations can be realized in greater generality, by letting the dimensions of $L$ and $V_6$ vary, thus describing the derived categories of certain sub-varieties of general Grassmannians of lines. This problem is related to the construction of the homological projective dual of a  categorical crepant resolution of singularities of a twelve-dimensional, singular subvariety of $\PP^{20}$, and it is the subject of ongoing work \cite{our_hpd_project}.
    
\section{The Mirror Moduli}\label{sec:mirrorEquivalence}

Here, we will use Batyrev's conjectural mirror construction, following \cite{batyrevGKZ} and \cite{batyrevToricDegen}, to understand and compare the mirror moduli of the Calabi-Yau threefold families $\X$ and $\Y$ constructed in \Cref{sec:geometry}. Given a Calabi-Yau variety $V$ that is a complete intersection of Cartier divisors in a smooth Fano variety $W$ and given that there exists a "small toric degeneration" \cite[Definition 3.1]{batyrevToricDegen} of $W$, Batyrev constructs a certain power series, which is expected to serve as the principal period of the complex structure moduli space of the mirror manifold. Now by the mirror map, this moduli space may be re-described as a certain space associated to $V$, called the \textit{stringy Kahler moduli space} (SKMS). Roughly speaking, this space is expected to be some "reasonable extension" of the complexified Kahler moduli space of $V$, by including certain physically motivated, but non-birational models into the moduli space. From this perspective, Batyrev's method can be interpreted as a conjectural way to understand  the quantum variation of Hodge structures on the SKMS of a Calabi-Yau variety (which, by definition, encode the Gromov-Witten invariants of the variety), independent of the specificity of the actual construction of its mirror. We re-write this conjecture below.

\begin{remark}
    In the above paragraph, we intend not to precisely define the SKMS, but rather to give a heuristic overview of it. As of now, there is no general intrinsic (i.e. mirror independent) definition of the SKMS, but only various situation-dependent proposals floating around. See for example, \cite{skmsHLeistnerSam}, \cite{skmsDonovanWemyss}.
\end{remark}
\begin{conjecture}[Batyrev \cite{batyrevToricDegen}]\label[conjecture]{conj:Batyrev}
Consider a Calabi-Yau variety $\til V$ that is a complete intersection of Cartier divisors $\{\til V_i\}_i$ in a smooth Fano variety $\til W$. Let 
\begin{itemize}
    \item $W$ be a small toric degeneration of $\til W$ with fan $\Sigma$.
    \item $V\subset W$ be the simultaneous degeneration of $\til V\subset \til W$
    \item $\{V_i \subset W\}_i$ be the simultaneous degenerations of $\{\til V_i\subset \til W\}_i$. Then, $V\subset W$ is the complete intersection of the Cartier divisors $\{V_i\}_i$ in $W$.
    \item $\{W_j\}_j$ be the torus-invariant Weil divisors corresponding to the rays of the fan describing $W$ as a toric variety.
\end{itemize} 

Denote by $L(\Sigma)$, the monoid in $\Z_{\geq0}^{|\Sigma(1)|}$ of non-negative integral relations among the primitive ray generators of $\Sigma$. 
And consider the following pairing between elements of $L(\Sigma)$ and torus invariant Weil divisors in $W$.
\begin{equation}
    \langle \ul l, \sum_j a_j W_j\rangle \equiv \sum_j l_j a_j
\end{equation}

Then, the power series $\Phi_0(\ul t)$ defined below is the principal period of the Hodge-theoretic B-side mirror moduli of $S$. We will call it the \textit{mirror principal period}.

\begin{equation}\label{eqn:mirrorPrincipalPeriod}
\Phi_0(\ul t):=\sum_{\ul l\in L(\Sigma)}\frac{\prod_i \langle \ul l, V_i\rangle !}{\prod_j \langle \ul l, W_j\rangle  !}\ul t^{\ul l}
\end{equation}
where $\ul t\in A(\Sigma)$ with $A(\Sigma)$ being a certain sub-variety of $\C^{|\Sigma(1)|}$, defined in \cite[Definition 4.1]{batyrevToricDegen}.
\end{conjecture}

\begin{remark}
    By the definition of small toric degenerations, it follows that $W$ must be a Gorenstein toric Fano variety with at worst terminal singularities. Hence, it must be singular in codimension at least three and moreover, must admit at worst compound $A_1$-singularities in codimension three. So, it follows that $V$ must have at worst nodal singularities, in which case it is called a \textit{Calabi-Yau conifold}.
\end{remark}

Let us focus on the case where $V$ is a threefold of Picard rank 1, which is our case of interest. This case is particularly simple, as here we expect the mirror principal period to be a power series in a single complex variable. To get such a description for $\Phi_0(\ul t)$, we first fix a minimal generating set $\{f_1,...,f_k\} \subset L(\Sigma)$, that allows us to express any $\ul l\in L(\Sigma)$ as a non-negative integral combination of vectors $f_i$ and hence any monomial $\ul t^{\ul l}$ as a product of monomials $\ul t^{f_i}$. Suppose it turns out that $\ul t^{f_i}|_{A(\Sigma)}=z^{a_i}$ for some function $z\in \O(A(\Sigma))$ and $a_i\in\Z_{>0}$. Then, we may express $\Phi_0(\ul t)$ as a power series $\Phi_0(z)$, in the single variable $z$ which we will view as living in $\C\P^1$.

Now consider the ring $\C[z,\Theta]$ of "log-differential operators" on the power series ring $\C[\![z]\!]$, where $\Theta:=z\frac{\partial}{\partial z}$. This ring is non-commutative as we have a non-trivial commutator $[\Theta, z]=z$. This commutation relation, allows one to write any $D\in \C[z,\Theta]$ in the form $D=z^m P_0(\Theta)+z^{m-1}P_1(\Theta)+...+P_m(\Theta)$, where $P_k$ are some polynomials. It is well known (see for example \cite{batyrevGKZ}) that if the power series $\Phi_0(z)$ is indeed the principal period for a pencil of threefolds having $h^{1,2}=1$, then it must uniquely determine (up to a $\C[z]$ polynomial factor) such a operator $D$ satisfying the following conditions.
\begin{itemize}

    \item $D$ has $\Theta$-degree 4. 
    \item $D$ has the unique indicial root 0. That is, $P_m(\Theta)$ is proportional to $\Theta^4$
    
    \item $D$ annihilates $\Phi_0(z)$. That is, $D\Phi_0(z)=0$. 
    
\end{itemize}
So, \Cref{conj:Batyrev} implies that if such an operator $D$ exists, then it must be the Picard-Fuchs operator that describes variation of middle Hodge structure on the mirror moduli. Whenever it exists, we will call it the \textit{mirror Picard-Fuchs operator}. In this point of view, the mirror moduli (or intrinsically speaking, the SKMS) of $V$ must look like $\C\P^1_z$ minus the singularities of $D$ (that is, points around which $D$ has non-trivial monodromy). Next, we aim to compute and compare the mirror Picard-Fuchs operators for the Calabi-Yau pair discussed in \Cref{sec:geometry}.

\subsection{Computations for $\X$} A general member $X$ of the family $\X$ (see \Cref{sec:X} or \cite[Table 1, no. 7]{ionue_ito_miura_CY3s}) may be described as a linear section of a certain Schubert variety $\Sigma$ in the Cayley plane $O\P^2$, by 9 general hyperplanes. Later, in \cite{miuraMirrorSymmetry}, Miura constructed a small toric degeneration of $\Sigma$. Then, following the procedure laid out in \Cref{conj:Batyrev}, he computed the mirror principal period and then the mirror Picard-Fuchs operator of $X$, which we re-write below\footnote{At the time of writing this paper, this operator is the one labeled as "AESZ no. 198" in the database \cite{database} of Calabi-Yau type operators}. 
\begin{equation}
    \begin{aligned}
        D_X &= 11^{2} \Theta^4+77 z\left(130\Theta^4+266\Theta^3+210\Theta^2+77\Theta+11\right)\\
        &-z^{2}\left(32126\Theta^4 +89990\Theta^3+103725\Theta^2+55253\Theta+11198\right)\\ &+z^{3}\left(28723\Theta^4+74184\Theta^3+63474\Theta^2+20625\Theta+1716\right)\\
        &-7z^{4}\left(1135\Theta^4+2336\Theta^3+1881\Theta^2+713\Theta+110\right)+7^{2}z^{5}\left((\Theta+1)^4\right)
    \end{aligned}
\end{equation}

\subsection{Computations for $\Y$}\label{sec:toricComputationsForY}
A general member $Y$ of the family $\Y$ (see \eqref{def:Y}) may be described as a linear section of the following six dimensional smooth variety (see \Cref{rem:23} for smoothness) $\til W$ in $\P^{11}$, by three general hyperplanes. 
\begin{equation}
    \til W:=V(\til M \cdot \til v, \Pf(\til M)) \subset \P^{11}
\end{equation}
where $\til M$ is a general $6 \times 6$ skew-symmetric matrix of linear forms and $\til v$ is a general $6 \times 1$ column of linear forms. Note that by Grothendieck-Lefchetz's theorem, $\til W$ must have Picard rank 1, as the same is true for $Y$. Now consider the following specializations of the general matrices $\til M$ and $\til v$.
\begin{equation}
 M := \begin{bmatrix}
0 & x_0 & 0 & 0 & 0 & -x_5 \\
-x_0 & 0 & x_1 & 0 & 0 & 0 \\
0 & -x_1 & 0 & x_2 & 0 & 0 \\
0 & 0 & -x_2 & 0 & x_3 & 0 \\
0 & 0 & 0 & -x_3 & 0 & x_4 \\
x_5 & 0 & 0 & 0 & -x_4 & 0
\end{bmatrix}, \quad
 v := \begin{bmatrix}x_6 \\x_7 \\x_8 \\x_9 \\x_{10} \\x_{11}
\end{bmatrix}
\end{equation}
Using them we may construct the following specialization of $\til W$, which turns out to be a toric variety as its homogenous ideal is prime and generated by binomials.
\begin{equation}
\begin{aligned}
     W &:= V(M \cdot v, \Pf(M)) \\
      &  \phantom{||}
          \begin{aligned}
              =V(& x_3 x_9 - x_4 x_{11}, \\
              & x_2 x_8 - x_3 x_{10}, \\
              & x_1 x_7 - x_2 x_9, \\
              & x_0 x_7 - x_5 x_{11}, \\
              & x_5 x_6 - x_4 x_{10}, \\
              & x_0 x_6 - x_1 x_8, \\
              & x_0 x_2 x_4 - x_1 x_3 x_5)
          \end{aligned}
\end{aligned}
\end{equation}  
In \Cref{app:toricCode}, aided by a Macaulay2 script, we perform several computations on the above toric degeneration to obtain the mirror Picard-Fuchs equation of $Y$. We will summarize our findings below. First, we verify that $ W$ is indeed a flat degeneration of $\til W$ by showing that its Hilbert polynomial matches with that of $\til W$. Then, we find the following collection of torus characters (of $(\C^*)^7$) that describe a monomial parametrization of $ W$.
\begin{equation}
\begin{array}{cc}
     \begin{aligned}
        v_0 &= (1,0,-1,0,0,0,1)\\
        v_1 &= (1,1,-1,-1,0,0,1)\\
        v_2 &= (1,1,0,-1,-1,0,1)\\
        v_3 &= (1,1,0,0,-1,-1,1)\\
        v_4 &= (1,1,0,0,0,-1,0)\\
        v_5 &= (1,0,0,0,0,0,0)
    \end{aligned} &
    \begin{aligned}
        v_6 &= (0,1,0,0,0,0,0)\\
        v_7 &= (0,0,1,0,0,0,0)\\
        v_8 &= (0,0,0,1,0,0,0)\\
        v_9 &= (0,0,0,0,1,0,0)\\
        v_{10} &= (0,0,0,0,0,1,0)\\
        v_{11} &= (0,0,0,0,0,0,1)
    \end{aligned}
\end{array}
\end{equation}

That is, $ W = $ closure of image of the following map
\begin{equation}\label{eqn:parametrization}
\begin{aligned}
    (\C^*)^7 &\longrightarrow \P^{11}\\
    \ul t &\longmapsto  [\ul t^{v_0}:\ul t^{v_1}:..:\ul t^{v_{11}}]
\end{aligned}
\end{equation}
Let $\Sigma$ be the normal fan of the polytope $P$ obtained by taking the convex hull of the above set of characters. Then, \eqref{eqn:parametrization} is equivalent to saying $ W \simeq X_\Sigma\equiv$ the normal toric variety associated to the fan $\Sigma$. Then, we inspect the properties of $\Sigma$ and conclude that $ W$ is a terminal Gorenstein toric Fano variety having Picard rank 1 and hence is a small toric degeneration of the smooth Fano variety $W$.

Our fan $\Sigma$ has 18 rays which correspond to 18 torus-invariant Weil divisors $ W_0,.., W_{17}\subset  W$. We know $-K_{ W} =  W_0+...+ W_{17}$. Let us partition this summation as follows
\begin{equation}
\begin{aligned}
    -K_{ W} &= H_1+H_2+H_3,\\
    \text{where } H_1 &\equiv  W_5+ W_6+ W_7+ W_8+ W_9,\\
    H_2 &\equiv  W_0+ W_3+ W_{11}+ W_{13}+ W_{16},\\
    H_3 &\equiv  W_1+ W_2+ W_4+ W_{10}+ W_{12}+ W_{14}+ W_{15}+ W_{17}
\end{aligned}
\end{equation}
Turns out, $H_1,H_2,H_3$ are divisors in the hyperplane class of $ W\subset \P^{11}$. So, seeing them as degenerations of three hyperplane sections of $W$, that cut-out a Calabi-Yau $Y\subset W$ from the family $\Y$, we can realize $ Y \equiv  W\cap H_1\cap H_2\cap H_3$ as a flat degeneration of that $Y$. 

 Also, $|\Sigma(1)|=18$ means $L(\Sigma)$ and $A(\Sigma)$ live in $\Z_{\geq0}^{18}$ and $\C^{18}$, respectively. We find that $L(\Sigma)$ has 50 minimal generators $f_1,...,f_{50}$ whose corresponding Laurent monomials, restricted to $A(\Sigma)$, look like $\ul t^{f_i}|_{A(\Sigma)}=z^{\langle f_i,H\rangle}$, where $z$ is a certain function on $A(\Sigma)$ and $H$ is any divisor in the hyperplane class. Therefore, the data $(\Sigma, \{H_1,H_2,H_3\})$ describe the following power series

\begin{equation}
\Phi_0(\ul t)=\sum_{\ul l\in L(\Sigma)}\frac{\langle \ul l, H_1\rangle ! \langle \ul l, H_2\rangle ! \langle \ul l, H_3\rangle !}{\prod_j \langle \ul l,  W_j\rangle  !}\ul t^{\ul l} = \sum_{n=0}^\infty\sum_{\substack{\ul l\in L(\Sigma)\\ \langle \ul l, H\rangle=n}}\frac{(n!)^3}{\prod_j l_j  !} z^n = \Phi_0(z)
\end{equation}

We find that the lattice spanned by $L(\Sigma)$ has rank 12, which allows us to re-write above summation in 12 variables only, which we call $\ul m\equiv(m_0,...,m_{11})$. In these variables, $L(\Sigma)$ is given by inequalities $m_0,...,m_{11},f_1(\ul m),...,f_6(\ul m)\geq 0$, where $f_i(\ul m)$ are certain integral linear forms in $\ul m$, defined below. And the condition $\langle \ul l, H\rangle=n$ becomes $m_4+...+m_{11}=n$. We finally get
\begin{equation}\label{eqn:mirrorPrincipalPeriodForY}
\begin{aligned}
\Phi_0(z) &=\sum_{n=0}^\infty\sum_{\substack{\ul m \in \Z^{12}_{\geq0}\\f_i(\ul m)\geq 0 \ \forall i\\ m_4+...+m_{11} =n}}\frac{(n!)^{3}}{m_0!...m_{11}!f_1(\ul m)!...f_6(\ul m)!} z^n \\
\text{where }
f_1(\ul m) &\equiv -m_1-m_2+m_4+m_5+m_6,\\
f_2(\ul m) &\equiv m_0-m_1-m_2-m_3+m_4+m_5+2m_6+m_8+m_9+m_{10}+2m_{11},\\
f_3(\ul m) &\equiv -m_1-m_3+m_5+m_6+m_7+m_8+m_{10}+m_{11},\\
f_4(\ul m) &\equiv -m_3+m_7+m_8+m_9+m_{10}+m_{11},\\
f_5(\ul m) &\equiv -m_0+m_1+m_3-m_6-m_8-m_{11},\\
f_6(\ul m) &\equiv -m_0+m_1+m_2+m_3-m_5-m_6-m_{10}-m_{11}
\end{aligned}
\end{equation}

\begin{proposition}\label[proposition]{prop:PicardFuchs}
    The following differential operator\footnote{At the time of writing this paper, this operator is the one labeled as "AESZ no. 193" in the database \cite{database} of Calabi-Yau type operators} annihilates $\Phi_0(z)$ and hence is the mirror Picard-Fuchs operator for $\Y$. 
\begin{equation}
\begin{aligned}
D_Y &= 7^{2} \Theta^4-7 z\left(1135\Theta^4+2204\Theta^3+1683\Theta^2+581\Theta+77\right)\\
&+z^{2}\left(28723\Theta^4+40708\Theta^3+13260\Theta^2-1337\Theta-896\right)\\
&-z^{3}\left(32126\Theta^4+38514\Theta^3+26511\Theta^2+10731\Theta+1806\right)\\
&+77z^{4}\left(130\Theta^4+254\Theta^3+192\Theta^2+65\Theta+8\right)+11^{2}z^{5}\left((\Theta+1)^4\right)
\end{aligned}
\end{equation}
\end{proposition}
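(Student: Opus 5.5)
We would prove that $D_Y\Phi_0(z)=0$ by a direct coefficient computation, in the same spirit as the other computer-assisted verifications in the paper. Write $\Phi_0(z)=\sum_{n\ge 0}a_n z^n$, with $a_n$ the finite lattice sum in \eqref{eqn:mirrorPrincipalPeriodForY}. Expanding $D_Y=\sum_{k=0}^{5}z^kP_k(\Theta)$ and using $\Theta z^n=nz^n$ turns the equation $D_Y\Phi_0=0$ into the recurrence
\begin{equation*}
\sum_{k=0}^{5}P_k(n-k)\,a_{n-k}=0\qquad\text{for all } n\ge 0 ,
\end{equation*}
with the convention $a_m=0$ for $m<0$.

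The case $n=0$ is automatic, since $P_0(\Theta)=7^2\Theta^4$ vanishes at $0$. This is also the unique-indicial-root condition: the only solution of $D_Y$ of the form $1+O(z)$ is the holomorphic one. For $n\ge 1$ we have $P_0(n)=49n^4\neq 0$, so the recurrence determines $a_n$ from $a_0,\dots,a_{n-1}$. Hence there is at most one power series solution with $a_0=1$, and it suffices to show that our $a_n$ satisfy the recurrence.

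The first concrete step is to compute $a_0,\dots,a_N$ for a moderately large $N$ (say $N\sim 20$ to $30$) by enumerating the polytope slices $\{\ul m\in\Z^{12}_{\ge0}: f_i(\ul m)\ge 0,\ m_4+\dots+m_{11}=n\}$. This should be done in Macaulay2 or Python, as in \Cref{app:toricCode}. As sanity checks:
\begin{itemize}
\item $a_0=1$;
\item $a_1$ should equal $\langle \ul l,H\rangle$-type counts consistent with the degree $21$ of $Y$;
\item the recurrence must hold for every $n\le N$.
\end{itemize}
Independently, we would find the operator by ansatz: take an unknown operator of $\Theta$-degree $4$ and $z$-degree $5$, solve the linear system obtained from the first coefficients, and check that the solution is unique up to scalar and equals $D_Y$. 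Matching far more coefficients than the $\approx 25$ unknowns gives strong evidence.

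The main obstacle is turning this finite check into a proof for all $n$. We would attempt a creative-telescoping (Zeilberger/WZ) argument for the multisum. The summand $F(n,\ul m)=(n!)^3/\bigl(\prod m_j!\prod f_i(\ul m)!\bigr)$ is proper hypergeometric. The goal is to exhibit rational certificates $R_j$ such that $\sum_k P_k(n-k)F(n-k,\ul m)$ equals a sum of $\ul m$-differences $\Delta_j(R_jF)$; summing over the lattice then kills the boundary terms, because the factorials of negative arguments vanish. Since $12$ summation variables are too many for a direct certificate, we would first reduce the number of variables by performing the inner sums in closed form where possible, for example via Vandermonde-type identities in the variables that occur in only one or two $f_i$.

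If this reduction fails, we would fall back on the standard rigorous alternative. We would show that $\Phi_0$ is a $D$-finite period, so that some operator of bounded order and degree annihilates it; the bound on order and degree comes from the GKZ system attached to $(\Sigma,H_1,H_2,H_3)$, which annihilates $\Phi_0$ by \cite{batyrevGKZ}. Matching sufficiently many coefficients then forces that operator to be (a left multiple of) $D_Y$. Making the degree bound explicit is where I expect most of the effort to go.

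The identification of $D_Y$ as the \emph{mirror Picard--Fuchs operator} of $\Y$ is then a formal consequence of \Cref{conj:Batyrev} and the uniqueness discussion preceding this proposition. One further checks that $D_Y$ is the AESZ no.~193 operator: its exponents at $z=0$ are $(0,0,0,0)$, and at $\infty$ they are $(1,1,1,1)$ after $z\mapsto 1/z$. Notably, under $z\mapsto 1/z$ it is exactly related to $D_X$: the coefficient lists visibly reverse. This supports the two-MUM-point picture of \Cref{conj:conjectureC}.
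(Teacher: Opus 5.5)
Your reduction of $D_Y\Phi_0=0$ to the recurrence $\sum_k P_k(n-k)a_{n-k}=0$ is correct. Your ansatz step (an operator of $\Theta$-degree $4$ and $z$-degree $5$ fitted to the first coefficients) is exactly the last step of the paper's proof. The gap is the one you flag yourself: nothing in the proposal turns the finite check into a proof. The creative-telescoping route is only announced, with no reduction of the $12$-fold sum and no certificate. The fallback does not work as stated. Suppose all you know is that \emph{some} operator of order $\le r$ and $z$-degree $\le d$ kills $\Phi_0$, with $r>4$ (as one should expect from a GKZ-derived bound). Then coefficient matching can at best show that this operator has the form $QD_Y$ with $Q$ of order $\le r-4$. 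When $Q$ has positive order, $QD_Y\Phi_0=0$ only gives $Q(D_Y\Phi_0)=0$. For instance, if $D_Y\Phi_0=c\,z^N$ with $N$ beyond your checks, then $(\Theta-N)D_Y$ annihilates $\Phi_0$, has order $5$ and $z$-degree $5$, and no coefficient below $z^N$ sees the discrepancy. To close the argument you need either an explicit annihilator, so that the indicial roots of the cofactor are known, or an a priori annihilator of order $4$. The paper uses one of exactly the shape of $D_Y$: order $4$, $z$-degree $\le 5$, $z^0$-coefficient proportional to $\Theta^4$. Such operators form a $25$-parameter family. The first $30$ coefficients of $\Phi_0$ give a rank-$25$ linear system, so its unique solution, namely $D_Y$, must be that annihilator.

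Supplying such an annihilator is the idea you are missing. The paper writes $a_n$ as a sum of products of three multinomials and applies the multinomial/constant-term identity to get $a_n=\mathrm{CT}\bigl((G_1G_2G_3)^n\bigr)$. Here the $G_i$ are Laurent polynomials in only four variables: the six variables of the $F_i$ collapse because all exponents of $F_1F_2F_3$ satisfy two linear relations. The Newton polytope of $G_1G_2G_3$ is reflexive. By Przyjalkowski's result, $\Phi_0$ is then the principal period of the pencil $\{G_1G_2G_3=t\}\subset(\mathbb{C}^*)^4$, so it is killed by that pencil's Picard--Fuchs operator. The $z$-degree of this operator is bounded by $5$, the degree of the discriminant of a compactified pencil. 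Order $4$ with unique indicial root $0$ is imposed, which places the operator in the $25$-parameter family above. These last inputs are standard expectations rather than theorems; for instance, a discriminant bound does not see apparent singularities. So a completed certificate along your lines would actually be stronger than the paper's argument. But the four-variable form $\Phi_0=\mathrm{CT}\bigl(1/(1-zG_1G_2G_3)\bigr)$ is also the natural input for such a certificate (telescoping or Griffiths--Dwork reduction), and far more tractable than the $12$-fold sum. Two minor points. Your sanity check on $a_1$ is off: the recurrence gives $49a_1=539$, i.e.\ $a_1=11=\mathrm{CT}(G_1G_2G_3)$, with no relation to $\deg Y=21$. And the relation with $D_X$ is $D_X(z,\Theta)=z^5D_Y(1/z,-\Theta-1)$, not a literal reversal of coefficient lists.
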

\begin{proof}
The proof is somewhat technical and requires the aid of Macaulay2 code. So, will simply summarize it here and leave the details to \Cref{app:proofOfPicardFuchs}. We find that $\Phi_0(z)$ can be written as the following
\begin{equation}
    \Phi_0(z)=\sum_{n=0}^\infty \text{CT}((G_1G_2G_3)^n)z^n
\end{equation}
where CT stands for "constant term" and $G_i$ are the following Laurent polynomials in $\C[u_1^{\pm1},...,u_4^{\pm1}]$.
\begin{equation}
\begin{aligned}
G_1 &= \frac{u_1 u_2 u_3}{u_4} + u_1 u_2 + \frac{u_2 u_3 }{u_4 } + u_1 + 1, \\
G_2 &= \frac{u_4 }{u_2} + u_2 + u_3 + u_4 + 1, \\
G_3 &= \frac{1}{u_1 u_2} + \frac{1}{u_1 u_2 u_3} + \frac{u_4 }{u_1 u_2^2 u_3} + \frac{1}{u_3} + \frac{u_4}{u_2 u_3} + \frac{1}{u_2} + \frac{1}{u_2 u_3} + \frac{u_4 }{u_2^2 u_3}.
\end{aligned}
\end{equation}
So, by \cite[Proposition 7]{LGFano}, we have that $\Phi_0(z)$ is the principal period of the pencil $\{V(G_1G_2G_3-t)\subset(\C^*)^4)\}_{t\in \C}$ of threefolds. In other words, the Picard-Fuchs operator $D\in\C[z,\Theta]$ of this pencil annihilates $\Phi_0(z)$. Then, by an explicit computation, we verify that $D$ is in fact equal to the given operator $D_Y$. 
\end{proof}
\begin{remark}
    By the above proof, it is reasonable to expect that the mirror family to $\Y$ is (fiber-wise) birational to the pencil $\{V(G_1G_2G_3-t)\subset(\C^*)^4)\}_{t\in \C}$.
\end{remark}

\subsection{Comparing the computations} 

First we note that, $D_X$ and $D_Y$ have the same discriminant locus $Z$, seen up to the rational coordinate change $z\mapsto 1/z$. This can be verified by direct computation or by referring to the discriminant locus computations on the entries of $D_X$ and $D_Y$ in the database \cite{database}. Explicitly, it is given as follows.
\begin{equation}
Z=V((z^3-159z^2+84z+1)(-11+7z)^2)
\end{equation}

Also, under this coordinate change, we have $\Theta\mapsto-\Theta$ and hence $D_Y\mapsto D_Y(1/z,-\Theta)$. To this transformed operator, we left compose $z^4$ and right compose $z$ to get a polynomial operator having indicial root zero. Finally, we arrive at $z^4\circ D_Y(1/z,-\Theta)\circ z$, which is equal to $z^5D_Y(1/z,-\Theta-1)\in\C[z,\Theta]$, due to the commutation relation $[\Theta,z]=z$. Turns out, this series of manipulations to $D_Y$ exactly retrieves the operator $D_X$. That is, the following identity holds (it is straightforward to verify by direct computation).
\begin{equation}
    D_X(z,\Theta)=z^5D_Y(1/z,-\Theta-1)
\end{equation}
Above computations indicate that $D_X$ and $D_Y$ induce isomorphic local systems on $\C\P^1\setminus Z$. In other words, they can be regarded as Picard-Fuchs operators for the same one parameter moduli space, considered locally around $0$ and $\infty$ respectively. In the light of this realization and the monodromy computations for either of these operators (see \cite[Section 5.2.2]{miuraMirrorSymmetry} or their entries in database \cite{database}), it is reasonable to make the following conjecture. Here, our definition of a Hodge-theoretic mirror is as per \cite[Conjecture 17.1]{calabiYauBook}.
\begin{conjecture}[Double Mirror]\label[conjecture]{conj:doubleMirror}
    There exists a family $\mathcal{Z}$ of Calabi-Yau threefolds having $h^{1,1}=52$ and $h^{1,2}=1$ such that 
    \begin{itemize}

        \item $\mathcal{Z}$ is fiber-wise birational to the pencil $\{V(G_1G_2G_3-t)\subset(\C^*)^4)\}_{t\in \C}$ of threefolds, where $G_i$ are the Laurent polynomials appearing in the proof of \Cref{prop:PicardFuchs}.
        \item The variation of middle Hodge structure on $\mathcal{Z}$ has two maximal unipotent monodromy (MUM) points and three conifold points.
        
        \item $\X$ and $\Y$ are the Hodge-theoretic A-side mirrors to $\mathcal{Z}$ at those two MUM points, respectively.
    \end{itemize}
    As a consequence, general members $X\in \X$ and $Y\in \Y$ must have the same quantum cohomology and the same SKMS (for any "reasonable" definition of this space). Furthermore, the latter must look like \Cref{fig:SKMS}\footnote{This figure is a slightly modified version of \cite[Figure 1]{KapRam2025}. The topology of the SKMS for the case considered there is identical to that of ours.}.
\end{conjecture}

\begin{figure} 
\begin{tikzpicture}
  \def\c{3}
  \def\r{0.2}
  \def\p{0.1}
  \def\m{6}

  \draw[black!10, line width=0.5pt]
    plot[domain=0:180, smooth, variable=\t]
      ({\c*cos(\t)}, {\c*\p*sin(\t)});
  \draw[black!20, line width=0.3pt]
    plot[domain=180:360, smooth, variable=\t]
      ({\c*cos(\t)}, {\c*\p*sin(\t)});

  \coordinate (P) at ({3*cos(230)}, {0.3*sin(230)});
  \draw[fill=white, thick] (P) circle [radius=0.15];
  
  \draw[thick] (0,0) circle (\c);
  \fill[white] (0,\c) circle (\r); 
  \fill[white] (0,-\c) circle (\r);
  \draw[thick]
    plot[domain=180:360, smooth, variable=\t] ({\r*cos(\t)}, {\c+\m*\r*\p*sin(\t)});
  \draw[thick]
    plot[domain=180:360, smooth, variable=\t]
      ({\r*cos(\t)}, {-\c-\m*\r*\p*sin(\t)});  

  \coordinate (Q) at ({3*cos(280)}, {0.3*sin(280)});
  \coordinate (Qup) at ($(Q)+(0,1)$);
  \coordinate (Qdown) at ($(Q)+(0,-1)$);
  \draw[fill=white, thick] (Qup) circle [radius=0.15];
  \draw[fill=white, thick] (Qdown) circle [radius=0.15];
  
  \coordinate (M) at ({3*cos(260)}, {0.3*sin(260)});
  \coordinate (M+) at ($(M)+(0,2.4)$);
  \coordinate (M-) at ($(M)+(0,-2)$);
  \draw[fill=black] (M+) circle [radius=0.1] node[above=2pt] {\small $\chk Y$};
  \draw[fill=black] (M-) circle [radius=0.1] node[below left=0.5pt] {\small $\chk X$};
    
  \coordinate (b) at ({3*cos(250)}, {0.3*sin(250)});
  \coordinate (a) at ($(Qdown)+(0.5,0)$);
  \coordinate (c) at ($(Qup)+(0.5,0)$);
  \coordinate (d) at ({3*cos(310)}, {0.3*sin(310)});
  
  \draw[line width=1.5pt] (M+) to[out=250,in=90,relative=false] node[auto=right, pos=0.6]{} (b);
  \draw[line width=1.5pt, <-]     (b) to[out=270,in=110,relative=false] (M-);

  \draw[line width=1.5pt] (M+) to[out=280,in=140,relative=false] node[auto=right, pos=0.5]{} (Q);
  \draw[line width=1.5pt, <-]     (Q) to[out=320,in=90,relative=false] (a) to[out=270,in=10,relative=false] (M-);

  \draw[line width=1.5pt] (M+) to[out=330,in=90,relative=false] (c);
  \draw[line width=1.5pt, <-]     (c) to[out=270,in=50,relative=false] node[auto=left, pos=0.15]{} (Q) to[out=230,in=80,relative=false] (M-);

  \draw[line width=1.5pt] (M+) to[out=0,in=90,relative=false] node[auto=left, pos=0.7]{} (d);
  \draw[line width=1.5pt, <-]     (d) to[out=270,in=340,relative=false] (M-);
  
  \draw[->] (3.5,-3) to (3.5,3) node[above]{FI parameter};
  \draw[->] ({3*cos(200)}, {-3.2 + 0.8*sin(200)}) 
    arc[start angle=200, end angle=340, x radius=3, y radius=0.8] 
    node[midway, below=3pt] {Theta parameter};

\node[above=2pt] at (0,\c) {\small $\infty$};
\node[below=2pt] at (0,-\c) {\small $0$};

\end{tikzpicture}
\caption{SKMS with representatives of four homotopy classes of paths from $\chk X$ to $\chk Y$ (disregarding loops around the singularities, these are all classes of such paths).}
\label{fig:SKMS}
\end{figure}

Let us look at the homological implications of this conjecture. Keep in mind that these implications are themselves conjectural and should be treated heuristically. Denote by $\chk X$ and $\chk Y$, the homological A-side mirrors of general members $X\in \X$ and $Y\in \Y$, respectively. That is, $\chk X$ and $\chk Y$ are Calabi-Yau threefolds such that $D(X)\simeq Fuk(\chk X)$ and $D(Y)\simeq Fuk(\chk Y)$. Now, if we believe that $\chk X$ and $\chk Y$ are also Hodge theoretic mirrors, then above conjecture suggests that they must be deformation equivalent and in particular, live near the MUM points $0$ and $\infty$ of the SKMS (figure \ref{fig:SKMS}), respectively. So, their Fukaya categories must be equivalent by symplectic parallel transport along a path connecting them in the SKMS. Therefore, we expect there to exist a derived equivalence between $X$ and $Y$ via a composition of the form 
\begin{equation}
    D(X)\simeq Fuk(\chk X)\simeq Fuk(\chk Y)\simeq D(Y)
\end{equation}
The rest of this paper is devoted to the proof of existence of such a derived equivalence (without commenting on the existence of the homological mirrors $\chk X$ and $\chk Y$). Following that, in \Cref{rem:windowShiftAutoEquivalences} and \Cref{rem:4Windows}, we discuss the implications of having multiple homotopy classes of paths between $\chk X$ and $\chk Y$. A posteriori, this result may also be treated as another piece of evidence for \Cref{conj:doubleMirror}. 

\begin{remark}
Physically, the SKMS may be interpreted as a FI-theta parameter space (FI stands for "Fayet–Iliopoulos"),  parameterizing a family of a certain class of two dimensional (2,2) superconformal field theories, called \textit{gauged Landau-Ginzburg models}. There is a general conjecture in this setting which states that the "categories of B-branes", must be equivalent for the "limiting phases" of such a moduli, which in our case are $X$ and $Y$. Mathematically, this has been proven to be true in a quite a few cases (some non-trivial examples include \cite{Seg2014}, \cite{rennemo_hpd_sym2V}, \cite{KapRam2025}). Based on this physical motivation and prior mathematical successes of this approach, we will adopt the same for the purpose of our proof.
\end{remark}

\section{Landau-Ginzburg Models and B-Branes}\label{sec:LG}
We will give a brief overview of Landau-Ginzburg models and categories of B-branes over them, focusing only on notions that we will use for the proof. For a more rigorous and general treatment of these topics we refer to \cite{Ship2010} and \cite{Seg2011}.

\subsection{Landau-Ginzburg Models}
\begin{definition}
    Consider a smooth algebraic stack $\mathfrak{X}$ with a $\C^*$-action and a regular function $\omega$ on it. We say that the pair $(\mathfrak{X}, \omega)$ is a Landau-Ginzburg (LG) model if:
    \begin{itemize}
        \item $-1\in \C^*$ acts trivially. We will denote this $\C^*$ by $\C^*_R$ and refer to degree of various objects under this action by ``R-charge".
        \item $\omega$ has R-charge 2. i.e. $\omega(\lambda.x)=\lambda^2\omega(x)$
    \end{itemize}
\end{definition}
\begin{definition}
A morphism $(\mathfrak{X}_1, \omega_1)\rightarrow(\mathfrak{X}_2,\omega_2)$ between LG-models is a $\C^*_R$-equivariant morphism $\mathfrak{X}_1\rightarrow\mathfrak{X}_2$ under which $\omega_2$ pulls back to $\omega_1$.
\end{definition}
We will mainly be concerned with the case where $\mathfrak{X}$ is the quotient of a smooth quasi-affine scheme $S$ by an algebraic group $G$. In that case, the R-charge action is a suitable action by $\rchar$ on $S$ that commutes with the $G$-action and the function $\omega$ is a suitable $G$-invariant function on $S$. In this kind of model, $G$ is referred to as the \textit{gauge group}\footnote{While common in mathematics, this notation does not exactly coincide with the physical notion of a gauge group, which is usually taken to be a \emph{compact} subgroup of $G$.} and the $G$-invariance or equivariance property of associated objects is termed as \textit{gauge symmetry}. We will assume that our LG-models are of this kind from now on. If moreover $S$ is an affine space, we say that our model is a \textit{gauged linear sigma model (GLSM)}.

Following is an important object associated to an LG model.
\begin{definition}
    Given a LG model $(\mathfrak{X}, \omega)$, we define its critical locus as 
    \begin{equation}
        \Crit(\mathfrak{X}, \omega):=V(\omega,d\omega)\subset \mathfrak{X}
    \end{equation}
\end{definition}
Physically, one says that the critical locus of a LG model is the ``vacuum manifold" associated to that model, and is interpreted as the parameterizing space of all ground states in the theory.

\subsection{Category of B-branes}
\begin{definition}
    A graded B-brane on a LG model $\LG$ is a pair $(\E, d_\E)$ where, 
    \begin{itemize}
        \item $\E$ is a $\rchar$-equivariant vector bundle on $\mathfrak{X}$
        \item $d_\E:\E\rightarrow \E$ is a morphism of R-charge 1 such that ${d_\E}^2=\omega$.
    \end{itemize}
\end{definition}
We will usually refer to such objects simply as ``branes", in the rest of the paper. Given two branes $(\E, d_\E)$ and $(\F, d_\F)$ on $\LG$, we can construct the $\rchar$-equivariant bundle $\H om (\E,\F)=\E^\vee\otimes\F$ and an endomorphism on it:
\begin{equation}
    d_{\E,\F}(\varphi):= d_\F \circ \varphi - (-1)^{deg(\varphi)}\varphi\circ d_\E
\end{equation}
$d_{\E,\F}$ has $R$-charge 1 and satisfies ${d_{\E,\F}}^2=0$. So, $(\H om (\E,\F), d_{\E,\F})$ is a brane on $(\mathfrak{X}, 0)$. Moreover, note that taking its sheaf cohomology (of any degree $i$), we get a dg-vector space $(H^{i}(\mathfrak{X},\H om(\E,\F)),d_{\E,\F})$, graded by R-charge. The grading can be explicitly seen by working with the \v{C}ech model of the cohomology space with respect to a $\rchar$-equivariant open affine cover.

\begin{definition}
    The dg-category of B-branes on $\LG$ is denoted by $Br\LG$ and is defined as the category whose objects are graded $B$-branes on $\LG$ and morphisms are given as:
    \begin{equation}
        \Hom_{Br\LG}((\E,d_\E), (\F,d_\F)):= R\Gam(\H om (\E,\F), d_{\E,\F})
    \end{equation}
    where, $R\Gam$ is a ``suitable" monoidal functor $D(\mathfrak{X},0)\rightarrow D(Vect_\C)$ that computes derived global sections of branes.
\end{definition}
The functor $R\Gam$ has been defined in \cite{Seg2011} using Dolbeault cohomology and in \cite{Ship2010}  using Čech cohomology. The choice of its definition doesn't matter as finally we are interested in the homotopy category of $Br\LG$ which turns out to be the same for either choice.

\begin{definition}
    The category of B-branes on $\LG$ is denoted by $D\LG$ and is defined as the homotopy category of $Br\LG$, endowed with a natural triangulated structure defined as follows:
\begin{itemize}
    \item \textit{Shift functor}: Given any brane $(\E,d_\E)$, define $(\E,d_\E)[1]:=(\E(1),-d_\E)$, where $\E(1)$ is just $\E$ with its $\rchar$-equivariance structure shifted by 1 (which amounts to shifting its local $\Z$-grading by 1).
    \item \textit{Exact triangles}: Given any morphism of branes $\varphi:(\E,d_\E)\xrightarrow{}(\F,d_\F)$, define 
    \begin{equation}
        Cone(\varphi):=\left( \E(1)\oplus\F, \matTwo{-d_\E}{0}{\varphi}{d_\F}\right)
    \end{equation}
    Then, define exact triangles to be sequences of morphisms of branes that are isomorphic to the following natural sequence
    \begin{equation}
        (\E,d_\E)\xrightarrow{\varphi}(\F,d_\F)\xrightarrow{}Cone(\varphi)\xrightarrow{} (\E(1),-d_\E)
    \end{equation}

\end{itemize}
\end{definition}
Just like with derived categories, we will usually be interested in computing $RHom$ between objects in $D\LG$, which means considering all homologies of the morphism spaces of $Br\LG$ (instead of just the zeroth homology, as in the above definition). They may be computed using the following spectral sequence (see \cite[Remark 2.14]{Seg2011} or \cite[Section 2.1]{Ship2010}):
\begin{equation}\label{eqn:morsOfBranes}
    E_1^{p,q}\equiv (H^{p}(\mathfrak{X},\H om(\E,\F))_q,d_{\E,\F})\Longrightarrow H_{p+q} (R\Gam(\H om (\E,\F), d_{\E,\F}))
\end{equation}
In particular, we see that morphisms in $D\LG$ are independent of the precise choice of the functor $R\Gam$.

\begin{example}\label[example]{ex:LG}
    Consider $\mathfrak{X}$ with trivial $\C^*_R$-action and $\omega=0$. Then, any brane on $\mathfrak{X}$ is a graded vector bundle equipped with a degree 1 differential that squares to zero. In other words, it is simply a complex of vector bundles. Then looking at the morphisms between branes, it is easy to deduce that $D(\mathfrak{X},0)=\operatorname{Perf}(\mathfrak{X})$ (see \cite[Remark 2.8]{Seg2014}). But as $\mathfrak{X}$ is smooth, this means $D(\mathfrak{X},0)=D(\mathfrak{X})$.
\end{example}

\subsection{Kn\"orrer Periodicity}\label{sec:knorrer}
This is the main tool that we will use to relate the derived categories of our interest to certain B-brane categories. We will review the statement and the construction of the LG model involved, mainly following the exposition in \cite[Section 2]{Ship2010}. Consider, throughout this section, a smooth quasi-projective scheme $\mathfrak{X}$ and a globally generated vector bundle $\V$ over it. 

\begin{lemma}\label[lemma]{lem:section}
    There exists a natural identification 
    \begin{align*}
    \Gam(\mathfrak{X}, \V) &\xrightarrow{\sim} \{f \in \O(\Tot(\V^\vee))\ |\ f \text{ is linear on the fibers of the projection } \Tot(\V^\vee)\xrightarrow{p}\mathfrak{X}\}\\
    \psi &\mapsto \chk\psi
    \end{align*}
\end{lemma}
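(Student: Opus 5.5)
The identification follows from the definition of the total space, $\Tot(\V^\vee)=\operatorname{Spec}_{\mathfrak{X}}(\Sym^\bullet\V)$, together with the projection formula for the affine morphism $p$. The plan is to compute $p_*\O_{\Tot(\V^\vee)}$, isolate its fiberwise-linear part, and take global sections.

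First, since $p$ is affine, we have
\begin{equation*}
    \O(\Tot(\V^\vee)) = \Gam\bigl(\mathfrak{X}, p_*\O_{\Tot(\V^\vee)}\bigr) = \Gam\bigl(\mathfrak{X}, \Sym^\bullet\V\bigr) = \bigoplus_{k\geq 0}\Gam(\mathfrak{X},\Sym^k\V).
\end{equation*}
The grading on $\Sym^\bullet\V$ is exactly the weight decomposition under the fiberwise scaling action of $\C^*$ on $\Tot(\V^\vee)$. Next, I would say that a function $f$ is \emph{linear on the fibers} if its restriction to every fiber $p^{-1}(x)\simeq \V^\vee|_x$ is a linear functional. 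On a trivializing open set $U$ with $\V|_U\simeq\O_U^{r}$, functions on $p^{-1}(U)=U\times\C^r$ are polynomials in the fiber coordinates with coefficients in $\O(U)$. Linearity on each fiber forces the constant term and all higher-degree terms to vanish: their coefficients vanish pointwise on $U$, hence vanish because $U$ is reduced, $\mathfrak{X}$ being smooth. So the fiberwise-linear functions are exactly the degree-one summand $\Gam(\mathfrak{X},\V)$.

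Concretely, the map $\psi\mapsto\chk\psi$ sends a section $\psi$ to the function $\chk\psi(x,\phi)=\phi(\psi(x))$ for $\phi\in\V^\vee|_x$. This formula is intrinsic and visibly fiberwise linear, which gives naturality. Injectivity is clear, since $\chk\psi$ determines $\psi(x)$ at each point. Surjectivity comes from the local computation above, because local sections glue. Global generation of $\V$ is not needed for this statement; it only matters later, for the Knörrer model.

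The only subtle point is that linearity on fibers must be read scheme-theoretically rather than only pointwise. This is handled by the reducedness argument above, or alternatively by defining linearity as having weight one under the scaling action.
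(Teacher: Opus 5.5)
Your proof is correct and is essentially the paper's argument. The paper defines $\chk\psi$ locally as $\sum_i\psi^i y_i$ in a frame, patches these local functions, and for the converse notes that a fiberwise-linear function locally has this form; your version packages the same idea globally via $p_*\O_{\Tot(\V^\vee)}=\Sym^\bullet\V$ and the intrinsic formula $\chk\psi(x,\phi)=\phi(\psi(x))$, which makes the gluing automatic and makes precise what ``linear on the fibers'' means (via reducedness, or weight one under scaling), a point the paper leaves implicit.
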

\begin{proof}
    Any $\psi\in \Gam(\mathfrak{X}, \V)$ maybe seen as a morphism $\psi:\mathfrak{X}\rightarrow \Tot(\V)$ that serves as a section for the projection $\Tot(\V)\rightarrow \mathfrak{X}$. Then locally, over a trivializing neighborhood $U\subset\mathfrak{X}$, it looks like $\psi = \sum_i\psi^ie_i$, where $\psi^i$ are some locally defined regular functions on $\mathfrak{X}$ and $(e_i)_i$ is a local frame of $\Tot(\V)$. This description induces following regular function on $\Tot(\V^\vee|_U)$
\begin{equation}\label{eqn:localViewOfFunc}
    \chk\psi := \sum_i\psi^iy_i
\end{equation}
  where $y_i$ are the coordinates on the fibers of $\Tot(\V^\vee|_U)$ corresponding to the chosen local frame for $\Tot(\V)$. These local functions patch up to give a global regular function $\chk\psi\in \O(\Tot(\V^\vee))$ that, by construction, is linear on the fibers. 
  
  Conversely, if we begin with such a function, then locally it must look like \eqref{eqn:localViewOfFunc}. Then one can run above construction in reverse to arrive at the unique section of $\V$ that induces this function.
\end{proof}
Define $\C^*_R$-action on $\Tot(\V^\vee)$ by fiber-wise dilation as $\lam.(x,\ul y):=(x, \lam^2\ul y)$. Then, given any section $\psi\in \Gam(\mathfrak{X}, \V) $, the corresponding function $\chk\psi \in \O(\Tot(\V^\vee))$ has R-charge 2 and hence induces a LG-model $(\Tot(\V^\vee),\chk\psi)$. 
\begin{definition}\label[definition]{def:knorrerModel}
    We will call an LG-model of the form $(\Tot(\V^\vee),\chk\psi)$ to be a \textit{Kn\"orrer model}.
\end{definition}

Now consider the pullback diagram
\begin{equation}
\begin{tikzcd}
    \Tot(\V^\vee|_{Z(\psi)})  \arrow[r, "i", hook]\arrow[d, "\bar p"] & \Tot(\V^\vee) \arrow[d, "p"] \\
    Z(\psi) \arrow[r, "\bar i", hook] &\mathfrak{X}
\end{tikzcd}
\end{equation}
By definition of $\chk \psi$, it vanishes on $\Tot(\V^\vee|_{Z(\psi)})\subset \Tot(\V^\vee)$. So, the maps $i$ and $\bar p$  lift to maps between LG models
\begin{equation}
    (Z(\psi),0)\xleftarrow{\bar p}(\Tot(\V^\vee|_{Z(\psi)}),0)\xrightarrow{i}(\Tot(\V^\vee),\chk\psi)
\end{equation}
where $Z(\psi)$ and $\Tot(\V^\vee|_{Z(\psi)})$ are seen with $\C^*_R$-actions induced by their embeddings in $\Tot(\V^\vee)$. Note that $Z(\psi)$ embeds in $\Tot(\V^\vee)$ via its embedding in $\mathfrak{X}$ followed by the zero section embedding of $\mathfrak{X}$ in $\Tot(\V^\vee)$. So in particular, the $\C^*_R$-action on $Z(\psi)$ is trivial and hence (by \Cref{ex:LG}) we have $D(Z(\psi),0)=D(Z(\psi))$. Therefore, we have the following induced sequence of functors
\begin{equation}
    D(Z(\psi))\xrightarrow{\bar p^*}D(\Tot(\V^\vee|_{Z(\psi)}),0)\xrightarrow{Ri_*}D(\Tot(\V^\vee),\chk\psi)
\end{equation}
\begin{theorem}[Kn\"orrer Periodicity \cite{Ship2010}]\label[theorem]{thm:knorrer}
    If $\psi$ is a regular section (i.e. $Z(\psi)\subset \mathfrak{X}$ has codimension equal to rank of $\V$), then the following functor is an equivalence of triangulated categories
    \begin{equation}
        Ri_*\circ \bar p^*:D(Z(\psi))\rightarrow D(\Tot(\V^\vee),\chk\psi)
    \end{equation}
\end{theorem}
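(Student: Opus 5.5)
The plan is to reduce Knörrer periodicity to a statement local over $\mathfrak{X}$, and then to the Koszul resolution of $\O_{Z(\psi)}$. Write $\Phi := Ri_*\circ\bar p^*$. Since $\psi$ is regular, the Koszul complex $K^\bullet=(\wedge^\bullet\V^\vee,\iota_\psi)$ resolves $\O_{Z(\psi)}$ on $\mathfrak{X}$. On $\Tot(\V^\vee)$ the pullback $p^*\V^\vee$ has a tautological section $\ul y$. Contraction with $\ul y$ and wedging with $p^*\psi$ together give an R-charge $1$ endomorphism
\begin{equation*}
d = \iota_{\psi} + \ul y\wedge(\cdot)
\end{equation*}
of $p^*\wedge^\bullet\V^\vee$, with suitable R-charge twists on the graded pieces so that $d$ has charge $1$. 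Its square is $\langle\ul y,\psi\rangle=\chk\psi$. This matrix factorization $\mathcal{K}$ is the brane I expect to represent $\Phi(\O_{Z(\psi)})$. More generally, for a vector bundle $F$ on $\mathfrak{X}$ I expect $\Phi(\bar i^*F)\simeq p^*F\otimes\mathcal{K}$. The first step is to verify this identification. Resolve $\O_{\Tot(\V^\vee|_{Z})}$ on $\Tot(\V^\vee)$ by the pulled-back Koszul complex, then perturb the differential by $\ul y\wedge$ to absorb the potential. This is the standard comparison between pushforward to an LG model and Koszul matrix factorizations, for which I would cite \cite{Ship2010}.

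Next I prove full faithfulness. Since $\mathfrak{X}$ is quasi-projective, $D(Z(\psi))$ is generated by restrictions $\bar i^*F$ of vector bundles (for instance powers of an ample line bundle). It therefore suffices to compare $R\Hom_Z(\bar i^*F,\bar i^*G)$ with $R\Hom(p^*F\otimes\mathcal{K},p^*G\otimes\mathcal{K})$. Both sides are computed by pushing forward to $\mathfrak{X}$, using the spectral sequence \eqref{eqn:morsOfBranes}. The endomorphism brane $\H om(\mathcal{K},\mathcal{K})$ is a Koszul-type factorization of $0$. Taking its $\C^*_R$-invariant part fibrewise, i.e. $p_*$ over the affine fibres with dilation weights, the polynomial variables $\ul y$ pair off with $\wedge\V$ via the Koszul differential. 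I expect the result to be quasi-isomorphic to $\H om_{\mathfrak{X}}(F,G)\otimes \bar i_*\O_{Z}$ with its natural grading, which reproduces $R\Hom_Z(\bar i^*F,\bar i^*G)$ via $\bar i_*\bar i^*$ and Koszul duality. This computation is local on $\mathfrak{X}$: over an open where $\V$ is trivial and $\psi=(\psi^1,\dots,\psi^r)$ is a regular sequence, it reduces to the classical computation for the factorization $\sum\psi^jy_j$ on $U\times\C^r$.

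Finally I prove essential surjectivity, which I expect to be the main obstacle. One route is to show that the image is left admissible and has zero orthogonal. Take a brane $\E$ with $R\Hom(\Phi(\bar i^*F),\E)=0$ for all $F$. Every brane is supported on $\Crit(\chk\psi)$, and because $\psi$ is regular this locus lies inside $\Tot(\V^\vee|_{Z(\psi)})$. By adjunction $R\Hom(Ri_*\bar p^*\bar i^*F,\E)=R\Hom(\bar p^*\bar i^*F,i^!\E)$, so the vanishing forces $i^!\E=0$. Since $\E$ is supported on the image of $i$, this should give $\E=0$. To make this rigorous I would pass to the completion or formal neighbourhood along the critical locus, following Orlov's support argument. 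If that becomes unwieldy, the fallback is a direct generation statement: branes on $(\Tot(\V^\vee),\chk\psi)$ are generated by the objects $p^*F\otimes\mathcal{K}$. This would follow by observing that any brane, after tensoring with the Koszul factorization and contracting, is homotopic to one built from pullbacks; this is again the argument in \cite{Ship2010}, which I would invoke.
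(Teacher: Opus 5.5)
The paper gives no proof of this statement; it is imported wholesale from \cite{Ship2010}. So deferring to that reference, as you do at the two hardest points, is exactly what the paper does. Read as a standalone proof, however, your sketch has a gap.

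Its first half is fine. The Koszul factorization $\mathcal{K}=(p^*\wedge^\bullet\V^\vee,\iota_\psi+\ul y\wedge)$ does represent $\Phi(\O_Z)$. The computation $p_*\mathcal{E}nd(\mathcal{K})\simeq\bar i_*\O_Z$ then gives full faithfulness, provided $Z=Z(\psi)$ is smooth. The paper tacitly assumes this (cf.\ \Cref{ex:LG}); for a singular complete intersection the restrictions $\bar i^*F$ only generate $\mathrm{Perf}(Z)$.

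The gap is essential surjectivity, which is the real content of the theorem and which you leave at ``this should give $\E=0$''. Two things are missing.

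\emph{Adjoint.} A zero right orthogonal forces equivalence only if $\Phi$ has a right adjoint on the bounded categories, and you exhibit none. The natural candidate $\bar p_*i^!$ pushes forward along affine-space fibres, so it is not a priori valued in $D(Z)$.

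\emph{Support.} The implication ``$\E$ supported on $\Tot(\V^\vee|_Z)$ and $i^!\E=0$ $\Rightarrow$ $\E\simeq0$'' cannot be run as for sheaves. Factorizations carry no t-structure, so there is no lowest cohomology sheaf on which to test $i^!$. ``Pass to the formal neighbourhood'' does not by itself supply a substitute. The $\C^*_R$-grading, which gives the cleanest substitute, never enters your argument.

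One way to close both points uses the zero sections $i_0\colon\mathfrak{X}\to\Tot(\V^\vee)$ and $j_0\colon Z\to\Tot(\V^\vee|_Z)$.

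(a) For smooth $Z$ one has $\Crit(\chk\psi)=j_0(Z)$. So $i^!\E$, viewed as a dg-module over $\Sym^\bullet(\V|_Z)$ with $\V$ in R-charge $2$, has finitely generated cohomology supported on the zero section. Hence its cohomology is bounded and coherent over $Z$, and $\Psi:=\bar p_*i^!$ is a right adjoint to $\Phi$ with values in $D(Z)$.

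(b) $\Psi$ is conservative, by the following steps.
\begin{itemize}
\item If $\Psi\E=0$ then $i^!\E=0$.
\item Tor-independent base change in the cartesian square $i\circ j_0=i_0\circ\bar i$ gives $\bar i^!Li_0^*\E\simeq Lj_0^*i^!\E=0$. This is the same kind of base change the paper uses in \Cref{obs:cartesianSquare}.
\item $Li_0^*\E$ is an honest bounded complex on $\mathfrak{X}$ (the potential vanishes and the R-charge acts trivially there), and it is supported on $Z$. The lowest-cohomology-sheaf argument therefore applies and gives $Li_0^*\E=0$.
\item Graded Nakayama then gives $\E\simeq0$. Filter $p_*\mathcal{E}nd(\E)$ by powers of the augmentation ideal of $\Sym^\bullet\V$. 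This filtration is finite in each R-charge degree, with acyclic graded pieces $\Sym^k\V\otimes\mathcal{E}nd(Li_0^*\E)$, so $p_*\mathcal{E}nd(\E)$ is acyclic.
\end{itemize}
Since $\Phi$ is fully faithful, $\Psi$ kills the cone of the counit $\Phi\Psi\E\to\E$, which therefore vanishes.

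Alternatively, a d\'evissage of coherent factorizations set-theoretically supported on $\Tot(\V^\vee|_Z)$ gives generation by the image of $i_*$ directly, with no adjoints needed.
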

\begin{remark}\label[remark]{rem:vansihingLocusVsCritLocus}
Using the local description of $\chk\psi$ \eqref{eqn:localViewOfFunc}, one can easily show that for a regular section $\psi$, such the $Z(\psi)$ is smooth and of expected codimension, we have $Z(\psi)=\Crit(\Tot(\V^\vee),\chk\psi)$. So, \Cref{thm:knorrer} gives a mathematical equivalence between the category of a (certain kind of) LG-model and the category of its critical locus, which is something that is expected to happen physically. 
\end{remark}

\section{Setting up the GLSM}\label{sec:GLSMsetup}
We start with a general member $X$ of the Calabi-Yau threefold family $\X$, discussed in \Cref{sec:X}. Recall that $X$ is given as the vanishing locus of a regular section $s\in \Gam(G(2,V_6),\mathcal{U^\vee}(\xi)\oplus \mathcal{O}(\xi)^{\oplus 3})$. By \Cref{rem:vansihingLocusVsCritLocus}, it can alternately be described as the critical locus of a regular function $\chk s_-$ on $\Tot(\mathcal{U}(-\xi)\oplus \mathcal{O}(-\xi)^{\oplus 3})$. Here, $\chk s_-$ is naturally induced by $s$ (by \Cref{lem:section}) and is linear on the fibers. Turns out, this total space can be realized as a GIT quotient of an affine space $V$ by a $G$-action, as described below. This is the starting point of the so-called "GLSM framework" as whenever we have a GIT quotient we may compute the quotient with respect to some other character of the group, to obtain something new. By the end of this section, we will have "retrieved" the dual Calabi-Yau threefold $Y$ (as defined in the beginning of \Cref{sec:Y}) by working entirely within the GLSM framework, thus confirming our physically motivated expectation laid out earlier, at the end of \Cref{sec:mirrorEquivalence}.

\subsection{Variation of GIT}
Denote by $E$, the $G$-representation $(S \otimes \det(S)) \oplus \det(S)^{\oplus 3}$, where $S \simeq \C^2 $ is the fundamental $G$-representation. And denote by $V_6$, a vector space of dimension 6. We then define $V$ to be the affine space whose underlying vector space is the following $G$-representation:
\begin{equation}
    V \eq \Hom(S, V_6) \oplus E
\end{equation}
So, the $G\curvearrowright V$ is explicitly given as $g.(B\sp,\sp\ul{v}\sp,\sp\ul{w}) := (B.g^{-1}\sp, \sp \det(g).g.\ul{v} \sp, \sp \det(g).\ul{w})$. The action of a general linear group on an affine space has two possible semi-stable loci and hence two GIT quotients, corresponding to the choice of $G$-characters $\det^{\pm 1}$. Denote the semi-stable locus of $V$ with respect to character $\det^{\pm1}$ by $V_\pm$. Also, denote by $\Hom(S, V_6)^o$, the subset of full rank matrices in $\Hom(S, V_6)$

\begin{proposition}\label[proposition]{prop:VGIT}
The semi-stable loci $V_\pm$ are as follows:
\begin{itemize}
    \item $V_-=\{\coordsOfV\in V \sp| \sp rank(B) = 2\}=\Hom(S,V_6)^o \times E$ 
    \item $V_+=\{\coordsOfV \in V \sp | \sp (B\ul v, \ul w) \neq 0\}$
\end{itemize}    
\end{proposition}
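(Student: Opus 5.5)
We will compute both semi-stable loci with the Hilbert--Mumford criterion (King's formulation for affine GIT), testing against one-parameter subgroups of $G=GL(S)\simeq GL_2$. A point $x=(B,\ul v,\ul w)$ is semi-stable for a character $\chi$ iff, for every one-parameter subgroup $\lambda$ such that $\lim_{t\to0}\lambda(t).x$ exists, we have $\langle\chi,\lambda\rangle\geq 0$ (with the sign convention fixed so that $\det$-semi-stability forces $B$ to have full rank, matching the identification of $V_-$ with the total space of $\U(-\xi)\oplus\O(-\xi)^{\oplus3}$ over $G(2,V_6)$; if needed we flip signs globally). Up to conjugation, $\lambda(t)=\operatorname{diag}(t^{a},t^{b})$ in a basis $e_1,e_2$ of $S$ with $a\geq b$. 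The weights on the three summands are then as follows.
\begin{itemize}
\item On the columns of $B$ (since $B\mapsto Bg^{-1}$): $-a$ and $-b$.
\item On the components of $\ul v$ (since $\ul v\mapsto\det(g)g\ul v$): $2a+b$ and $a+2b$.
\item On each $w_i$ (since $w_i\mapsto\det(g)w_i$): $a+b$.
\end{itemize}
The pairing with $\det^{\pm1}$ is $\pm(a+b)$.

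\emph{Character $\det^{-1}$.} Here we need $a+b\leq 0$ whenever the limit exists, and we show this is equivalent to $\operatorname{rank}B=2$. If $B$ has full rank, existence of the limit requires $-a\geq0$ and $-b\geq0$, so $a+b\leq0$. Conversely, if $\operatorname{rank}B\leq1$, change basis so that $Be_1=0$ and take $\lambda=\operatorname{diag}(t^{a},t^{b})$ with $a>0$ and $b$ chosen appropriately. We must have $-b\geq0$ if $Be_2\neq0$, the $v$-weights nonnegative on nonzero components, and $a+b\geq0$ if $\ul w\neq0$. Choosing $b=0$ and $a=1$ works only when $Be_2=0$ or when $Be_2\ne0$ requires $-b\ge 0$; since $b=0$ this holds. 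All $v$-weights ($2$ and $1$) and $w$-weights ($1$) are then positive, so the limit exists and $a+b=1>0$, giving instability. This yields $V_-=\Hom(S,V_6)^o\times E$, independently of $(\ul v,\ul w)$.

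\emph{Character $\det$.} Here we need $a+b\ge0$ whenever the limit exists. First, $B\ul v$ lies in the representation $V_6\otimes\det$ and $\ul w$ in $\det^{\oplus3}$, so the map $x\mapsto(B\ul v,\ul w)$ is equivariant into a representation of pure weight $a+b$. If $(B\ul v,\ul w)\neq0$ and the limit of $x$ exists, then the limit of its image exists, forcing $a+b\geq0$; hence $x$ is semi-stable. Conversely, suppose $B\ul v=0$ and $\ul w=0$, and produce a destabilizing $\lambda$ with $a+b<0$ by cases.
\begin{itemize}
\item If $\ul v=0$: take $\lambda=\operatorname{diag}(t^{-1},t^{-1})$. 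All $B$-weights are positive and the other coordinates vanish, so the limit exists with $a+b=-2<0$.
\item If $\ul v\neq0$: then $B$ kills $\ul v$. Choose $e_2$ spanning $\ul v$ and $e_1$ completing the basis, so only the $e_2$-component of $\ul v$ is nonzero, with weight $a+2b$. Take $(a,b)=(-2,1)$. Then $a+2b=0$, the $B$-weights are $-a=2$ on $Be_1$ and $-b=-1$ on $Be_2=0$, which is fine since that column vanishes, and $a+b=-1<0$.
\end{itemize}
The ordering $a\geq b$ is immaterial because we may reorder the basis. This gives $V_+=\{(B\ul v,\ul w)\neq0\}$.

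The main obstacle is bookkeeping: fixing the sign convention linking $\det^{\pm1}$ to the limit condition consistently with the paper's labelling. I would anchor it by the requirement that $V_-/G$ recovers $\Tot(\U(-\xi)\oplus\O(-\xi)^{\oplus3})$. The second delicate point is the case $\ul v\neq0$, $B\ul v=0$ for $\det$, where the destabilizing subgroup must be adapted to the kernel line of $B$.
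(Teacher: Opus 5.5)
Your proposal is correct and follows essentially the same route as the paper: a Hilbert--Mumford analysis in which full rank of $B$ (respectively, nonvanishing of the equivariant function $(B\underline{v},\underline{w})$, which scales by $\det$) rules out destabilizing one-parameter subgroups, while explicit one-parameter subgroups adapted to $\ker B$ or to the line spanned by $\underline{v}$ destabilize the remaining points. The only real difference is that you take limits as $t\to 0$ (King's convention) instead of $t\to\infty$; after reparametrizing, your subgroups agree with the paper's or play the same role, e.g.\ your $(a,b)=(-2,1)$ in the case $\underline{v}\neq 0$, $B\underline{v}=0$ is exactly the paper's $\operatorname{diag}(t^{2},t^{-1})$ as $t\to\infty$.
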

\begin{proof}
     A linearized line bundle on an affine space $V$ with a $G$-action corresponds to choice of a character $\mu\in \hat G$. Given such a choice, the Hilbert-Mumford criterion tells us that:
\begin{equation*}
u \in V \text{ is semi-stable} \iff 
\begin{aligned}[t]
    &\forall \text{ 1-PS subgroup } \lambda: \mathbb{C}^* \to G \text{ such that } \lim_{t \to \infty} \mu^{-1}(\lambda(t)) = 0, \\
    &\text{we have that } \lim_{t \to \infty} \lambda(t).u \text{ does not exist.}
\end{aligned}
\end{equation*}

Let us apply this criterion to our situation.\\
\\
\noindent \textit{Computing $V_-$}: Consider an arbitrary 1-parameter subgroup $\lam$ such that $\lim\limits_{t \to \infty} \det(\lambda(t))=0$ and an arbitrary element $\coordsOfV\in V$.

First, suppose $B$ has full-rank. It is equivalent to saying that the induced element $\det(B)\in \Hom(\det (S), \wedge ^2V_6)$ is non-zero. The induced action on $\det(B)$ is $\lam(t).\det(B) = \det(\lam(t^{-1})) \sp \det(B)$. This doesn't have a limit as $t\to \infty$. So, $\coordsOfV$ is semi-stable. Hence, $\lim\limits_{t\to \infty}B.\lam (t^{-1})$ doesn't exist and therefore $\lim\limits_{t\to \infty}(B.\lam (t^{-1})\sp, \sp \det(\lam (t)).\lam (t).\ul{v} \sp, \sp \det(\lam (t)).\ul{w})$ doesn't exist.

Now suppose $B$ doesn't have full rank. Then, there exists a basis of $S$, where $B$ (as a $6\times2$ matrix) looks like $[* \sp\sp0]$. Then, for $$\lam(t):=\matTwo{1}{0}{0}{t^{-1}}$$we have
$\lim\limits_{t\to \infty}(B.\lam (t^{-1})\sp, \sp \det(\lam (t)).\lam (t).\ul{v} \sp, \sp \det(\lam (t)).\ul{w})=(B,0,0)$. In particular, this limit exists. Therefore, $V_-=\{\coordsOfV\in V \sp| \sp rank(B) = 2\}$\\\\

\noindent \textit{Computing $V_+$}: Consider an arbitrary 1-parameter subgroup $\lam$ such that $\lim\limits_{t \to \infty} \det(\lambda(t))^{-1}=0$ and an arbitrary element $\coordsOfV\in V$. First suppose $(B\ul v,\sp\ul w) \neq0$. The induced action on it is $\lam(t).(B\ul v, \sp \ul w)=\det(\lam(t))\sp(B\ul v, \sp\ul w)$. This doesn't have a limit as $t\to \infty$. Hence, $\lim\limits_{t\to \infty}(B.\lam (t^{-1})\sp, \sp \det(\lam (t)).\lam (t).\ul{v} \sp, \sp \det(\lam (t)).\ul{w})$ doesn't exist. 

Now suppose $B\ul v =\ul w = 0$. Suppose moreover, $\ul v =0$. Then, for $$\lam(t):=\matTwo{1}{0}{0}{t}$$ we have $\lim\limits_{t\to \infty}(B.\lam (t^{-1})\sp, \sp \det(\lam (t)).\lam (t).\ul{v} \sp, \sp \det(\lam (t)).\ul{w})=([* \sp\sp0],0,0)$. In particular this limit exists. Assuming the other possibility $\ul v\neq 0$, there exists a basis of $S$, where $\ul v=(0,v)$, for some $v\neq0$. Then, $B\ul v=0\Rightarrow B=[* \sp\sp 0]$. Then, for $$\lam(t):=\matTwo{t^2}{0}{0}{t^{-1}}$$ we have $\lim\limits_{t\to \infty}(B.\lam (t^{-1})\sp, \sp \det(\lam (t)).\lam (t).\ul{v} \sp, \sp \det(\lam (t)).\ul{w})=(0,(0,v),0)$. In particular, again this limit exists. Therefore, $V_+=\{\coordsOfV \in V \sp | \sp (B\ul v, \ul w) \neq 0\}$.
 
\end{proof}

\subsection{The ``negative" phase }
By the description of $V_-$ in \Cref{prop:VGIT}, the GIT quotient $V_-/G$ is just the total space of the vector bundle  associated to the $G$-representation $E$, over the space $\Hom(S,V_6)^o/G=G(2,V_6)$. In other words, we have:
\begin{equation}
    V_-/G = \Tot(\mathcal{U}(-\xi)\oplus \mathcal{O}(-\xi)^{\oplus 3})\xrightarrow{} G(2,V_6)
\end{equation}
Equipping $V_-/G$ with the function $\check{s}_-$ and a $\C^*_R$-action by fiber-wise dilation as $\lam.(B,\ul v, \ul w):=(B,\lam^2\ul v, \lam^2\ul w)$ turns it into a LG model, which is an example of a Kn\"orrer model (\Cref{def:knorrerModel} ). Now as the closed complement of  $V_-\subset V$ has codimension $\geq$ 2, the $G$-invariant function $\chk s_-$ on $V_-$ extends uniquely to a $G$-invariant function $\chk s : V\xrightarrow{} \C$, due to Hartog's lemma. Also, the $\C^*_R$-action on $V_-/G$ has a natural extension to an action on $V/G$. So, we get a GLSM $(V/G,\chk s)$ and we say that the LG model $(V_-/G,\chk s_-)$ is the ``negative" phase of this GLSM. We have
\begin{lemma} \label[lemma]{critNeg}
    $\Crit(V_-/G, \sp \chk s_-) = X$. Moreover, $X$ embeds in $V_-/G$ through its natural embedding in $G(2,V_6)$, followed by the embedding of  $G(2,V_6)$ in $V_-/G$ as the zero section of the vector bundle projection.
\end{lemma}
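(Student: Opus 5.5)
The plan is to reduce the lemma to \Cref{rem:vansihingLocusVsCritLocus}, the Kn\"orrer-model statement, applied with $\mathfrak{X}=G(2,V_6)$, $\V=\U^\vee(\xi)\oplus\O(\xi)^{\oplus 3}$ and $\psi=s$. The first step is to identify the negative phase with the right total space. By \Cref{prop:VGIT}, $V_-=\Hom(S,V_6)^o\times E$. The quotient $\Hom(S,V_6)^o/G$ is $G(2,V_6)$: the image of $B$ is a point of the Grassmannian, and $S$ descends to $\U$. Since $B\mapsto B.g^{-1}$, we should check that the representation $S$ descends to $\U$ rather than to $\U^\vee$, and that $\det S$ descends to $\O(-\xi)$.

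With these identifications, the representation $E=(S\otimes\det S)\oplus(\det S)^{\oplus 3}$ descends to $\U(-\xi)\oplus\O(-\xi)^{\oplus3}=\V^\vee$. Hence $V_-/G=\Tot(\V^\vee)$ over $G(2,V_6)$. The $\C^*_R$-action $\lam.(B,\ul v,\ul w)=(B,\lam^2\ul v,\lam^2\ul w)$ is exactly fiberwise dilation by $\lam^2$, as in \Cref{def:knorrerModel}. By construction, $\chk s_-$ is the fiber-linear function attached to $s$ via \Cref{lem:section}. So $(V_-/G,\chk s_-)=(\Tot(\V^\vee),\chk s)$ is a Kn\"orrer model.

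The second step is the local computation behind \Cref{rem:vansihingLocusVsCritLocus}. Over a trivializing open set $U\subset G(2,V_6)$ with fiber coordinates $y_1,\dots,y_5$, write
\[
\chk s_-=\sum_{i=1}^{5}s^i(x)\,y_i .
\]
The derivatives in the fiber directions are $\partial\chk s_-/\partial y_i=s^i$, so $d\chk s_-=0$ forces $s^1=\dots=s^5=0$, that is, $x\in X=Z(s)$. The derivatives in the base directions then read
\[
\sum_i y_i\, ds^i(x)=0 .
\]
Because $s$ is a general, hence regular, section with $X$ smooth of codimension $5=\rk\V$, the differentials $ds^1,\dots,ds^5$ are linearly independent at every point of $X$. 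This forces $y=0$. Since $\chk s_-$ vanishes on this locus as well, $\Crit(V_-/G,\chk s_-)$ is set-theoretically $X$ sitting inside the zero section.

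For the scheme structure, the ideal $(\chk s_-,d\chk s_-)$ contains the $s^i$, and it contains the $y_i$ after inverting the full-rank Jacobian. So the ideal equals $(s^1,\dots,s^5,y_1,\dots,y_5)$, which is the ideal of $X$ embedded in the zero section. This also proves the second sentence of the lemma. The main point needing care is the first step: getting the signs of the twists right so that the representation $E$ descends to $\V^\vee$ and not to $\V$. I would settle this by checking weights on the diagonal torus, using the action $g.\ul w=\det(g)\ul w$.
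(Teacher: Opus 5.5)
Your proposal is correct and follows essentially the same route as the paper. The paper identifies $V_-/G$ with $\Tot(\U(-\xi)\oplus\O(-\xi)^{\oplus 3})$ via $G(2,V_6)=\Hom(S,V_6)^o/G$, reads $\chk s_-$ as the fiber-linear function attached to $s$, and then simply declares $\Crit=Z(s)=X$ to be clear, in effect citing \Cref{rem:vansihingLocusVsCritLocus}; your local Jacobian computation, including the scheme-theoretic identification of the ideal with $(s^1,\dots,s^5,y_1,\dots,y_5)$, just spells out that step.
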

\begin{proof}
    To compute the critical locus, we first need to describe $\chk s_-$ more explicitly. Writing $G(2,V_6)$ as the quotient $\Hom(S,V_6)^o/G$, we can realize $\Tot(\mathcal{U}^\vee (\xi)\oplus \mathcal{O}(\xi)^{\oplus 3})$ as the quotient $\Hom(S,V_6)^o\times_G E^\vee$. In this view, the section $s_-$ is simply a $G$-equivariant function $s_-:\Hom(S,V_6)^o\xrightarrow{}E^\vee$. This further corresponds to a $G$-invariant function $\chk s_-: \Hom(S,V_6)^o\times E = V_-\xrightarrow{} \C$ which is linear in the coordinates of $E$. Equivalently, it is a regular function on $V_-/G=\Tot(\mathcal{U}(-\xi)\oplus \mathcal{O}(-\xi)^{\oplus 3})$, which is linear on the vector space fibers. By the above chain of identifications, it is clear that $\Crit(V_-/G, \chk s_-)=Z(s_-)\eq X$. 
\end{proof}
The simplicity of the negative phase, lies in the fact that the GIT quotient $V_-/G$ is a vector bundle and the superpotential $\chk s_-$ is linear on the fibers of this bundle. Hence, we can apply Kn\"orrer periodicity (\Cref{thm:knorrer}) to relate the derived category of the critical locus $X=Z(s_-)$ with the B-brane category of the phase $(V_-/G, \chk s_-)$. That is, we have
\begin{corollary}
    There is an equivalence of categories $D(X)\simeq D(V_-/G,\chk s_-)$
\end{corollary}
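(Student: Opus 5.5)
The corollary follows from Kn\"orrer periodicity (\Cref{thm:knorrer}) applied to the Kn\"orrer model of the negative phase. I will take $\mathfrak{X}=G(2,V_6)$, which is smooth and projective, hence quasi-projective. I will take $\V=\mathcal{U}^\vee(\xi)\oplus\mathcal{O}(\xi)^{\oplus 3}$, a globally generated bundle of rank $5$, since both $\mathcal{U}^\vee$ and $\mathcal{O}(\xi)$ are globally generated. Finally, I will take $\psi=s$, the general section defining $X$.

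First I identify the LG model. By the discussion preceding \Cref{critNeg}, $V_-/G\simeq\Tot(\mathcal{U}(-\xi)\oplus\mathcal{O}(-\xi)^{\oplus 3})=\Tot(\V^\vee)$. Under this identification the $\C^*_R$-action $\lambda.(B,\ul v,\ul w)=(B,\lambda^2\ul v,\lambda^2\ul w)$ is exactly the fibrewise dilation used in the definition of a Kn\"orrer model. Next, the function $\chk s_-$ constructed in the proof of \Cref{critNeg} is the $G$-invariant function on $\Hom(S,V_6)^o\times E$ that is linear in the $E$-coordinates and corresponds to the equivariant map $s_-$. By the uniqueness in \Cref{lem:section}, it coincides with the function $\chk\psi$ attached to $\psi=s$. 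Hence $(V_-/G,\chk s_-)$ is literally the Kn\"orrer model $(\Tot(\V^\vee),\chk s)$ of \Cref{def:knorrerModel}.

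It remains to check the hypothesis of \Cref{thm:knorrer}, namely that $s$ is a regular section, i.e.\ that $\operatorname{codim} Z(s)=\rk\V=5$. Since $\V$ is globally generated and $s$ is general, a Bertini-type argument shows that $Z(s)$ is either empty or smooth of pure codimension $5$. It is nonempty, because $Z(s)=X$ is the threefold of \Cref{sec:X}, and indeed $\dim G(2,6)-5=8-5=3$. Therefore $s$ is regular, and \Cref{thm:knorrer} gives the equivalence $Ri_*\circ\bar p^*\colon D(X)\xrightarrow{\sim} D(\Tot(\V^\vee),\chk s)=D(V_-/G,\chk s_-)$.

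There is no serious obstacle. The only point needing care is the bookkeeping showing that the GIT description, with its twist by $\det(S)$, matches $\Tot(\V^\vee)$ with the correct dualization, so that the superpotential corresponds to $s$ and not to a section of the dual bundle. Once the sign conventions for $\xi$ are fixed via $\det(S)\leftrightarrow\mathcal{O}(-\xi)$, this is immediate from the proof of \Cref{critNeg}.
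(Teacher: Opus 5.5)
Your proposal is correct and follows the paper's own route: the paper gets this corollary by observing that $V_-/G=\Tot(\mathcal{U}(-\xi)\oplus\mathcal{O}(-\xi)^{\oplus 3})$ with $\chk s_-$ linear on the fibers, and then applying Kn\"orrer periodicity (\Cref{thm:knorrer}) with $X=Z(s)$. The paper states this in a single sentence. You additionally spell out the hypothesis checks: the identification $\det(S)\leftrightarrow\mathcal{O}(-\xi)$, the matching of the $\C^*_R$-action with fiberwise dilation, and the regularity of $s$ via Bertini.
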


\subsection{The ``positive" phase}
We may restrict the function $\chk s:V\rightarrow \C$ to $V_+$ to obtain a $G$-invariant function $\chk s_+$ on it. Let us also restrict the $\rchar$-action on $V/G$ to $V_+/G$ to get a $\rchar$-action on the latter. Equipping $V_+/G$ with $\chk s_+$ and the above $\C^*_R$-action turns it into another LG-model, which we will refer to as the ``positive" phase of our GLSM. Putting the GLSM and its phases together, we have inclusions of LG models

\begin{equation}
    \begin{tikzcd}
        (V_-/G, \chk s_-)\arrow[r, hook, "i_-"]  & (V/G, \chk s) & (V_+/G, \chk s_+) \arrow[l, hook', "i_+"']
    \end{tikzcd}
\end{equation}

which induce the following maps between the corresponding B-brane categories.
\begin{equation}
    \begin{tikzcd}
        D(V_-/G, \chk s_-)& D(V/G, \chk s) \arrow[l, "Li^*_-"']  \arrow[r, "Li^*_+"] & D(V_+/G, \chk s_+) 
    \end{tikzcd}
\end{equation}

\subsubsection{Birational Modifications}
The positive phase is more complicated than the negative phase. The quotient stack $V_+/G$ appearing here is not as nice, as some points of $V_+$ have non-trivial stabilizers with respect to the $G$-action. For example, a point like $(0, 0, \ul w)$ is stabilized by $SL(2)$. So, $V_+/G$ is ``stacky" along such loci. And although $V_+/G$ has a natural projection onto $\P^8$ given as $\coordsOfV \mapsto (B\ul v, \ul w)$, it is not a vector bundle over it. So, we cannot apply Kn\"orrer periodicity to relate the B-brane category of this phase with the derived category of the critical locus. Moreover, it is not clear how to retrieve the geometry of the second Calabi-Yau threefold from the critical locus in this quotient. For these reasons, we will birationally modify the quotient until we get rid of such problems. Before we do that, we will set up some notations below, which also sets up the coordinate naming conventions we will stick to throughout this paper. Here, $L^\vee\simeq \C^3$.

\begin{itemize}
\item For a general $B\in \Hom(S, V_6)$, denote $\ul b \equiv$ the first column of $B$ and $\ul d \equiv $ the second column of $B$. Then, identifying $\Hom(S, V_6)=V_6\oplus V_6$ (as vector spaces) we may write $B = (\ul b, \ul d)$
    \item $W \equiv \{(\ul b, \ul d, v,\ul w)\in V_6\oplus V_6 \oplus \mathbb{C} \oplus L^\vee \ | \ (v \ul b, \ul w) \neq 0\}$
    \item $U\equiv \{(\ul b, \ul d,v,\ul w)\in V_6\oplus V_6 \oplus \mathbb{C} \oplus L^\vee \ | \ (v, \ul w) \neq 0, \sp \ul b \neq 0 \}$
    \item $\Gamma \eq \left\{\matTwo{\alpha}{\beta}{0}{\delta}\in G \ |\ \alpha\delta\neq0\right\}$
\item $T \eq \left\{\matTwo{\alpha}{0}{0}{\delta}\in G \ |\ \alpha\delta\neq0\right\}$
\end{itemize}
It turns out that the correct birational modification is given by the following sequence of maps of quotient stacks, which we describe below.
\begin{equation}\label{eqn:seqOfMaps}
    \kb \xrightarrow{} W/\Gamma \xrightarrow{} V_+/G
\end{equation}
$W$ may be seen as a closed sub-scheme of $V_+$ given by vanishing of $v_2$ (the second coordinate of $\ul v$). The stabilizer of $W$ in $V_+$ is exactly $\Gamma$. Hence, we have a natural surjective map of quotient stacks $W/\Gamma \xrightarrow{} V_+/G$, which is given at the level of points as $[(\ul b, \ul d, v,\ul w)/\Gamma] \mapsto [(\ul b, \ul d, (v,0),\ul w)/G]$. The other map $\kb \xrightarrow{} W/\Gamma$ is an open inclusion of quotient stacks and is induced by the open inclusion of the $\Gamma$-invariant open subspace $U$ in $W$.

\subsubsection{Applying Kn\"orrer Periodicity}
Pulling back the function $\chk s_+$ through the maps in \eqref{eqn:seqOfMaps}, we get a function $\omega$ on $\kb$. We will now focus on the pair $(\kb, \omega)$. Note that the $\Gamma$-action on $U$ is free and the points of $U$ are stable under this action (the latter can be checked easily, using the Hilbert-Mumford criterion). So, $\kb$ is a scheme.

It will be useful to write down the $\Gamma$-action on $U$ explicitly:
\begin{equation} \label{eqn:gamSym}
    {\begin{pmatrix}
\alpha & \beta \\
0 & \delta
\end{pmatrix}}.(\ul b, \ul d,v,\ul w) := (\alpha^{-1}\ul b, \sp \delta^{-1}\ul d - \beta \alpha^{-1}\delta^{-1} \ul b, \sp\alpha^2\delta v, \sp \alpha \delta \ul w)
\end{equation}
Denote:
\begin{itemize}
    \item $U^{'}  \eq \{(\ul b, v,\ul w)\in V_6 \oplus \mathbb{C} \oplus L^\vee \ | \ (v, \ul w) \neq 0, \sp \ul b \neq 0 \}$
    \item $U^{''}  \eq \{(\ul x,\ul w)\in V_6 \oplus L^\vee \ | \ (\ul x, \ul w) \neq 0 \}$
\end{itemize}
We have natural actions:
\begin{itemize}
    \item $T \curvearrowright U'$ : $(\alpha, \delta). (\ul b, v,\ul w) := (\alpha^{-1}\ul b, \sp\alpha^2\delta v, \sp \alpha \delta \ul w)$
    \item $\mathbb{C}^* \curvearrowright U^{''}$ : $\lam. (\ul x,\ul w) := (\lam \ul x, \lam \ul w)$

\end{itemize}
Now consider the following sequence of projections.
\[
\begin{array}{ccccc}
U & \xrightarrow{\til p} & U^{'} & \xrightarrow{\til\pi'} & U^{''} \\
(\ul b, \ul d, v, \ul w) & \longmapsto & (\ul b, v,\ul w) & \longmapsto & (v\ul b,\ul w)
\end{array}
\]
This sequence is equivariant with respect to the following sequence of projections of algebraic groups.
\[
\begin{array}{ccccc}
\Gamma & \xrightarrow{} & T & \xrightarrow{} & \C^* \\
{\begin{pmatrix}
\alpha & \beta \\
0 & \delta
\end{pmatrix}} & \longmapsto & (\alpha, \delta)& \longmapsto & \al\del
\end{array}
\]
Hence, it induces a sequence of maps of quotient stacks. Now, the quotient $U^{''}/\C^*$ is just $\P(V_6\oplus L^\vee)=\P^8$ and the map $\pi':U^{'}/T\xrightarrow{} U^{''}/\C^*$ can be identified with the blow-up $\til{\P^8}$, of this $\P^8$ along the plane $\P(L^\vee)=\P^2$. This identification is done by writing $\blp = \{([\ul b],[\ul x,\ul w])\in \P(V_6)\times \P(V_6\oplus L^\vee) \sp | \sp \ul b \wedge \ul x = 0 \}$ and mapping: 
\begin{equation}
    U^{'}/T \owns(\ul b, v, \ul w)\mapsto ([\ul b],[v\ul b,\ul w])\in \blp
\end{equation}
So, finally we get an induced sequence of maps of quotient stacks:
\begin{equation}
    \kb\xrightarrow{p} \blp\xrightarrow{\pi'}\P^8
\end{equation}
$\blp$ also has a projection onto $\P(V_6)=\P^5$, by which it can be realized as the projective bundle $\P(\O_{\P^5}(-h) \oplus {\O_{\P^5}}^{\oplus 3})$. Denote this projection by $\pi:\blp \xrightarrow{} \P^5$. Recall the notations $h$ for the hyperplane class of the $\P^5$ and $H$ for the hyperplane class of the $\P^8$, introduced in \Cref{sec:geometry}. Whenever twists by these appear on sheaves defined on other varieties, we mean twists by their pullback under the obvious maps available in the situation. We will also record, for future use, the representation theoretic view of these twists.

\begin{observation}  On $\kb$, under the identification $Coh(\kb)\simeq Coh_\Gam(U)$, 

\begin{itemize}
    \item $\O(h)\simeq \C_{\alpha^{-1}}\otimes \O_U$, where $\C_{\alpha^{-1}}$ is the 1-dimensional $\Gam$-representation: $\GamEl.u := \alpha^{-1}u$
    \item $\O(H)\simeq\C_{\alpha \delta}\otimes \O_U$, where $\C_{\alpha\delta}$ is the 1-dimensional $\Gam$-representation: $\GamEl.u := \alpha \delta u$
\end{itemize}    
\end{observation}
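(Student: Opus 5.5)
The plan is to identify both line bundles through their sections. On a quotient stack $[U/\Gam]$, a line bundle of the form $\C_\chi\otimes\O_U$ has as local sections the functions $f$ on $\Gam$-stable opens satisfying $f(g.u)=\chi(g)f(u)$. Two facts then do most of the work.

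\begin{itemize}
\item \emph{Functoriality of pullback.} Let $\varphi:U_1\to U_2$ be equivariant along a group homomorphism $\rho:\Gam_1\to\Gam_2$. Then the induced map of stacks pulls $\C_\chi\otimes\O_{U_2}$ back to $\C_{\chi\circ\rho}\otimes\O_{U_1}$. This is immediate, since functions $f$ of weight $\chi$ pull back to functions $f\circ\varphi$ of weight $\chi\circ\rho$.
\item \emph{Projective space.} On $\P^n=(\C^{n+1}\setminus 0)/\C^*$ with the scaling action $\lam.x=\lam x$, the bundle $\O(1)$ is $\C_\lam\otimes\O$. Indeed, its global sections are the linear forms $x_i$, and these satisfy $x_i(\lam.x)=\lam x_i(x)$.
\end{itemize}

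For $\O(h)$, I use the composite $\kb\xrightarrow{p}\blp\xrightarrow{\pi}\P(V_6)$. On the level of spaces this is the map $U\to V_6\setminus 0$ sending $(\ul b,\ul d,v,\ul w)\mapsto \ul b$, which is well defined because $\ul b\neq 0$ on $U$. By \eqref{eqn:gamSym}, this map is equivariant along $\Gam\to\C^*$, $\GamEl\mapsto\alpha^{-1}$, since $\ul b\mapsto\alpha^{-1}\ul b$. Equivalently, the coordinate functions $b_i$ on $U$ satisfy $b_i(g.u)=\alpha^{-1}b_i(u)$. They are therefore sections of $\C_{\alpha^{-1}}\otimes\O_U$, and they are exactly the pullbacks of the sections $x_i$ of $\O(h)$. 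Functoriality then gives $\O(h)\simeq\C_{\alpha^{-1}}\otimes\O_U$.

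For $\O(H)$, I use the composite $\kb\to\blp\xrightarrow{\pi'}\P^8$. On spaces this is $(\ul b,\ul d,v,\ul w)\mapsto(v\ul b,\ul w)\in U''$. The weights are computed directly from \eqref{eqn:gamSym}:
\begin{itemize}
\item $v\ul b$ scales by $\alpha^2\delta\cdot\alpha^{-1}=\alpha\delta$;
\item $\ul w$ scales by $\alpha\delta$.
\end{itemize}
So the map is equivariant along $\Gam\to T\to\C^*$, $\GamEl\mapsto\alpha\delta$, matching the group homomorphisms recorded earlier. Hence the pulled-back coordinates of $\P^8$ are sections of $\C_{\alpha\delta}\otimes\O_U$, and functoriality gives $\O(H)\simeq\C_{\alpha\delta}\otimes\O_U$.

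The only real obstacle is keeping the sign convention straight. One must check that "sections of $\C_\chi\otimes\O_U$" means $f(g.u)=\chi(g)f(u)$, rather than the dual convention with $\chi^{-1}$, and that this is the convention used when $\O(1)$ on $\P^n$ is identified with $\C_\lam$. I will fix this convention once, via the projective-space bullet above, and read both identifications off from it. As a consistency check, $\O(H-h)$ corresponds to $\C_{\alpha^2\delta}$, which is precisely the weight of $v$. This matches the fact that $v$ cuts out the exceptional divisor $E=H-h$ of $\blp$.
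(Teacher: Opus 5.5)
Your argument is correct and is essentially the paper's own reasoning. The paper records this observation without a separate proof; it relies on the equivariance of $U\to U'\to U''$ along $\Gamma\to T\to\C^*$ and on the identification $U'/T\simeq\blp$, $(\ul b,v,\ul w)\mapsto([\ul b],[v\ul b,\ul w])$, so that $\O(h)$ and $\O(H)$ are read off from the weight $\alpha^{-1}$ of $\ul b$ and the weight $\alpha\delta$ of $(v\ul b,\ul w)$, exactly as you do. Your convention $f(g.u)=\chi(g)f(u)$ is also the one the paper uses elsewhere, e.g.\ in $\det(\bunNeg)=\O(\xi)$ and in matching $\C_{\delta^{-1}}$ with $\O(-h-H)$ in \Cref{obs:ses}.
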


\begin{lemma}
$\kb\xrightarrow{p} \blp$ can be realized as the total space of the bundle $\pi^*\wt \Q (-h-H)$, where $\wt \Q $ is the quotient bundle on $\P^5$, defined under the identification $\P^5=G(1,V_6)$. 
\end{lemma}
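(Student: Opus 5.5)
The map $p\colon \kb\to\blp$ is the quotient of the projection $\til p\colon U\to U'$, $(\ul b,\ul d,v,\ul w)\mapsto(\ul b,v,\ul w)$, which forgets $\ul d$. The plan is to realize $U/\Gam$ as a relative quotient in two steps. First take the quotient by the unipotent radical $N=\{\beta\}\subset\Gam$, which acts only on the $\ul d$ coordinate. Then take the quotient by the torus $T=\Gam/N$, which is exactly the group acting on $U'$ with quotient $U'/T\simeq\blp$.

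By \eqref{eqn:gamSym}, the element $\matTwo{1}{\beta}{0}{1}$ acts as $\ul d\mapsto\ul d-\beta\ul b$ and fixes $\ul b,v,\ul w$. Since $\ul b\neq0$ on $U$, this is a free translation action along the line spanned by $\ul b$. Hence $U/N\simeq \{(\ul b,[\ul d],v,\ul w)\}$, where $[\ul d]\in V_6/\langle\ul b\rangle$. Over $U'$ this is the total space of the rank $5$ bundle whose fiber at $(\ul b,v,\ul w)$ is $V_6/\langle\ul b\rangle$. This bundle is the pullback of $\wt\Q$ from $\P(V_6)$ under $[\ul b]$, at least before we keep track of the equivariant structure. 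Since $N$ acts freely and $U\to U'$ is an affine bundle with $N$ acting simply transitively along the $\ul b$-direction, I would verify that $U/N\to U'$ is a (trivially locally) $T$-equivariant vector bundle, for instance by checking it on the charts $b_i\neq0$.

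Next I track the $T$-weights. The residual torus acts by $(\al,\del)\cdot[\ul d]=\del^{-1}[\ul d]$ on the fiber and by $\ul b\mapsto\al^{-1}\ul b$ on the base. The quotient bundle $V_6/\langle\ul b\rangle$, with $V_6$ carrying the trivial character, descends to $\pi^*\wt\Q$. So I must identify the extra character $\del^{-1}$ in terms of the twists recorded in the Observation. There $\O(h)\leftrightarrow\al^{-1}$ and $\O(H)\leftrightarrow\al\del$, so $\O(-h-H)\leftrightarrow \al\cdot\al^{-1}\del^{-1}=\del^{-1}$. Hence the fiber coordinate $[\ul d]$ transforms as a section of $\wt\Q(-h-H)$ pulled back to $\blp$, and $U/\Gam\simeq\Tot(\pi^*\wt\Q(-h-H))$ over $U'/T=\blp$.

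The main obstacle is the sign and duality conventions. One has to decide whether a coordinate of weight $\del^{-1}$ corresponds to the bundle $\O(-h-H)$ or to its dual, given the paper's convention $\Tot(\E)=\operatorname{Spec}\Sym^\bullet\E^\vee$ and the identification $Coh(\kb)\simeq Coh_\Gam(U)$. I would settle this with a sanity check on a fiber over a fixed point of $\blp$. There, the points of $\Tot$ correspond to equivariant maps, and the tautological sub-line $\O(-h)$ carries the vector $\ul b$ of weight $\al^{-1}$. This matches $\O(h)\leftrightarrow\al^{-1}$ only with the convention that weights describe how sections (or functions) transform. So the variable $\ul d$, which transforms like $\ul b$ twisted by $\al\del^{-1}$, sits in $\wt\Q\otimes\O(-h-H)$. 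I would double-check this by comparing determinants: $K$ of the total space should match what the superpotential's R-charge and the Calabi--Yau condition predict later.
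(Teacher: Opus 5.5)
Your proposal is correct and is essentially the paper's argument. The paper base-changes $p$ along $U'\to U'/T$ to the map $T\times_\Gam U\to U'$, which is precisely your $U/N$; it identifies the fiber as the $T$-representation $\C_{\delta^{-1}}\otimes V_6/\langle\ul b\rangle$ and matches it with $\pi^*\wt\Q(-h-H)$ using the same dictionary $\O(h)\leftrightarrow\C_{\alpha^{-1}}$, $\O(H)\leftrightarrow\C_{\alpha\delta}$ that you use. The convention worry in your last paragraph is already settled by your own computation, since a fiber point scaling by $\delta^{-1}$ is exactly a point of $\Tot$ of the bundle attached to $\C_{\delta^{-1}}$, so the determinant cross-check is unnecessary.
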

\begin{proof}
    We will view $\blp$ under the identification $\blp\simeq U^{'}/T$ and explicitly describe the fibers over points in it. An arbitrary point in it represents an orbit $T.(\ul b, v, \ul w)$ and the fiber over it is just the pullback of the corresponding map:
    \begin{equation}
        \{(\ul b, v, \ul w)\}\rightarrow U^{'}\rightarrow U^{'}/T
    \end{equation}
    We have the following fiber product diagram:   
    \[
\begin{tikzcd}
F \arrow[r] \arrow[d] \arrow[dr, phantom, "\lrcorner", very near start] & \{(\ul b, v, \ul w)\} \arrow[d] \\
T \times_{\Gam} U \arrow[r] \arrow[d] \arrow[dr, phantom, "\lrcorner", very near start] & U' \arrow[d] \\
U/\Gam \arrow[r, "p"'] & U'/T
\end{tikzcd}
\]
The bottom square in this diagram is standard, described by the following prescriptions:
\begin{itemize}
    \item $\Gam \curvearrowright T\times U$ is given as
    \begin{equation}
        \matTwo{\alpha'}{\beta'}{0}{\delta'}.\left(\matTwo{\alpha}{0}{0}{\delta}, \ (\ul b, \ul d, v,\ul w)\right):=\left(\matTwo{\alpha\alpha'^{-1}}{0}{0}{\delta\delta'^{-1}},\  \matTwo{\alpha'}{\beta'}{0}{\delta'}.(\ul b, \ul d, v,\ul w)\right)
    \end{equation}
    \item $T\times_\Gam U\rightarrow U'$ is given as $(t,u)\mapsto t.u$

    \item $T\times_\Gam U\rightarrow \kb $ is induced by the projection $T\times U \rightarrow U$.
\end{itemize}
So, the fiber is 
\begin{equation}
    F = \left\{ \left( \matTwo{\alpha}{0}{0}{\delta}, \matTwo{\alpha^{-1}}{0}{0}{\delta^{-1}} \cdot (\ul b, \ul d, v, \ul w) \right) \ |\ \alpha\delta\neq0,\ \ul d\in V_6 \right\}/\Gam
\end{equation}

Notice that $T\times_\Gam U$ has a natural $T$-action on it by left multiplication with the $T$ factor. $F$ inherits this action and under it, we have the following natural isomorphism of $T$-representations:
\begin{equation}
    \begin{array}{ccc}
     F&\xrightarrow{\sim}  &\C_{\delta^{-1}}\otimes V_6/\langle\ul b\rangle\\
     \left( \matTwo{1}{0}{0}{1}, \  (\ul b, \ul d, v, \ul w) \right) & \longmapsto & 1\otimes  [\ul d]
\end{array}
\end{equation}
Clearly, $\C_{\delta^{-1}}\otimes V_6/\langle\ul b\rangle$ is just the fiber of $\Tot(\pi^*\wt \Q (-h-H))$ over the point we started with.
\end{proof}
In the following lemma we compute the critical locus of $(U/\Gam,\omega)$. The variety $\bly$ appearing there is described explicitly in the proof. It turns out, that it is the same variety $\bly$ of \Cref{prop:gorCY3}, given that we make the same choice of section $s$ (that defines $X$) as we have made in this section.

\begin{lemma}\label[lemma]{lem:critPos}
$\Crit(\kb, \omega) = \wtil Y$. Moreover, $\bly$ embeds in  $\kb$ through its natural embedding in $\blp$, followed by the embedding of  $\blp$ in $\kb$ as the zero section of the vector bundle projection $p$.
\end{lemma}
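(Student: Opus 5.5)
The plan is to show that $\omega$ is a function on $\Tot(\pi^*\wt\Q(-h-H))$ which is linear on the fibers. By \Cref{lem:section}, $\omega$ then equals $\chk\psi$ for a section $\psi\in\Gam(\blp,\pi^*\wt\Q^\vee(h+H))$. After that I will identify $Z(\psi)$ with the variety $\bly$ of \Cref{prop:gorCY3}, and conclude with \Cref{rem:vansihingLocusVsCritLocus}.

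\emph{Step 1: write $\omega$ explicitly.} Under the chain of isomorphisms \eqref{eq:chainOfIsos}, $s$ corresponds to a pair $(s_1,s_2)$. Here $s_1\in\wedge^2V_6^\vee$ is a skew form representing the $\Uc^\vee(\xi)$-component, thought of as a cubic-type pairing. More precisely, $s_1\in H^0(\Uc^\vee(\xi))\simeq (V_6\otimes\wedge^2V_6)^\vee$ modulo the totally antisymmetric part, i.e.\ a tensor $A(\ul x;\ul b,\ul d)$ that is linear in $\ul x$ and skew in $(\ul b,\ul d)$. The component $s_2\in L\otimes\wedge^2V_6^\vee$ is a triple of skew forms. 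The $G$-invariant function $\chk s$ on $V$ is then
\[
\chk s(B,\ul v,\ul w)=A(B\ul v;\ul b,\ul d)+\langle \ul w, s_2(\ul b\wedge\ul d)\rangle .
\]
One checks $G$-invariance from the explicit action, and also checks that on $V_-$ this reproduces $\chk s_-$. Restricting to $W$, i.e.\ setting $\ul v=(v,0)$, gives
\[
\omega=A(v\ul b;\ul b,\ul d)+\langle\ul w,s_2(\ul b\wedge\ul d)\rangle .
\]

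\emph{Step 2: linearity and $\Gam$-invariance.} The expression is linear in $\ul d$. It is also invariant under $\ul d\mapsto\ul d+c\ul b$, because both terms are skew in $(\ul b,\ul d)$. So $\omega$ descends to a function that is linear on the fibers $\C_{\delta^{-1}}\otimes V_6/\langle\ul b\rangle$ of $p$. Its coefficient is, fiberwise, the linear form
\[
\ul d\mapsto A(v\ul b;\ul b,\cdot)+\langle\ul w,s_2(\ul b\wedge\cdot)\rangle,
\]
which kills $\ul b$. Hence it is a section $\psi$ of $\pi^*\wt\Q^\vee(h+H)$, the twists being read off from the weights $\alpha^{-1}$ and $\alpha\delta$ as in the Observation.

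\emph{Step 3: identify $Z(\psi)$.} The section $\psi$ is exactly the image of $s$ under the chain \eqref{eq:chainOfIsos}, giving $f_s:\O(-h)\oplus L^\vee\otimes\O\to\wt\Q^\vee(h)$. It is evaluated at the point $[v\ul b:\ul w]$ of the fiber of $\pi$, which is the standard resolution of $D_3(f_s)$ inside $\PP(\O(-h)\oplus L^\vee\otimes\O)$ used in \Cref{sec:geometry}. So $Z(\psi)=\bly$, and the claimed embedding (zero section of $p$ composed with $\bly\subset\blp$) is automatic. Since $\bly$ is smooth of dimension $3=8-5$, the section $\psi$ is regular, and \Cref{rem:vansihingLocusVsCritLocus} gives $\Crit(\kb,\omega)=Z(\psi)=\bly$.

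\emph{Main obstacle.} The hard part is the bookkeeping in Steps 1–2. One has to make sure that the extension $\chk s$ obtained by Hartogs really has the stated explicit form, and that the character twists produce exactly $\pi^*\wt\Q^\vee(h+H)$. I would handle this by verifying the formula on $V_-$ directly and then using the uniqueness of the extension.
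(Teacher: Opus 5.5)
Your proposal is correct and follows essentially the paper's proof. In both, you write $\chk s$ explicitly, restrict to $v_2=0$ to get $\omega=(\eta_{ipq}vb^i+l_{ipq}w^i)b^pd^q$, observe that it is linear in the fiber coordinates $\ul d$, and take the critical locus to be the zero locus of the coefficient section. The only difference is the last identification: you match $\psi$ with the section induced by $f_s$ through \eqref{eq:chainOfIsos}, so $Z(\psi)$ is $\bly$ by its definition in \Cref{sec:geometry}, whereas the paper writes the zero locus as $V(M\ul b, M\ul x,\Pf(M))\subset\blp$ and recognizes it as the blow-up of $Y$ along $C$. Your route is slightly cleaner because it avoids the paper's closure argument for $\Pf(M)$.
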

\begin{proof}
Continuing from the proof of \Cref{critNeg}, let us start by describing the induced superpotential $\chk s$ on the central quotient stack $V/G$ even more explicitly. Then, we will pull it back to $\kb$. For readability, we will adopt here the ``Einstein's convention" of summing up repeated indices. 

Recall that $\chk s$ can be seen as a $G$-invariant function $V \eq \Hom(S, V_6) \oplus E\rightarrow \C$ that is linear in coordinates of $E$. In this view, it has the following general form, written in coordinates $\coordsOfV$ of  $V$: 
\begin{equation}\label{eqn:superpot}
    \chk s_- = s_1(B)v^1 + s_2(B)v^2 + l_1(B)w^1 + l_2(B)w^2 + l_3(B)w^3
\end{equation}
where,
\begin{itemize}
    \item $s_j(B)=\eta_{ipq}b^pd^qB^i_j$ with $\eta_{ipq}$ being general scalars such that $\eta_{ipq}=-\eta_{iqp}$.
    \item $l_j(B)=l_{jpq}b^pd^q$ with $l_{jpq}$ being general scalars such that $l_{ipq}=-l_{iqp}$.
\end{itemize}
Note that $(s_1(B),s_2(B))$ corresponds to a general section of $\bunNeg (1)$ and each $l_j(B)$ corresponds to general section of $\O(1)$. Pulling it back to $U$ amounts to putting $v^2=0$ and replacing $v^1$ with $v$. So, finally the superpotential on $U/\Gam$ is given by the following  $\Gam$-invariant function $U\rightarrow \C$.
\begin{equation}
    \omega = (\eta_{ipq}vb^i+l_{ipq}w^i)b^p d^q
\end{equation}
Notice that it is linear in $\ul d$, which are the coordinates of the fibers of $\kb\xrightarrow{} U^{'}/T \simeq \blp$. So, the critical locus is the locus where the coefficients, considered as functions on the base, vanish. That is,
\begin{equation}
    \Crit(\kb, \omega)=V(\{(\eta_{ipq}vb^i+l_{ipq}w^i)b^p\}_q)\subset U^{'}/T
\end{equation}
Now let us re-describe it under the identification $U^{'}/T \simeq \blp$. Working in $v\neq 0$ is equivalent to working in $\ul x \neq 0$ in $\blp$ i.e. away from the exceptional locus. Here, the critical locus equations are equivalent to the equations $(\eta_{ipq}x^i+l_{ipq}w^i)b^p=0$ and $(\eta_{ipq}x^i+l_{ipq}w^i)x^p=0$. Either of these vanishings forces the skew-symmetric matrix $M_{pq}\equiv (\eta_{ipq}x^i+l_{ipq}w^i)_{pq}$ to be degenerate. So, in addition to the above two sets of equations, $\Pf(M)=0$ must also hold away from the exceptional locus. So, it must also hold in the closure. Therefore, we conclude that under the identification $U^{'}/T \simeq \blp$, we have:
\begin{equation}
    \Crit(\kb, \omega)=V(M\ul b, M\ul x, \Pf(M))\subset \blp
\end{equation}
Above is just the blow-up of $Y\eq V(M\ul x, \Pf(M))$ along the linear section $ C\eq V(\ul x)\cap Y$. By the discussion in \Cref{sec:geometry}, it is clear that $Y$ is a general Calabi-Yau threefold in the family described there and $C$ is the elliptic curve in it.
\end{proof}
Considering $(U/\Gam, \omega)$ with the induced $\C^*_R$-action (from $V_+/G$), turns it into a LG model. Let us inspect this induced $\rchar$-action closely. It looks like
\begin{equation}
    \rchar \curvearrowright U/\Gam \ \sim \ \lam.(\ul b,\ul d, v, \ul w) = (\ul b, \ul d, \lam^2 v, \lam^2 \ul w)
\end{equation}

To correctly "measure" the R-charge along the fiber coordinates $\ul d$ we must first "gauge-fix" the base coordinates $(\ul b, v, \ul w)$, as these coordinates are seen up to gauge symmetry. In more precise terms, we use the $\Gam$-symmetry \eqref{eqn:gamSym} to write 
\begin{equation}
\begin{aligned}
\lam.(\ul b,\ul d, v, \ul w) &= (\ul b, \ul d, \lam^2 v, \lam^2 \ul w)\\
&=(\alpha ^{-1}\ul b,\  \delta^{-1}\ul d-\beta\alpha^{-1}\delta^{-1} \ul b,\  \alpha^2\delta\lam^2 v,\  \alpha\delta\lam^2 \ul w)\\
&=(\ul b,\lam^2\ul d, v, \ul w)
\end{aligned}
\end{equation}
where the last equality is obtained by choosing $\alpha =1$, $\beta =0$ and $\delta = \lambda^{-2}$. So, we see that the fiber coordinates $\ul d$ have R-charge 2 and there is no R-charge along the base. Also, notice that the superpotential $\omega$ is linear along the fibers. Therefore, after our stacky birational modifications in the positive side, we have finally arrived at a Kn\"orrer model $(U/\Gam, \omega)$ and hence are now in a position to apply Kn\"orrer periodicity (\Cref{thm:knorrer}). We get

\begin{corollary}\label[corollary]{cor:applyingKnorrer}
    $D(\bly)\simeq D(U/\Gam, \omega)$
\end{corollary}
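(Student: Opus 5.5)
The corollary follows from Kn\"orrer periodicity (\Cref{thm:knorrer}) applied with $\mathfrak{X}=\blp$ and $\V^\vee=\pi^*\wt\Q(-h-H)$. So it suffices to check its hypotheses for the model $(\kb,\omega)$. The plan has three steps.

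\textbf{Step 1: the geometric set-up.} By the preceding lemma, $p\colon\kb\to\blp$ is the total space of $\pi^*\wt\Q(-h-H)$. Hence $\kb\simeq\Tot(\V^\vee)$ with
\[
\V=\pi^*\wt\Q^\vee(h+H),
\]
a vector bundle of rank $5$ on the smooth projective variety $\blp$. Global generation of $\V$ is immediate. Indeed, $\wt\Q^\vee(h)\simeq\Omega_{\P^5}(2h)$ is globally generated, and $\O(H)$ is globally generated on $\blp$, so their tensor product is globally generated. This also matches the description of $\wt Y$ in \Cref{sec:Y} as the zero locus of a section of $\pi^*\wt\Q^\vee(h+H)$.

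\textbf{Step 2: $\omega$ is the function $\chk\psi$ attached to a section, with the correct R-charges.}
\begin{itemize}
\item The explicit formula $\omega=(\eta_{ipq}vb^i+l_{ipq}w^i)b^pd^q$ is linear in the fibre coordinates $\ul d$. By \Cref{lem:section}, $\omega=\chk\psi$ for a unique section $\psi\in\Gam(\blp,\V)$.
\item The coefficients $\{(\eta_{ipq}vb^i+l_{ipq}w^i)b^p\}_q$ are $\Gam$-equivariant. Only their classes modulo $\ul b$ matter, since $\omega$ pairs $\ul d$ with them and $\ul d$ is taken modulo $\ul b$. This is exactly what makes them a section of $\wt\Q^\vee$ twisted appropriately.
\item The gauge-fixing computation shows that the induced $\rchar$-action is fibrewise dilation of weight $2$ with trivial action on the base. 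This is precisely the R-charge convention of \Cref{def:knorrerModel}.
\end{itemize}
Thus $(\kb,\omega)$ is literally the Kn\"orrer model $(\Tot(\V^\vee),\chk\psi)$.

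\textbf{Step 3: regularity of $\psi$ (the main obstacle).} We must show that $Z(\psi)$ has codimension $5=\rk\V$ in the $8$-dimensional $\blp$. By \Cref{lem:critPos}, $Z(\psi)=\Crit(\kb,\omega)=\wt Y$. By \Cref{prop:gorCY3}, $\wt Y$ is a smooth threefold, so its codimension is $8-3=5$, as required.

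The delicate point is that the section obtained here must coincide with a \emph{general} section. It also must not acquire extra components along the exceptional divisor $v=0$, where the equations degenerate.
\begin{itemize}
\item I would first use the chain of isomorphisms \eqref{eq:chainOfIsos}. It identifies sections of $\U^\vee(\xi)\oplus L\otimes\O(\xi)$ with morphisms $\O(-h)\oplus L^\vee\otimes\O\to\wt\Q^\vee(h)$, and hence with sections of $\pi^*\wt\Q^\vee(h+H)$ on $\blp=\P(\O(-h)\oplus L^\vee\otimes\O)$. Since $s$ is general, $\psi$ is general too.
\item I would then invoke the degeneracy-locus dimension count quoted from \cite{lazarsfeld_positivity_II}: $D_2(f_s)=\emptyset$, and the resolution of $D_3(f_s)$ is cut out with the expected codimension.
\item Over the exceptional divisor, the fibre description of $E_Y$ in \Cref{prop:explicitDescriptionOfY} shows that $Z(\psi)$ meets it in a surface. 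So $Z(\psi)$ has no excess component there.
\end{itemize}

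With all hypotheses verified, \Cref{thm:knorrer} gives the equivalence $Ri_*\circ\bar p^*\colon D(\bly)\xrightarrow{\sim}D(\kb,\omega)$.
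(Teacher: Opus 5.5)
Your proposal is correct and takes the same route as the paper. The preceding lemma identifies $\kb$ with $\Tot(\pi^*\wt\Q(-h-H))$ over $\blp$. The function $\omega$ is linear in $\ul d$, and after gauge-fixing the fibres carry R-charge $2$. \Cref{lem:critPos}, together with the smoothness of $\wt Y$, then gives a regular section, so \Cref{thm:knorrer} applies.

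Your extra checks (global generation, and no excess component of $Z(\psi)$ inside $E$) make explicit what the paper leaves implicit. One small slip: the coefficient covector of $\ul d$ is not taken modulo $\ul b$. Rather, it lies in the annihilator of $\ul b$, since it pairs to zero with $\ul b$ by the skew-symmetry of $\eta_{ipq}$ and $l_{ipq}$.
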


\subsection*{The Overall Picture}
Following diagram captures the picture developed so far.
\begin{equation*}
\begin{tikzcd}[row sep=large, column sep=large]
    & (V/G, \check{s}) & & & \\
    (V_-/G, \check{s}_-) 
        \arrow[ur, hook, "i_-"'] 
        \arrow[rr, dashed, <->, "\sim"] 
        \arrow[d, "\pi"', xshift=1.0em] 
    & & 
    (V_+/G, \check{s}_+) 
        \arrow[ul, hook', "i_+"] 
        \arrow[d] 
    & 
    (W/\Gamma, \omega) 
        \arrow[l, "f"] 
    & 
    (U/\Gamma, \omega) 
        \arrow[l, hook', "i"] 
        \arrow[d, "p"', xshift=-1.2em] 
    \\
    X \subset G(2,V_6) 
    & & 
    \P ^8 
    & & 
    \widetilde{\P }^8 \supset \widetilde{Y} 
        \arrow[ll] 
\end{tikzcd}
\end{equation*}

\section{Construction of the Functor}\label{sec:constructionOfFunctor}
In this section, we will construct a fully-faithful functor $D(V_-/G,\chk s_-) \xrightarrow{} D(U/\Gam, \omega)$. We will do it in five steps. In the first four steps, we will construct a fully-faithful functor between the corresponding derived categories. And in the last step, we will ``turn on the superpotential" to produce a fully-faithful functor between the B-brane categories.

Before we begin, let us review the notion of window categories which is central to our construction. As both $V_\pm/G$ are smooth, any object in $D(V_\pm/G)$ is isomorphic to a complex of vector bundles and hence can be extended to an object in $D(V/G)$ (after extending the vector bundles themselves, one extends the morphisms between them by using Hartog's lemma). In other words, we have essentially surjective restriction functors:
\begin{equation}
    Li^*_{\pm}:D(V/G)\rightarrow D(V_\pm/G)
\end{equation}
So, one might hope that there exist sub-categories $\W_\pm\subset D(V/G)$ such that $Li^*_{\pm}|_{\W_\pm}$ are equivalences or at least fully-faithful. 
\begin{definition}
We say that $\W_\pm$ are \textit{window} sub-categories if indeed $Li^*_{\pm}|_{\W_\pm}$ are equivalences. Instead, if we only get fully-faithful functors we refer to $\W_\pm$ as \textit{partial windows}.     
\end{definition}
In general, the status of a sub-category being a (partial) window depends on the side ($+$ or $-$) of the variation of GIT (VGIT) we are on. Halpern-Leistner's theory \cite{HL} gives a general construction of windows for the quotient of a smooth quasi--projective variety by a reductive group, using certain ``grade restriction rules" on the weight space of the group. His results, in particular, imply that windows do exist for both sides of our VGIT.

\subsection{Step 1: Window Functor for the Positive Side}
Here we will construct a partial window for the positive side of the VGIT using Halpern-Leistner's theory \cite{HL}. This will turn out (as we will see in step 2) to be a full window for the negative side of the VGIT. To apply this theory we first need to construct the Kempf-Ness stratification of the unstable locus $V\ \setminus\ V_+$.

\subsubsection{Kempf-Ness Stratification}
We will apply the general formalism of Kempf-Ness stratification for affine GIT quotients, as given in \cite{Hoskins}, to our situation.

Fix the chosen maximal torus in $G$ to be $T$, the subgroup of diagonal matrices. To obtain the indices of the Kempf-Ness stratification we will focus on the induced action $T\curvearrowright V$. We have the following weight space decomposition of $V$ with respect to the torus action:
\begin{equation}
    \begin{array}{ccccccccccc}
V & = &\C^6_{\ul b} &\oplus & \C^6_{\ul d} &\oplus & \C_{v_1} &\oplus & \C_{v_2} &\oplus & C^3_{\ul w}\\
weight&:&  (-1,0) &&(0,-1)&&(2,1)&&(1,2)&&(1,1)
\end{array}
\end{equation}

\begin{figure}[htbp]
    \centering
    \begin{tikzpicture}[>=stealth, scale=0.8]
        \draw[thin, gray!40] (-3,-3) grid (3,3);

        \draw[<->, gray!50, thick] (-3,0) -- (3,0);
        \draw[<->, gray!50, thick] (0,-3) -- (0,3);
        
        \draw[->, thick, black] (0,0) -- (-1,0);
        \draw[->, thick, black] (0,0) -- (0,-1);
        \draw[->, thick, black] (0,0) -- (1,2);
        \draw[->, thick, black] (0,0) -- (2,1);
        
        \draw[->, thick, black] (0,0) -- (1,1);
        \draw[->, thick, red] (0.05,-0.05) -- (1.05,0.95) node[right, yshift=4pt, red] {$\mu$};

    \end{tikzpicture}
    \caption{T-weights of $V$}
    \label{fig:vector_configuration}
\end{figure}

Denote the character and co-character lattice of $T$ by $\X^*(T)$ and $\X_*(T)$ respectively and do the same analogously for $G$. Note that a co-character of a group is just a 1-parameter subgroup (1-PS) in the group. We will identify both $\X^*(T)$ and $\X_*(T)$ as $\Z^2$ in the obvious way. Then the natural pairing between them is simply the dot product on $\Z^2$. The chosen character of this phase, $\det \in \X^*(G)$, restricts to the torus character $\mu=(1,1)\in \X^*(T)$. So, the normalized Hilbert-Mumford function for $T\curvearrowright V$ is

\begin{equation}
    HM_T(v) := \inf \frac{m+n}{\sqrt{m^2+n^2}}
\end{equation}
where $v\in V$ and the infimum is taken over all $(m,n)\in \X_*(T)$ such that $\lim_{t\rightarrow 0}\matTwo{t^m}{0}{0}{t^n}.v$ exists. This latter collection of 1-PSs is called the \textit{cone of allowable 1-PSs} for $v$ and is more explicitly given as:
\begin{equation}
    C_v\equiv\{(m,n)\in \X _*(T)\ |\ (m,n).(p,q)\geq 0 \text{ for all weights }(p,q) \text{ appearing in } v \}
\end{equation}
Note that a point $v$ is $T$-unstable if and only if $HM_T(v)<0$. For a $T$-unstable point $v$, we say that a 1-PS $(m_0,n_0)$ \textit{maximally destabilizes} $v$ if it achieves the above infimum, that is $ HM_T(v) = (m_0+n_0)/\sqrt{m_0^2+n_0^2}$. The $G$-conjugacy classes of such 1-PSs will form the indices of the desired stratification. 
For a 1-PS $\lam$ denote:
\begin{itemize}
    \item $Y_\lam\equiv$ the locus of points in $V$ that maximally destabilized by $\lam$
    \item $S_{[\lam]} \equiv G.Y_\lam=$ the locus of points in $V$ that are maximally destabilized by some $G$-conjugate of $\lam$. 
    \item  $Z_\lam\equiv$ the locus of points in $Y_\lam$ that are fixed by $\lam$.   
\end{itemize}
Observe that, for any $\lam$, we have a sequence of embeddings:
\begin{equation}
    Z_\lam\subseteq Y_\lam\subseteq S_{[\lam]}
\end{equation}
For any $T$-unstable point $v$ there exists a unique 1-PS that maximally destabilizes $v$ (see lemma 2.13 in \cite{Hoskins}). Explicitly, this is the 1-PS in $C_v$ having maximum angle with $\mu$.  Clearly, this it is determined just by the torus weights appearing in $v$, which is some subset of the finitely many torus weights appearing in the weight space decomposition of $V$. In particular, this shows that there are finitely many 1-PSs that are capable of maximally destabilizing points in $V$ and we can iterate through all of them by iterating through all subsets of weights that can appear in a general $T$-unstable point.  \Cref{tab:max1PSs}\footnote{In this table, the set of weights $\{(0,0)\}$ corresponds to the origin in $V$.} lists them all.

\begin{table}[htbp]
    \centering
    
\begin{tabular}{|c|c|c|}\hline
        \rule[-1.2ex]{0pt}{3.7ex} Weights of a T-unstable point& Maximally destabilizing 1-PS& HM function value\\ \hline
        \rule[-1.2ex]{0pt}{3.7ex}$\{(0,0)\}$ & $(-1,-1)$ & $-\sqrt{2}$\\\hline
        \rule[-1.2ex]{0pt}{3.7ex}$\{(-1,0)\}$ & $(-1,-1)$ & $-\sqrt{2}$\\\hline 
        \rule[-1.2ex]{0pt}{3.7ex}$\{(0,-1)\}$ & $(-1,-1)$ & $-\sqrt{2}$ \\\hline
        \rule[-1.2ex]{0pt}{3.7ex}$\{(2,1)\}$ & $(1,-2)$ & $-1/\sqrt{5}$\\\hline
        \rule[-1.2ex]{0pt}{3.7ex}$\{(1,2)\}$ & $(-2,1)$ & $-1/\sqrt{5}$\\\hline
        \rule[-1.2ex]{0pt}{3.7ex}$\{(0,-1), (-1,0)\}$ & $(-1,-1)$ & $-\sqrt{2}$ \\ \hline
        \rule[-1.2ex]{0pt}{3.7ex}$\{(-1,0),(1,2)\}$ & $(-2,1)$ & $-1/\sqrt{5}$\\\hline
        \rule[-1.2ex]{0pt}{3.7ex}$\{(0,-1),(2,1)\}$ & $(1,-2)$ & $-1/\sqrt{5}$\\\hline
    \end{tabular}
\caption{Maximally destabilizing 1-PSs}
\label{tab:max1PSs}
\end{table}

Considering the 1-PSs listed in \Cref{tab:max1PSs} up to conjugation we obtain just two classes: $[(-1,-1)]$ and $[(1,-2)]$. So, now we need to compute the associated loci $Z_\lam$, $Y_\lam$, $S_{[\lam]}$ for $\lam=(-1,-1),(1,-2)$. We do this in \Cref{tab:KempfNess} using the data in \Cref{tab:max1PSs}. The loci $S_{[\lam]}$ are exactly the strata in the Kempf-Ness stratification that we had set out to construct.
\begin{table}[htbp]
    \centering
\begin{tabular}{|c|c|c|c|}\hline
        \rule[-1.2ex]{0pt}{3.7ex} $\lam$& $Z_\lam$ &$Y_\lam$ & $S_{[\lam]}$\\ \hline
        \rule[-1.2ex]{0pt}{3.7ex} $(-1,-1)$ & $\{0\}$&$V(v_1,v_2,\ul w)$ & $V(v_1,v_2,\ul w)$\\\hline
        \rule[-1.2ex]{0pt}{3.7ex} $(1,-2)$ & $V(\ul b, \ul d, v_2, \ul w)\cap \{v_1\neq 0\}$ & $V(\ul b, v_2, \ul w)\cap \{v_1\neq 0\}$ &$V(\ul b v_1+\ul d v_2, \ \ul w)\cap \{\ul v\neq 0\}$ \\\hline
    \end{tabular}
\caption{Loci associated to Kempf-Ness Stratification of $V^{us}$ }
\label{tab:KempfNess}
\end{table}
\subsubsection{HL Window}
Computing the window using Halpern-Leistner's theory amounts to computing the numbers $\eta_\lam$ appearing in the so-called ``grade restriction rules" that define such windows (see  \cite[Lemma 2.9]{HL}). These numbers are defined as
\begin{equation}
    \eta_\lam:=wt_\lam(\det\ \N^\vee_{S_{[\lam]}/V}|_{Z_\lam})
\end{equation}
In our situation, the normal bundles involved are easy to compute using the explicit description of the stratification given in \Cref{tab:KempfNess}. They can be written in terms of (tensor products and direct sums of) $\O(1,0)$ and $\O(0,1)$, which denote the obvious line bundles wrt the $T$-action on either of the fixed loci $Z_\lam$. We get

\begin{itemize}
    \item $\eta_{(-1,-1)}= wt_{(-1,-1)}\ \det(\O(-2,-1)\oplus\O(-1,-2)\oplus\O(-1,-1)^{\oplus 3})$ \\
    \hspace*{3.63em}$=wt_{(-1,-1)}\O(-6,-6)=12$

    \item $\eta_{(1,-2)}= wt_{(1,-2)}\ \det(\O(-1,-1)^{\oplus 6 }\oplus\O(-1,-1)^{\oplus 3})=wt_{(1,-2)}\ \O(-9,-9) = 9$
\end{itemize}

For any choice of $(w_1,w_2)\in \Z^2$, \cite[Theorem 2.10]{HL} says that the following sub-category of $D(V/G)$ is a window for the positive side of the VGIT\footnote{Note that our definition actually retrieves the dual of Halpern-Leistner's window. But the cited theorem still holds.}.

\begin{equation*}
\W_+ := \left\{ \F^* \in D(V/G) \ \middle|\ 
\begin{aligned}
    &\text{denoting }\lam_1\equiv(-1,-1) \text{ and }\lam_2\equiv(1,-2) \text{, the following holds: }\\
    &\forall i,\text{any } \lam_i\text{-weight } \kappa_i \text{ appearing in } H_*(Li_{Z_{\lam_i}}^*(\F^*)^\vee) \text{ satisfies} \\
    &w_i\leq \kappa_i < w_i+\eta_{\lam_i}
\end{aligned}
\right\}
\end{equation*}

In particular, if we look for a $G$-equivariant vector bundle $\O_V\otimes E$ in the window, then the torus weight $(a,b)$ of $E^\vee$ must satisfy the following conditions.
\begin{equation}
    w_1 \leq -a-b < w_1+12 \quad \quad w_2 \leq a-2b < w_2+9
\end{equation}

Such weight conditions are called \textit{grade restriction rules}. For example, the grade restriction rules obtained for the choice $(w_1, w_2)=(-10,-6)$, cut out the shaded parallelogram drawn in \Cref{fig:wtWindow} which is contained in the weight lattice of $G$ (identified as a $\Z^2$).
\begin{figure}[htbp]
    \centering
    \begin{tikzpicture}[>=stealth, scale=0.8]
        \fill[gray!15] ({-8/3},{5/3}) -- (0,-1) -- ({22/3},{8/3}) -- ({14/3},{16/3}) -- cycle;

        \draw[thin, gray!40] (-4,-2) grid (9,6);

        \draw[<->, gray!70, very thick] (-4,0) -- (9,0) node[below left, black, scale=1.2] {$a$};
        \draw[<->, gray!70, very thick] (0,-2) -- (0,6) node[below left, black, scale=1.2] {$b$};
        
        \draw[thick, dashed, gray!70] (-2,-2) -- (6,6) node[below right, black, scale=1.1] {};

        \draw[black!60, very thick] ({-8/3},{5/3}) -- (0,-1) -- ({22/3},{8/3}) -- ({14/3},{16/3}) -- cycle;
        
        \tikzset{
            wt/.style={circle, fill=red, inner sep=1.5pt},
            domWt/.style={circle, draw=red, thick, inner sep=3.2pt}
        }
        \node[wt] at (-1,0) {}; \node[wt] at (0,-1) {}; 
        \node[wt] at (0,1) {}; \node[wt] at (1,0) {}; 
        \node[wt] at (0,2) {}; \node[wt] at (2,0) {}; 
        \node[wt] at (1,2) {}; \node[wt] at (2,1) {}; 
        \node[wt] at (1,3) {};\node[wt] at (3,1) {};
        \node[wt] at (2,3) {};\node[wt] at (3,2) {};
        \node[wt] at (2,4) {};\node[wt] at (4,2) {};
        \node[wt] at (3,4) {};\node[wt] at (4,3) {};
        \node[wt] at (4,5) {};\node[wt] at (5,4) {};
        \node[wt] at (0,0) {};\node[wt] at (1,1) {};\node[wt] at (2,2) {};\node[wt] at (3,3) {};\node[wt] at (4,4) {};\node[wt] at (5,5) {};

        \node[domWt] at (0,-1) {}; \node[domWt] at (1,0) {}; \node[domWt] at (2,0) {}; \node[domWt] at (2,1) {}; \node[domWt] at (3,1) {};\node[domWt] at (3,2) {};\node[domWt] at (4,2) {};\node[domWt] at (4,3) {};\node[domWt] at (5,4) {};\node[domWt] at (0,0) {};\node[domWt] at (1,1) {};\node[domWt] at (2,2) {};\node[domWt] at (3,3) {};\node[domWt] at (4,4) {};\node[domWt] at (5,5) {};
    \end{tikzpicture}
    
    \vspace{1.5ex}
    {\small
    \tikz[baseline=-0.6ex]\node[circle, draw=red, thick, inner sep=2.5pt]{}; admissible dominant weights \quad\tikz[baseline=-0.6ex]\node[circle, fill=red, inner sep=1.5pt]{}; weights appearing in all admissible irreps.\par}
    \vspace{1ex}
    \caption{Window of Weights}
    \label{fig:wtWindow}
\end{figure}

As will be clear from the later steps, it turns out that the full window is too big for our purposes. We will only consider the sub-category generated by vector bundles corresponding to $G$-irreps whose weights satisfy the grade restriction rules. This amounts to looking for dominant weights $w$ in the weight lattice such that all weights on the line segment joining $w$ and its reflection across the $a=b$ line, satisfy the grade restriction rules. Let us call them the \textit{admissible dominant weights} and call the corresponding $G$-irreps to be the \textit{admissible irreps}. In fact, the choice $(w_1, w_2)=(-10,-6)$ made above yields the maximum number of admissible dominant weights. We fix this choice and, using \Cref{fig:wtWindow}, list below all the corresponding admissible $G$-irreps thus obtained.
\begin{equation}\label{eqn:windowIrreps}
\begin{gathered}
  \C ,\  \det(S^\vee),... , \det(S^\vee)^{\otimes5},\\
  S^\vee\otimes \det(S^\vee)^{\otimes-1} , \ S^\vee,... , S^\vee\otimes\det(S^\vee)^{\otimes4},\\
\Sym^2S^\vee ,\  \Sym^2S^\vee\otimes\det(S^\vee),\ \Sym^2S^\vee\otimes\det(S^\vee)^{\otimes2}  
\end{gathered}
\end{equation}
They define the following sub-category of $D(V/G)$, which by construction is also a sub-category of $\W_+$.
\begin{equation}
    \W:=\langle \O_V\otimes E\ |\ E \text{ is an irrep in the above list}\rangle
\end{equation}
As $\W_+$ is a window, its sub-category $\W$ must be a partial window. So, we finally have:
\begin{proposition}\label[proposition]{prop:ffStackToPhase}
    The restriction functor $Li_+^*:D(V/G)\rightarrow D(V_+/G)$ is fully-faithful, when restricted to $\W\subset D(V/G)$. 
\end{proposition}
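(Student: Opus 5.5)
The plan is to deduce the statement directly from Halpern-Leistner's window theorem \cite[Theorem 2.10]{HL}, applied to the Kempf-Ness stratification of $V\setminus V_+$ recorded in \Cref{tab:KempfNess}. The first step is to confirm that the hypotheses of that theorem are met. $V$ is a smooth affine $G$-variety with $G$ reductive, so the unstable locus $V\setminus V_+$ has a $\Theta$-stratification whose strata are the $S_{[\lam]}$ for $\lam\in\{(-1,-1),(1,-2)\}$. I will check from \Cref{tab:KempfNess} that each $S_{[\lam]}$ is smooth and closed in the complement of the higher strata. I will also check that $Y_\lam\to Z_\lam$ is the affine bundle given by the positive-weight part. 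In the affine linear setting all of these properties are standard \cite{Hoskins}. The weights $\eta_{\lam}$ have already been computed to be $12$ and $9$. With the choice $(w_1,w_2)=(-10,-6)$, the theorem then says that $\W_+$ is a window, so $Li_+^*|_{\W_+}$ is an equivalence onto $D(V_+/G)$, and in particular it is fully faithful.

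The second step is to show that $\W\subset\W_+$. Since $\W_+$ is a full triangulated subcategory, closed under shifts and cones, it suffices to check the generators $\O_V\otimes E$ with $E$ in the list \eqref{eqn:windowIrreps}. For such a generator, $Li^*_{Z_{\lam_i}}(\O_V\otimes E)^\vee$ is the trivial bundle $\O_{Z_{\lam_i}}\otimes E^\vee$, and it has no higher homology. Its $\lam_i$-weights are therefore exactly the pairings of $\lam_i$ with the $T$-weights of $E^\vee$. Each of these $T$-weights lies on the Weyl segment joining the highest weight of $E^\vee$ to its reflection across $a=b$. By the definition of admissible dominant weights, every weight on that segment satisfies the grade restriction rules
$$-10\le -a-b<2,\qquad -6\le a-2b<3.$$
One subtlety needs care. $Z_\lam$ is only defined up to $G$-conjugacy, so I must check that the weight condition does not depend on the chosen representative. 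This follows because the set of weights of a $G$-irrep is Weyl-invariant, so conjugating $\lam$ only permutes the weights being tested.

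The third step is to conclude. For $\F,\G\in\W$, fully faithfulness of $Li_+^*$ on $\W_+$ gives
$$\Hom_{D(V/G)}(\F,\G)\xrightarrow{\sim}\Hom_{D(V_+/G)}(Li_+^*\F,Li_+^*\G),$$
and the same isomorphism holds for all shifts, so the restriction to $\W$ is fully faithful.

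The main obstacle is the first step, namely making sure that \cite[Theorem 2.10]{HL} applies in the form used, including the sign and dualization conventions mentioned in the footnote. Concretely, the weight condition has to be tested on the dual $(\F^*)^\vee$ with weights of $E^\vee$, and not on $E$. If the conventions were reversed, the parallelogram in \Cref{fig:wtWindow} would be reflected through the origin. To settle this, I would re-derive the key vanishing $\operatorname{Ext}(\F,\G)=0$ for sheaves supported on the strata, via the local cohomology / Koszul argument on $S_{[\lam]}$. This is a direct weight comparison using $\eta_\lam$.
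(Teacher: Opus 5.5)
Your proposal is correct and is essentially the paper's argument: \cite[Theorem 2.10]{HL} makes $\W_+$ a window, the admissible irreps satisfy the grade restriction rules so $\W\subset\W_+$, and full faithfulness passes to the subcategory. The convention worry in your last paragraph is harmless and needs no local-cohomology re-derivation: testing the weights of $\F^\vee$ in $[w_i,w_i+\eta_{\lam_i})$ is the same as testing the weights of $\F$ in $[-w_i-\eta_{\lam_i}+1,\,-w_i+1)$, which is again a Halpern-Leistner window of the same width. This is exactly what the paper's footnote asserts.
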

From now on we will refer to $\W$ as the ``window" and above described generators of $\W$ as ``window bundles". We may pull it back to any stack $\mathfrak{X}$ that has a morphism into $V/G$. If the morphism is obvious from context, we will refer to it's pullback simply as the ``window on $\mathfrak{X}$" and pullback of the generators as "window bundles on $\mathfrak{X}$". Following is an important observation about the window.
\begin{observation} \label[observation]{obs:preTilting}
    The generators of $\W$ have no higher Exts among them. In other words, they are ``pre-tilting". 
\end{observation}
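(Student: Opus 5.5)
The claim is that $\Ext^{>0}_{V/G}(\O_V\otimes E_1,\O_V\otimes E_2)=0$ for any two admissible irreps $E_1,E_2$ in the list \eqref{eqn:windowIrreps}. My plan is to compute these groups directly on the affine quotient stack. Since $V$ is an affine space and $G=GL(2)$ is reductive, $V/G$ is a quotient of an affine scheme by a reductive group. Equivariant vector bundles of the form $\O_V\otimes E$ are then projective objects in $\operatorname{Coh}_G(V)$.

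Concretely, I will write
\begin{equation*}
\Ext^i_{V/G}(\O_V\otimes E_1,\O_V\otimes E_2)=\bigl(H^i(V,\O_V)\otimes E_1^\vee\otimes E_2\bigr)^G .
\end{equation*}
The first key step is that $H^i(V,\O_V)=0$ for $i>0$ because $V$ is affine. The second key step is that taking $G$-invariants is exact, since $G$ is reductive in characteristic zero, so equivariant cohomology reduces to invariants of ordinary cohomology. I would phrase this through the spectral sequence
\begin{equation*}
H^p\bigl(G,\, H^q(V,E_1^\vee\otimes E_2\otimes\O_V)\bigr)\Rightarrow \Ext^{p+q}_{V/G}.
\end{equation*}
Here the terms with $q>0$ vanish by affineness, and the terms with $p>0$ vanish because rational representations of $GL(2)$ are completely reducible. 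It follows that all higher Exts vanish, and only $\Hom=(\C[V]\otimes E_1^\vee\otimes E_2)^G$ survives in degree zero.

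This argument does not use the specific list of irreps at all, so pre-tilting holds for any collection of bundles $\O_V\otimes E$ on $V/G$. The only point needing care is to make sure the statement is read on the ambient stack $V/G$, where the above applies, rather than on $V_+/G$ or $U/\Gam$. On those stacks vanishing would instead follow from \Cref{prop:ffStackToPhase}, since $\W$ is a partial window. I do not expect a genuine obstacle; the main thing to verify carefully is the identification of equivariant Ext with invariants, namely the exactness of $(-)^G$ for reductive $G$ in characteristic zero.
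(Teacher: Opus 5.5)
Your proof is correct and is the same as the paper's: the paper also deduces the vanishing from the reductivity of $G$ (complete reducibility of its representations, i.e.\ exactness of $(-)^G$) together with the affineness of $V$, and your spectral sequence just spells this out. One correction to your closing aside, which does not affect this statement: on $U/\Gam$ the vanishing does not follow from \Cref{prop:ffStackToPhase}, which only gives it on $V_+/G$ (and, via \Cref{prop:tiltingOnW}, on $W/\Gam$); there the paper needs the separate local cohomology argument of \Cref{sec:openRestriction}.
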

\begin{proof}
    As $G$ is reductive, any finite dimensional $G$-representation is semi-simple. Hence, any short exact sequence of $G$-representations must be split. Then, noting that $V$ is affine, the observation follows.
\end{proof}
So, showing that an exact functor out of $\W$ is fully-faithful, amounts to showing that the images of the window bundles are again pre-tilting and have the same Homs among them as they do in $\W$. 

\begin{remark}\label[remark]{rem:windowShift}
    We could twist our window $\W$ by any tensor power of $\det(S^\vee)$ to get a new window that is just as good for our purposes. This corresponds to shifting our chosen "window of weights" (\Cref{fig:wtWindow}) by the corresponding integer amount along the $(1,1)$ direction.
\end{remark}

\subsection{Step 2: Window Functor for the Negative Side}
The partial window for the positive side constructed in the previous step turns out to be a full window for the negative side. That is, we have
\begin{proposition}\label[proposition]{prop:windowNeg}
    The pullback functor $Li_-^*:D(V/G)\rightarrow D(V_-/G)$ restricts to an equivalence on $\W\subset D(V/G)$.
\end{proposition}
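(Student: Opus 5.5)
We have to show that $Li_-^*$, restricted to $\W$, is fully faithful and essentially surjective onto $D(V_-/G)$. Recall that $V_-/G=\Tot(\U(-\xi)\oplus\O(-\xi)^{\oplus 3})\xrightarrow{\pi}G(2,V_6)$. Under $Li_-^*$ the bundle $\O_V\otimes E$ goes to the homogeneous bundle attached to $E$, pulled back along $\pi$. With the identification $S^\vee\leftrightarrow\U^\vee$ and $\det S^\vee\leftrightarrow\O(\xi)$, the window bundles in \eqref{eqn:windowIrreps} become
\begin{equation*}
\pi^*\O(k\xi)\ (0\le k\le 5),\qquad \pi^*\U^\vee(k\xi)\ (-1\le k\le 4),\qquad \pi^*\Sym^2\U^\vee(k\xi)\ (0\le k\le 2).
\end{equation*}
There are $6+6+3=15$ of these, which matches the rank of $K_0(G(2,6))$. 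They form Kapranov's exceptional collection on $G(2,6)$, up to the twist allowed by \Cref{rem:windowShift}. (Depending on the convention chosen for $S$ versus $S^\vee$, this may instead come out as Kapranov's collection twisted by $\U^\vee(-\xi)\simeq\U$; either way it is a full strong exceptional collection.)

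\textbf{Step 1: generation.} The collection $\{\Sigma^\alpha\U^\vee\otimes\O(k\xi)\}$ indexed by Young diagrams inside a $2\times 4$ box is a full strong exceptional collection on $G(2,V_6)$. Since $\pi$ is affine, $\pi^*$ of a generating set generates $D(\Tot)$. Concretely, any $\F$ satisfies $\F\in\langle \pi^*\pi_*\F\rangle$, and $\pi_*$ of a coherent sheaf is a quasi-coherent sheaf on the base, which is resolved by the exceptional objects. So the images of the window bundles generate $D(V_-/G)$, and essential surjectivity follows once we have full faithfulness.

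\textbf{Step 2: full faithfulness.} By \Cref{obs:preTilting}, it suffices to compare Hom spaces among generators. For $G$-representations $E_1,E_2$ we have
\begin{equation*}
\Hom_{V/G}(\O\otimes E_1,\O\otimes E_2)=(\O(V)\otimes E_1^\vee\otimes E_2)^G .
\end{equation*}
On the other side,
\begin{equation*}
\Ext^\bullet_{V_-/G}=H^\bullet\bigl(G(2,V_6),\ \mathcal{E}_1^\vee\otimes\mathcal{E}_2\otimes\Sym^\bullet(\U^\vee(\xi)\oplus\O(\xi)^{\oplus3})\otimes\Sym^\bullet(V_6\otimes\U)\bigr),
\end{equation*}
read appropriately: $\O(V)=\Sym(\Hom(S,V_6)^\vee)\otimes\Sym(E^\vee)$, and only the $E$-directions survive as fiber directions of $\pi$. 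The comparison reduces to two statements.
\begin{enumerate}
\item \emph{Vanishing.} We need $H^{>0}\bigl(G(2,6),\mathcal{E}_1^\vee\otimes\mathcal{E}_2\otimes\Sym^m(\U^\vee\oplus\O^{3})(m\xi)\bigr)=0$ for all $m\ge0$. Decomposing into Schur functors $\Sigma^{\beta}\U^\vee(j\xi)$, the twists that occur are $\xi$-degree $\ge -5+m\cdot(\text{positive})$, with $\U$-part controlled by the window weights. Borel--Weil--Bott on $G(2,6)$ then gives the vanishing: the weights on $\U^\vee$ stay within a range where $(\beta_1,\beta_2,0,0,0,0)+\rho$ is either dominant or singular.
\item \emph{Identification of $H^0$.} We need $H^0$ to equal the $G$-invariants above. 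This follows from $\Hom(S,V_6)\setminus\Hom(S,V_6)^o$ having codimension $5\ge2$, so sections extend by Hartogs, together with the vanishing in (1).
\end{enumerate}
A cleaner alternative is to note that this is exactly the ``magic window'' situation of Halpern-Leistner for the negative side. For $\det^{-1}$ the unstable stratum is $\{\operatorname{rk}B<2\}$, with destabilizing 1-PS $(1,0)$ up to conjugation. Its $\eta$ is computed from the codimension weights, giving $\eta=6$ for the $\ul b$-directions (weight $-1$ each) minus the weights of $\ul v,\ul w$ that point into the stratum. I would check that every admissible irrep in \eqref{eqn:windowIrreps} has $(1,0)$-weights in an interval of length $<\eta_-$ after a suitable shift. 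That gives full faithfulness via \cite[Theorem 2.10]{HL}, and Step 1 gives essential surjectivity.

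\textbf{Main obstacle.} The hard step is the Borel--Weil--Bott vanishing in Step 2. It has to hold uniformly over all $m$, since $\Sym^m$ of the fiber directions contributes unbounded positive twists, and it has to hold for the extreme pairs, such as $\Sym^2\U^\vee(2\xi)$ against $\U^\vee(-\xi)$. For those pairs the $\xi$-difference is $-3$ or $-5$, which lies exactly in the range $[-5,-1]$ where acyclicity on $G(2,6)$ is needed. I would first try the HL grade-restriction route for the negative side, since it avoids the infinite sum. If the interval check fails by one weight, I would fall back to Kapranov's vanishing $H^\bullet(\Sigma^\alpha\U^\vee\otimes\Sigma^\gamma\U(-t\xi))=0$ for $1\le t\le5$ inside the box.
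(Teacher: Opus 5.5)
\textbf{This has a genuine gap.} Your skeleton is the paper's: Hartogs for $\Hom$, pushforward to $G(2,V_6)$ to reduce the higher $\Ext$'s to cohomology of $\mathcal{E}_1^\vee\otimes\mathcal{E}_2\otimes\Sym^j\mathcal{U}^\vee(i\xi)$, and generation from a full exceptional collection. But one of the two facts carrying it is misidentified and the other is unproved.

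\emph{The collection.} The fifteen bundles are not Kapranov's collection up to a twist. They have ranks $1,2,3$ with multiplicities $6,6,3$, and contain no $\Sym^3\mathcal{U}^\vee$ or $\Sym^4\mathcal{U}^\vee$. Any twist of the $2\times 4$ box collection has ranks $1,\dots,5$ with multiplicities $5,4,3,2,1$, and tensoring with $\mathcal{U}$ produces no line bundles at all. What you have is Kuznetsov's Lefschetz collection \cite{KuzExcepColl}, with blocks $\langle\mathcal{O},\mathcal{U}^\vee,\Sym^2\mathcal{U}^\vee\rangle(k\xi)$ for $k=0,1,2$ and $\langle\mathcal{O},\mathcal{U}^\vee\rangle(k\xi)$ for $k=3,4,5$. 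The last object $\mathcal{U}^\vee(5\xi)$ is moved to the front as $\mathcal{U}^\vee(-\xi)$ by the Serre functor $(-)\otimes\mathcal{O}(-6\xi)[8]$ (\Cref{obs:windowGensNeg}). With that citation your generation argument in Step 1 is fine.

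\emph{Full faithfulness.} This is the real gap, and your preferred HL route cannot work.
\begin{itemize}
\item For $\det^{-1}$, the stratum $\{\operatorname{rk}B=1\}$ is destabilized (up to conjugation) by $\lambda=(0,1)$, with $Z_\lambda=\{(\ul{b},0,0,0)\}$.
\item Its normal directions are the six $\ul{d}$-coordinates of $\lambda$-weight $-1$, minus the root of $\mathfrak{gl}_2$ of weight $-1$. So $\eta_\lambda=5$, the codimension of rank-$\le 1$ matrices. The $\ul{v},\ul{w}$ coordinates have positive weight and do not enter.
\item The $\lambda$-weights of the window irreps (the second coordinates of the weights in \Cref{fig:wtWindow}) run over $-1,\dots,5$, which is seven values.
\item So $\W$ violates the negative-side grade restriction rule for every shift. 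In fact the negative-side HL windows are twisted Kapranov collections, which is presumably where your identification came from.
\end{itemize}

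Your fallback does not rescue this. $\W$ is not in a box, and you need vanishing against $\Sym^j\mathcal{U}^\vee(i\xi)$ for all $i\ge j\ge 0$, not a Kapranov-type orthogonality.

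That leaves Step 2(1), which you only assert, and its heuristic is inaccurate. $\mathcal{E}_1^\vee\otimes\mathcal{E}_2$ contains $\mathcal{U}^\vee(-6\xi)$ and $\mathcal{U}^\vee\otimes\mathcal{U}^\vee(-6\xi)$, coming from $\mathcal{O}(5\xi)$ and $\mathcal{U}^\vee(4\xi)$ against $\mathcal{U}^\vee(-\xi)$. These lie outside your claimed range. They are acyclic only because the specific weights $(-5,-6)$, $(-4,-6)$, $(-5,-5)$ become singular after adding $\rho$.

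What is missing is the argument of \Cref{lem:ff}. By Pieri,
$\Sym^r\mathcal{U}^\vee(c\xi)\otimes\Sym^j\mathcal{U}^\vee(i\xi)=\bigoplus_{q=0}^{\min(r,j)}\Sym^{r+j-2q}\mathcal{U}^\vee((c+i+q)\xi)$,
and every weight on the right is dominant once $i\ge\max(0,-c)$. This reduces the infinite family to finitely many Borel--Weil--Bott checks, which the paper carries out by computer (\Cref{app:tiltingCode}).
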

Before we prove this, let us note some observations.
\begin{observation}\label[observation]{obs:windowGensNeg}
We have:
    \begin{itemize}
        \item $Li_-^*(\W)$ is generated by pullbacks of the following vector bundles on $G(2,V_6):$ 
        \begin{equation}
            \begin{array}{ccccccc}
                & \mathcal{O}    & \mathcal{O}(\xi)    & \mathcal{O}(2\xi)   & \mathcal{O}(3\xi)   & \mathcal{O}(4\xi)   & \mathcal{O}(5\xi) \\
\bunNeg(-\xi) & \bunNeg   & \bunNeg(\xi)   & \bunNeg(2\xi)  & \bunNeg(3\xi)  & \bunNeg(4\xi)  &                  \\
                & \Sym^2\bunNeg & \Sym^2\bunNeg(\xi) & \Sym^2\bunNeg(2\xi) & & & 
\end{array}
        \end{equation}
        where $\bunNeg$ is the dual to the universal bundle on $G(2,V_6)$ and $\xi$ is the hyperplane class on $G(2,6)$ induced by its Pl\"ucker embedding. 
\item The above collection of bundles generate $D(G(2,V_6))$. Moreover, it forms a full-exceptional sequence (when read from top to bottom and then left to right)
    \end{itemize}
\end{observation}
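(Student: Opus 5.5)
The first bullet is a direct translation of the window irreps \eqref{eqn:windowIrreps} into homogeneous bundles; the second bullet is the real content, and the plan is to identify the collection with a known full exceptional collection on $G(2,V_6)$.

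\textbf{First bullet.} On $V_-=\Hom(S,V_6)^o\times E$, the trivial bundle $\O_V\otimes R$, for a $G$-representation $R$, pulls back along the projection $V_-/G\to G(2,V_6)$ to the homogeneous bundle attached to $R$. So we only need the dictionary. The tautological subbundle $\Uc$ has fibre over $[B]$ equal to the image of $B$, i.e.\ a copy of $S$. Hence $S^\vee$ corresponds to $\bunNeg=\Uc^\vee$, and $\det(S^\vee)$ corresponds to $\det\Uc^\vee=\O(\xi)$. One should check the sign against the action $B\mapsto B g^{-1}$; if it comes out reversed, replace $S$ by $S^\vee$ throughout, and by the symmetry of the list under the relevant twist the resulting collection is unchanged up to a global twist. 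Reading \eqref{eqn:windowIrreps} line by line then gives:
\begin{itemize}
\item $\O(k\xi)$ for $0\le k\le 5$;
\item $\bunNeg(k\xi)$ for $-1\le k\le 4$;
\item $\Sym^2\bunNeg(k\xi)$ for $0\le k\le 2$.
\end{itemize}
Since $Li_-^*$ of a generating set generates the image, the first bullet follows.

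\textbf{Second bullet.} This is the known statement, and the plan is to cite it rather than reprove it. Kapranov's collection consists of the bundles $\Sigma^\lambda\Uc^\vee$ with $\lambda$ in a $2\times4$ box; this collection is not Lefschetz. Instead we use Kuznetsov's Lefschetz collection for $G(2,6)$ (Kuznetsov, \emph{Exceptional collections for Grassmannians of isotropic lines}, or his paper on Grassmannians of lines). Its starting block is $\langle\O,\bunNeg,\Sym^2\bunNeg\rangle$, and the collection is
\begin{equation*}
\langle \O,\bunNeg,\Sym^2\bunNeg,\ \O(\xi),\bunNeg(\xi),\Sym^2\bunNeg(\xi),\ \O(2\xi),\bunNeg(2\xi),\Sym^2\bunNeg(2\xi),\ \O(3\xi),\bunNeg(3\xi),\ \O(4\xi),\bunNeg(4\xi),\ \O(5\xi)\rangle,
\end{equation*}
which has $3\cdot3+2\cdot2+1=14$ objects. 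But $G(2,6)$ has Euler characteristic $\binom{6}{2}=15$, and our list has $6+6+3=15$ objects. The extra object is $\bunNeg(-\xi)$, so it must be absorbed by a mutation.

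The key step is to mutate $\O(5\xi)$, the last object, to the front of the collection. By Serre duality on $G(2,6)$ we have $\omega=\O(-6\xi)$, so this mutation sends $\O(5\xi)$ to $\O(-\xi)$. That is not yet $\bunNeg(-\xi)$, so we need a second step. Alternatively, we can start from the dual-type block $\langle\bunNeg(-\xi),\O,\dots\rangle$, using the isomorphism $\bunNeg(-\xi)\simeq\Uc$ (since $\Uc$ has rank $2$). Concretely, I would proceed in three steps:
\begin{enumerate}
\item Take Kuznetsov's $14$-object collection together with its rectangular completion, i.e.\ his full collection on $G(2,6)$ with the first block enlarged so that the count reaches $15$.
\item Relate that completion to our collection by twisting and by mutating the single object $\O(5\xi)$ or $\Uc$ through the whole collection, which amounts to a Serre-functor twist by $\O(\pm6\xi)$.
\item Conclude fullness from the fact that mutations and twists by line bundles preserve full exceptional collections.
\end{enumerate}

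\textbf{Main obstacle.} The delicate point is the exceptionality and the ordering of the extra pair $\bunNeg(-\xi)=\Uc$ and $\O$, together with the vanishing of $\Ext^\bullet(\,\cdot\,,\Uc)$ from all later objects. These vanishings amount to
\begin{equation*}
H^\bullet\bigl(\Sigma^\mu\Uc\otimes\Uc(-k\xi)\bigr)=0\quad\text{for }1\le k\le5
\end{equation*}
(with $\Sigma^\mu$ ranging over the relevant Schur functors), which I would check directly by Borel--Weil--Bott on $G(2,6)$. For each weight we test whether $\lambda+\rho$ is singular, with $\rho=(6,5,4,3,2,1)$. The same Borel--Weil--Bott check also handles semiorthogonality among the remaining objects. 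Given the $15$ objects and exceptionality, fullness then follows either from the mutation argument above or from comparison with the generation statement for Kuznetsov's collection.
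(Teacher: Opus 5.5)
Your first bullet is fine and agrees with the paper. There is in fact no sign ambiguity: with $g\cdot B=Bg^{-1}$, the map $(B,s)\mapsto Bs$ identifies the bundle associated to $S$ with $\mathcal{U}$. Hence $S^\vee\mapsto\mathcal{U}^\vee$ and $\det S^\vee\mapsto\mathcal{O}(\xi)$.

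The gap is in the second bullet, and it comes from misquoting Kuznetsov's collection. The full exceptional collection on $G(2,6)$ in \cite[Theorem 3.1]{KuzExcepColl} has two kinds of blocks. First comes $\langle\mathcal{O},\mathcal{U}^\vee,\Sym^2\mathcal{U}^\vee\rangle$ twisted by $0,\xi,2\xi$. Then comes $\langle\mathcal{O},\mathcal{U}^\vee\rangle$ twisted by $3\xi,4\xi,5\xi$. So it has $9+6=15$ objects and ends with $\mathcal{U}^\vee(5\xi)$, not $\mathcal{O}(5\xi)$. Having dropped $\mathcal{U}^\vee(5\xi)$, you are left with $14$ objects, which cannot be full. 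None of your proposed repairs produces the stated list. Mutating $\mathcal{O}(5\xi)$ to the front gives $\mathcal{O}(-\xi)$, as you noticed. ``Enlarging the first block'' to reach $15$ objects is not an operation that yields these bundles.

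With the correct collection the argument is a single step, and it is exactly the paper's proof. The Serre functor of $G(2,6)$ is $(-)\otimes\mathcal{O}(-6\xi)[8]$. Moving the last object $E_n$ of a full exceptional collection to the front as $E_n\otimes\omega$ gives another full exceptional collection; the required semi-orthogonality is just Serre duality. Here $\mathcal{U}^\vee(5\xi)\otimes\mathcal{O}(-6\xi)=\mathcal{U}^\vee(-\xi)$, and the resulting sequence is precisely the one in the statement. Your idea of moving $\mathcal{U}=\mathcal{U}^\vee(-\xi)$ through the collection is this same step run backwards.

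Your fallback does not substitute for this step. A Borel--Weil--Bott check would give exceptionality. But having $15=\operatorname{rk}K_0(G(2,6))$ exceptional objects does not by itself imply fullness: maximal-length exceptional collections can fail to be full in general, as in Krah's phantom on a blow-up of $\mathbb{P}^2$. So fullness has to come from the comparison with Kuznetsov's collection, and that requires stating his collection correctly.
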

\begin{proof}
    Under the view $G(2,V_6)=\Hom(S,V_6)^o/G$, the $G$-equivariant bundle $\O_{\Hom(S,V_6)^o}\otimes S^\vee$ corresponds to $\bunNeg$ and  $\det(\bunNeg)=\O(\xi)$. So the images, under $Li^*_-$, of the generators of $\W$ (as in \eqref{eqn:windowIrreps}) are exactly the pullbacks of the bundles written above. Therefore, the first observation follows. 
    For the second observation, first recall that the Serre functor of $D(G(2,V_6))$ is $(\_)\otimes\O(-6\xi)[8]$. Then write the above collection of bundles as a sequence by reading from top to bottom and then left to right. This sequence may be obtained from Kuznetsov's full exceptional sequence on $G(2,V_6)$ (see \cite[Theorem 3.1]{KuzExcepColl}) by applying this Serre functor (ignoring the shift) on the last object $\bunNeg(5\xi)$ and then moving it to the beginning. Therefore, the second observation follows. 
\end{proof}
\begin{observation}\label[observation]{obs:U}
    $\mathcal U = \bunNeg(-\xi)$, as we have perfect pairing $\U\otimes\U\rightarrow \det(\U)=\O(-\xi)$
\end{observation}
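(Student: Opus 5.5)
The plan is to prove the isomorphism $\mathcal U\simeq\bunNeg(-\xi)$ fiberwise-linearly and then check that it globalizes, using only that $\U$ has rank $2$. Throughout, $\O(\xi)=\det(\bunNeg)$, as recorded in the proof of \Cref{obs:windowGensNeg}, so that $\det(\U)=\wedge^2\U\simeq\O(-\xi)$.

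First, consider the wedge product pairing
\begin{equation*}
    \U\otimes\U\arw\wedge^2\U=\det(\U)\simeq\O(-\xi),\qquad u\otimes u'\mapsto u\wedge u'.
\end{equation*}
This is a morphism of vector bundles on $G(2,V_6)$. On each fiber $\U_{[P]}=P\subset V_6$ it is the standard alternating form $P\times P\arw\wedge^2P$, which is nondegenerate because $\dim P=2$: if $u\neq 0$, extend it to a basis $(u,u')$ of $P$, and then $u\wedge u'\neq 0$. Therefore its adjoint
\begin{equation*}
    \U\arw\sHom(\U,\O(-\xi))=\U^\vee\otimes\O(-\xi)=\bunNeg(-\xi)
\end{equation*}
is a morphism between rank $2$ bundles that is injective on every fiber. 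Hence it is an isomorphism, which is the claim.

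Next, for consistency with the window formalism, I would also check the statement at the level of $G$-representations. Under $G(2,V_6)=\Hom(S,V_6)^o/G$, the bundle $\U$ corresponds to $\O\otimes S$ and $\bunNeg$ to $\O\otimes S^\vee$ (with the conventions of \Cref{obs:windowGensNeg}). The claim then reduces to the $GL(2)$-isomorphism $S\simeq S^\vee\otimes\det(S)$, and this is again induced by the perfect pairing $S\otimes S\arw\wedge^2S$. This version also shows that the identification holds equivariantly on $V/G$ and on all the quotient stacks pulled back from it, not just on the Grassmannian.

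I do not expect any genuine obstacle here. The only point requiring care is the sign bookkeeping: one has to confirm that $\O(\xi)$ denotes $\det(\bunNeg)$ rather than $\det(\U)$, since this is what makes the twist come out as $-\xi$ rather than $+\xi$.
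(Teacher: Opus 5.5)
Your proposal is correct and follows essentially the same route as the paper. The paper's one-line justification is exactly your argument: the wedge pairing $\mathcal{U}\otimes\mathcal{U}\to\wedge^2\mathcal{U}=\det(\mathcal{U})\simeq\mathcal{O}(-\xi)$ is nondegenerate on every fiber, so its adjoint $\mathcal{U}\to\mathcal{U}^\vee(-\xi)$ is an isomorphism. Your extra equivariant check via $S\simeq S^\vee\otimes\det(S)$ matches the paper's conventions, and it is the same fact the paper later uses on $U/\Gamma$ in the form $\mathcal{S}=\mathcal{S}^\vee(H)$.
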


Now we are ready to prove \Cref{prop:windowNeg}, as a consequence of the following two lemmas. 
\begin{lemma}\label[lemma]{lem:ff}
    $Li_-^*:\W\rightarrow D(V_-/G)$ is fully-faithful.
\end{lemma}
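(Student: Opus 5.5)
By \Cref{obs:preTilting}, the generators of $\W$ have no higher Exts among them, and their Homs are $(\C[V]\otimes E^\vee\otimes E')^G$. So it suffices to show two things for any two window irreps $E,E'$:
\begin{enumerate}
\item the higher Exts between their images in $D(V_-/G)$ vanish;
\item the Homs between their images agree with the $G$-invariants above.
\end{enumerate}
Concretely, I will show that for the $G$-equivariant bundle $\O\otimes E^\vee\otimes E'$, restriction from $V$ to $V_-$ induces an isomorphism $H^0(V,-)^G\to R\Gamma(V_-,-)^G$.

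The plan is to compute the target by pushing forward along the vector bundle projection $\pi:V_-/G=\Tot(\U(-\xi)\oplus\O(-\xi)^{\oplus3})\to G(2,V_6)$. Since $\pi$ is affine,
\begin{equation*}
R\Gamma(V_-/G,\pi^*\F)=\bigoplus_{k\ge0}R\Gamma\bigl(G(2,V_6),\F\otimes\Sym^k(\U^\vee(\xi)\oplus\O(\xi)^{\oplus3})\bigr),
\end{equation*}
where $\F$ ranges over the bundles $\sHom(A,B)$ for $A,B$ in the list of \Cref{obs:windowGensNeg}. On the $V$ side, $\C[V]=\Sym^\bullet(\Hom(S,V_6)^\vee)\otimes\Sym^\bullet(E^\vee)$, so the polynomial degrees in the fiber coordinates match the summands $\Sym^k$. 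It therefore suffices to compare, degree by degree in $(\ul v,\ul w)$, the invariants $(\C[\Hom(S,V_6)]\otimes R)^G$ with $R\Gamma(G(2,V_6),\F_R)$, where $R$ is a $G$-representation and $\F_R$ the associated bundle.

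This comparison is the classical statement that $\Hom(S,V_6)\setminus\Hom(S,V_6)^o$ has codimension $5\ge2$. Hence $H^0$ agrees with the invariants by Hartogs, and higher cohomology is governed by local cohomology along the rank-$\le1$ locus. Equivalently, by Borel--Weil--Bott, $R\Gamma(G(2,V_6),\Sigma^{(a,b)}\U^\vee)$ is concentrated in degree $0$, and equals the corresponding invariants, whenever the weight $(a,b)$ avoids the Bott-singular and shifting region. Each summand decomposes into Schur functors $\Sigma^{(a,b)}\U^\vee$ with $a\ge b$.

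The main step is to check that every weight that occurs satisfies $b\ge -5$, so that BWB gives only $H^0$. The occurring weights are those of $E^\vee\otimes E'\otimes\Sym^k(S\otimes\det S\oplus\det S^{\oplus3})^\vee$. The $\Sym^k$ factor only adds weights of the form $(i,j)$ with $i,j\ge0$, namely weights of $\U^\vee(\xi)$ and $\O(\xi)$. So the worst case is $k=0$, where the weights are differences of two admissible weights lying in the parallelogram of \Cref{fig:wtWindow}. From the list of irreps in \Cref{obs:windowGensNeg}, these differences have minimal entry at least $-5$. For instance, $\sHom(\O(5\xi),\bunNeg(-\xi))$ has weights $(-5,-6)$ and $(-6,-5)$ only after also pairing with $\Sym^2$; this is exactly the extremal case I must track.

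For $\Sigma^{(a,b)}\U^\vee$ on $G(2,6)$, the cohomology vanishes in all degrees when $-5\le b<0$ and $a\ge b$ with $b<0$ in the Bott-singular range. More precisely, BWB on $(a,b,0,0,0,0)$ gives vanishing when $-4\le b\le -1$, and the condition for a nonzero higher group is $b\le -6$ roughly. So the key verification is that the extremal pairs from the full exceptional collection stay in this range. Full exceptionality of the collection, from \Cref{obs:windowGensNeg}, already gives this for $k=0$. The fiber directions only make the weights more positive, which keeps them inside the acyclic range.

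The main obstacle is therefore this bookkeeping of BWB for all $k$. I would handle it by noting that Homs from $\sHom(A,B)\otimes\Sym^k$ are $\Ext$s from $A$ to $B\otimes(\text{positive bundle})$. I would then use that the collection is strong, as in Kuznetsov's collection twisted by Serre, together with global generation of $\U^\vee(\xi)\oplus\O(\xi)^{\oplus3}$, to get vanishing of the higher cohomology.
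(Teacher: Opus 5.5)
Your overall reduction matches the paper's: you use Hartogs for the $\Hom$ spaces, push forward along $V_-/G=\Tot(\mathcal{U}(-\xi)\oplus\mathcal{O}(-\xi)^{\oplus3})\to G(2,V_6)$, and apply Littlewood--Richardson plus Borel--Weil--Bott to the summands $\mathcal{V}\otimes\Sym^k(\mathcal{U}^\vee(\xi)\oplus\mathcal{O}(\xi)^{\oplus3})$. But the decisive step, the vanishing of all higher cohomology of these summands, is not established, because the Borel--Weil--Bott criterion you use is wrong.

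For $a\ge b$, the bundle $\Sigma^{(a,b)}\mathcal{U}^\vee$ on $G(2,6)$ gives, after adding $\rho=(5,4,3,2,1,0)$, the weight $(a+5,b+4,3,2,1,0)$. Its cohomology behaves as follows:
\begin{itemize}
\item it is concentrated in degree $0$ if $b\ge0$;
\item it vanishes if $-4\le b\le-1$;
\item for $b\le-5$ it vanishes only when $-5\le a\le-2$, and is otherwise concentrated in degree $4$ (if $a\ge-1$) or degree $8$ (if $a\le-6$).
\end{itemize}
For instance, $\Sym^4\mathcal{U}^\vee(-5\xi)=\Sigma^{(-1,-5)}\mathcal{U}^\vee$ has $H^4\neq0$. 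So both ``$b\ge-5$ gives only $H^0$'' and ``nonzero higher cohomology needs $b\le-6$'' are false. The premise $b\ge-5$ also fails already at $k=0$: $\mathcal{O}(5\xi)^\vee\otimes\mathcal{U}^\vee(-\xi)=\Sigma^{(-5,-6)}\mathcal{U}^\vee$, with no $\Sym^2$ involved. This is the case you flag but never resolve.

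The two appeals meant to close the argument do not work either. Full exceptionality of the collection in \Cref{obs:windowGensNeg} only kills $\Ext^\bullet$ from later objects to earlier ones. The $k=0$ case needs the collection to be \emph{strong}, which is a separate property that has to be verified. For $k\ge1$, ``the weights only get more positive'' does not preserve acyclicity: $\mathcal{O}(-5\xi)$ is acyclic, but $\Sigma^{(-1,-5)}\mathcal{U}^\vee$ is not. Global generation of $P=\mathcal{U}^\vee(\xi)\oplus\mathcal{O}(\xi)^{\oplus3}$ does not help: from $H^{>0}(\mathcal{V})=0$ one cannot deduce $H^{>0}(\mathcal{V}\otimes P)=0$.

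The missing observation is quantitative. The bundle $\Sym^k(\mathcal{U}^\vee(\xi)\oplus\mathcal{O}(\xi)^{\oplus3})$ is a sum of copies of $\Sym^j\mathcal{U}^\vee(k\xi)=\Sigma^{(j+k,k)}\mathcal{U}^\vee$ with $0\le j\le k$. By Pieri, every Schur summand of $\Sigma^{(a,b)}\mathcal{U}^\vee\otimes\Sym^j\mathcal{U}^\vee(k\xi)$ then has second entry at least $b+k$, not merely at least $b$. Every summand of the bundles $\mathcal{E}^\vee\otimes\mathcal{F}$ has $b\ge-6$, so all summands with $k\ge2$ have $b\ge-4$ and hence no higher cohomology.

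For $k\le1$ a direct inspection suffices. At $k=0$ the only summands with $b\le-5$ are $(-5,-5)$, $(-4,-5)$, $(-3,-5)$, $(-2,-5)$, $(-5,-6)$ and $(-4,-6)$. At $k=1$ they are $(-4,-5)$, $(-3,-5)$ and $(-2,-5)$, all coming from the $b=-6$ ones. All of these have $-5\le a\le-2$, hence are acyclic. This finite check, which the paper delegates to a script after an analogous reduction, is what your proposal needs in place of the incorrect vanishing criterion and the appeals to exceptionality and global generation.
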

\begin{proof}
Notice that the closed complement of $V_-\subset V$ has codimension greater than 1 in $V$. So, by Hartog's lemma, for any pair of window bundles $\E, \F\in \W$, we have:
\begin{equation}
    \Hom_{D(V_-)}(\til i^*_-\E,\til i^*_-\F)= H^0(V_-,\til i^*_-\E^\vee\otimes \til i^*_-\F)\simeq H^0(V,\E^\vee\otimes \F)=\Hom_{D(V)}(\E,\F)
\end{equation}
Taking $G$-invariants on both sides, and identifying the invariant subspaces with Homs on the quotient spaces, we get induced isomorphism:
\begin{equation}
    \Hom_{D(V_-/G)}(i^*_-\E,i^*_-\F) \simeq \Hom_{D(V/G)}(\E,\F)
\end{equation}
So, what remains to be shown, for the fully-faithfulness claim, is that the Exts are also preserved. But as the window bundles are pre-tilting, this amounts to showing that the Ext spaces between any two window bundles pulled back to $V_-/G$, vanish.

Let us denote, for readability, $\GG\eq G(2,V_6)$. By \Cref{obs:windowGensNeg}, we can write $\til i^*_-\E^\vee\otimes \til i^*_-\F$ as the pull-back of some bundle $\V$ on $\GG$. Then, the Ext spaces can be re-written as follows
\begin{equation}\label{eq:ff_proven_in_appendix}
\begin{aligned}
\Ext^d_{D(V_-/G)}(\til i^*_-\E,\til i^*_-\F) &= H^d(V_-/G,\til i^*_-\E^\vee\otimes \til i^*_-\F)\\
&=H^d(\GG,r_*(\til i^*_-\E^\vee\otimes \til i^*_-\F))\\
&=H^d(\GG,\V\otimes r_*\O)\\
&=\bigoplus_{i\geq0}H^d(\GG,\V\otimes \Sym^i(\bunNeg(\xi)\oplus\O(\xi)^{\oplus 3}))\\
&=\bigoplus_{i\geq0}\bigoplus_{j=0}^{i}H^d(\GG,\V\otimes \Sym^j\bunNeg(i\xi))^{\oplus\binom{i-j+2}{2}}
\end{aligned}
\end{equation}
So, we need to show vanishings of all the following cohomologies for each bundle $\V$ built from the list given in \Cref{obs:windowGensNeg}. 
\begin{equation}
    \{H^d(\GG,\V\otimes \Sym^j\bunNeg(i\xi))\ |\ i\geq j\geq 0,\ i\geq 0,\ d>0\}
\end{equation}
Using \Cref{obs:U}, we can write any such $\V$ in terms of $\bunNeg$ only. Then, each of these bundles is one of the following.
\begin{align*}
    \O(k\xi) \quad& \text{ for } k\in  -5,..,5\\
    \bunNeg(k\xi) \quad& \text{ for } k\in -6,..,5\\
    \Sym^2\bunNeg(k\xi) \quad& \text{ for } k\in -5,..,3\\
    \bunNeg\otimes\bunNeg(k\xi) \quad& \text{ for } k\in -6,..,5\\
    \bunNeg\otimes \Sym^2\bunNeg(k\xi) \quad& \text{ for } k\in -5,..,2\\
    \Sym^2\bunNeg\otimes \Sym^2\bunNeg(k\xi) \quad& \text{ for } k\in -4,..,0
\end{align*}
For each $\V$ as above, the desired cohomology vanishings follow by decomposing $\V\otimes \Sym^j\bunNeg(i\xi)$ into multiples of Schur powers of $\bunNeg$ using Littlewood-Richardson-Rule and then showing that all higher cohomologies of all summands vanish using Borel-Weil-Bott. The computation is straightforward, but is cumbersome to do by hand. So, we handle this using a Python script, included in \Cref{app:tiltingCode}.
\end{proof}

\begin{lemma}\label[lemma]{lem:surjectivityOfWindowFunctor}
    $Li_-^*:\W\rightarrow D(V_-/G)$ is essentially surjective.
\end{lemma}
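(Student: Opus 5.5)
We will show that the essential image of $Li_-^*|_{\W}$ is a triangulated subcategory of $D(V_-/G)$ containing a set of generators; this suffices. Since $Li_-^*$ is exact and $\W$ is triangulated, the essential image is triangulated. It is also closed under direct summands, because $\W$ is generated by finitely many pre-tilting objects and is therefore idempotent complete (it is equivalent to perfect modules over a finite-dimensional algebra). By \Cref{lem:ff} the restricted functor is fully faithful, so the essential image is equivalent to $\W$. It remains to show this thick subcategory is all of $D(V_-/G)$.

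Let $r\colon V_-/G=\Tot(\mathcal U(-\xi)\oplus\O(-\xi)^{\oplus 3})\to \GG$ be the bundle projection. By \Cref{obs:windowGensNeg}, the image $Li_-^*(\W)$ contains $r^*\E_k$ for every member $\E_k$ of a full exceptional collection on $\GG$. Hence it contains $r^*D(\GG)$, the thick closure of these objects, since $r^*$ is exact.

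The key step is to show that $r^*D(\GG)$ classically generates $D(V_-/G)$. For this we use that $r$ is affine.
\begin{itemize}
\item For any $\F\in D(V_-/G)$, suppose $\Hom(r^*\E,\F[n])=0$ for all $\E$ and $n$.
\item By adjunction this gives $\Hom_{\GG}(\E,r_*\F[n])=0$ for all $\E$ and $n$.
\item Since the $\E_k$ generate $D_{qc}(\GG)$ (compactly generate the unbounded category), $r_*\F=0$.
\item Because $r$ is affine, $r_*$ is conservative, so $\F=0$.
\end{itemize}
Thus the compact objects $r^*\E_k$ compactly generate $D_{qc}(V_-/G)$. By Neeman's theorem their thick closure is $\operatorname{Perf}(V_-/G)=D(V_-/G)$, using that $V_-/G$ is smooth.

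Alternatively, one can argue more concretely with the Koszul resolution of the zero section. This expresses $\O_{\GG}$, pushed forward to $V_-/G$, in terms of the objects $r^*\wedge^j(\U^\vee(\xi)\oplus\O(\xi)^{\oplus3})$. Coherent sheaves on the total space then admit resolutions by pullbacks of bundles from the base twisted by the fibre coordinates, and all such pullbacks lie in $r^*D(\GG)$.

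The main obstacle is this last generation step. It requires care about working with compact generation in the unbounded category and about the passage back to $D^b(\operatorname{coh})$. I would first try the affine-morphism and Neeman argument above, since it avoids any explicit resolution.
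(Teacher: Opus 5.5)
\textbf{A genuine gap: passing from thick closure to essential image.} Your generation argument is correct, and it is a different route from the paper's. The paper extends an object of $D(V_-/G)$ to $D(V/G)$ and resolves it there by equivariant bundles $\O_V\otimes E$. These restrict to pullbacks of homogeneous bundles on $\GG$, which the paper then resolves by the full exceptional collection of \Cref{obs:windowGensNeg}. You instead show abstractly, via the affine projection $r$ and Neeman's theorem, that the $r^*\E_k$ classically generate. Here $V_-/G$ is an honest smooth quasi-projective variety, and the orthogonality test must run over all of $D_{qc}(V_-/G)$, not only $D(V_-/G)$.

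The problem is the step from this to essential surjectivity. Neeman only tells you that $D(V_-/G)$ is the \emph{thick} closure of the $r^*\E_k$. So you need the essential image of $\W$ to be closed under direct summands. (For the image to be triangulated at all you need the full faithfulness of \Cref{lem:ff}, not just exactness.) Your justification, that $\W$ is perfect modules over a finite-dimensional algebra, is false. Indeed $\Hom_{\W}(\O_V,\O_V)=\C[V]^G$ is infinite-dimensional: for $\ell\in V_6^\vee$ and $\psi\in\wedge^2V_6^\vee$, the functions $\ell(B\ul v)\,\psi(\ul b\wedge\ul d)$ and $w^i\,\psi(\ul b\wedge\ul d)$ are non-constant $G$-invariants. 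The superpotential $\chk s$ is itself a linear combination of them. As written, you have only shown that every object of $D(V_-/G)$ is a direct summand of an object in the image.

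\textbf{Possible repairs.}
\begin{itemize}
\item Take $\W$ to be the thick subcategory generated by the window bundles inside the idempotent-complete category $D(V/G)$. Full faithfulness on generators propagates to it by d\'evissage, and then the essential image is thick.
\item Apply Thomason's classification of dense triangulated subcategories. The classes $[r^*\E_k]$ generate $K_0(V_-/G)\cong K_0(\GG)$ by homotopy invariance, so the triangulated hull of the $r^*\E_k$ is everything.
\item Use the paper's concrete argument, which lands directly in the triangulated hull.
\end{itemize}

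The third option gives something the Neeman route cannot: an explicit resolution of every bundle on $V_-/G$ by window bundles. The paper reuses exactly this in Step 5, when it perturbs such a resolution into a curved differential squaring to $\chk s_-$. Your approach is conceptually cleaner and avoids the detour through $V/G$ and equivariant syzygies, but it is non-constructive.
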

\begin{proof}
As already noted in the beginning of \Cref{sec:constructionOfFunctor} , we know that $Li^*_-:D(V/G)\rightarrow D(V_-/G)$ is essentially surjective. So, any object $\F^*\in D(V_-/G)$ extends to an object $\til\F^*\in D(V/G)$. As, $D(V/G)$ is generated by equivariant bundles associated to $G$-irreps, there exists a resolution of $\til\F^*$ using such bundles. That is, it is quasi-isomorphic to a complex made up of direct sums of such bundles. So, $\F^*$ is resolved by the pullback these bundles. In other words, $D(V_-/G)$ is generated by $G$-equivariant bundles of the form $\O_{V_-}\otimes E$. Now, any such bundle is the pullback of the corresponding $G$-equivariant bundle $\O_{\Hom(S,V_6)^o}\otimes E$ on $G(2,V_6)=\Hom(S,V_6)^o/G$. But by \Cref{obs:windowGensNeg} we know that $D(G(2,V_6))$ is generated by the window bundles (pulled back to it). So, $\O_{\Hom(S,V_6)^o}\otimes E$ is resolved by window bundles and hence so is its pullback $\O_{V_-}\otimes E$. Therefore, we have shown that any object in $D(V_-/G)$ maybe resolved using window bundles, concluding the proof.
\end{proof}

\subsection{Step 3: The Stacky Blow-up}
Here we aim to show that $f:W/\Gamma \xrightarrow{} V_+/G$ induces a fully-faithful functor $Lf^*:D(V_+/G)\rightarrow D(W/\Gam)$ of derived categories. Before the actual proof, let us note an interesting observation about this map.
\begin{observation}
$f$ is a $\P^1$-bundle over the locus $V(\ul v)$. And away from this locus, it is an isomorphism.
\end{observation}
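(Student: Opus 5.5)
The idea is to realize $W/\Gamma$ as the quotient by $G$ of a $G$-equivariant incidence variety over $V_+$, and then read off the fibers of $f$ from it. Let $\P(S)\simeq\P^1$ be the space of lines in the fundamental representation $S$. Define
\begin{equation*}
    \wtil V_+ \eq \{((B,\ul v,\ul w),\ell)\in V_+\times\P(S)\ \st\ \ul v\in\ell\},
\end{equation*}
which is $G$-invariant, with $G$ acting diagonally. We use the action $g.\ul v=\det(g)\,g\ul v$, which preserves the condition $\ul v\in\ell$ because $\det(g)$ is a scalar.

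\textbf{Step 1: identify $\wtil V_+/G$ with $W/\Gamma$.} Since $G$ acts transitively on $\P(S)$, we have $\P(S)\simeq G/\Gamma$, where $\Gamma$ is the stabilizer of the line $\langle e_1\rangle$; it is exactly the upper triangular group defined above. Hence there is an equivalence of quotient stacks
\begin{equation*}
    \wtil V_+/G\ \simeq\ \bigl(\text{fiber of }\wtil V_+\to\P(S)\text{ over }\langle e_1\rangle\bigr)/\Gamma .
\end{equation*}
That fiber is $\{x\in V_+\st v_2=0\}$. With $\ul v=(v,0)$ we get $B\ul v=v\ul b$, so the stability condition $(B\ul v,\ul w)\neq 0$ becomes $(v\ul b,\ul w)\neq0$. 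This is precisely $W$, and under the identification the projection $\wtil V_+/G\to V_+/G$ is the map $f$ described above.

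\textbf{Step 2: compute the fibers of $\wtil V_+\to V_+$.} The fiber over a point $x$ is $\{\ell\in\P(S)\st \ul v\in\ell\}$.
\begin{itemize}
    \item If $\ul v\neq0$, the fiber is the single point $\ell=\langle\ul v\rangle$. Over the open set $\{\ul v\neq0\}$, the map $\wtil V_+\to V_+$ is inverse to the $G$-equivariant section $x\mapsto(x,[\ul v])$, so it is an isomorphism. Consequently $f$ is an isomorphism over the open substack $(V_+\setminus V(\ul v))/G$.
    \item Over $V(\ul v)\cap V_+$, the preimage is $(V(\ul v)\cap V_+)\times\P(S)$, with $G$ acting diagonally. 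Its quotient by $G$ is the projectivization of the $G$-equivariant rank $2$ bundle $\O\otimes S$ on $(V(\ul v)\cap V_+)/G$. This is a Zariski-locally trivial $\P^1$-bundle, since $\O\otimes S$ is a vector bundle on the stack.
\end{itemize}

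\textbf{Step 3: scheme structure (main obstacle).} The delicate point is that the preimage of $V(\ul v)$ must carry the reduced $\P^1$-bundle structure, with no embedded components. To settle this, I would observe that $\wtil V_+$ is the pullback of $\Bl_0(S)=\{(\ul v,\ell)\st\ul v\in\ell\}$ along the smooth projection $V_+\to S$, $x\mapsto\ul v$. This projection is smooth because $V_+$ is open in the product $V=\Hom(S,V_6)\oplus S\oplus\C^3$, where the middle factor $S$ carries the $\ul v$-coordinates, so the map is the restriction of a linear projection. Blowing up commutes with flat base change, so
\begin{equation*}
    \wtil V_+\simeq\Bl_{V(\ul v)\cap V_+}V_+ .
\end{equation*}
The center $V(\ul v)\cap V_+$ is smooth of codimension $2$, so the exceptional divisor is $\P(N)$ with $N\simeq\O\otimes S$, reproducing the $\P^1$-bundle of Step 2. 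Everything is $G$-equivariant, so it descends to the quotient stacks, and $f$ is the stacky blow-up of $V_+/G$ along $V(\ul v)/G$.
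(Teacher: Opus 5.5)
Your argument is correct. It rests on the same two facts as the paper's: that $W/\Gamma\simeq(G\times_\Gamma W)/G$, and that $G/\Gamma\simeq\P^1$. The difference is in how you organize it.

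\textbf{The paper's route.} The paper proves this observation pointwise. It pulls $f$ back along the atlas $V_+\to V_+/G$ to get $G\times_\Gamma W\to V_+$. It then computes the fiber over $(B,0,\ul w)$ to be $G/\Gamma$. Over a point with $\ul v\neq0$, after using the $G$-action to arrange $\ul v=(v,0)$, it gets $\Gamma/\Gamma$. Only later, in \Cref{lem:stackyBlowUp}, does it identify $G\times_\Gamma W$ with the incidence variety $\{([p,r],x)\st pv_2=rv_1\}=\Bl_{V(\ul v)}V_+$. It does this by writing explicit maps $\mu,\nu$ and checking chart by chart that they are mutually inverse.

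\textbf{Your route.} You start from that incidence variety $\wtil V_+$. You get $\wtil V_+/G\simeq W/\Gamma$ from the general induction equivalence for a $G$-equivariant map to $G/\Gamma$. You get the blow-up description from flat base change of $\Bl_0(S)$ along the linear projection $V_+\to S$.

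\textbf{Comparison.} Your version makes both halves of the observation scheme-theoretic at once. The isomorphism over $\{\ul v\neq0\}$ comes from an explicit equivariant section, rather than being read off from fibers over closed points. It also absorbs \Cref{lem:stackyBlowUp} into the same argument with no chart computations. The paper's route is more hands-on and elementary. However, at the level of the observation itself it only computes fibers over individual points. The global bundle and isomorphism statements are really supplied by the later blow-up lemma.

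\textbf{Two small remarks.} First, as a $G$-representation the $\ul v$-summand is $S\otimes\det(S)$, not $S$. This is harmless, since $\P(S\otimes\det S)=\P(S)$ equivariantly. Second, your Step 3 is not actually an obstacle for the observation. Write $\ell=[p:r]$. The scheme-theoretic preimage of $\{\ul v=0\}$ in $\wtil V_+$ is cut out by $(v_1,v_2,pv_2-rv_1)=(v_1,v_2)$. It is therefore automatically the reduced product $(V(\ul v)\cap V_+)\times\P(S)$. Step 3 is a correct bonus that recovers \Cref{lem:stackyBlowUp}, not a missing ingredient.
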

\begin{proof}
    Consider arbitrary point $(B, 0, \ul w)\in V(\ul v)\subset V_+$. By the semi-stability condition on $V_+$, we have $\ul w\neq0$.
    We have fiber product diagram:
\begin{equation}
    \begin{tikzcd}
E \arrow[r] \arrow[d] \arrow[dr, phantom, "\lrcorner", very near start] & \{(B, 0, \ul w)\} \arrow[d] \\
G \times_{\Gam} W \arrow[r] \arrow[d] \arrow[dr, phantom, "\lrcorner", very near start] & V_+ \arrow[d] \\
W/\Gam \arrow[r] & V_+/G
\end{tikzcd}
\end{equation}
So, the fiber is: 

\begin{equation}    \begin{aligned}
        E &= \{(g,\ (B',v', \ul w'))\ | \ g\in G, \ g.(B',(v',0),\ul w')=(B,(0,0),\ul w)\}/\Gam\\
        &= \{(g,\ (Bg,\ 0,\  \det(g)^{-1}\ul w))\ | \ g\in G\}/\Gam\\
        &\simeq G/\Gam\simeq\P^1
    \end{aligned}
\end{equation}
Now consider an arbitrary point $(B, \ul v, \ul w)$ outside the locus $V(\ul v)$. Wlog, we can assume $\ul v = (v,0)$ for some $v\neq 0$ (this can be achieved by the $G$-action on $V_+$). We can compute the fiber over this point using a similar diagram. It is:
\begin{equation}
\begin{aligned}
        F &= \{(g,\ (B',v', \ul w'))\ | \ g\in G, \ g.(B',(v',0),\ul w')=(B,(v,0),\ul w)\}/\Gam\\
        &= \{(g,\ (Bg,\ v,\  \det(g)^{-1}\ul w))\ | \ g\in \Gam\}/\Gam\\
        &\simeq \Gam/\Gam\simeq \{1\}
\end{aligned}
\end{equation}
Hence we conclude our observation.
\end{proof}
This tells us that $f$ is certainly not an isomorphism and moreover suggests that it might be a ``stacky blow-up" of $V_+/G$ along $V(\ul v)/G$. In fact, there already exists a notion of blowing up stacks that naturally generalizes blowing up schemes. Given an Artin stack $\X$ and an ideal sheaf $\I $ on it, the ``blow-up of $\X$ along $\I$" is defined to be the following quotient stack.
\begin{equation}
    \Bl_{\I}\X:=\left(Spec_{\X}(\bigoplus_{d \ge 0} \I^d) \sp\setminus\sp V(\bigoplus_{d>0}\I)\right)/\C^* 
\end{equation}
In what follows, we will essentially show that our map $f$ can indeed be realized as a stacky blow-up in the above sense. Using a standard identity of quotient stacks, we can decompose $f$ as follows by realizing $W/\Gam$ as a quotient by $G$.

\begin{equation}
\begin{array}{ccccc}
    W/\Gam & \xrightarrow{\sim} & ( G\times_{\Gam}W)/G & \xrightarrow{\til f} & V_+/G \\
    p & \longmapsto & (1, p) & & \\
      &                           & (g, p) & \longmapsto & gp
\end{array}
\end{equation}
Here, the group actions involved in the middle stack are as follows:
\begin{itemize}
    \item $\Gam \curvearrowright G\times W: \gamma.(g,p):=(g\gamma^{-1}, \gamma .p)$
    \item $G\curvearrowright G\times_{\Gam}W:g'.(g,p):=(g'g,p)$
\end{itemize}
Note that the action $\Gam \curvearrowright G\times W$ is free (as $\Gam\curvearrowright G$ is free) and hence stable. So, $G\times_\Gam W$ is a scheme.

So, let us replace $f$ by $\til f$ and analyze that instead. As described above, $\til f$ is induced by a $G$-equivariant map between schemes, which we again denote by $\til f$. We have
\begin{lemma}\label[lemma]{lem:stackyBlowUp}
    $\til f: G\times_{\Gam}W\rightarrow V_+$ is isomorphic to the blow-up $\Bl_{V(\ul v)}V_+\rightarrow V_+$.
\end{lemma}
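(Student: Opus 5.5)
Since $V(\ul v)\subset V_+$ is the smooth complete intersection $\{v_1=v_2=0\}$ of codimension $2$, its blow-up is the closed subscheme
\[
\Bl_{V(\ul v)}V_+=\{(p,[\ell])\in V_+\times\P^1 \ |\ \ul v(p)\in \ell\},
\]
with $\P^1=\P(\C^2_{\ul v})$ the projectivization of the $\ul v$-coordinate plane. I will define an explicit $G$-equivariant map
\[
\Psi: G\times_\Gam W\longrightarrow \Bl_{V(\ul v)}V_+,\qquad (g,p)\longmapsto \bigl(g.p,\ [g e_1]\bigr),
\]
where $e_1=(1,0)^T\in S$. I will then show that $\Psi$ is an isomorphism compatible with the projections to $V_+$. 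This compatibility is automatic, because $\til f(g,p)=g.p$.

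The first step is to check that $\Psi$ is well defined. The identification $G/\Gam\simeq\P(S)=\P^1$ sends $g\mapsto[ge_1]$, since $\Gam$ is exactly the stabilizer of the line $\langle e_1\rangle$. This class is invariant under $(g,p)\mapsto(g\gamma^{-1},\gamma.p)$, so $\Psi$ descends to $G\times_\Gam W$. Next I check that the image lies in the blow-up. For $p=(B,(v,0),\ul w)\in W$, the $\ul v$-component of $g.p$ is $\det(g)\,g(v,0)^T=\det(g)v\,ge_1$, which lies on the line $[ge_1]$. I also need $g.p\in V_+$. This holds because $(B\ul v,\ul w)$ transforms by the character $\det$ and $W$ is cut out by $(v\ul b,\ul w)\neq0$, which is exactly the $V_+$-condition restricted to $v_2=0$. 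The map $\Psi$ is $G$-equivariant, where $G$ acts on $\P^1=\P(S)$ through the standard representation, and the blow-up is $G$-stable since the $\ul v$-coordinates transform as $\det\otimes S$.

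The second step is to build an inverse. Both sides are fibered over $\P^1$: the source via $G\times_\Gam W\to G/\Gam$, and the target via the second projection. The map $\Psi$ is $G$-equivariant over $\P^1$, and $G$ acts transitively on $\P^1$. So it suffices to compare the fibers over the base point $[e_1]$, where the stabilizer is $\Gam$. The fiber of the source is $W$. The fiber of the target is $\{p\in V_+ \ |\ \ul v(p)\in\langle e_1\rangle\}=V_+\cap\{v_2=0\}=W$, and on this fiber $\Psi$ restricts to the identity. Since a $G$-equivariant morphism of homogeneous fibrations $G\times_\Gam F\to G\times_\Gam F'$ over $G/\Gam$ is an isomorphism exactly when it is an isomorphism on the fiber over the base point, $\Psi$ is an isomorphism.

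The step I expect to be the main obstacle is the scheme-theoretic identification of the target as the homogeneous bundle $G\times_\Gam W$. For this I will use the equality $\Bl_{V(\ul v)}V_+ = G\times_\Gam(\text{fiber})$, which holds for any $G$-equivariant morphism to a homogeneous space $G/\Gam$; this is a standard fact for $G$-equivariant morphisms to $G/\Gam$. I will also check that the incidence description of the blow-up is reduced, that is, the scheme-theoretic blow-up. This follows because $v_1,v_2$ form a regular sequence on the smooth variety $V_+$, so the Rees algebra is cut out by the single relation $v_1t_2-v_2t_1$. Finally, I will remark that the quotient by $G$ recovers $f$ as the stacky blow-up.
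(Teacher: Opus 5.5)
Your proposal is correct and follows the paper's proof: the paper uses exactly your map $(g,p)\mapsto\bigl([g_{11}:g_{21}],\,g.p\bigr)$ into the incidence description of $\Bl_{V(\ul v)}V_+$. Where you invoke the associated-bundle principle over $G/\Gam\simeq\P^1$, the paper instead writes the inverse explicitly on the two affine charts, using the local sections $\matTwo{1}{0}{r}{1}$ over $\{[1:r]\}$ and $\matTwo{t}{-1}{1}{0}$ over $\{[t:1]\}$ of $G\to G/\Gam$, and checks both composites — which is just that principle carried out concretely in this case.
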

\begin{proof}
    Explicitly, $\Bl_{V(\ul v)}V_+ = \{([p,r],\coordsOfV)\in\P^1\times V_+ \st pv_2=rv_1\}$ and $\Bl_{V(\ul v)}V_+ \rightarrow V_+$ is the projection onto the coordinates $\coordsOfV$. Using this description, we can decompose $\til f$ as follows:
    \begin{equation*}
        \begin{array}{cccc}
         G\times_{\Gam}W & \xrightarrow{\mu} & \Bl_{V(\ul v)}V_+  & \longrightarrow V_+\\
         \left (g, \coordsOfU\right ) &\longmapsto &([g_{11},g_{21}],\sp  g.(B,(v,0), \ul w)) \\
         & & = ([g_{11},g_{21}], \sp (Bg^{-1}, \sp v\sp \det(g)\sp (g_{11},g_{21}), \sp \det(g)\sp\ul w) )
    \end{array}
    \end{equation*}
We claim that the first map in this decomposition is an isomorphism, with its inverse given as follows.

\begin{equation*}
    \begin{array}{ccc}
     \Bl_{V(\ul v)}V_+& \xrightarrow{\nu} & G\times_\Gam W  \\
     ([1,r], \sp(B,\sp(\lam ,\lam r),\sp\ul w)) & \longmapsto & \left ( \matTwo{1}{0}{r}{1}, \sp \left(B.\matTwo{1}{0}{r}{1}, \sp\lam, \sp\ul w\right) \right )\\
     ([p,1], \sp(B,\sp(\lam p,\lam ),\sp\ul w)) & \longmapsto & \left ( \matTwo{p}{-1}{1}{0}, \sp \left(B.\matTwo{p}{-1}{1}{0}, \sp\lam, \sp\ul w\right) \right )
\end{array}
\end{equation*}
 We have described the mapping out of the two affine charts $p\neq 0$ and $r\neq 0$, separately. Taking the $\Gam$-symmetry of the target into account, it is easy to verify that these descriptions match on the overlap of these charts and hence $\nu$ a valid morphism.
 
Let us verify the above claim. It is easy to check that $\mu \circ \nu = Id$. On the other hand, action of $\nu\circ\mu$ on a general element looks like
\begin{equation}
\begin{gathered}
    (\nu\circ \mu)\left(g, \coordsOfU\right)\\
    =\nu ([1,g_{11}^{-1}g_{21}], \sp (Bg^{-1}, \sp g_{11}\sp v\sp \det(g)\sp (1,g_{11}^{-1}g_{21}), \sp \det(g)\sp\ul w) )\\
    =\left ( \matTwo{1}{0}{g_{11}^{-1}g_{21}}{1}, \sp \left(B.g^{-1}.\matTwo{1}{0}{g_{11}^{-1}g_{21}}{1}, \sp g_{11}\sp v\sp \det(g), \sp \det(g)\ul w\right) \right )
\end{gathered}
\end{equation}
Here, we have assumed that $g_{11}\neq 0$, so that $\nu$ sends the given element to the affine chart $p\neq 0$. One can argue analogously for the other chart. Now consider the following matrix, which turns out to be an element of $\Gam$.
\begin{equation}
    \gamma \equiv g^{-1}.\matTwo{1}{0}{g_{11}^{-1}g_{21}}{1}\in\Gam
\end{equation}

It is easy to check that $\gamma^{-1}.(g,\coordsOfU)=(\nu\circ\mu)(g,\coordsOfU)$. Therefore, taking the $\Gam$-symmetry into account, we have $\nu \circ \mu = Id$.

\end{proof}

\begin{proposition}\label[proposition]{prop:tiltingOnW}
    $Lf^*:D(V_+/G)\rightarrow D(W/\Gam)$ is fully-faithful.
\end{proposition}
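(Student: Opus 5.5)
We reduce the statement to the standard fact that pulling back along a blow-up with smooth centre is fully faithful, using \Cref{lem:stackyBlowUp}. First, $f$ factors as the isomorphism $W/\Gam\simeq (G\times_\Gam W)/G$ followed by $\til f$. So it suffices to show that $L\til f^*: D(V_+/G)\to D((G\times_\Gam W)/G)$ is fully faithful. By \Cref{lem:stackyBlowUp}, $\til f$ is the $G$-equivariant blow-up $\Bl_{V(\ul v)}V_+\to V_+$. The centre $V(\ul v)\cap V_+$ is smooth of codimension $2$ in the smooth scheme $V_+$, being cut out by the two coordinates $v_1,v_2$. The blow-up is taken along a $G$-invariant ideal, so $G$ acts on it compatibly.

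Full faithfulness of $L\til f^*$ amounts to the natural map $\mathrm{id}\to R\til f_*L\til f^*$ being an isomorphism. By the projection formula, $R\til f_*L\til f^*\F\simeq \F\otimes R\til f_*\O$, so it is enough to check that $R\til f_*\O_{\Bl_{V(\ul v)}V_+}\simeq \O_{V_+}$. This is a non-equivariant statement about the underlying schemes, and the equivariant structure is automatic because the unit map $\O_{V_+}\to \til f_*\O$ is $G$-equivariant. We also need that pushforward and pullback on quotient stacks are computed by the equivariant functors on $V_+$ and $\Bl_{V(\ul v)}V_+$, which holds since $(-)/G$ is compatible with flat base change to $V_+$.

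The key computation is therefore $R\til f_*\O=\O$ for the blow-up of a smooth variety along a smooth codimension-$2$ centre. There are two ways to see it. One is the standard argument: the fibres over the centre are $\P^1$, on which $H^{>0}(\O)=0$, together with the theorem on formal functions and normality of $V_+$. The other is to cite Orlov's blow-up formula \cite{orlovblowup}, which directly gives the full faithfulness of the pullback. Since the centre $V(\ul v)$ is a linear subspace defined by the vanishing of the coordinates $(v_1,v_2)$, everything is locally a product. Concretely, $\Bl_{V(\ul v)}V_+$ is an open subset of $\Bl_0\C^2\times(\text{other coordinates})$, and it is the preimage of the open set $V_+$. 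So the computation further reduces to $R\sigma_*\O_{\Bl_0\C^2}=\O_{\C^2}$ for $\sigma:\Bl_0\C^2\to\C^2$, combined with flat base change along the open inclusion. This is the cleanest route and avoids the formal-functions argument entirely.

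The main obstacle is not the cohomology computation but making the reduction from stacks to schemes airtight. One has to confirm that $L\til f^*$ on quotient stacks corresponds to equivariant pullback, and that $\Hom$s on $V_+/G$ are $G$-invariants of $R\Gam$ on $V_+$; for the latter, $G$ reductive makes taking invariants exact. One also has to confirm that the isomorphism of \Cref{lem:stackyBlowUp} is $G$-equivariant for the left-multiplication action on $G\times_\Gam W$ and the given action on $\Bl_{V(\ul v)}V_+$. I would check this equivariance directly from the formula for $\mu$: $\mu(g'g,p)=([ (g'g)_{11},(g'g)_{21}], g'g.p)$, which is $g'$ applied to $\mu(g,p)$ for the induced $G$-action on $\P^1$ given by the first column of $g$. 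With this checked, the adjunction isomorphism descends and $Lf^*$ is fully faithful.
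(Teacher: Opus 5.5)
Your proposal is correct and follows essentially the same route as the paper: via \Cref{lem:stackyBlowUp} you identify $f$ with the map of quotient stacks induced by the $G$-equivariant blow-up of $V_+$ along the smooth codimension-$2$ centre $V(\ul v)$, use the non-equivariant full faithfulness of pullback along such a blow-up, and then descend to $V_+/G$. The only difference is the descent step: the paper takes $G$-invariants of the $G$-module isomorphism of Hom spaces, using that $G$ is reductive. You instead check that the unit $\F\to R\til f_*L\til f^*\F$ is an isomorphism, via the projection formula and $R\til f_*\O=\O$ verified after forgetting the equivariance; this is an equally valid and somewhat more explicit version of the same argument.
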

\begin{proof}
    In the following, denote $\til V_+\simeq \Bl_{V(\ul v)}V_+$. By previous discussion, $f:W/\Gamma \xrightarrow{} V_+/G$ is isomorphic to the morphism $\varphi:\til V_+/G\rightarrow V_+/G$, induced by the $G$-equivariant blow-up map $\til \varphi:\til V_+\rightarrow V_+$. Here, $G\curvearrowright \Bl_{V(\ul v)}V_+$ is the one induced from $G\curvearrowright G\times_{\Gam}W$, under the isomorphism described in \Cref{lem:stackyBlowUp}. 
    Let $\E, \F \in D_G(V_+)\equiv D(V_+/G)$. By fully-faithfulness of $L\til \varphi^*$ and $G$-equivariance of $\widetilde \varphi$, we have canonical isomorphism 
    \begin{equation}
        \Hom_{D(\til V_+)}(L\til \varphi^*\E,L\til \varphi^*\F)\simeq \Hom_{D(V_+)}(\E,\F)
    \end{equation}
    of $G$-modules. As $G$ is reductive, $G$-invariants of these Hom spaces are exactly equal to the Hom spaces of the corresponding objects in $D(V_+/G)$. So, taking $G$-invariants on both sides, we have induced isomorphism:
    \begin{equation}
        \Hom_{D(\til V_+/G)}(L\varphi^*\E,L \varphi^*\F)\simeq \Hom_{D(V_+/G)}(\E,\F)
    \end{equation}
    Therefore, $L\varphi^*$ is fully-faithful.
\end{proof}

\subsection{Step 4: The Open Restriction}\label{sec:openRestriction}
Here we finally arrive at $\kb$, by going through the open inclusion $i:\kb\rightarrow W/\Gam$. We will show that the induced restriction functor $Li^*:D(W/\Gam)\rightarrow D(\kb)$ is fully-faithful when restricted to the window, pulled back to $W/\Gam$. Let us begin with a crucial observation.

\begin{observation}
    The closed complement $W\sp\setminus\sp U$ is given by vanishing of coordinates $\ul b$ and hence has codimension 6 in $W$. 
\end{observation}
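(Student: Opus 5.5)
The plan is to compare the two defining open conditions directly in the coordinates $(\ul b,\ul d,v,\ul w)$ of $V_6\oplus V_6\oplus\C\oplus L^\vee$. Note that $W$ is open in this $16$-dimensional affine space, hence irreducible of dimension $16$. I will show that $U=W\cap\{\ul b\neq 0\}$. Granting this, the closed complement is $W\setminus U=W\cap V(\ul b)$, and the dimension count follows.

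\emph{Step 1: $W\cap\{\ul b\neq 0\}\subseteq U$.} Take a point of $W$ with $\ul b\neq 0$. Then $(v\ul b,\ul w)\neq 0$, so either $\ul w\neq 0$, or $v\ul b\neq 0$, in which case $v\neq 0$. In both cases $(v,\ul w)\neq 0$. Together with $\ul b\neq 0$, this puts the point in $U$.

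\emph{Step 2: $U\subseteq W\cap\{\ul b\neq 0\}$.} Take a point of $U$. By definition $\ul b\neq 0$. If $\ul w\neq 0$, then clearly $(v\ul b,\ul w)\neq 0$. Otherwise $\ul w=0$, so $v\neq 0$, and then $v\ul b\neq 0$ because $\ul b\neq 0$. So the point lies in $W$.

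\emph{Step 3: codimension.} Steps 1 and 2 give $W\setminus U=W\cap V(\ul b)$. On $V(\ul b)$ we have $v\ul b=0$, so this locus is $\{(0,\ul d,v,\ul w)\mid \ul w\neq 0\}$. It is a nonempty open subset of the $10$-dimensional linear subspace $V(\ul b)\simeq V_6\oplus\C\oplus L^\vee$, hence irreducible of dimension $10$. It is cut out in $W$ by the six coordinate functions $\ul b$, so its codimension is $16-10=6$.

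There is no real obstacle here. The only point requiring care is the case split on whether $\ul w$ vanishes in Steps 1 and 2; this is exactly what makes the two open conditions agree once $\ul b\neq 0$.
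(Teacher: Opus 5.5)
Your proof is correct: the case split on whether $\underline{w}$ vanishes shows $U = W\cap\{\underline{b}\neq 0\}$. Hence $W\setminus U=\{(0,\underline{d},v,\underline{w})\mid \underline{w}\neq 0\}$, a nonempty open subset of a $10$-dimensional linear space inside the $16$-dimensional $W$. The paper states this observation without proof, and your direct coordinate check is exactly the verification it leaves implicit.
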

By Hartog's lemma, for any pair of bundles $\E, \F$ in the window (pulled back to $W/\Gam$), we have:
\begin{equation}
    \Hom_{D(U)}(\til i^*\E,\til i^*\F)\eq H^0(U,\til i^*\E^\vee\otimes \til i^*\F)\simeq H^0(W,\til i^*\E^\vee\otimes \til i^*\F)=\Hom_{D(W)}(\E,\F)
\end{equation}
Taking $\Gam$-invariants on both sides, and identifying the invariant subspaces with Homs on the quotient spaces, we get induced isomorphism:
\begin{equation}
    \Hom_{D(U/\Gam)}(i^*\E,i^*\F) \simeq \Hom_{D(W/\Gam)}(\E,\F)
\end{equation}
So, what remains to be shown, for the fully-faithfulness claim, is that the Exts are also preserved. But as the window bundles are pre-tilting, this amounts to showing that the Ext spaces between any two bundles $\E, \F$ in the window (pulled back to $\kb$) vanish.

Recall that $\Srep$ denotes the tautological $G$-representation. We will use the same notation to denote its restriction to the subgroup $\Gam$, giving us a $\Gam$-representation. Denote by $\bunPos$, the corresponding bundle on $U/\Gam$. We can realize it as an extension between familiar line bundles, and using this description we can go on to describe $\Sym^2\S^\vee$ and $\det(\S^\vee)$ more explicitly.
\begin{observation}\label[observation]{obs:ses}
    On $\kb$, we have:
    \begin{itemize}
        \item short exact sequence $0\rightarrow \O(-h-H)\rightarrow \S^\vee\rightarrow\O(h)\rightarrow 0$
        \item short exact sequence $0\rightarrow \S^\vee(-h-H)\rightarrow \Sym^2\S^\vee\rightarrow\O(2h)\rightarrow 0$
        \item $\det(\S^\vee)=\O(-H)$
    \end{itemize}
\end{observation}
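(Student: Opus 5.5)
The plan is to work directly with $\Gam$-representations, using the equivalence $Coh(\kb)\simeq Coh_\Gam(U)$. The bundle $\bunPos$ corresponds to $S^\vee$ restricted to $\Gam$. First I will write down the $\Gam$-action on $S^\vee$. If $e_1,e_2$ is the standard basis of $S$, then $\Gam$ stabilizes the line $\langle e_1\rangle$ and acts on it by $\alpha$. This gives a $\Gam$-equivariant exact sequence
\begin{equation*}
0\arw \langle e_1\rangle \arw S\arw S/\langle e_1\rangle\arw 0,
\end{equation*}
where the quotient has character $\delta$. Dualizing gives $0\arw (S/\langle e_1\rangle)^\vee\arw S^\vee\arw \langle e_1\rangle^\vee\arw 0$, with characters $\delta^{-1}$ and $\alpha^{-1}$ respectively.

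Next I will convert these characters into line bundles using the Observation recording $\O(h)\simeq\C_{\alpha^{-1}}\otimes\O_U$ and $\O(H)\simeq \C_{\alpha\delta}\otimes\O_U$. Since $\delta^{-1}=\alpha\cdot(\alpha\delta)^{-1}$, the character $\C_{\delta^{-1}}$ corresponds to $\O(-h-H)$, while $\C_{\alpha^{-1}}$ corresponds to $\O(h)$. This yields the first sequence. The one point to check is the sign convention, namely whether the action on sections or on fibers dualizes the characters. I will calibrate this against the Observation itself, for example by checking that the coordinate $\ul b$, which has $\Gam$-weight $\alpha^{-1}$, is a section of $\O(h)$, so the same convention applies to $S^\vee$.

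The determinant follows at once from the first sequence: $\det(\bunPos)=\O(-h-H)\otimes\O(h)=\O(-H)$, matching $\det(S^\vee)=\C_{\alpha^{-1}\delta^{-1}}$. For $\Sym^2$, I will apply $\Sym^2$ to the filtration $0\subset F_1=\O(-h-H)\subset \bunPos$. The induced filtration on $\Sym^2\bunPos$ has the sub-bundle $F_1\cdot\bunPos\simeq \bunPos\otimes F_1=\bunPos(-h-H)$, which is locally free of rank 2 because $F_1$ is a line sub-bundle. The quotient is $\Sym^2(\bunPos/F_1)=\O(2h)$. Equivalently, at the level of representations, $\Sym^2S^\vee$ has weight vectors $x^2,xy,y^2$; the span of $xy,y^2$, where $y$ spans the $\delta^{-1}$ line, is $\Gam$-stable and isomorphic to $S^\vee\otimes\C_{\delta^{-1}}$, with quotient $\C_{\alpha^{-2}}$.

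The only real obstacle is this bookkeeping of dual versus non-dual characters. The structural content is routine linear algebra of the Borel subgroup $\Gam$.
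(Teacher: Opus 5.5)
Your proof is correct and is essentially the paper's argument. Both find the $\Gam$-stable line in $S^\vee$ with character $\delta^{-1}$ and quotient character $\alpha^{-1}$, and then translate characters via $\O(h)\leftrightarrow\C_{\alpha^{-1}}$ and $\O(H)\leftrightarrow\C_{\alpha\delta}$. You get the line by dualizing the flag $\langle e_1\rangle\subset S$, while the paper writes the action $(x,y)\mapsto(x,y)g^{-1}$ explicitly. Beyond this, you spell out the $\Sym^2$ filtration and the determinant, which the paper calls immediate corollaries. Your convention check against the coordinate $\ul b$ agrees with the paper's identification.
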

\begin{proof}
Working in a basis of $\Srep^\vee$, this $\Gam$-representation is described as follows:
\begin{equation}
    \matTwo{\alpha}{\beta}{0}{\delta}.\colTwo{x}{y}=\colTwo{x}{y}\matTwo{\alpha}{\beta}{0}{\delta}^{-1}=\colTwo{\alpha^{-1}x}{\delta^{-1}y-\beta \alpha^{-1}\delta^{-1}x}
\end{equation}
    So, we have the following short exact sequence of $\Gam$-representations:
    \begin{equation}
        \begin{array}{ccccc}
         0\rightarrow \C_{\delta ^{-1}} &\longrightarrow &\Srep^\vee &\longrightarrow &\C_{\alpha^{-1}}\rightarrow 0  \\
         &\begin{bmatrix}0\\1\end{bmatrix}&&\begin{bmatrix}1&0\end{bmatrix}& 
    \end{array}
    \end{equation}

where $\C_{\alpha ^{-1}}$ and $\C_{\delta^{-1}}$ are the $\Gam$-representations corresponding to the $\Gam$-characters $\alpha^{-1}$ and $\delta^{-1}$ respectively. So, $\C_{\alpha ^{-1}}$ corresponds to the line bundle $\O(h)$ and $\C_{\delta^{-1}}=\C_{(\alpha \delta)^{-1}}\otimes\C_{\alpha}$ corresponds to the line bundle $\O(-h-H)$. So, the first short exact sequence follows. The other two statements are immediate corollaries of this sequence. 
\end{proof}
     So, the bundles in the window (pulled back to $\kb$) are as follows :
\begin{equation}
    \begin{array}{ccccccc}
                & \mathcal{O}    & \mathcal{O}(-H)    & \mathcal{O}(-2H)   & \mathcal{O}(-3H)   & \mathcal{O}(-4H)   & \mathcal{O}(-5H) \\
\bunPos(H) & \bunPos   & \bunPos(-H)   & \bunPos(-2H)  & \bunPos(-3H)  & \bunPos(-4H)  &                  \\
                & \Sym^2\bunPos & \Sym^2\bunPos(-H) & \Sym^2\bunPos(-2H) & & & 
\end{array}
\end{equation}
For any two bundles $\E,\F$ in the window, we have $\Ext^i(\E,\F)=H^i(U/\Gam,\E^\vee\otimes \F)$. So, we need to show vanishings of all these cohomologies (for $i>0$). We will note some more easy observations to help us with the cohomology computations to come.

\begin{observation}\label[observation]{obs:S} We have
\begin{itemize}
\item $\S = \S^\vee(H)$, as there exists a perfect pairing $\S \otimes \S \to \det \S = \mathcal{O}(H)$.
\item $\wt \Q  = \Lambda^4 \Q^\vee(h)$, as there exists a perfect pairing $\wt \Q  \otimes\Lambda^4 \wt \Q \rightarrow \det(\wt \Q )=\O(h)$
\end{itemize}
\end{observation}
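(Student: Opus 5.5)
Both items are instances of one linear-algebra fact: for a vector bundle $\mathcal{F}$ of rank $r$, the wedge pairing $\mathcal{F}\otimes\Lambda^{r-1}\mathcal{F}\to\Lambda^r\mathcal{F}=\det\mathcal{F}$ is perfect. Hence $\Lambda^{r-1}\mathcal{F}\simeq\mathcal{F}^\vee\otimes\det\mathcal{F}$, and dually $\Lambda^{r-1}\mathcal{F}^\vee\simeq\mathcal{F}\otimes\det\mathcal{F}^{-1}$. I will check perfectness fiberwise, since it is a statement about a fixed vector space $F\simeq\C^r$. There, $e_i\wedge(e_1\wedge\cdots\wedge\widehat{e_j}\wedge\cdots\wedge e_r)=\pm\delta_{ij}\,e_1\wedge\cdots\wedge e_r$, so in the standard bases the pairing matrix is a signed identity. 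The construction is natural, so it glues to a morphism of bundles on any base. On the stack $\kb$ it is also automatically $\Gam$-equivariant, so it descends under $Coh(\kb)\simeq Coh_\Gam(U)$.

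For the first item I take $\mathcal{F}=\S$ of rank $2$, so $\Lambda^{1}\S=\S$ and the pairing is $\S\otimes\S\to\det\S$. The only input is $\det\S$. By \Cref{obs:ses}, $\det(\S^\vee)=\O(-H)$, so $\det\S=\O(H)$. Representation-theoretically, $\det\Srep$ restricted to $\Gam$ is the character $\alpha\delta$, which corresponds to $\O(H)$ by the Observation on twists. Perfectness gives $\S\simeq\S^\vee\otimes\det\S=\S^\vee(H)$.

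For the second item I take $\mathcal{F}=\wt\Q$ of rank $5$ on $\P^5$, pulled back along $\pi$ and $p$ to $\blp$ and $\kb$ as needed. Its determinant comes from the twisted Euler sequence $0\to\O(-h)\to V_6\otimes\O\to\wt\Q\to0$: multiplicativity of $\det$ gives $\det\wt\Q=\O(h)$. The perfect pairing $\wt\Q\otimes\Lambda^4\wt\Q\to\O(h)$ gives $\Lambda^4\wt\Q\simeq\wt\Q^\vee(h)$. Applying the same fact to $\wt\Q^\vee$, whose determinant is $\O(-h)$, gives $\Lambda^4\wt\Q^\vee\simeq\wt\Q(-h)$, so $\wt\Q\simeq\Lambda^4\wt\Q^\vee(h)$ as claimed.

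There is no real obstacle here. The only point needing care is the sign conventions for the twists: $\det\S$ is $\O(+H)$ rather than $\O(-H)$, and $\det\wt\Q=\O(h)$. I fix these from \Cref{obs:ses} and the Euler sequence, and then the two isomorphisms are immediate.
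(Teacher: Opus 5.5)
Your proof is correct and follows the paper's own justification. Both items are instances of the perfect wedge pairing $\mathcal{F}\otimes\Lambda^{r-1}\mathcal{F}\to\det\mathcal{F}$, applied with $\det\S=\O(H)$ (from \Cref{obs:ses}) and $\det\wt\Q=\O(h)$ (from the Euler sequence); you also rightly read the $\Q^\vee$ in the second item as $\wt\Q^\vee$, which is how the paper uses it in \Cref{lem:cohomExpression}.
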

\begin{observation}
$\omega_{\blp} = \mathcal{O}(-5h - 4H)$
\end{observation}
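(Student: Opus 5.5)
The plan is to compute the canonical bundle of $\blp=\Bl_{\P(L^\vee)}\P(V_6\oplus L^\vee)$ in two independent ways and check that they agree. This also pins down the sign conventions for $h$ and $H$ used in the cohomology computations that follow.

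The first computation uses the projective bundle structure $\pi\colon\blp\simeq\P(\Ec)\to\P(V_6)$, where $\Ec\coloneqq\O(-h)\oplus L^\vee\otimes\O$ has rank $4$. The relative Euler sequence is
\begin{equation*}
0\arw\O\arw\pi^*\Ec\otimes\O_\pi(1)\arw T_\pi\arw0 .
\end{equation*}
Since $\O_\pi(1)=\O(H)$, this gives $\omega_\pi=\pi^*\det\Ec^\vee\otimes\O(-4H)=\O(h-4H)$, because $\det\Ec=\O(-h)$. Adding $\pi^*\omega_{\P(V_6)}=\O(-6h)$, we obtain
\begin{equation*}
\omega_{\blp}=\O(-6h+h-4H)=\O(-5h-4H).
\end{equation*}
The one point that needs care is the identification of the relative hyperplane class with the pullback of $H$. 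This was already recorded in \Cref{sec:geometry}: the relative hyperplane class of $\pi$ is the pullback of the hyperplane class of $\P(V_6\oplus L^\vee)$, compatibly with the tautological inclusion $\O_\pi(-1)\subset\pi^*\Ec\subset(V_6\oplus L^\vee)\otimes\O$.

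For the cross-check, I use the blow-up description. The center $\P(L^\vee)$ has codimension $6$ in $\P^8$, so $\omega_{\blp}=\beta^*\O(-9H)\otimes\O(5E)$. The exceptional divisor satisfies $E=H-h$. Indeed, the composite $\O_\pi(-1)\to\pi^*\O(-h)$ is the projection to the $V_6$-factor, and it vanishes exactly along $E$. Hence $\O(-9H+5H-5h)=\O(-4H-5h)$, which agrees with the first computation. This matches the relation $K_{\wt Y}=H-h=E_Y$ used earlier, so the conventions are consistent.

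There is also a representation-theoretic check on $U/\Gam$, using $\O(h)\leftrightarrow\C_{\alpha^{-1}}$ and $\O(H)\leftrightarrow\C_{\alpha\delta}$. The anticanonical character should be the determinant of the $T$-representation on $U'$ minus the Lie algebra contribution, which is trivial for a torus. The $T$-weights are $\alpha^{-1}$ six times, $\alpha^2\delta$ once and $\alpha\delta$ three times. Their product is $\alpha^{-6+2+3}\delta^{1+3}=\alpha^{-1}\delta^{4}$. Since $\alpha^{5}(\alpha\delta)^4=\alpha^{9}\delta^4$, the two do not match directly, so this check needs the correct convention of dual weights (the functions versus the coordinates). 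Because of this sign ambiguity I rely on the two geometric computations above. The main obstacle is exactly this kind of sign and dual convention, meaning $\O_\pi(1)$ versus $\O_\pi(-1)$ and $\Ec$ versus $\Ec^\vee$, and the agreement of the two independent methods is what settles it.
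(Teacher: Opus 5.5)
Your proposal is correct and is essentially the paper's own proof. The paper also computes $\omega_{\blp}$ twice: once via the blow-up formula $\pi'^*\O(-9H)\otimes\O(5E)$ with $E=H-h$, and once via the projective bundle $\P(\O(-h)\oplus\O^{\oplus 3})\to\P^5$, obtaining $\O(-5h-4H)$ both times. Incidentally, your abandoned representation-theoretic check does go through: since $\O(h)\leftrightarrow\C_{\alpha^{-1}}$, one has $\O(5h+4H)\leftrightarrow\C_{\alpha^{-5}(\alpha\delta)^{4}}=\C_{\alpha^{-1}\delta^{4}}$, which is exactly the product of weights you found. So there is no sign ambiguity; the mismatch came from writing $\alpha^{5}$ instead of $\alpha^{-5}$.
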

\begin{proof}
We can compute $\omega_{\blp}$ in two different ways.
\begin{itemize}
  \item Using $\blp = \Bl_{\P^2}\P ^8$, we have:
  \begin{equation}
      \omega_{\blp} = \pi'^*\omega_{\P ^8} \otimes \mathcal{O}(5E)
    = \mathcal{O}(-9H) \otimes \mathcal{O}(5H - 5h)
    = \mathcal{O}(-5h - 4H)
  \end{equation}
  \item Using $\blp = \P \bigl(\mathcal{O}_{\P^5}(-h) \oplus 3\mathcal{O}_{\P^5}\bigr)$, we have:
\begin{equation}
\begin{aligned}
    \omega_{\blp}
      &= \pi^*\omega_{\P^5} \otimes \pi^*\det\!\bigl(\mathcal{O}(h) \oplus 3\mathcal{O}\bigr) \otimes \mathcal{O}(-4H) \\
      &= \mathcal{O}(-6h) \otimes \mathcal{O}(h) \otimes \mathcal{O}(-4H) \\
      &= \mathcal{O}(-5h - 4H).
  \end{aligned}
\end{equation}
  
\end{itemize}
\end{proof}

\begin{lemma} \label[lemma]{lem:cohomExpression}
For $\F = \mathcal{O}_{U/\Gam}(ah + bH)$, $d \geq 2$, $a \geq -5$, we have:
\[
  H^d(\kb, \F)
  = \begin{cases}
      \displaystyle\bigoplus_{i=0}^{-b-4}
        H^{8-d}\!\Bigl(\P^5,\; \Sym^i \Lambda^4 \Q^\vee(-a-5)
          \otimes \Sym^{-i-b-4}\!\bigl(\mathcal{O}(1) \oplus \mathcal{O}^{\oplus3}\bigr)\Bigr)^\vee
      & \text{if } b \leq -4, \\[6pt]
      0 & \text{otherwise.}
    \end{cases}
\]
\end{lemma}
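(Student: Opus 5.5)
We compute $H^d(\kb,\F)$ by pushing forward in stages along $\kb\xrightarrow{p}\blp\xrightarrow{\pi}\P^5$. By the lemma identifying $p$ with the projection from $\Tot(\pi^*\wt\Q(-h-H))$, $p$ is affine. Hence
\[
p_*\O_{\kb}=\bigoplus_{i\geq0}\Sym^i\bigl(\pi^*\wt\Q^\vee(h+H)\bigr),
\]
and therefore
\[
H^d(\kb,\F)=\bigoplus_{i\geq 0}H^d\bigl(\blp,\;\pi^*\Sym^i\wt\Q^\vee\,((a+i)h+(b+i)H)\bigr).
\]
The task is then to compute each summand on the projective bundle $\blp=\P(\O(-h)\oplus\O^{\oplus3})$ over $\P^5$, whose relative dimension is $3$.

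Next we push forward along $\pi$, using the formula for $R\pi_*\O(mH)$ already established in the proof of \Cref{prop:hodge_numbers}. For $m\geq 0$ there is only $R^0$, and it equals $\Sym^m(\O(h)\oplus\O^{\oplus3})$. For $-3\leq m\leq -1$ the pushforward vanishes. For $m\leq -4$ it is concentrated in degree $3$, and equals $\O(-h)\otimes\Sym^{-4-m}(\O(h)\oplus\O^{\oplus3})^\vee[-3]$.

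It is cleaner to apply Serre duality on $\blp$ first, using $\omega_{\blp}=\O(-5h-4H)$ from the observation above. This gives
\[
H^d(\blp,\G)\simeq H^{8-d}(\blp,\G^\vee(-5h-4H))^\vee,
\]
where $\G^\vee(-5h-4H)=\pi^*\Sym^i\wt\Q\,(-(a+i+5)h+(-b-i-4)H)$. Since $\wt\Q=\Lambda^4\wt\Q^\vee(h)$ by \Cref{obs:S}, we get $\Sym^i\wt\Q(-ih)=\Sym^i\Lambda^4\wt\Q^\vee$. The dual bundle therefore becomes $\pi^*\bigl(\Sym^i\Lambda^4\wt\Q^\vee(-a-5)\bigr)\otimes\O((-b-i-4)H)$.

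Pushing this forward gives the following case split.
\begin{itemize}
\item If $-b-i-4\geq 0$, i.e.\ $i\leq -b-4$, the projection formula yields
\[
H^{8-d}\bigl(\P^5,\;\Sym^i\Lambda^4\wt\Q^\vee(-a-5)\otimes\Sym^{-i-b-4}(\O(1)\oplus\O^{\oplus3})\bigr),
\]
which is exactly the summand in the statement. Such $i\ge 0$ exist only when $b\leq -4$.
\item If $-3\leq -b-i-4\leq -1$, the summand vanishes.
\item If $-b-i-4\leq -4$, i.e.\ $i\geq -b$, the pushforward sits in degree $3$. The summand is then $H^{5-d}$ on $\P^5$ of a bundle of the form $\Sym^i\wt\Q^\vee((a+i)h)\otimes\Sym^{b+i}(\ldots)$, after undoing the duality.
\end{itemize}

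The main obstacle is showing that this last family of summands vanishes for $d\ge2$. Undualized, these are $H^{d-3}(\P^5,\ldots)$, which vanish automatically for $d\leq 2$. For $d\geq 3$ I would use Bott vanishing on $\P^5$: the relevant sheaves are $\Sym^i\wt\Q^\vee(kh)$ with $k=a+i+(\text{nonneg.})\geq a+i\geq i-5$. Here $\wt\Q^\vee=\Omega(h)$, and $\Sym^i\Omega(kh)$ has no higher cohomology once $k>i$. The boundary cases need to be checked against the hypothesis $a\ge-5$, either via Borel--Weil--Bott for $\Sym^i\wt\Q^\vee$ as a homogeneous bundle, or by inspecting which weights become singular. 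I expect the bound $a\geq -5$ is precisely what makes the dot-action land in a chamber with only $H^0$, so this case contributes nothing. I would verify this first for the extreme value $a=-5$ and the smallest $i=-b$, then argue monotonicity in $a$ and $i$.
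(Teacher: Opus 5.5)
Your handling of the summands with $i\le -b-4$ (Serre duality on $\blp$, then $\Sym^i\wt\Q(-ih)=\Sym^i\Lambda^4\wt\Q^\vee$ and a degree-$0$ pushforward) is correct. So is your treatment of the vanishing window $-4<b+i<0$. Both coincide with the paper's computation of $\Sigma_-$. The gap is in the remaining range $i\ge -b$, which carries the whole ``otherwise $0$'' clause, and there your degree bookkeeping is wrong. Since $b+i\ge 0$, $R\pi_*\O((b+i)H)=\Sym^{b+i}(\O(h)\oplus\O^{\oplus 3})$ sits in degree $0$, so the summand is
\[
H^d\bigl(\P^5,\ \Sym^i\wt\Q^\vee((a+i)h)\otimes\Sym^{b+i}(\O(h)\oplus\O^{\oplus 3})\bigr),
\]
in the same degree $d$. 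Equivalently, your $H^{5-d}$ on the dual side is Serre dual on $\P^5$ to $H^{d}$, not to $H^{d-3}$. The shift by $3$ belongs to the other range when it is computed without duality. Consequently $d=2$ is not automatic. Worse, with your indexing the case $d=3$ would require an $H^0$ to vanish. This already fails for $a=b=i=0$: the term would be $H^0(\P^5,\O)=\C$, although $H^3(\kb,\O)=0$.

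Second, the vanishing you aim for is false: the bound $a\ge -5$ does not leave only $H^0$. For $a=-2$ and $b\ge -1$, the summand with $i=1$, $j=0$ is $H^\bullet(\P^5,\wt\Q^\vee(-h))=H^\bullet(\P^5,\Omega_{\P^5})$, and $H^1(\P^5,\Omega_{\P^5})\neq 0$. This is why the table in the proof of \Cref{cor:cohomVanishings} has degree $1$ for $a\le -2$, and why the lemma is stated only for $d\ge 2$. What you actually need, and what the paper uses, is the following. After expanding $\Sym^{b+i}$, each piece is $H^d(\P^5,\Sym^i\wt\Q^\vee(m))$ with $m=a+i+j\ge a\ge -5$. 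Its $\rho$-shifted Borel--Weil--Bott weight is $(m+5,\,i+4,\,3,\,2,\,1,\,0)$. This weight is singular for $-5\le m\le -2$ or $m=i-1$, dominant for $m\ge i$, and one transposition away from dominant for $-1\le m\le i-2$. Hence the cohomology lives only in degrees $0$ and $1$, which gives vanishing for all $d\ge 2$ at once, with no monotonicity argument needed. The hypothesis $a\ge -5$ is exactly what excludes $m\le -6$, where $H^5$ appears. With this replacement your argument becomes the paper's.
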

\begin{proof}
\begin{equation}
\begin{aligned}
  H^d(\kb, \F)
  &= H^d\bigl(\blp,\, p_*\F\bigr)\\
  &= H^d\bigl(\blp,\, \O(ah+bH)\otimes Rp_*\O\bigr)\\
  &= \bigoplus_{i \geq 0} H^d\!\Bigl(\blp,\, \Sym^i\!\bigl(\pi^*\Q^\vee(h+H)\bigr)(ah+bH)\Bigr) \\
  &= \bigoplus_{i \geq 0} H^d\!\Bigl(\blp,\, \pi^*\Sym^i\Q^\vee\bigl((i+a)h + (i+b)H\bigr)\Bigr) \\
  &= \bigoplus_{i \geq 0} H^d\!\Bigl(\P^5,\, \Sym^i \Q^\vee((i+a)h) \otimes R\pi_*\mathcal{O}\bigl((i+b)H\bigr)\Bigr).
\end{aligned}
\end{equation}
As $\O(H)$ is the relative $\O(1)$ of the $\P^3$-bundle map $\pi$ , we have 
\begin{equation}
    R\pi_*\mathcal{O}\bigl((i+b)H\bigr) = 0 \sp\text{ for } -4 < i+b < 0\sp\text{ i.e. } -4-b < i < -b
\end{equation}
So, above sum splits as $\Sigma_+ \oplus \Sigma_-$, where $\Sigma_+\eq$ the sum over $i \geq \max\{0, -b\}$ and $\Sigma_-\eq$ the sum over $0 \leq i \leq -b-4$.\\
\noindent \textit{Computing $\Sigma_+$} (using Borel-Weil-Bott, see Appendix \ref{app:cohomCode} for more explanation):
\begin{equation}
\begin{aligned}
  \Sigma_+
  &= \bigoplus_{i \geq \max\{0,-b\}}
      H^d\!\Bigl(\P^5,\, \Sym^i \Q^\vee((i+a)h) \otimes \Sym^{i+b}\!\bigl(\mathcal{O}(h) \oplus 3\mathcal{O}\bigr)\Bigr) \\
  &= \bigoplus_{i \geq \max\{0,-b\}} \bigoplus_{j=0}^{i+b}
      H^d\!\Bigl(\P^5,\, \Sym^i \Q^\vee((i+a)h) \otimes \mathcal{O}(jh)
        \otimes \mathcal{O}^{\oplus\binom{i+b-j+2}{2}}\Bigr) \\
  &= \bigoplus_{i \geq \max\{0,-b\}} \bigoplus_{j=0}^{i+b}
      H^d\!\Bigl(\P^5,\, \Sym^i \Q^\vee(i+j+a)\Bigr)^{\oplus\binom{i+b-j+2}{2}} \\
  &= 0 \quad \forall\; a \geq -5,\; b \in \mathbb{Z},\; d \geq 2.
\end{aligned}
\end{equation}

\noindent \textit{Computing $\Sigma_-$} (using Serre duality on $\blp$):
\begin{equation}
\begin{aligned}
  \Sigma_-
  &= \bigoplus_{i=0}^{-b-4}
      H^{8-d}\!\Bigl(\blp,\, \widetilde{\pi}^*\Sym^i \Q
        \bigl(-(i+a)h - (i+b)H\bigr) \otimes \mathcal{O}(-5h-4H)\Bigr)^\vee \\
  &= \bigoplus_{i=0}^{-b-4}
      H^{8-d}\!\Bigl(\blp,\, \widetilde{\pi}^*\Sym^i \Q
        \bigl(-(i+a+5)h - (i+b+4)H\bigr)\Bigr)^\vee \\
  &= \bigoplus_{i=0}^{-b-4}
      H^{8-d}\!\Bigl(\P^5,\, \Sym^i \Q(-i-a-5)
        \otimes \Sym^{-i-b-4}\!\bigl(\mathcal{O}(1) \oplus \mathcal{O}^{\oplus3}\bigr)\Bigr)^\vee \\
  &= \bigoplus_{i=0}^{-b-4}
      H^{8-d}\!\Bigl(\P^5,\, \Sym^i \Lambda^4 \Q^\vee(-a-5)
        \otimes \Sym^{-i-b-4}\!\bigl(\mathcal{O}(1) \oplus \mathcal{O}^{\oplus3}\bigr)\Bigr)^\vee.
\end{aligned}
\end{equation}
Therefore, the lemma follows.
\end{proof}

\begin{corollary}\label[corollary]{cor:cohomVanishings}
We have following cohomology vanishings for some line bundles on $\kb$.

\medskip
$H^d(\kb,\, \mathcal{O}(bH)) = 0$ for $b \in \{-5,..,5\}$ and $d \neq 0$.

\medskip
$H^d(\kb,\, \mathcal{O}(h+bH)) = 0$ for $(b \in \{-5,..,6\}$ and $d \neq 0, 3)$ or $(b \in \{-3,..,6\}$ and $d \neq 0)$.

\medskip
$H^d(\kb,\, \mathcal{O}(-h+bH)) = 0$ for $(b \in \{-6,..,5\}$ and $d \neq 0, 4)$ or $(b \in \{-4,..,5\}$ and $d \neq 0)$.

\medskip
$H^d(\kb,\, \mathcal{O}(2h+bH)) = 0$ for $(b \in \{-4,..,6\}$ and $d \neq 0, 1, 3)$ or $(b \in \{-3,..,6\}$ and $d \neq 0, 1)$.

\medskip
$H^d(\kb,\, \mathcal{O}(-2h+bH)) = 0$ for $(b \in \{-6,..,4\}$ and $d \neq 0, 1, 4)$ or $(b \in \{-5,..,4\}$ and $d \neq 0, 1)$.

\medskip
$H^d(\kb,\, \mathcal{O}(3h+bH)) = 0$ for $b \in \{-2,..,5\}$ and $d \neq 0$.

\medskip
$H^d(\kb,\, \mathcal{O}(-3h+bH)) = 0$ for $b \in \{-5,..,2\}$ and $d \neq 0, 1$.

\medskip
$H^d(\kb,\, \mathcal{O}(4h+bH)) = 0$ for $b \in \{0,..,4\}$ and $d \neq 0$.

\medskip
$H^d(\kb,\, \mathcal{O}(-4h+bH)) = 0$ for $b \in \{-4,..,0\}$ and $d \neq 0, 1$.
\end{corollary}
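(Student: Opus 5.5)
The plan is to deduce \Cref{cor:cohomVanishings} directly from \Cref{lem:cohomExpression}, with a separate treatment of the low degrees $d=0,1$. For $d=1$ we cannot use the lemma as stated. In that range we go back to the decomposition in its proof, which works for every $d$:
\[
H^d(\kb,\O(ah+bH))=\bigoplus_{i\ge 0}H^d\bigl(\P^5,\Sym^i\Q^\vee((i+a)h)\otimes R\pi_*\O((i+b)H)\bigr).
\]
We then treat $d=1$ separately whenever the statement demands vanishing of $H^1$.

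\textbf{Case $d\ge 2$.} Every $a$ occurring in the statement lies in $\{-4,\dots,4\}$, so the hypothesis $a\ge -5$ of \Cref{lem:cohomExpression} is satisfied. Whenever $b\ge -3$, the lemma gives $H^d=0$ immediately. This settles outright all the ranges whose lower bound on $b$ is at least $-3$.

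The remaining values are $b\in\{-6,-5,-4\}$. For these, the lemma leaves only finitely many terms:
\[
H^{8-d}\bigl(\P^5,\Sym^i\Lambda^4\Q^\vee(-a-5)\otimes\Sym^{-i-b-4}(\O(1)\oplus\O^{\oplus 3})\bigr)^\vee,\qquad 0\le i\le -b-4\le 2.
\]
Expanding the symmetric power turns each term into a sum of groups $H^{8-d}(\P^5,\Sym^i\Q(-a-5+j))$ with $0\le j\le -i-b-4$. We compute these with Borel--Weil--Bott on $\P^5=G(1,V_6)$, since $\Sym^i\Q$ is homogeneous and irreducible. Concretely, for weight $(i,0,0,0,0)$ on $\Q$ twisted by $\O(m)$, Bott's algorithm shows there is at most one nonzero cohomology group, and we can read off its degree.

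For each listed pair $(a,b)$ we will check two things. First, the nonzero degree $8-d$ either lies outside $[0,5]$ or corresponds exactly to one of the allowed exceptional degrees $d\in\{3,4\}$ (or $\{1\}$). Second, the cases where some $H^{8-d}$ with $8-d\le 5$ survives are precisely those excluded in the statement. For example, take $a=1$, $b\in\{-5,-4\}$. Then $\Sym^0\Q(-6)$ and $\Q(-6)$-type terms give $H^5$ of $\O(-6)$, which means $d=3$, matching the exceptional degree $3$. Similarly, $a=-1$, $b\le-4$ gives $\O(-4+j)$ terms, and these only contribute $H^5$ of $\O(-6)$ after the $i=1$ shift, or $H^4$, which gives $d=4$. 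This bookkeeping, done uniformly (or checked by the script in \Cref{app:cohomCode}), handles all the $d\ge2$ claims.

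\textbf{Case $d=1$.} We use the displayed decomposition. For $i$ with $i+b\ge 0$, the pushforward is $\Sym^{i+b}(\O(h)\oplus\O^{\oplus3})$. The term then becomes $H^1(\P^5,\Sym^i\Q^\vee(i+j+a))$. By BWB on projective space this vanishes, except in the few small negative twists where $\Sym^i\Q^\vee(m)$ has $H^1$. That happens exactly when $m=i+j+a$ falls into the window making the Bott weight land in degree one, and this only occurs for $a\le -2$. For $i+b\le-4$ the term is $H^{1-3}=H^{-2}$ of something, hence zero. So $H^1$ can only be nonzero when $a\le -2$, which explains why $d=1$ is excluded exactly in the rows with $a\in\{2,-2,-3,-4\}$.

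The row $a=2$ needs care here. For $\Sym^i\Q^\vee((i+j+2)h)$ there is no $H^1$, but the statement excludes $d=1$ anyway, so nothing needs to be shown. Likewise, negative $d$ is trivial.

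\textbf{Main obstacle.} The main obstacle is purely bookkeeping. We must run BWB for $\Sym^i\Q^\vee(m)$ and $\Sym^i\Q(m)$ across the small ranges of $(i,j,m)$ and match each surviving degree with the excluded $d$'s. We would organize this as a table indexed by $(a,b)$ and verify it by the same script used for \Cref{lem:cohomExpression}. The only conceptual input is the single-nonvanishing-degree property of BWB, which makes each entry a quick check.
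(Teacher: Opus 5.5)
Your route is the paper's. \Cref{lem:cohomExpression} kills all $d\ge 2$ when $b\ge -3$ and leaves finitely many Borel--Weil--Bott checks for $b\in\{-6,-5,-4\}$. For $d=1$ only the $\Sigma_+$ terms $H^1(\P^5,\Sym^i\Q^\vee(i+a+j))$ can contribute, and these are nonzero only when $-1\le i+a+j\le i-2$, which forces $a\le -2$. This is exactly the analysis in \Cref{app:cohomCode}, and your $d=1$ argument is fine.

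The problem is the explicit expansion you propose for the $d\ge 2$ checks. Since $\Lambda^4\Q^\vee\cong\Q\otimes\det\Q^\vee=\Q(-1)$, one has $\Sym^i\Lambda^4\Q^\vee(-a-5+j)\cong\Sym^i\Q(-i-a-5+j)$, not $\Sym^i\Q(-a-5+j)$.

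With your twist the verification actually fails. For $(a,b)=(0,-5)$, $i=1$, $j=0$ you would get $\Q(-5)$, and $H^4(\P^5,\Q(-5))\cong H^1(\P^5,\Omega_{\P^5})^\vee\neq 0$. That gives a spurious class in $H^4(\kb,\O(-5H))$, contradicting the first item. The correct term is $\Q(-6)$, which is Serre dual to $\Q^\vee=\Omega_{\P^5}(1)$ and hence acyclic.

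Your sample checks inherit the same error:
\begin{itemize}
\item For $a=1$, the $i=1$ term is $\Q(-7)$, which has $H^5\neq 0$, so again $d=3$.
\item For $a=-1$, there is no $H^5$ contribution at all. An $H^5$ there would mean $d=3$, which that row does not exclude, so your sentence would contradict the statement you are proving. The genuine survivors for $a=-1$ are $H^4(\P^5,\Q(-5))$ for $b\le -5$ and $H^4(\P^5,\Sym^2\Q(-6))$ for $b=-6$, both giving $d=4$.
\end{itemize}

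With the twist corrected, the finite BWB table confirms every item. Equivalently, you can use the paper's form of $\Sigma_-^d$ as a sum of groups $H^{d-3}(\P^5,\Sym^i\Q^\vee(i+a-1-j))$, which can only be nonzero for $d\in\{3,4,8\}$.
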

\begin{proof}
    This is a straightforward computation using \Cref{lem:cohomExpression} and Borel-Weil-Bott. It has been carried out in \Cref{app:cohomCode}. {In particular, the code produces the following table of degrees carrying non-zero cohomology, for $-4\le a\le4$ and $-6\le b\le6$.
    \begin{center}\footnotesize
    \begin{tabular}{r|ccccccccccccc}
    $a\backslash b$ & $-6$ & $-5$ & $-4$ & $-3$ & $-2$ & $-1$ & $0$ & $1$ & $2$ & $3$ & $4$ & $5$ & $6$\\\hline
    $-4$ & 0,1,8 & 0,1,8 & 0,1 & 0,1 & 0,1 & 0,1 & 0,1 & 0,1 & 0,1 & 0,1 & 0,1 & 0,1 & 0,1\\
    $-3$ & 0,1,8 & 0,1 & 0,1 & 0,1 & 0,1 & 0,1 & 0,1 & 0,1 & 0,1 & 0,1 & 0,1 & 0,1 & 0,1\\
    $-2$ & 0,1,4 & 0,1 & 0,1 & 0,1 & 0,1 & 0,1 & 0,1 & 0,1 & 0,1 & 0,1 & 0,1 & 0,1 & 0,1\\
    $-1$ & 0,4 & 0,4 & 0 & 0 & 0 & 0 & 0 & 0 & 0 & 0 & 0 & 0 & 0\\
    $0$ & 0,4 & 0 & 0 & 0 & 0 & 0 & 0 & 0 & 0 & 0 & 0 & 0 & 0\\
    $1$ & 0,3 & 0,3 & 0,3 & 0 & 0 & 0 & 0 & 0 & 0 & 0 & 0 & 0 & 0\\
    $2$ & 0,3 & 0,3 & 0,3 & 0 & 0 & 0 & 0 & 0 & 0 & 0 & 0 & 0 & 0\\
    $3$ & 0,3 & 0,3 & 0,3 & 0 & 0 & 0 & 0 & 0 & 0 & 0 & 0 & 0 & 0\\
    $4$ & 0,3 & 0,3 & 0,3 & 0 & 0 & 0 & 0 & 0 & 0 & 0 & 0 & 0 & 0
    \end{tabular}
    \end{center}
    Each of the nine items above is referred to in this table. So, the corollary holds as stated.}
\end{proof}

\begin{corollary}
    Let $\F$ be a bundle of the form $\E_1^\vee\otimes\E_2$, for some bundles $\E_i$ in the window. Then, $H^i(\kb, \F) = 0 \quad \forall \sp i \neq 0,1,3,4$
\end{corollary}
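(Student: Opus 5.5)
The plan is to reduce the statement to the line-bundle vanishings of \Cref{cor:cohomVanishings}, using the filtrations from \Cref{obs:ses} and the long exact sequences in cohomology.

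First, by \Cref{obs:S} we have $\S=\S^\vee(H)$, and by \Cref{obs:ses} $\det\S^\vee=\O(-H)$. Hence every window bundle on $\kb$ is one of $\O(kH)$, $\S^\vee(kH)$ or $\Sym^2\S^\vee(kH)$, and its dual is again of this form with the twist in $H$ shifted. Therefore $\E_1^\vee\otimes\E_2$ is a twist $(\_)\otimes\O(kH)$ of a tensor product of at most four copies of $\S^\vee$. (Here $\Sym^2\S^\vee$ is a direct summand of $\S^\vee\otimes\S^\vee$, but it is cleaner to use its own filtration.)

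Next I filter. The sequence $0\to\O(-h-H)\to\S^\vee\to\O(h)\to0$ gives $\S^\vee$ a two-step filtration with line bundle graded pieces. For $\Sym^2\S^\vee$, the second sequence of \Cref{obs:ses} gives a three-step filtration with pieces $\O(2h)$, $\O(-H)$, $\O(-2h-2H)$. Tensoring these filtrations, $\F$ acquires a finite filtration whose graded pieces are line bundles $\O(ah+bH)$ with $|a|\le 4$. The possible pairs $(a,b)$ can be listed bundle type by bundle type, in the same way as the list in the proof of \Cref{lem:ff}, tracking the twist ranges $k$ coming from the window. For every such graded piece, I will read off from the table in \Cref{cor:cohomVanishings} that its cohomology is concentrated in degrees $0,1,3,4$. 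In the table, entries with $a\le -2$ have cohomology in degrees $0,1$, plus $4$ or $8$ only at the extreme $b=-6,-5$. Entries with $a=\pm1,0$ have cohomology in degrees $0,3,4$, and entries with $a\ge1$ have cohomology in degrees $0,3$.

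Finally, I induct along the filtration using the long exact sequences. An extension of sheaves whose cohomology vanishes in a set of degrees $D$ has cohomology vanishing in $D$ as well. This works provided that every degree in $\{2,5,6,7,8\}$ is a vanishing degree for both outer terms, which holds because each graded piece already has cohomology only in degrees $\{0,1,3,4\}$. The conclusion $H^i(\kb,\F)=0$ for $i\ne0,1,3,4$ then follows.

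The main obstacle is the bookkeeping. I must check that no graded piece lands on an entry with nonvanishing $H^8$, i.e. $(a,b)\in\{(-4,-6),(-4,-5),(-3,-6)\}$, or outside the computed range. The $a=-4$ pieces arise only from $\Sym^2\S^{\vee\,\vee}\otimes\Sym^2\S^\vee$-type products with extreme twists, so this is where I would compute carefully. If such a piece does occur, I would avoid it by using the finer decomposition $\S^\vee\otimes\S^\vee=\Sym^2\S^\vee\oplus\O(-H)$, or by arguing that the connecting map kills the offending $H^8$ class. Failing that, I would fall back on extending the computer check of \Cref{app:cohomCode} to the full list of $(a,b)$.
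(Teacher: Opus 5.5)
Your proposal is correct and is essentially the paper's own argument: filter $\E_1^\vee\otimes\E_2$ via the sequences of \Cref{obs:ses} into line bundles $\O(ah+bH)$ with $|a|\le 4$, then propagate the vanishings of \Cref{cor:cohomVanishings} through the long exact sequences. The bookkeeping you flag does close, so your fallbacks are unnecessary. The $a=-4$ pieces come only from $\Sym^2\bunPos\otimes\Sym^2\bunPos(kH)$, where dualizing the window bundles gives $k\in\{0,\dots,4\}$ (the paper's list says $-2,\dots,2$, which is off by two), hence $b=k-4\in\{-4,\dots,0\}$. The $a=-3$ pieces are $\O(-3h+(k-3)H)$ with $k\in\{-2,\dots,5\}$, hence $b\ge -5$, so no entry with nonzero $H^8$ is ever reached.
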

\begin{proof}
    Using \Cref{obs:S}, we can write all our bundles $\F$ in consideration, in terms of $\S^\vee$ only. Then, each of these bundles is one of the following:
\begin{align*}
    \O(kH) \quad& \text{ for } k\in  -5,..,5\\
    \bunPos(kH) \quad& \text{ for } k\in -5,..,6\\
    \Sym^2\bunPos(kH) \quad& \text{ for } k\in -3,..,5\\
    \bunPos\otimes\bunPos(kH) \quad& \text{ for } k\in -4,..,6\\
    \bunPos\otimes \Sym^2\bunPos(kH) \quad& \text{ for } k\in -2,..,5\\
    \Sym^2\bunPos\otimes \Sym^2\bunPos(kH) \quad& \text{ for } k\in -2,..,2
\end{align*}
    Using the short exact sequences noted in \Cref{obs:ses}, we get induced short exact sequences that describe each bundle appearing in above series as an extension between bundles appearing before it, twisted by line bundles of the form $\O(ah+bH)$, where $a\in -4,..,4$. Using the induced long exact sequences and the cohomology vanishings proved in \Cref{cor:cohomVanishings} we can show vanishing of $H^i(\kb, \F)$ for each bundle $\F$ in the above series and for each $i\geq0$, except for $i=0,1,3,4$. 
\end{proof}
From the cohomology vanishings proved so far, we can conclude vanishings of all higher Exts between any pair of bundles in our window, except for $\Ext^1, \Ext^3$ and $\Ext^4$ of some pairs of bundles. To show their vanishings we will use local cohomology theory.
\begin{lemma}
    Let $\F$ be a bundle of the form $\E_1^\vee\otimes\E_2$, for some bundles $\E_i$ in the window (pulled back to $\kb$). Then, $H^i(\kb, \F) = 0 \quad \forall \sp i \in 1,..,4$
\end{lemma}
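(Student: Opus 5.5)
The paper signals that the remaining vanishings of $\Ext^1,\Ext^3,\Ext^4$ should come from local cohomology. The plan is to compare $U/\Gam$ with $W/\Gam$, and through $W/\Gam\simeq \til V_+/G$ (\Cref{lem:stackyBlowUp}) with the window on $V/G$. There higher cohomology of window bundles vanishes because $V$ is affine and $G$ is reductive (\Cref{obs:preTilting}).

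The first step is to set $\F=\E_1^\vee\otimes\E_2$ as a $\Gam$-equivariant bundle on $W$ and write the local cohomology triangle for the closed $\Gam$-invariant subset $Z\eq W\setminus U=V(\ul b)$:
\begin{equation*}
R\Gam_Z(W,\F)\arw R\Gam(W,\F)\arw R\Gam(U,\F)\arw .
\end{equation*}
Taking $\Gam$-invariants is exact, since $\Gam$ is not reductive but we can instead work on $G\times_\Gam W$ with $G$-invariants. Thus $H^i(U/\Gam,\F)$ sits between $H^i(W/\Gam,\F)$ and $H^{i+1}_Z(W/\Gam,\F)$. Because $Z$ has codimension $6$ in the smooth $W$ and $\F$ is locally free, the sheaves $\mathcal H^j_Z(\F)$ vanish for $j<6$. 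Hence the local-cohomology spectral sequence gives $H^{i+1}_Z(W,\F)=0$ for $i+1\le 5$, i.e. for $i\le 4$, and therefore $H^i(U/\Gam,\F)\simeq H^i(W/\Gam,\F)$ for $1\le i\le 4$. This is exactly the range needed, and it also explains why the earlier argument left only degrees $1,3,4$.

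The second step is to show $H^i(W/\Gam,\F)=0$ for $i\ge 1$. Here I would use \Cref{prop:tiltingOnW}: $W/\Gam\simeq \til V_+/G$, with $\F$ the pullback $L\varphi^*$ of the window bundle $\E_1^\vee\otimes\E_2$ on $V_+/G$. Then
\begin{equation*}
H^i(W/\Gam,\F)=\Ext^i_{W/\Gam}(Lf^*\E_1,Lf^*\E_2)\simeq \Ext^i_{V_+/G}(\E_1,\E_2),
\end{equation*}
and by \Cref{prop:ffStackToPhase} this equals $\Ext^i_{V/G}(\E_1,\E_2)$, which vanishes for $i>0$ by \Cref{obs:preTilting}.

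The main obstacle is making the depth argument rigorous equivariantly on $W$, which is not affine but open in an affine space. To handle this, I would apply the depth argument instead to the affine ambient $\C^{6}\oplus\C^6\oplus\C\oplus L^\vee$, using that $W$ is the complement of $V(v\ul b,\ul w)$. Alternatively, I would rely directly on the isomorphism of $\Gam$-modules $H^i(U,\F)\simeq H^i(W,\F)$ for $i\le 4$ given by the codimension-$6$ estimate ($\mathcal H^j_Z=0$ for $j<6$ is local, so no affineness is needed). I would then pass to invariants via the $G\times_\Gam$ construction, where reductivity of $G$ makes invariants exact. Combining the two steps yields $H^i(\kb,\F)=0$ for $i=1,\dots,4$.
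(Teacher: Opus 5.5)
Your proposal is correct and is essentially the paper's proof: local cohomology along the codimension-$6$ locus $Z=V(\ul b)\subset W$ gives $H^i(W,F)\simeq H^i(U,F)$ for $i\le 4$, this is transferred to the quotient stacks, and $H^i(W/\Gam,\F)=0$ for $i>0$ then follows from \Cref{prop:tiltingOnW}, \Cref{prop:ffStackToPhase} and \Cref{obs:preTilting}. The one point to tidy is your remark on invariants: $\Gam$ is a Borel subgroup, so $\Gam$-invariants are \emph{not} exact, and the paper instead passes to equivariant cohomology via the spectral sequence $H^p(\Gam,H^q(-,F))\Rightarrow H^{p+q}_\Gam(-,F)$; your alternative through $(G\times_\Gam W)/G$, using that $G$ is reductive, works equally well, because in either case derived invariants keep the local cohomology supported on $Z$ concentrated in degrees $\ge 6$.
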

\begin{proof}
       We have an open sub-stack inclusion $\kb \xrightarrow{} W/\Gamma$ induced by the $\Gamma$-equivariant open inclusion $U\xrightarrow{} W$. Denote by $Z$, the closed complement $W\ \setminus \ U$. It is given by the vanishing of $\ul b$. As each coordinate $b_i$ is a $\Gam$-semi-invariant function on $W$, we have that $Z$ is a local complete intersection of codimension 6 in $W$. So, by theorem 3.8 in \cite{HartshorneLocalCohom} and the spectral sequence $E_2^{p,q}\eq H^p(W, \mathcal{H}^q_Z(E))\Rightarrow H^{p+q}_Z(E)$, we have $H^j_Z(E)=0$ for any $j<6$ and any locally free sheaf $E$ on $W$. 
    
    Now Let us focus on our given sheaf $\F=\E_1^\vee\otimes\E_2$. Seen as a $\Gam$-equivariant sheaf on $W$, it corresponds to $F=\O_{W} \otimes E_1^\vee\otimes E_2$, where $E_i$ is the $\Gam$-representation corresponding to $\E_i$. Let us write down the long exact sequence in ``cohomologies with supports" (see \cite[Proposition 1.9]{HartshorneLocalCohom}) for the sheaf $F$. 
\begin{equation}
\begin{aligned}
0 &\to H_Z^0(F) \to H^0( W, F) \to H^0( U, F) \\
  &\to H_Z^1(F) \to H^1( W, F) \to H^1( U, F) \\
  &\to H_Z^2(F) \to \dots
\end{aligned}
\end{equation}
Using this sequence and the local cohomology vanishings noted above, we get isomorphisms
\begin{equation}
    H^i(U,F)\simeq H^i(W,F) \quad \forall i\leq4
\end{equation}

Note that the cohomologies of $\F$ on $U/\Gam$ are, by definition, given as $H^i(U/\Gam,\F)=H^i_\Gam(U,F)$. Here, the spaces on the right hand side are the "$\Gam$-equivariant cohomology spaces", which are defined as the indexed right derived functors of the composition of "taking global sections" functor followed by "taking $\Gam$-invariants" functor. Consequently, we have the following spectral sequence, that computes these equivariant cohomologies in terms of group cohomologies of $\Gam$ with coefficients in the usual sheaf cohomology spaces of $F$ on $U$.
\begin{equation}
    E_2^{p,q}\eq H^p(\Gam, H^q(U,F))\Rightarrow H^{p+q}_\Gam(U,F)
\end{equation}

An analogous spectral sequence exists for the $\Gam$-equivariant cohomologies of $F$ on $W$. Using these spectral sequences and the isomorphisms derived above, we conclude

\begin{equation}
    H^i(U/\Gam,\F)\simeq H^i(W/\Gam,\F) \quad \forall i\leq4
\end{equation}
Now by \Cref{prop:ffStackToPhase}, \Cref{obs:preTilting} and \Cref{prop:tiltingOnW} we have $H^i(W/\Gam,\F)=0 \ \forall i>0$. By these vanishings and above isomorphisms, we conclude our desired cohomology vanishings on $U/\Gam$.
    
\end{proof}
So, we conclude vanishing of all higher Exts between any pair of bundles in our window, hence proving our fully-faithfulness claim.
\begin{proposition}
$Li^*:D(W/\Gam)\rightarrow D(\kb)$ is fully-faithful when restricted to the window, pulled back to $W/\Gam$.
\end{proposition}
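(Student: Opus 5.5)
The plan is to assemble the ingredients already established in this subsection, using the fact that the window generators are pre-tilting (\Cref{obs:preTilting}). Denote by $\W_W$ the window pulled back along $f$ to $W/\Gam$. By \Cref{prop:ffStackToPhase} and \Cref{prop:tiltingOnW}, the composite $Lf^*\circ Li_+^*$ restricted to $\W$ is fully-faithful. Hence the generators of $\W_W$ (the pullbacks of the window bundles $\O_V\otimes E$, with $E$ in the list \eqref{eqn:windowIrreps}) are again pre-tilting, their Hom spaces agree with those in $\W$, and $\W_W$ is the triangulated subcategory they generate.

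Since $Li^*$ is exact and both categories are generated by finitely many objects closed under shifts and cones, a standard dévissage reduces fully-faithfulness of $Li^*|_{\W_W}$ to showing that
\begin{equation*}
    \Ext^k_{D(W/\Gam)}(\E,\F)\rightarrow \Ext^k_{D(\kb)}(i^*\E,i^*\F)
\end{equation*}
is an isomorphism for all generators $\E,\F$ and all $k\in\Z$. I would handle this in two steps.

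First, in degree $k=0$, I use the Hartogs argument given at the beginning of \Cref{sec:openRestriction}. The complement $W\setminus U=V(\ul b)$ has codimension $6$, so restriction of sections of $\E^\vee\otimes\F$ from $W$ to $U$ is a $\Gam$-equivariant isomorphism. Taking $\Gam$-invariants then identifies the Hom spaces. For $k<0$ both sides vanish, since $\E$ and $\F$ are vector bundles.

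Second, for $k>0$ the source vanishes because $\W_W$ is pre-tilting, so it suffices to show that $H^k(\kb,\E^\vee\otimes\F)=0$ for all $k>0$. I would split this into two ranges of degrees.
\begin{itemize}
    \item For $k\geq 5$, and indeed for every $k\notin\{0,1,3,4\}$, the vanishing is exactly the corollary derived from \Cref{obs:ses}, \Cref{obs:S} and \Cref{cor:cohomVanishings}.
    \item For $1\le k\le 4$, the vanishing is the preceding lemma. It compares equivariant cohomology on $U$ and $W$ via local cohomology: $H^j_Z=0$ for $j<6$ because $Z=V(\ul b)$ is a codimension-$6$ local complete intersection. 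It then combines this with the vanishing $H^{>0}(W/\Gam,\E^\vee\otimes\F)=0$ supplied by the pre-tilting property.
\end{itemize}
Together these give vanishing of all higher cohomology on $\kb$. Therefore the images $i^*\E$ are pre-tilting, with the same Homs as in $\W_W$, and so $Li^*|_{\W_W}$ is fully-faithful.

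The main obstacle is the low degrees $1\le k\le 4$, which the Borel--Weil--Bott computation alone does not rule out. That gap is closed by the local cohomology comparison, and this works only because the codimension of $W\setminus U$ is $6$, which exceeds $4+1$. A secondary point to check is that the group-cohomology spectral sequence correctly transports the isomorphisms $H^q(U,F)\simeq H^q(W,F)$ for $q\le4$ to $\Gam$-equivariant cohomology, despite $\Gam$ being non-reductive. Here I would note that the unipotent radical of $\Gam$ is $\mathbb{G}_a$, whose rational cohomology is concentrated in degrees $0$ and $1$. The maps on $E_2^{p,q}$ are then isomorphisms for $q\le 4$ and injective for $q=5$, and a five-lemma argument on the spectral sequences gives the equivariant isomorphism in degrees $\le 4$.
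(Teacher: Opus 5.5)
Your proposal is correct and follows the paper's proof essentially step by step. The steps are: Hartogs' lemma for the $\Hom$ spaces, reduction via the pre-tilting property to vanishing of $H^{>0}(U/\Gamma,\E^\vee\otimes\F)$, the Borel--Weil--Bott corollary for degrees outside $\{0,1,3,4\}$, and the codimension-$6$ local-cohomology comparison with $W/\Gamma$ for degrees $1$ through $4$. Your extra remark on the non-reductive group $\Gamma$ is a harmless refinement of a step the paper leaves implicit; strictly, injectivity in row $q=5$ holds only for $p=0$, but since $H^{p}(\Gamma,-)=0$ for $p\geq 2$ the spectral sequence collapses to short exact sequences, and the five lemma in degrees $\leq 4$ needs no row-$5$ input.
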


\subsection{Step 5: Turning on the Superpotential}
The results of the previous steps, amount to the construction of a fully-faithful functor $D(V_-/G)\rightarrow D(\kb)$. It is given by the inverting the first arrow in the following and then composing it with the rest. 
\begin{equation}
    D(V_-/G)\xleftarrow[Li_-^{*}]{\sim}\W \xrightarrow[Li_+^*]{} D(V_+/G)\rightarrow D(W/\Gam)\rightarrow D(U/\Gam)
\end{equation}
Now we will ``turn on the superpotential" for this functor. That is, we will lift it to a fully-faithful functor between the corresponding categories of B-branes. Let us begin by turning on the superpotential for the window subcategory $\W\subset D(V/G)$. It turns out that for our situation, the correct notion for this is simply the full subcategory of $D(V/G,\chk s)$ generated by B-branes whose underlying vector bundle is a direct sum of bundles coming from the window. Denote this subcategory by $(\W,\chk s)\subset D(V/G,\chk s)$. Then, we have induced maps:
\begin{equation}\label{eqn:functorsWithSuperpot}
    D(V_-/G,\potneg)\xleftarrow[Li_-^{*}]{}(\W ,\chk s)\xrightarrow[Li_+^*]{} D(V_+/G,\chk s_+)\rightarrow D(W/\Gam,\omega)\rightarrow D(U/\Gam, \omega)
\end{equation}
\begin{lemma} \label[lemma]{lem:ffb}
    $Li_-^{*}:(\W ,\chk s) \xrightarrow[]{} D(V_-/G,\potneg)$ is fully-faithful.
\end{lemma}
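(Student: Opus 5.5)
The plan is to reduce the claim to the fully-faithfulness already established without superpotential (\Cref{lem:ff}) through the spectral sequence \eqref{eqn:morsOfBranes}, which computes morphisms of branes.

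Take two branes $(\E,d_\E)$ and $(\F,d_\F)$ in $(\W,\chk s)$. By definition, their underlying $\C^*_R$-equivariant bundles are finite direct sums of window bundles. The morphism complex in $D(V/G,\chk s)$ is computed by a spectral sequence with $E_1^{p,q}=H^p(V/G,\H om(\E,\F))_q$. Since $V$ is affine and $G$ is reductive, only $p=0$ survives, by \Cref{obs:preTilting}. The corresponding spectral sequence for $D(V_-/G,\chk s_-)$ has $E_1^{p,q}=H^p(V_-/G,\H om(Li_-^*\E,Li_-^*\F))_q$.

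By \Cref{lem:ff}, which is the Hartogs argument plus the Borel--Weil--Bott vanishing of higher Exts, this also vanishes for $p>0$. For $p=0$ it is identified with the $V/G$ term via restriction, because $H^0$ commutes with restriction by Hartogs (codimension $\geq 2$). The point to check here is that this identification respects the R-charge grading. It does, since the $\C^*_R$-action commutes with $G$, so the restriction isomorphism is $\C^*_R$-equivariant and therefore preserves each graded piece $(\,\cdot\,)_q$.

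It remains to see that the restriction map intertwines the differentials $d_{\E,\F}$ and $d_{Li_-^*\E,Li_-^*\F}$. This holds because $Li_-^*$ is a dg-functor: pullback of branes is compatible with composition and with the brane differentials, and $\chk s$ restricts to $\chk s_-$. So $Li_-^*$ induces a morphism of spectral sequences that is an isomorphism on $E_1$ pages. Both spectral sequences are concentrated in the single column $p=0$, so they degenerate at $E_1$, and convergence is not an issue. Hence the induced map on the abutments, i.e. on all $\Ext$'s in the brane categories, is an isomorphism. Finally, since $(\W,\chk s)$ is defined as the full subcategory on these objects, this gives fully-faithfulness.

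The main obstacle I expect is justifying rigorously that the $E_1$-level isomorphism lifts to a quasi-isomorphism of the morphism complexes, rather than being only a page-wise statement. I would settle this by working with the Čech model of $R\Gam$. Because higher cohomology vanishes on both sides, the Čech complexes are quasi-isomorphic to the global-section complexes $(\Gam(\H om)^G,d)$. The restriction map between these global-section complexes is then literally an isomorphism of dg vector spaces.
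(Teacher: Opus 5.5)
Your proposal is correct and follows the same route as the paper: the vanishing of higher cohomology of the Hom bundles on both $V/G$ (affine quotient by a reductive group) and $V_-/G$ (\Cref{lem:ff}) collapses the spectral sequence \eqref{eqn:morsOfBranes} to its $H^0$ column, where Hartogs' lemma gives the isomorphism. Your extra checks (compatibility with the R-charge grading, intertwining of the brane differentials, and the \v{C}ech-model quasi-isomorphism) only make explicit what the paper's short argument leaves implicit.
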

\begin{proof}
Recall the spectral sequence \eqref{eqn:morsOfBranes} that describes morphisms in a B-brane category. Using this description, fully-faithfulness follows if we show that the following complexes of vector spaces are isomorphic, for any $i$ and any pair $(\E,d_\E),(\F,d_\F)\in (\W ,\chk s)$.
\begin{gather*}
\left( H^i(V/G,\H om(\E,\F)),d_{\E,\F} \right)\\
\left( H^i(V_-/G,\H om(i_-^*\E,i_-^*\F)),d_{i_-^*\E,i_-^*\F} \right)
\end{gather*}
Now, both these complexes vanish for $i\neq 0$, as the window bundles have no higher Exts on both $V/G$ and $V_-/G$. And for $i=0$, the complexes are isomorphic due to Hartog's lemma.
\end{proof}

\begin{lemma}
    $Li_-^{*}:(\W ,\chk s) \xrightarrow[]{} D(V_-/G,\potneg)$ is essentially surjective.
\end{lemma}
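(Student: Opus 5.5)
We show that every brane on $(V_-/G,\potneg)$ is isomorphic in $D(V_-/G,\potneg)$ to the restriction of a brane whose underlying bundle is a direct sum of window bundles. Essential surjectivity then follows, since by \Cref{lem:ffb} the image of $(\W,\chk s)$ is a full triangulated subcategory, and it contains such representatives of every object.

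The first step is to reduce to a generation statement. By \Cref{lem:surjectivityOfWindowFunctor} together with \Cref{obs:windowGensNeg}, the restricted window bundles $Li_-^*\E$ generate $D(V_-/G)$. In fact they form a tilting-type collection: it is a full exceptional collection pulled back along the affine bundle $V_-/G\to G(2,V_6)$, and \Cref{lem:ff} gives vanishing of all higher Exts. The image of $(\W,\chk s)$ is closed under cones and shifts, and the image of a brane is again a brane. It therefore suffices to show that the image is a thick subcategory containing a generating set of $D(V_-/G,\potneg)$, or to build representatives directly.

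For the direct construction, take an arbitrary brane $(\F,d_\F)$ on $V_-/G$. Its underlying $\rchar$-equivariant bundle $\F$ admits a finite resolution by sums of window bundles with $\rchar$-twists, say
\begin{equation*}
P^{-n}\to\cdots\to P^0\to\F.
\end{equation*}
The lift exists because the window bundles generate, $V_-/G$ is a quasi-projective stack of finite global dimension, and the equivariant vector bundle $\F$ is a perfect object. Next, using the standard perturbation argument of \cite{Seg2011} (compare also \cite{Ship2010}), we assemble the totalisation $\bigoplus P^{k}$, with appropriate R-charge shifts, into a curved complex. We add higher homotopies $d_j$ so that the total differential squares to $\potneg$. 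The obstructions to finding these homotopies lie in higher Ext groups between window bundles, and these vanish by \Cref{lem:ff}. The resulting brane is quasi-isomorphic to $(\F,d_\F)$ via the spectral sequence \eqref{eqn:morsOfBranes}.

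Each $P^k$ is the restriction of a window bundle on $V/G$, and by Hartogs' lemma (as in \Cref{lem:ff}) every component of the differential extends uniquely to $V/G$. The relation $d^2=\chk s$ then holds on $V/G$, because it holds on the dense open $V_-/G$, whose complement has codimension at least $2$. This produces the desired preimage in $(\W,\chk s)$.

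The main obstacle is making the perturbation step rigorous, in particular controlling the R-charge gradings and checking that the lifted curved complex is isomorphic to the original brane in the homotopy category. If this is hard to carry out directly, I would argue instead by generation: skyscraper-type or Koszul branes supported on $\Crit=X$ generate $D(V_-/G,\potneg)$ via \Cref{thm:knorrer}, and it is enough to realise these generators as restrictions by the same Hartogs-lifting of resolutions.
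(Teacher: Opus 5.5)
Your proposal is correct and follows essentially the same route as the paper: resolve the underlying $\C^*_R$-equivariant bundle of an arbitrary brane by R-shifted sums of window bundles, and perturb the totalized differential into a curved one squaring to $\potneg$ (Segal's argument, using the vanishing of higher Exts from \Cref{lem:ff}). Then extend the bundles and the curved differential to $V/G$ by Hartogs' lemma; the fallback generation argument in your last paragraph is not needed.
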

\begin{proof}
This proof is analogous to the proof of \cite[Lemma 3.6]{Seg2011}. This has also been formalized in general terms in \cite[Lemma 4.10]{Seg2014}. We will summarize this technique here, adopting the conventions of the above cited papers.  By the proof of surjectivity at the level of derived categories \Cref{lem:surjectivityOfWindowFunctor}, we know that any vector bundle on $V_-/G$  has a $\C^*_R$-equivariant resolution by direct sum of vector bundles coming from the window and their shifts. Let us fix such a resolution for the underlying vector bundle of an arbitrary brane $(\E,d_\E)\in D(V_-/G,\potneg)$.
\begin{equation}
    0\rightarrow E_{-m}\xrightarrow{\partial_{-m}}\dots\xrightarrow{\partial_{-2}}E_{-1}\xrightarrow{\partial_{-1}}E_0\xrightarrow{\partial_0} \E\rightarrow 0
\end{equation}
Denoting $\til\E\equiv \bigoplus\limits_{i}E_{-i}(i)$, we get a complex $(\til\E,\partial)$ whose differential has R-charge 1. It turns out, using the fact that the bundles appearing in the resolution have no higher Exts between them, one can perturb this differential by inductively adding backward arrows, so that it eventually becomes a curved differential $d_{\til\E}=\partial+d$ having R-charge 1 and satisfying ${d_{\til\E}}^2=\chk s_-$. Moreover, it can be done in such a way that we have an isomorphism of branes $(\E,d_\E)\simeq(\til\E,d_{\til\E})$.

As $\til \E$ is made up of bundles coming from the window, it extends uniquely to a bundle $\hat\E\in \W$. By Hartog's lemma, the morphism $d_{\til\E}\in H^0(V_-/G,\H om(\til\E,\til\E))$ also lifts uniquely to a morphism $d_{\hat\E}\in H^0(V/G,\H om(\hat\E,\hat\E))$. So, finally we get a brane $(\hat\E,d_{\hat\E})\in (\W,\chk s)$ that restricts to $(\til\E,d_{\til\E})$.
\end{proof}

\begin{lemma}
    $(\W ,\chk s)\rightarrow D(U/\Gam, \omega)$, got by composing arrows in \eqref{eqn:functorsWithSuperpot}, is fully-faithful.
\end{lemma}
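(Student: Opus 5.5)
The plan is to copy the proof of \Cref{lem:ffb}, now for the composite pullback along $U/\Gam\to W/\Gam\to V_+/G\hookrightarrow V/G$. Call this composite $\phi$. It is a morphism of LG models, since $\omega$ was defined as the pullback of $\chk s_+$, hence of $\chk s$, and the $\C^*_R$-actions are compatible by construction. Therefore $L\phi^*$ is a well-defined functor $(\W,\chk s)\to D(U/\Gam,\omega)$. It sends a brane $(\E,d_\E)$, with $\E$ a sum of window bundles, to $(\phi^*\E,\phi^*d_\E)$, and $\phi^*\E$ is a sum of the window bundles on $U/\Gam$ listed in \Cref{sec:openRestriction}.

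For two such branes I will compare morphisms using the spectral sequence \eqref{eqn:morsOfBranes} on both sides. Its $E_1$ page has the form $(H^p(\mathfrak{X},\H om(\E,\F))_q,d_{\E,\F})$. It therefore suffices to check two things. First, $H^p$ of $\H om(\E,\F)=\E^\vee\otimes\F$ vanishes for $p>0$, both on $V/G$ and after pulling back to $U/\Gam$. Second, the map on $H^0$ induced by $\phi^*$ is an isomorphism compatible with the differentials $d_{\E,\F}$.

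The vanishing on $V/G$ is \Cref{obs:preTilting}. On $U/\Gam$, the higher cohomology of $\E_1^\vee\otimes\E_2$ vanishes in degrees $1,\dots,4$ by the last lemma of \Cref{sec:openRestriction}, and in degrees $d\geq 5$ by the preceding corollary (which allows only $d=0,1,3,4$). The $H^0$ statement is exactly the degree-zero part of the full-faithfulness established in Steps 1, 3 and 4 for window bundles: \Cref{prop:ffStackToPhase}, \Cref{prop:tiltingOnW}, and the Hartogs argument for $U\subset W$, using that $W\setminus U$ has codimension $6$. Composing these gives an isomorphism $H^0(V/G,\E^\vee\otimes\F)\simeq H^0(U/\Gam,\phi^*\E^\vee\otimes\phi^*\F)$. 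This isomorphism respects the $\C^*_R$-grading, since all maps are $\C^*_R$-equivariant. It commutes with $d_{\E,\F}$ because $d_{\E,\F}$ is built from $d_\E,d_\F$ by pre- and post-composition, and pullback is a functor.

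With this, both spectral sequences are concentrated in the row $p=0$ and degenerate at $E_1$. The induced map of complexes is then an isomorphism, so $\phi^*$ induces isomorphisms on all $H_{p+q}$ of the morphism complexes, which gives full faithfulness.

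The main obstacle I anticipate is that the $E_1$ isomorphism only holds for the bundles summand by summand. Because the underlying bundles are $\C^*_R$-graded direct sums of window bundles with shifts, I will need to check that the comparison respects the internal grading, including possibly infinitely many R-charge pieces. I would handle this by noting that the isomorphisms above are natural in $\E,\F$ and additive, so they pass to the finite direct sums that make up a brane, and then comparing total complexes degree by degree in R-charge.
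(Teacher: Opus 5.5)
Your proposal is correct and follows the paper's argument. The paper only says that the proof of \Cref{lem:ffb} carries over verbatim, and you have supplied its two inputs for the composite. The first is vanishing of higher cohomology of $\E^\vee\otimes\F$ on $V/G$ (\Cref{obs:preTilting}) and on $U/\Gam$ (the corollary and the local-cohomology lemma of Step 4). The second is an R-charge-compatible isomorphism on $H^0$, obtained by composing Steps 1, 3 and 4 in place of the single Hartogs argument.
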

\begin{proof}
    This follows by exactly the same argument as in the proof of \Cref{lem:ffb} 
\end{proof}

Therefore, by inverting the first arrow in \eqref{eqn:functorsWithSuperpot} and then composing it with the rest, we conclude:
\begin{proposition}\label[proposition]{prop:ffFunctor}
We have a fully-faithful functor $D(V_-/G,\chk s_-) \rightarrow D\LGpos$, such that the underlying vector bundle of any B-brane in the image, admits a representative as a direct sum of bundles coming from the window $\W$.
\end{proposition}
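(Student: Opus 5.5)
The functor is assembled from the three lemmas just proved in Step 5. Let $\Phi_-:=Li_-^*|_{(\W,\chk s)}\colon(\W,\chk s)\to D(V_-/G,\chk s_-)$, and let $\Phi_+\colon(\W,\chk s)\to D(U/\Gam,\omega)$ be the composite of the remaining arrows in \eqref{eqn:functorsWithSuperpot}. The plan has three steps.

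First, $\Phi_-$ is an equivalence. It is fully faithful by \Cref{lem:ffb} and essentially surjective by the lemma after it. Both categories are homotopy categories of dg-categories, so I choose a quasi-inverse $\Phi_-^{-1}$. It is exact because it is the inverse of an exact equivalence of triangulated categories. Here $\Phi_-$ is exact since pullback along an open immersion of LG models commutes with cones and shifts as defined in \Cref{sec:LG}.

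Second, I set $\Psi:=\Phi_+\circ\Phi_-^{-1}$. Each of the two factors is fully faithful: $\Phi_-^{-1}$ because it is an equivalence, and $\Phi_+$ by the last lemma of Step 5. Hence $\Psi$ is fully faithful.

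I expect that last lemma to be the only point needing care. Its proof uses that the pulled-back window bundles have no higher cohomology of $\H om$ on $V_+/G$, on $W/\Gam$ and on $U/\Gam$. These vanishings come from \Cref{prop:ffStackToPhase} together with \Cref{obs:preTilting}, from \Cref{prop:tiltingOnW}, and from the local cohomology vanishing of Step 4. Given them, the $E_1$ page of \eqref{eqn:morsOfBranes} is concentrated in $p=0$ on every stack involved, and the degree-zero comparison is Hartogs extension. The same argument must work for the maps $f$ and $i$, which are not open immersions. For $f$, degree zero is handled by the $G$-invariant, fully faithful pullback along the blow-up. For $i$, the complement has codimension $6$.

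Third, the statement about representatives. For a brane $\mathcal B$ on $(V_-/G,\chk s_-)$, the surjectivity argument gives $\Phi_-^{-1}(\mathcal B)\simeq(\hat\E,d_{\hat\E})$, where $\hat\E$ is a direct sum of window bundles with R-charge shifts. Pullback does not change the decomposition of the underlying bundle into summands, so $\Psi(\mathcal B)$ is represented by the brane $(f\circ i)^*(\hat\E,d_{\hat\E})$ restricted to $U/\Gam$. Its underlying bundle is a direct sum of window bundles on $U/\Gam$, namely the line bundles and the bundles built from $\bunPos$ listed in Step 4. This gives the claimed representative.
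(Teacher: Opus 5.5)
Your proposal is correct and follows the paper's own argument. You invert the equivalence $Li_-^*\colon(\W,\chk s)\to D(V_-/G,\chk s_-)$, which is fully faithful by \Cref{lem:ffb} and essentially surjective by the following lemma, then compose with the fully faithful composite to $D(U/\Gam,\omega)$, and read off the window-bundle representatives from the fact that every image object is a pullback of a brane in $(\W,\chk s)$; your remarks on the last lemma of Step 5 (the $E_1$ page of \eqref{eqn:morsOfBranes} concentrated in degree zero, with the degree-zero comparison via Hartogs and the blow-up) are the same reasoning the paper abbreviates as ``the same argument as \Cref{lem:ffb}''.
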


\section{Proof of Equivalence}\label{sec:proofOfEquiv}
By \Cref{critNeg} and \Cref{lem:critPos} the fully-faithful functor between the B-brane categories, constructed in previous section (\Cref{prop:ffFunctor}), may be seen as a fully-faithful functor $\Phi : D(X)\rightarrow D(\bly)$ between the derived categories of the critical loci. Now we will show that this functor induces our desired equivalence $D(X)\simeq D(Y)$. We will focus here entirely on the (modified) positive side of the GLSM, the side where $\bly$ appears. Diagram \eqref{diag:positiveSide} sets up notations for the maps that we will use in this section. There, 
\begin{itemize}
    \item $\bar p:\overline{U/\Gam}\rightarrow\bly$ is the vector bundle projection $U/\Gam\xrightarrow{p}\blp$ restricted to $\bly\subset \blp$
    \item $i_0$ and $j_0$ are the zero section embeddings corresponding $p$ and $\bar p$ respectively
    \item $E$ and $\wtil{\mathcal{C}}$ are the exceptional divisors of the blow-ups $\blp$ and $\bly$ respectively
\end{itemize}
\begin{equation} \label{diag:positiveSide}
\begin{tikzcd}[row sep=large, column sep=large]
    & U/\Gam \arrow[d, shift right=0.6ex, "p"']
    & \overline{U/\Gam} \arrow[l, hook', "\bar{i}"'] \arrow[d, shift right=0.6ex, "\bar{p}"']
    & \\
    E \arrow[r, hook, "\mu"] \arrow[d]
    & \widetilde{\P }^8 \arrow[u, shift right=0.6ex, hook', "i_0"'] \arrow[d, "\widetilde{\pi}"']
    & \widetilde{Y} \arrow[l, hook', "j"'] \arrow[u, shift right=0.6ex, hook', "j_0"'] \arrow[d, "q"']
    & \widetilde{\mathcal{C}} \arrow[l, hook', "\nu"'] \arrow[d, "\bar{q}"'] \\
    \P ^2 \arrow[r, hook, "\bar{\mu}"]
    & \P ^8
    & Y \arrow[l, hook', "\bar{j}"']
    & \mathcal{C} \arrow[l, hook', "\bar{\nu}"']
\end{tikzcd}
\end{equation}
Let us set up a few more notations before proceeding. $\G$ be the image of the fully-faithful functor $D(V_-/G,\chk s_-) \rightarrow D\LGpos$ and $\kappa$ be the functor $\bar i_*\circ \bar p^*:D(\bly)\rightarrow D\LGpos$, which we will sometimes call the \textit{Kn\"orrer functor}. Now, the identification in \Cref{lem:critPos} is enabled exactly by $\kappa$. So, $\operatorname{im}(\Phi)=\kappa^{-1}(\G)$. 

We know that $D(Y)$ embeds in $D(\bly)$ via pullback along the blow-up and under this embedding, its left orthogonal is a copy of the derived category of the blow-up center $C$. More precisely, we have a semi-orthogonal decomposition $D(\bly)=\langle Lq^*D(Y), \ \nu_*\bar q^* D(C)\rangle$. In fact, we will consider the more general decomposition, where both the categories have been twisted by some multiple of the exceptional divisor. Here we identify the class of $\til C$ in $\til Y$ as that of $E$, since $\til C\subset \bly$ is just the divisor $E\cap  \til Y$.
\begin{equation}\label{eqn:SOD}
    D(\bly)=\langle Lq^*D(Y)(kE), \ \nu_*\bar q^* D(C)(kE)\rangle
\end{equation}
So, to show that the image of $D(X)$ (under $\Phi$) lands in some copy of $D(Y)$, it suffices to show that the image is right orthogonal to the corresponding copy of $D(C)$. Following gives a sufficient condition to achieve this orthogonality.
\begin{lemma}\label[lemma]{lem:suffCondForOrth}
    $\Phi(D(X)) \perp \nu_*\bar q^* D(C)(kE)$ i.e. $RHom(\nu_*\bar q^* D(C)(kE),\ \Phi(D(X)))=0$ holds, if the following cohomologies vanish
    \begin{equation}
        \{H^*(E, \F|_E(-lH-(k-1)E))\sp|\sp \F \in \text{the window}, l\in \mathbb{Z}\}
    \end{equation}
\end{lemma}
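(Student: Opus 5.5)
We reduce the orthogonality to a computation on $E$ by moving everything through the Knörrer functor $\kappa$ and then using the adjunctions along the zero-section embeddings. The target is a cohomology group of the form $H^*(E,\F|_E\otimes(\text{line bundle}))$.

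\textbf{Step 1: translate to the brane category.} Since $\kappa$ is an equivalence onto $D\LGpos$ (\Cref{cor:applyingKnorrer}) and $\operatorname{im}(\Phi)=\kappa^{-1}(\G)$, it suffices to show
\[
\operatorname{RHom}_{D\LGpos}\bigl(\kappa(\nu_*\bar q^*\mathcal{A}(kE)),\,\mathcal{B}\bigr)=0
\quad\text{for } \mathcal{A}\in D(C),\ \mathcal{B}\in\G.
\]
By \Cref{prop:ffFunctor}, each $\mathcal{B}$ has underlying bundle a direct sum of window bundles $\F$ (with R-charge shifts), carrying a curved differential.

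\textbf{Step 2: reduce to a single window bundle.} The Hom complex between branes is computed by the spectral sequence \eqref{eqn:morsOfBranes}. Its $E_1$ page is built from the sheaf-level groups $\operatorname{Ext}^*_{U/\Gam}(\kappa(\nu_*\bar q^*\mathcal{A}(kE)),\F)$, forgetting the curved differentials. It therefore suffices to show that these groups vanish for every window bundle $\F$ and every twist appearing.

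\textbf{Step 3: push the computation down to $E$.} The sheaf underlying $\kappa(\nu_*\bar q^*\mathcal{A}(kE))$ is $\bar i_*\bar p^*\nu_*\bar q^*\mathcal{A}(kE)$. This is supported on the restriction of $\Tot(\pi^*\wt\Q(-h-H))$ to $\wt{\mathcal C}$, which lies over $E$. Since $D(C)$ is generated by the objects $\O_c$ of points $c\in C$, it is enough to treat fibres. We then use Grothendieck duality for the closed embeddings, together with the factorisation through $p$. The right adjoint of pushforward along the embedding $\bar p^{-1}(\wt{\mathcal C})\hookrightarrow U/\Gam$ is a restriction twisted by the determinant of the normal bundle. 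The normal bundle of $\wt{\mathcal C}$ in $\blp$ is built from $\O(E)$, i.e.\ $E$ inside $\blp$, plus the normal of $\wt Y$, namely $\pi^*\wt\Q^\vee(h+H)$. Its determinant shifts the twist from $kE$ to $(k-1)E$, and the $\wt\Q$-contribution combines with $h$ into a multiple of $H$ via $E=H-h$.

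We then use the projection $E\to\P^2$ and the fact that the fibres of $\bar q$ are lines in fibres of $E\to\P^2$. The fibre directions of $p$ contribute $\Sym^\bullet$ of the relevant bundle, and we resolve $\wt{\mathcal C}$ inside $E$ by a Koszul complex. Together these reduce the vanishing to $H^*(E,\F|_E(-lH-(k-1)E))$ for varying $l$, where $l$ absorbs the twists from the Koszul terms, the symmetric powers and the fibre coordinate. Since all $l\in\ZZ$ are allowed by hypothesis, every term vanishes, and the vanishing propagates through the Koszul spectral sequence.

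\textbf{Main obstacle.} The hard step is the bookkeeping in Step 3. One must check that every twist produced by the duality along the embeddings is a multiple of $H$, plus the fixed $-(k-1)E$. This requires converting the $h$-twists and the $\wt\Q$ factors correctly, using $\O_E(h)=\O_E(H-E)$ and the Koszul resolution of $\wt Y\cap E$ inside $E$. I would first carry this out for $\mathcal{A}$ a point sheaf, and check that the leftover $\wt\Q$-type factors disappear after restriction to $E$, where $\pi^*\wt\Q$ restricts along the $\P^2$ directions.
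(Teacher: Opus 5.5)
Your Step 2 is where the argument breaks. Forgetting the curved differentials replaces the lemma by a stronger claim: that the sheaf-level groups $\Ext^*_{\kb}\bigl(\kappa(\nu_*\bar q^*\mathcal A(kE)),\F\bigr)$ vanish for every window bundle $\F$. That claim is false.

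Write $\V=\pi^*\wt\Q^\vee(h+H)$ for the normal bundle of $\bly$ in $\blp$, so that $\kb=\Tot(\V^\vee)$ and $\det\V=\O(4h+5H)$. Duality along $\bar i$ and adjunction along the affine map $\bar p$ give $\Ext^n_{\kb}(\bar i_*\bar p^*\mathcal M,\O)\simeq\Ext^{n-5}_{\bly}(\mathcal M,\det\V\otimes\Sym^\bullet\V)$.

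Take $\F=\O$ and $\mathcal A=\O_c$, so $\mathcal M=\O_{\ell_c}(kE)$, where $\ell_c\simeq\P^1$ is the fibre of $\wtil{\mathcal C}\to C$. On $\ell_c$ the bundles $\O(h),\O(H),\O(E)$ restrict to $\O(1),\O,\O(-1)$, and $\det N_{\ell_c/\bly}=\O(-1)$. The $\Sym^0$ summand alone is, up to shift, $H^*(\P^1,\O(4)\otimes\O(k)\otimes\O(-1))=H^*(\P^1,\O(k+3))$. For the value $k=-2$ used in the paper this is $H^0(\P^1,\O(1))\neq 0$. The same holds for every window bundle $\O(-mH)$, since $H$ is trivial on $\ell_c$.

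So your $E_1$ page is nonzero precisely in the case where the hypothesis of the lemma holds. The Hom between branes vanishes only because the differential induced by the brane's curved differential cancels two families of terms: the $\Sym^\bullet\V$ terms coming from the fibres of $p$, and the Koszul terms $\Lambda^\bullet\V^\vee$ coming from $\bly\subset\blp$. That cancellation is exactly Kn\"orrer periodicity, and no bookkeeping in Step 3 can recover it once the differential has been discarded.

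The hope at the end of Step 3 also fails on its own terms. We have $E\simeq\P^5\times\P^2$ with $\pi|_E$ the projection to $\P^5$. Hence $\pi^*\wt\Q|_E$ is the pull-back of $T_{\P^5}(-h)$ and does not split into line bundles. Moreover $\det\V=\O(9H-4E)$ changes the coefficient of $E$, so it cannot be absorbed into an $H$-twist.

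The missing ingredient is \Cref{obs:cartesianSquare}, $j_*\circ\kappa^{-1}=Li_0^*$, which comes from the base change $Li_0^*\circ\bar i_*=j_*\circ Lj_0^*$ along the zero sections. It lets you avoid computing on $\kb$ altogether. On $\bly$, resolve $\nu_*\O_{\wtil{\mathcal C}}$ by $\O(-E)\to\O$ and dualize; this is where $kE$ becomes $(k-1)E$. The projection formula along $j$ then turns $RHom_{\bly}(\nu_*\O_{\wtil{\mathcal C}}(lH+kE),\kappa^{-1}\E)$ into
\[
R\Gamma_{\blp}\bigl(\mu_*\O_E(-lH-(k-1)E)\otimes Li_0^*\E\bigr)[-1]=R\Gamma_E\bigl(\E|_E(-lH-(k-1)E)\bigr)[-1].
\]
On the zero section the superpotential vanishes and the R-charge acts trivially, so $\E|_E$ is an honest bounded complex whose terms are restrictions of window bundles. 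Only at this point is it legitimate to pass to individual window bundles, since a complex of acyclic sheaves is acyclic. Your use of point sheaves instead of the ample sequence $\O_C(lH)$ as a spanning class is harmless.
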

To prove this, we will need the following observations.
\begin{observation}\label[observation]{obs:cartesianSquare}
    $j_* \circ \kappa^{-1} = Li_0^*$
\end{observation}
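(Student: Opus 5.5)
The plan is to verify the equivalent identity $Li_0^*\circ\kappa\simeq j_*$ as functors $D(\bly)\to D(\blp)$. Since $\kappa=\bar i_*\circ\bar p^*$ is an equivalence by \Cref{cor:applyingKnorrer}, precomposing with $\kappa^{-1}$ then gives the statement.

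First I will make sense of the target of $Li_0^*$. The superpotential $\omega$ is linear along the fibres of $p$ (its explicit form is written in the proof of \Cref{lem:critPos}), so it vanishes on the zero section, i.e. $i_0^*\omega=0$. After the gauge fixing carried out before \Cref{cor:applyingKnorrer}, the $\C^*_R$-action is trivial on the base $\blp$. Hence $i_0$ is a morphism of LG models $(\blp,0)\to(U/\Gam,\omega)$. By \Cref{ex:LG}, $Li_0^*$ lands in $D(\blp,0)=D(\blp)$. The same reasoning identifies the source of $j_*$ with $D(\bly)=D(\bly,0)$.

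The core step is a base change for the square
\begin{equation*}
\begin{tikzcd}
\bly \arrow[r, "j_0"] \arrow[d, "j"'] & \overline{U/\Gam} \arrow[d, "\bar i"] \\
\blp \arrow[r, "i_0"'] & U/\Gam
\end{tikzcd}
\end{equation*}
in the notation of \eqref{diag:positiveSide}. I will check that it is Cartesian: the zero section of $\Tot(\pi^*\wt\Q(-h-H))$ meets the restricted total space $\Tot(\pi^*\wt\Q(-h-H)|_{\bly})$ exactly in the zero section over $\bly$. I will then check Tor-independence by a dimension count. We have $\dim U/\Gam=12$, $\dim\overline{U/\Gam}=3+4=7$, $\dim\blp=8$ and $\dim\bly=3=7+8-12$. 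All four spaces are smooth, so the intersection is transversal and higher Tors vanish. Locally this is even more transparent: $\overline{U/\Gam}$ is cut out by the base equations of $\bly$ and $\blp$ by the fibre coordinates $\ul d$, and these are independent variables. Flat/Tor-independent base change then gives $Li_0^*\,\bar i_*\simeq j_*\,Lj_0^*$. Since $\bar p\circ j_0=\mathrm{id}_{\bly}$, we also have $Lj_0^*\bar p^*\simeq\mathrm{id}$. Combining,
\[
Li_0^*\circ\kappa \simeq j_*\circ Lj_0^*\circ\bar p^*\simeq j_*.
\]

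The main obstacle is justifying base change at the level of B-brane categories rather than ordinary derived categories. The object $\bar i_*\bar p^*F$ is a brane on $(U/\Gam,\omega)$, represented by a Koszul-type matrix factorization as in \cite{Ship2010}, not by a complex. I would handle this by working with the explicit Koszul-twisted resolution that Shipman uses to define $\bar i_*$: it is a locally free brane whose underlying graded bundle is the Koszul complex of the regular section cutting out $\overline{U/\Gam}$, perturbed by terms involving $\omega$. Restricting along $i_0$ kills the curvature, since $i_0^*\omega=0$. Transversality ensures that the restricted Koszul complex is still a resolution, now of $j_*\O_{\bly}\otimes F$. The step to be careful about is that the perturbation terms restrict to zero, or at least to something homotopic to zero. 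I expect this to follow because they are proportional to the fibre coordinates $\ul d$, which vanish on the zero section.
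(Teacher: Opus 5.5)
Your argument is essentially the paper's. The paper likewise obtains $Li_0^*\circ\kappa\simeq j_*$ from the Tor-independent base change $Li_0^*\circ\bar i_*\simeq j_*\circ Lj_0^*$ for this Cartesian square (citing \cite[Corollary 2.27]{KuzHyperplaneSections}) together with $Lj_0^*\bar p^*\simeq\mathrm{id}$. It passes silently over the B-brane subtlety you flag. For arbitrary $F$ that subtlety is more cleanly settled by resolving $\bar i_*\bar p^*F$ by locally free factorizations and applying Tor-independence termwise, since your Koszul brane directly covers only those $F$ that extend off $\bly$.

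One small slip: the fibres of $p$ are $\wt\Q(-h-H)$, which has rank $5$ (the same rank as the bundle cutting out $\bly$ in $\blp$). So $\dim U/\Gam=13$ and $\dim\overline{U/\Gam}=8$, not $12$ and $7$. The count $\dim\overline{U/\Gam}+\dim\blp-\dim U/\Gam=3$ does not depend on the fibre rank, so your conclusion stands.
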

\begin{proof}
    Even though the maps $i_0$ and $j_0$ are not flat, we still have the identity $Li_0^*\circ \bar i_* = j_*\circ Lj_0^*$, due to \cite[Corollary 2.27]{KuzHyperplaneSections}. Using this and the definition of $\kappa$, we have
\begin{equation}
\begin{gathered}
    Li_0^*\circ \kappa = Li_0^*\circ \bar i_*\circ \bar p^* = j_*\circ Lj_0^*\circ \bar p^* = j_* \\
    \Rightarrow j_* \circ \kappa^{-1} = Li_0^*
\end{gathered}
\end{equation}
\end{proof}
\begin{observation}\label[observation]{obs:quasiIsos}
    We have quasi-isomorphisms:
    \begin{itemize}
        \item $\mu_*\O_E\simeq(\O_{\blp}(-E) \rightarrow\O_{\blp})$
        \item $\nu_*\O_{\til C}\simeq(\O_{\bly}(-E) \rightarrow\O_{\bly})$
    \end{itemize}
\end{observation}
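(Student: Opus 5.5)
The plan is to read both quasi-isomorphisms off the ideal sheaf sequence of an effective Cartier divisor. Here the two-term complexes are concentrated in degrees $-1$ and $0$, and the maps are multiplication by the canonical section $s_E$ of $\O(E)$.

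\emph{First quasi-isomorphism.} The exceptional divisor $E\subset\blp$ of the blow-up $\pi'\colon\blp\to\P^8$ along $\P(L^\vee)$ is an effective Cartier divisor on the smooth variety $\blp$. By definition of the blow-up, its ideal sheaf is the invertible sheaf $\O_{\blp}(-E)$, namely the inverse image ideal of $\P(L^\vee)$. So there is a short exact sequence
\begin{equation*}
0\to\O_{\blp}(-E)\xrightarrow{s_E}\O_{\blp}\to\mu_*\O_E\to 0 .
\end{equation*}
The complex $(\O_{\blp}(-E)\to\O_{\blp})$ has cohomology only in degree $0$, equal to $\mu_*\O_E$, which gives the first claim. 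In the coordinates of \Cref{sec:openRestriction}, $E$ is the locus $v=0$, which matches $\O(E)=\O(H-h)$.

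\emph{Second quasi-isomorphism.} I would restrict the previous sequence to $\bly$, i.e. apply $Lj^*$ and use $Lj^*\mu_*\O_E\simeq Lj^*(\O_{\blp}(-E)\to\O_{\blp})=(\O_{\bly}(-E)\to\O_{\bly})$. Two points need checking.
\begin{itemize}
\item The restricted map $s_E|_{\bly}\colon\O_{\bly}(-E)\to\O_{\bly}$ is injective. Since $\bly$ is smooth and irreducible (\Cref{prop:gorCY3}), this holds as soon as $s_E|_{\bly}\neq 0$, i.e. $\bly\not\subset E$. That is true because $\bly$ is the proper transform of $Y$ and surjects onto the threefold $Y$, while $E$ maps to the plane $\P(L^\vee)$.
\item The cokernel $\O_{E\cap\bly}$ equals $\nu_*\O_{\til C}$, i.e. $\til C$ is the scheme-theoretic intersection $E\cap\bly$. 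By \Cref{prop:gorCY3}, $\bly\to Y$ is the blow-up of $Y$ along $C=Y\cap\P(L^\vee)$. For a blow-up, the exceptional divisor is cut out by the inverse image ideal of the center. Here that ideal is generated by the pullbacks of the linear forms cutting out $\P(L^\vee)$, i.e. it is exactly the restriction of the ideal of $E$. This gives $\til C=E\cap\bly$ scheme-theoretically, as already used when identifying the class of $\til C$ with $E$ in \eqref{eqn:SOD}.
\end{itemize}

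The only step that needs care is this identification of $\til C$ with the scheme-theoretic intersection $E\cap\bly$ (equivalently, the Tor-independence of $\bly$ and $E$). Everything else is the standard Koszul resolution of a Cartier divisor.
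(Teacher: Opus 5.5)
Your proposal is correct and takes the same route as the paper. Both quasi-isomorphisms come from the ideal-sheaf (Koszul) sequence of a Cartier divisor, with the second obtained from $\til C=E\cap\bly$; you also supply the two checks the paper leaves implicit, namely injectivity of $s_E|_{\bly}$ because $\bly\not\subset E$, and the scheme-theoretic identification of $\til C$ with $E\cap\bly$ as the inverse image of the blow-up center.

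One small slip in your closing remark: Tor-independence of $\bly$ and $E$ is equivalent to your injectivity bullet, not to the identification of $\til C$ with $E\cap\bly$, which is a separate statement about the blow-up.
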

\begin{proof}
    The first quasi-isomorphism follows from the Koszul resolution of $E\subset\blp$
    \begin{equation}
        0\rightarrow \O_{\blp}(-E) \rightarrow\O_{\blp}\rightarrow {\mu_*\O_{E}}\rightarrow 0
    \end{equation}
    For the second one, notice that $\til C\subset \bly$ is just the divisor $E\cap  \til Y$. So, we have a similar looking Koszul resolution for $\til C\subset \bly$, which yields the second quasi-isomorphism.
\end{proof}

\begin{proof}[Proof of \Cref{lem:suffCondForOrth}]
As noted earlier, we can swap $\Phi(D(X))$ for $\kappa^{-1}\G$. As $\O_ C(H)$ is an ample line bundle on $ C$, its tensor powers $\O_ C(lH)$ form a spanning class of $D( C)$. Hence, $\nu_*\bar q^* \O_ C(lH)(kE)=\nu_*\O_{\til C}(lH+kE)$ form a spanning class of $\nu_*\bar q^* D( C)(kE)$. That is, the desired orthogonality is equivalent to showing that 
\begin{equation}
    RHom(\nu_*\O_{\til C}(lH+kE), \kappa^{-1}\G)=0 \forallsp l\in \Z
\end{equation}
For any $\E\in \G$, we use \Cref{obs:cartesianSquare} and \Cref{obs:quasiIsos} to write
\begin{equation}
\begin{aligned}
    &RHom_{\bly} (\nu_*\O_{\til C}(lH+kE), \kappa^{-1}\E))\\
        =&R\Gam_{\bly} R\H om(\O(lH+(k-1)E) \rightarrow\O(lH+kE), \kappa^{-1}\E))\\    
        =&R\Gam_{\bly} ((\O(lH+(k-1)E) \rightarrow\O(lH+kE))^\vee\derTensor \kappa^{-1}\E)) \\
        =&R\Gam_{\bly} ((\O(-lH-kE) \rightarrow \O(-lH-(k-1)E))[-1]\derTensor \kappa^{-1}\E)) \\
        =&R\Gam_{\blp} ((\O(-lH-kE) \rightarrow \O(-lH-(k-1)E)[-1]\derTensor j_*\circ\kappa^{-1}\E))\\
        =&R\Gam_{\blp} (\mu_*\O_E(-lH-(k-1)E)[-1]\derTensor \E|_{\blp}))  \\
        =&R\Gam_E (\E|_E(-lH-(k-1)E))[-1]
\end{aligned}
\end{equation}
Here, the restriction $\E|_{\blp}$ is under the zero section embedding $\blp\xrightarrow{i_0} \kb$ and $\E|_E$ is its further restriction to the exceptional divisor. On $\blp$ (seen in $\kb$), the superpotential vanishes and $R$-charge acts trivially. The same happens on $E\subset \blp$. Hence, $\E|_E$ is just a complex of vector bundles, each of which is a direct sum of bundles coming from the window. Now, if each sheaf in a complex is acyclic, then the complex itself is acyclic. So, for $R\Gam_E (\E|_E(-lH-(k-1)E))$ to vanish for an arbitrary B-brane $\E$, it suffices to show that $R\Gam_E (\F|_E(-lH-(k-1)E))$ vanishes for each vector bundle $\F$ coming from the window. Note that apriori, the latter statement is a stronger one, as there we have swapped B-branes for their vector bundle building blocks.
\end{proof}
It turns out that all these cohomologies indeed do vanish for a unique value of the twist $k$ appearing in the decomposed presentation of $D(\bly)$. We have:
\begin{lemma}\label[lemma]{lem:cohomForOrth}
    $H^*(E, \F|_E(-lH+ 3E))=0 \quad \forallsp\F \in \text{the window and } l\in \mathbb{Z}$
\end{lemma}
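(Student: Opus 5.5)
We plan to compute everything on the exceptional divisor $E\subset\blp$ explicitly. Since $\blp=\Bl_{\P(L^\vee)}\P^8$, the divisor is $E\simeq \P(L^\vee)\times\P(V_6)=\P^2\times\P^5$. On $E$ we have $\O(H)|_E=\O(1,0)$ and $\O(h)|_E=\O(0,1)$. The relation $\O(E)=\O(H-h)$ then gives $\O_E(E)=\O(1,-1)$, consistent with the normal bundle being the tautological $\O_{\P(V_6)}(-1)$ twisted by $\O_{\P^2}(1)$. Hence $\O_E(-lH+3E)=\O(3-l,-3)$. By \Cref{obs:ses}, every window bundle restricted to $\blp$, and then to $E$, is an iterated extension of line bundles $\O(ah+bH)$. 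So we first list these line-bundle pieces.

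For $\bunPos$ the pieces are $\O(-h-H)$ and $\O(h)$. For $\Sym^2\bunPos$ they are $\O(-2h-2H)$, $\O(-H)$ and $\O(2h)$. Twisting by $\O(-kH)$ with the ranges of $k$ listed for the window bundles, every piece of every window bundle is of the form $\O(ah+bH)$ with $a\in\{-2,-1,0,1,2\}$ and $b$ arbitrary.

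After twisting by $\O_E(-lH+3E)$, each piece restricts on $E$ to $\O_{\P^2}(b-l+3)\boxtimes\O_{\P^5}(a-3)$. Here $a-3\in\{-5,\dots,-1\}$. Since $H^*(\P^5,\O(m))=0$ for $-5\le m\le -1$, the K\"unneth formula kills the cohomology of every piece, for every $l$ and every $b$. Long exact sequences for the extensions in \Cref{obs:ses} then give $H^*(E,\F|_E(-lH+3E))=0$ for all window bundles $\F$. It is enough to treat the generators listed after \Cref{obs:ses}, because everything in the window is built from them.

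The main obstacle is pinning down the restriction of $\O(E)$, and of the two classes $h$ and $H$, to $E$ with the correct signs. The statement is visibly tuned so that the $\P^5$-degree lands exactly in the window $[-5,-1]$, so a single sign error shifts it out. I would verify it via the identity $\O_{\blp}(E)=\O(H-h)$, which the paper already used in computing $\omega_{\blp}$ (there $\O(5E)=\O(5H-5h)$). I would also cross-check against the twist $3E$ by recomputing the $h$-range of the window pieces: it is $-2$ to $2$, and $-3$ shifts this to $-5$ to $-1$. The same computation explains why $k=4$ is the unique good twist, since $k-1=3$.
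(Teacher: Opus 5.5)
Your proposal is correct and is essentially the paper's proof. Both identify $E\simeq\P^2\times\P^5$ with $\O_E(E)=\O_{\P^2}(1)\boxtimes\O_{\P^5}(-1)$, break the window bundles into line-bundle pieces whose $h$-degree lies in $\{-2,\dots,2\}$, and conclude by K\"unneth since the $\P^5$-degree lands in $\{-5,\dots,-1\}$. The paper additionally notes that the sequences of \Cref{obs:ses} split on $E$, which your long-exact-sequence argument does not need.

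The only slip is in your closing aside. With the twist $-(k-1)E$ of \Cref{lem:suffCondForOrth}, the value $+3E$ corresponds to $k=-2$, matching the paper's $\nu_*\bar q^* D(C)(-2E)$, not $k=4$. This does not affect the lemma itself.
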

\begin{proof}
    Under the identification $\blp\simeq \P(\O_{\P^5}(-h)\oplus \O_{\P^5}^{\oplus3})$, the exceptional divisor $E$ embeds in $\blp$ as $\P(\O_{\P^5}^{\oplus3})=\P^5\times \P^2$. Now as $H$ is the relative $\O(1)$ of the projective bundle, we have $E\sim H-h$. Also, in this view of $E$, we have $\O_E(h)=\O_E(1,0)$ and $\O_E(H)=\O_E(0,1)$. So, $\F|_E(-lH+aE))=\F|_E(-a,a-l)$. It suffices to just focus on the cases where $\F=\O, \bunPos$ or $\Sym^2\bunPos$, as we need to show vanishings for arbitrary twists by $H$ anyway. \\\\
    \textit{For $\F=\O$}, we have $H^*(E, \O(-a, a-l))=0$ for $a\in1,2,3,4,5$ and $l\in \Z$.\\\\
    \textit{For $\F=\bunPos$}, recall the first short exact sequence in \Cref{obs:ses}. Pulling it back to $E$, we get short exact sequence 
    \begin{equation}
        0\rightarrow \O(-1,-1)\rightarrow \bunPos|_E\rightarrow \O(1,0)\rightarrow 0
    \end{equation}
    This is split, as $RHom_E(\O(1,0),\O(-1,-1))=0$. So, $\bunPos|_E \simeq \O(-1,-1)\oplus\O(1,0)$. So,
    \begin{equation}
    \begin{aligned}
        H^*(E, \bunPos|_E(-a, a-l))
        &\simeq H^*(E, \O(-a-1, a-l-1))\oplus H^*(E, \O(-a+1, a-l)) \\
        &=0  \quad\text{for } a \in 2,3,4 \text{ and } l \in \Z 
    \end{aligned}
    \end{equation}
    \textit{For $\F=\Sym^2\bunPos$}, we have $\Sym^2\bunPos|_E \simeq O(-2,-2)\oplus\O(0,-1) \oplus \O(2,0)$. So, 
 \begin{equation}
 \begin{aligned}
        &H^*(E, \Sym^2\bunPos|_E(-a, a-l))\\
        &\simeq
        H^*(E, \O(-a-2, a-l-2))\oplus H^*(E, \O(-a, a-l-1))
        \oplus H^*(E, \O(-a+2, a-l)) \\
        &=0  \quad\text{for } a = 3 \text{ and } l \in \Z 
    \end{aligned}
 \end{equation}
    Therefore, we have our desired cohomology vanishings if and only if $a=3$.
\end{proof}
By \Cref{lem:suffCondForOrth} and \Cref{lem:cohomForOrth}, we conclude that $\nu_*\bar q^* D( C)(-2E) \perp \Phi(D(X)))$,
as sub-categories of $D(\bly)$. This semi-orthogonality and the semi-orthogonal decomposition \eqref{eqn:SOD} together imply that the fully-faithful functor $\Phi$ lands in $Lq^*D(Y)(-2E)\simeq D(Y)$. Hence, $\Phi$ induces a fully-faithful functor $D(X)\rightarrow D(Y)$. As it is a fully-faithful functor between derived categories of smooth projective varieties, it is automatically admissible. So, in summary, we have constructed a fully-faithful, admissible functor $D(X)\rightarrow D(Y)$. Such a functor must be an equivalence, as derived categories of Calabi-Yau varieties are indecomposable. Therefore, we conclude our proof of equivalence, and with it, our long tale.

\begin{theorem}\label[theorem]{thm:derivedEquivalence}
    There exists an equivalence $D(X)\simeq D(Y)$ of derived categories of Calabi-Yau threefolds $X$ and $Y$. 
\end{theorem}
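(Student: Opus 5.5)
The plan is to assemble the equivalence from the pieces built in \Cref{sec:constructionOfFunctor} and \Cref{sec:proofOfEquiv}.

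\textbf{Step 1: the functor $\Phi$.} Start from the chain of equivalences and fully-faithful functors
\begin{equation*}
D(X)\simeq D(V_-/G,\chk s_-)\hookrightarrow D(U/\Gam,\omega)\simeq D(\bly).
\end{equation*}
The first equivalence is Kn\"orrer periodicity together with \Cref{critNeg}. The middle arrow is \Cref{prop:ffFunctor}. The last equivalence is \Cref{cor:applyingKnorrer}, with inverse given by the Kn\"orrer functor $\kappa$. Composing these gives a fully-faithful $\Phi:D(X)\to D(\bly)$ with image $\kappa^{-1}(\G)$.

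\textbf{Step 2: landing in a copy of $D(Y)$.} Use the semi-orthogonal decomposition \eqref{eqn:SOD} of $D(\bly)$, coming from Orlov's blow-up formula for $\bly=\Bl_C Y$ (\Cref{prop:gorCY3}), with the twist chosen as $k=-2$. By \Cref{lem:suffCondForOrth}, it suffices to show
\begin{equation*}
H^*\bigl(E,\F|_E(-lH+3E)\bigr)=0
\end{equation*}
for every window bundle $\F$ and every $l\in\Z$. This is exactly \Cref{lem:cohomForOrth}.

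That lemma rests on three facts. First, $E\simeq\P^5\times\P^2$. Second, $\bunPos|_E$ and $\Sym^2\bunPos|_E$ split into line bundles via \Cref{obs:ses}. Third, the line bundles $\O(-a,\ast)$ with $1\le a\le5$ are acyclic on $\P^5$. Together these give $\Phi(D(X))\subset\bigl(\nu_*\bar q^*D(C)(-2E)\bigr)^{\perp}=Lq^*D(Y)(-2E)$.

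Since $Lq^*$ is fully faithful and twisting by a line bundle is an autoequivalence, $\Phi$ factors through a fully-faithful functor $\Psi:D(X)\to D(Y)$.

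\textbf{Step 3: essential surjectivity.} $\Psi$ is a fully-faithful exact functor between derived categories of smooth projective varieties. By Bondal--Van den Bergh saturatedness, its image is an admissible subcategory, so we get a semi-orthogonal decomposition $D(Y)=\langle \Psi(D(X))^{\perp},\Psi(D(X))\rangle$.

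Since $\omega_Y\simeq\O_Y$, the Serre functor of $D(Y)$ is the shift $[3]$. Bridgeland's indecomposability argument then applies: any semi-orthogonal decomposition of the derived category of a connected variety with trivial canonical bundle is trivial. Because $\Psi(D(X))\neq0$, the complement vanishes and $\Psi$ is an equivalence.

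As an alternative, one could check that the Serre functors match, $S_Y\circ\Psi\simeq\Psi\circ S_X$, both being $[3]$, and apply Bondal--Orlov's criterion.

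\textbf{Main obstacle.} The genuinely hard input is the orthogonality in Step 2, specifically that a single twist $k$ works simultaneously for all window bundles, including $\Sym^2\bunPos$. I would check the splitting of $\Sym^2\bunPos|_E$ carefully, since the extension class could fail to vanish. If it did not split, I would instead run the long exact sequence from \Cref{obs:ses} and verify acyclicity of both graded pieces $\O(-5,\ast)$, $\O(-4,\ast)$ and $\O(-1,\ast)$ directly. Everything else is formal.
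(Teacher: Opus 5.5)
Your proposal is correct and follows the paper's own argument essentially step for step: the composite Kn\"orrer/window functor $\Phi:D(X)\to D(\bly)$, orthogonality to $\nu_*\bar q^*D(C)(-2E)$ via \Cref{lem:suffCondForOrth} and \Cref{lem:cohomForOrth} (that is, $k=-2$, $a=3$), and then admissibility plus indecomposability of $D(Y)$ coming from $\omega_Y\simeq\O_Y$. One small correction to your contingency remark: $\bunPos|_E\simeq\O(-1,-1)\oplus\O(1,0)$ splits because the extension class lies in $H^1(E,\O(-2,-1))=0$, so $\Sym^2\bunPos|_E$ splits automatically, and after the twist its summands are $\O(-5,\ast)$, $\O(-3,\ast)$ and $\O(-1,\ast)$ (not $\O(-4,\ast)$), all acyclic as required.
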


\begin{remark}\label[remark]{rem:windowShiftAutoEquivalences}
    In fact, there exists a $\Z$-indexed family of such equivalences, as we have $\Z$-many choices of windows $\{\W_k\}_k$, due to \Cref{rem:windowShift}. Using them, one can build the so-called \textit{window shift auto-equivalences} for both $D(X)$ and $D(Y)$.
    \begin{equation}
    \begin{aligned}
        \Phi_X^{k,l}:D(X)\xrightarrow{\sim}(\W_k,\chk s)\xrightarrow{\sim} D(Y) \xrightarrow{\sim} (\W_l,\chk s)\xrightarrow{\sim} D(X) \\
        \Phi_Y^{k,l}:D(Y)\xrightarrow{\sim}(\W_k,\chk s)\xrightarrow{\sim} D(X) \xrightarrow{\sim} (\W_l,\chk s)\xrightarrow{\sim} D(Y)
    \end{aligned}
    \end{equation}
If we fix the identifications $D(X)\simeq(\W_0,\chk s)$ and $D(Y)\simeq(\W_0,\chk s)$, then $D(X)\simeq(\W_k,\chk s)\simeq D(X)$ and $D(Y)\simeq(\W_l,\chk s)\simeq D(Y)$ are the auto-equivalences $(\_ )\otimes \O(k\xi)\in Aut(D(X))$ and $(\_ )\otimes \O(-lH)\in Aut(D(Y))$ respectively. By mirror symmetry (see \cite{monodromyCalc}), they are expected to correspond to taking $k$ and $-l$ many loops around the MUM points $0$ and $\infty$ of the SKMS (figure \ref{fig:SKMS}), respectively. Hence, a window shift auto-equivalence $\Phi_X^{k,l}$ is expected to correspond to the homotopy class of a path where we start at $\chk X$, take $k$ many loops around $0$, move to $\chk Y$ along some path $\gamma$, take $-l$ many loops around $\infty$, then finally move back to $\chk X$ along $-\gamma$.
\end{remark}

\begin{remark}\label[remark]{rem:4Windows}
Disregarding loops around the singularities, there are four homotopy classes of paths from $\chk X$ to $\chk Y$ in the conjectured common SKMS of $X$ and $Y$ (figure \ref{fig:SKMS}). So, by mirror symmetry, we expect there to exist four "fundamental" windows, giving in total four "fundamental" equivalences $D(X)\simeq D(Y)$, including the one constructed in \Cref{thm:derivedEquivalence}. We will leave the discovery of the three other windows and the uncovering of their precise relationship to the SKMS, to future work.
\end{remark}

\appendix
\section{Computer Algebra}
Several computations throughout the paper have been carried out with the aid of computer algebra systems. In particular, Macaulay2 \cite{macaulay2} has been extensively used to compute resolutions, ideals and handle explicit toric geometry and related combinatorics. On the other hand, most of the calculations involving tensor products and cohomology of complexes of homogeneous vector bundles (e.g. Hodge numbers of zero loci and tilting properties), have been carried out using the Python package \href{https://github.com/marcorampazzo/homogeneous-varieties}{\texttt{homogeneous-varieties}} produced by the authors, and also publicly available.

All the scripts included in this appendix, are available in the repository \href{https://github.com/marcorampazzo/the-last-CY-pair}{\texttt{the-last-CY-pair}}, together with detailed instructions on how to execute the code and reproduce the results.

\subsection{Hodge Number Computations in \Cref{sec:geometry}}\label[appendix]{app:hodgeCode}
The repository \href{https://github.com/marcorampazzo/homogeneous-varieties}{\texttt{homogeneous-varieties}} contains a method for computing Hodge numbers of zero loci of general sections of homogeneous vector bundles over rational homogeneous varieties, give that they are completely determined by the relevant (Koszul) long exact sequences. The method permits to compute the Hodge diamond of $X$ quoted in
\Cref{sec:geometry}. However, the Hodge diamond of $\wt Y$ claimed in the proof of \Cref{prop:hodge_numbers} requires an additional observation, since the exact sequences alone are not enough to ensure that $h^{1,1} = 2$. In fact, the constraint $h^{1,1}\geq 2$ follows by the fact that $\wt Y$ is a blowup: indeed, $H^{1,1}$ contains the span of the pullback of the $(1,1)$-class and the exceptional divisor. This additional condition resolves the ambiguity. For this reason, we introduce a modified version of the function, which can be found in \href{https://github.com/marcorampazzo/the-last-CY-pair}{\texttt{the-last-CY-pair}} under the name \verb|blowup_hodge_numbers|, in the file \verb|hodge_numbers.py|. Executing the file computes the Hodge numbers of both $X$ and $\wt Y$.

\subsection{Toric Geometry Computations in \Cref{sec:toricComputationsForY}} \label[appendix]{app:toricCode}
The scripts appearing in this appendix are in the Macaulay2 programming language and are available in the repository \href{https://github.com/marcorampazzo/the-last-CY-pair}{\texttt{the-last-CY-pair}} as the file \verb|mirror_principal_period.m2|. First we load the following packages which we will use extensively throughout the code. 
\begin{lstlisting}[style=macaulay2]
needsPackage "NormalToricVarieties"
needsPackage "Polyhedra"
\end{lstlisting}

Next we define some methods. Here, \verb|toricIdealFromTorusCharacters| takes a collection of torus characters (encoded in the columns of a matrix L) and returns the ideal (in a polynomial ring S) of the closure of the image of the parameterizing map into an affine space, given by monomials corresponding the given torus characters. This ideal will be homogeneous if and only if the given torus characters live on an affine hyperplane in the ambient real space. In that case, the ideal obtained may be interpreted as the homogenous ideal of the projectivization of the closure of the image of the parameterizing map.
\begin{lstlisting}[style=macaulay2]
dot = (a,b) -> sum apply(#flatten entries a, i -> (flatten entries a)_i*(flatten entries b)_i)

encodeBinomial = f-> (transpose matrix exponents f)_0 - (transpose matrix exponents f)_1

createMonomial = (l, S) -> product apply(#l, i -> ((gens S)_i)^(l_i)) 

toricIdealFromTorusCharacters = (L, S)->(
    T := (coefficientRing S)[t_1..t_(rank target L), u_1..u_(rank target L)];
    T = T/ideal(toList(1..(rank target L))/(i -> t_i*u_i - 1)); -- coordinate ring of a torus
    parametrizingMonomials := apply(rank source L, j -> product apply(rank target L, i -> if L_(i,j) >= 0 then (t_(i+1))^(L_(i,j)) else (u_(i+1))^(-L_(i,j))));
    return ideal mingens ker map(T, S, parametrizingMonomials)
)
\end{lstlisting}
Next, we construct the ideals of the embeddings of the smooth Fano variety $\til W$ and the toric variety $W$ in $\P^{11}$ and verify that $W$ is a flat degeneration of $\til W$, by checking that their Hilbert polynomials match. We then find a collection of torus characters parameterizing this embedding. We verify that this is the correct collection, in the last line below.
\begin{lstlisting}[style=macaulay2]
K = ZZ/101
P11 = K[x_0..x_11]

M1 = random(P11^6,P11^{6:-1}); M1 = M1 - (transpose M1)
v1 = random(P11^6,P11^{1:-1})
idealW1 = ideal(M1*v1, pfaffians(6,M1))

M = matrix {
    {   0,  x_0,    0,    0,    0, -x_5},
    {-x_0,    0,  x_1,    0,    0,    0},
    {   0, -x_1,    0,  x_2,    0,    0},
    {   0,    0, -x_2,    0,  x_3,    0},
    {   0,    0,    0, -x_3,    0,  x_4},
    { x_5,    0,    0,    0, -x_4,    0}
}
v = transpose matrix{{x_6..x_11}}
idealW = ideal(M*v, pfaffians(6,M))

hilbertPolynomial idealW == hilbertPolynomial idealW1 -- true

integralRelns = sub(matrix transpose(((flatten entries mingens idealW)/(f -> encodeBinomial f))/(c -> flatten entries c)), QQ)
torusCharacters = transpose mingens ker transpose integralRelns 
toricIdealFromTorusCharacters(sub(torusCharacters, ZZ), P11) == idealW -- true 
\end{lstlisting}

Next, we obtain the fan $\Sigma$ that describes $W$ as an abstract toric Fano variety. Then using this description, we verify that $W$ is indeed a terminal Gorenstein toric variety having Picard rank 1. 

\begin{lstlisting}[style=macaulay2]
P = toSublattice convexHull torusCharacters 
sigma = normalFan P
rank source rays sigma -- 18 = no. of rays of sigma

W = normalToricVariety sigma
isFano W -- true. So, W is Gorenstein Fano
facesAsCones(0, sigma)/(c -> rank source rays c == #(hilbertBasis c)) -- this is all true. So, W has at worst terminal singularities
rank picardGroup W -- 1
\end{lstlisting}

Next, we construct the three divisors $H_1,H_2,H_3$ in the hyperplane class of $W$, that form a partition of $-K_W \equiv -\text{toricDivisor}(W) = \til W_0+...+\til W_{17}$. $H_1$ is in the hyperplane class, as it is an effective divisor that generates Pic($W$). Then $H_2$ and $H_3$ are also in the hyperplane class as they differ from $H_1$ by principal divisors.
\begin{lstlisting}[style=macaulay2]
H1 = - toricDivisor(flatten entries ((fromCDivToWDiv W)*(transpose fromCDivToPic W)), W)
H2 = H1 + toricDivisor(flatten entries((rays sigma)^{0}), W)
H3 = H1 + toricDivisor(flatten entries((rays sigma)^{1}), W)
H1 + H2 + H3 == - toricDivisor W -- true
\end{lstlisting}

Next, we construct the monoid $L(\Sigma)$ and find its minimal generating set, which in this case is its Hilbert basis.
\begin{lstlisting}[style=macaulay2]
relnsAmongRays = mingens ker rays sigma
relnsAmongRaysAsCone = coneFromVData(relnsAmongRays|(-relnsAmongRays))
LSigma = intersection(relnsAmongRaysAsCone, posOrthant ambDim relnsAmongRaysAsCone)
dim LSigma -- 12
gensLSigma = hilbertBasis LSigma
\end{lstlisting}

Next, we compute the ideal of $A(\Sigma)\subset \C^{18}$. Then using it, we compute the restrictions of the monomials corresponding to the generators of $L(\Sigma)$, to $A(\Sigma)$. We observe that for any generator $f_i$, we have $\ul t^{f_i}|_{A(\Sigma)}=(t_0t_9t_{17})^{d_i}$, where $d_i$ turns out to be equal to $\langle f_i,H_1\rangle$. So, we take our variable $z$ to be $t_0t_9t_{17}$.
\begin{lstlisting}[style=macaulay2]
R = K[t_0..t_(rank source rays sigma-1)]
conesSigma = join apply(1..dim sigma, d -> cones(d, sigma))
relns = flatten (conesSigma/(c->(
        temp := mingens ker((rays sigma)_c);
        apply(rank source temp, j -> product(apply(rank target temp, i -> if temp_(i,j) > 0 then (R_(c_i))^(temp_(i,j)) else 1)) - product(apply(rank target temp, i -> if temp_(i,j) < 0 then (R_(c_i))^(-temp_(i,j)) else 1)))
    )
))
idealASigma = saturate(ideal relns, product gens R)

generatingMonomials = gensLSigma/(g -> createMonomial(flatten entries g, R/idealASigma))
d = generatingMonomials / (m -> (degree m)_0); d = d/gcd(d)
gensLSigma/(f -> dot(f,H1)) == d -- true
\end{lstlisting}
Next, we find the 12 intrinsic coordinates on $L(\Sigma)$, among the 18 ambient space coordinates. Those coordinates turn out to be $\{l_4,l_6,l_7,l_9,l_{10},l_{11},l_{12},l_{13},l_{14},l_{15},l_{16},l_{17}\}$. Then we find the inequalities defining $L(\Sigma)$ and compute $\langle\ul l, H\rangle$, expressing everything in the above intrinsic coordinates. Finally, re-naming the intrinsic coordinates as ${m_0,...,m_{11}}$ yields the final expression \eqref{eqn:mirrorPrincipalPeriodForY} for the power series $\Phi_0(z)$. 
\begin{lstlisting}[style=macaulay2]
L = ZZ[l_0..l_17]
relnsAmongL = sub(reducedRowEchelonForm sub(hyperplanes LSigma, K), ZZ)
temp = mutableMatrix relnsAmongL;
toSub = {}
for i in 0..((rank target relnsAmongL)-1) do (
    k = -1;
    for j in 0..((rank source relnsAmongL)-1) do if temp_(i,j) != 0 then (k = j; break);
    if k >= 0 then(
        temp_(i,k) = 0;
        toSub = append(toSub, L_k => - dot(temp^{i}, vars L));
    );
)
inequalitiesDefiningLSigma = sub(halfspaces(LSigma)*(transpose vars L), toSub)
lDotH = sub(dot(vars L, H1), toSub) -- the intersection condition for nth coefficient is "lDotH" = n.
\end{lstlisting}

\subsection{Proof of \Cref{prop:PicardFuchs}} \label[appendix]{app:proofOfPicardFuchs}
The scripts appearing in this appendix are in the Macaulay2 programming language and are available in the repository \href{https://github.com/marcorampazzo/the-last-CY-pair}{\texttt{the-last-CY-pair}} as the file \verb|mirror_PF_operator.m2|.

Denote by $a_n$, the coefficient of $z^n$ in the power series $\Phi_0(z)$ (given in \Cref{eqn:mirrorPrincipalPeriodForY}). By inspecting the formulas for the linear forms $f_i(\ul m)$ appearing there, we notice that we can write $a_n$ as a sum over a product of multinomials, as follows.
\begin{equation}
a_n = \sum_{\substack{\ul m \in \Z^{12}_{\geq0}\\f_i(\ul m)\geq 0 \ \forall i\\ m_4+...+m_{11} =n}}
    \binom{n}{m_0, f_2(\ul{m}), f_3(\ul{m}), f_5(\ul{m}), f_6(\ul{m})}\binom{n}{m_1, m_2, m_3, f_1(\ul{m}), f_4(\ul{m})}\binom{n}{m_4,..., m_{11}}
\end{equation}
To work with this series further, we will need the following lemmas.
\begin{lemma}\label[lemma]{lem:combIdentity}
Consider a collection of $d$ many integral linear forms
$\{r_\mu(\ul m)\equiv\sum_{i} \sum_{j=1}^{\ell_i} {r}_\mu^{i,j} m_{ij} \}_{\mu\in1,...,d}$ in a collection of integral variables $\ul m\equiv (m_{ij})_{ij}$. Then, the following holds for all $n\geq 0$.

\begin{equation}
  \sum_{\substack{m_{ij} \ge 0 \\ \sum_{j} m_{ij} = n \\ {r}_\mu(\ul{m})=0}}
\ \prod_{i=1}^{k} \binom{n}{m_{i1} ,..., m_{i\ell_i}}=CT(F_1^n...F_k^n)
\end{equation}
where CT stands for "constant term" and $F_i$ are the following Laurent polynomials
\begin{equation}
    F_i = \sum_{j=1}^{\ell_i} \ul{u}^{\ul{r}^{i,j}} \in \mathbb{C}\big[u_1^{\pm 1}, \dots, u_d^{\pm 1}\big]
\end{equation}

\end{lemma}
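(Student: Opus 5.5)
The plan is to expand each power $F_i^n$ by the multinomial theorem, multiply the expansions together, and read off the constant term. For each $i$ we have
\begin{equation*}
F_i^n=\Bigl(\sum_{j=1}^{\ell_i}\ul u^{\ul r^{i,j}}\Bigr)^n=\sum_{\substack{m_{i1},\dots,m_{i\ell_i}\ge 0\\ \sum_j m_{ij}=n}}\binom{n}{m_{i1},\dots,m_{i\ell_i}}\,\ul u^{\sum_j m_{ij}\ul r^{i,j}},
\end{equation*}
where $\ul r^{i,j}\in\Z^d$ denotes the vector $(r_1^{i,j},\dots,r_d^{i,j})$ of coefficients of $m_{ij}$ in the forms $r_1,\dots,r_d$. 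This is valid in the Laurent polynomial ring since all exponents involved are integral.

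Taking the product over $i=1,\dots,k$ gives a sum indexed by tuples $\ul m=(m_{ij})_{ij}$ of non-negative integers satisfying $\sum_j m_{ij}=n$ for every $i$. The coefficient of the tuple $\ul m$ is $\prod_i\binom{n}{m_{i1},\dots,m_{i\ell_i}}$, and the monomial attached to it is $\ul u^{\sum_{i,j}m_{ij}\ul r^{i,j}}$. The $\mu$-th component of this exponent is exactly $\sum_{i,j}r_\mu^{i,j}m_{ij}=r_\mu(\ul m)$, so the monomial is $u_1^{r_1(\ul m)}\cdots u_d^{r_d(\ul m)}$.

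Distinct tuples may produce the same monomial, but taking the constant term is a linear functional. It therefore suffices to apply it term by term: $\text{CT}$ of $\ul u^{\ul e}$ is $1$ if $\ul e=0$ and $0$ otherwise. Hence $\text{CT}(F_1^n\cdots F_k^n)$ equals the sum of the multinomial products over exactly those tuples with $r_\mu(\ul m)=0$ for all $\mu$, which is the left-hand side of the claimed identity.

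The only potential obstacle is bookkeeping: the tuple $\ul m$ must be indexed compatibly with the monomials of $F_i$, in the sense that the $j$-th monomial of $F_i$ carries exponent vector $\ul r^{i,j}$. This holds by the definition of $F_i$. The sum is finite for each $n$, so no convergence issues arise.
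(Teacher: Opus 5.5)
Your proposal is correct and is essentially the paper's argument. Both expand each $F_i^n$ by the multinomial theorem, multiply the expansions so that the tuple $(m_{ij})$ contributes the monomial with exponent $(r_1(\underline{m}),\dots,r_d(\underline{m}))$, and extract the constant term; the paper simply runs the same chain of equalities in reverse, starting from the left-hand side.
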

\begin{proof}
Using multinomial theorem, we have
\begin{align*}
\sum_{\substack{m_{ij} \ge 0 \\ \sum_{j} m_{ij} = n \\ {r}_\mu(\ul{m})=0}}
\ \prod_{i=1}^{k} \binom{n}{m_{i1} ,..., m_{i\ell_i}} 
&= CT \sum_{\substack{m_{ij} \ge 0 \\ \sum_j m_{ij} = n}}
\ \prod_{i=1}^{k} \binom{n}{m_{i1}, ,..., ,m_{i\ell_i}} \,
\ul{u}^{\sum_i \sum_{j}\ul{r}^{i,j} m_{ij}}\\
&= CT \sum_{\substack{m_{ij} \ge 0 \\ \sum_j m_{ij} = n}}
\ \prod_{i=1}^{k} \left( \binom{n}{m_{i1} ,..., m_{i\ell_i}} \,
\ul{u}^{\sum_{j} \ul{r}^{i,j} m_{ij}} \right) \\
&= CT \prod_{i=1}^{k}\left( \sum_{\substack{m_{ij} \ge 0 \\ \sum_{j} m_{ij} = n}}
\binom{n}{m_{i1} ,..., m_{i\ell_i}} \prod_{j=1}^{\ell_i} \left( \ul{u}^{\ul{r}^{i,j}} \right)^{m_{ij}}\right) \\
&= CT \prod_{i=1}^{k} \left( \sum_{j=1}^{\ell_i} \ul{u}^{\ul{r}^{i,j}} \right)^{n}
\end{align*}
\end{proof}

\begin{lemma}\label[lemma]{lem:coefficientsOfSoln}
Consider operator $D = z^m P_0(\Theta) + z^{m-1} P_1(\Theta) + \dots + P_m(\Theta)\in \C[z,\Theta]$, where $P_0, \dots, P_m$ are some polynomials. If $\varphi = \sum_{n= 0}^\infty a_n z^n$ satisfies $D\varphi=0$, then $(a_n)_n$ satisfy the following reccurence relation.
\begin{equation}
P_0(n) a_n + P_1(n+1) a_{n+1} + \dots + P_m(n+m) a_{n+m} = 0 \quad \forall n\geq0
\end{equation}
\end{lemma}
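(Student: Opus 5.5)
The plan is to substitute the power series into $D$ and compare coefficients of each power of $z$. It rests on two elementary facts about $\C[z,\Theta]$ acting on $\C[\![z]\!]$.

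The first fact is $\Theta(z^n)=n z^n$. Hence $P(\Theta)z^n=P(n)z^n$ for any polynomial $P$. Extending termwise, which is legitimate because $\Theta$ and multiplication by $z$ act coefficientwise on formal power series, we get $P_k(\Theta)\varphi=\sum_{n\ge 0}P_k(n)a_nz^n$.

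Next, I multiply by $z^{m-k}$ and reindex:
\[
z^{m-k}P_k(\Theta)\varphi=\sum_{n\ge 0}P_k(n)a_n z^{n+m-k}=\sum_{N\ge m-k}P_k(N-m+k)\,a_{N-m+k}\,z^{N}.
\]
Summing over $k=0,\dots,m$ gives $D\varphi$. Its coefficient of $z^{N}$ is $\sum_{k}P_k(N-m+k)a_{N-m+k}$, where the sum runs over those $k$ with $N-m+k\ge 0$.

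Now I set $N=n+m$ with $n\ge 0$. Then every index $n+k$, for $k=0,\dots,m$, is nonnegative, so all $m+1$ terms appear and the coefficient reads
\[
P_0(n)a_n+P_1(n+1)a_{n+1}+\dots+P_m(n+m)a_{n+m}.
\]
The hypothesis $D\varphi=0$ means every coefficient of $D\varphi$ vanishes. In particular this combination vanishes for every $n\ge 0$, which is the claimed recurrence.

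The coefficients of $z^N$ with $N<m$ give additional boundary relations (for example $P_m(0)a_0=0$). These are consistent with the statement but are not needed for it.

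There is no genuine obstacle here. The only point needing care is the bookkeeping: choosing the shift $N=n+m$ so that no truncated terms appear, and not confusing $z^{m-k}P_k(\Theta)$ with $P_k(\Theta)z^{m-k}$. The ordering matters because $[\Theta,z]=z$, and the form of $D$ in the statement places $z$ to the left.
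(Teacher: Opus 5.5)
Your proposal is correct and takes the same approach as the paper. The paper's proof only records that multiplication by $z$ shifts coefficients and that $\Theta$ multiplies the $n$-th coefficient by $n$; you write out the reindexing and compare coefficients of $z^{n+m}$ explicitly.
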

\begin{proof}
    This is easily derived by noting that $z.\sum_{n=0}^\infty a_n z^n=\sum_{n= 1}^\infty a_{n-1} z^n$ and $\Theta.\sum_{n \ge 0} a_n z^n=\sum_{n \ge 0} na_n z^n$.
\end{proof}
Using \Cref{lem:combIdentity} we can cook up three Laurent polynomials  $F_1,F_2,F_3\in\C[x_1^{\pm1},..,x_6^{\pm1}]$ such that $a_n=CT((F_1F_2F_3)^n)$. They are
\begin{equation}
\begin{aligned}
F_1 &= \frac{x_1 x_2 x_3}{x_5 x_6} + \frac{x_1 x_2}{x_6} + \frac{x_2 x_3 x_4}{x_5 x_6} + x_1 + x_4, \\
F_2 &= \frac{x_5 x_6}{x_2} + x_2 + x_3 + x_5 + x_6, \\
F_3 &= \frac{1}{x_1 x_2} + \frac{x_6}{x_1 x_2 x_3} + \frac{x_5 x_6}{x_1 x_2^2 x_3} + \frac{1}{x_3 x_4} + \frac{x_5}{x_2 x_3 x_4} + \frac{1}{x_2 x_4} + \frac{x_6}{x_2 x_3 x_4} + \frac{x_5 x_6}{x_2^2 x_3 x_4}.
\end{aligned}
\end{equation}
Using the following Macaulay2 code, we find that any exponent $(a_1,...,a_6)$ appearing in $F_1F_2F_3$ satisfies the relations $a_1+a_4=a_2+a_3+a_5+a_6=0$.

\begin{lstlisting}[style = macaulay2]
needsPackage "NormalToricVarieties"

T = QQ[x_1..x_6, y_1..y_6]/ideal(apply(6, i->x_(i+1)*y_(i+1)-1)) -- coordinate ring of a torus
F1 = x_1*x_2*x_3*y_5*y_6 + x_1*x_2*y_6 + x_2*x_3*x_4*y_5*y_6 + x_1 + x_4
F2 = y_2*x_5*x_6 + x_2 + x_3 + x_5 + x_6
F3 = y_1*y_2 + y_1*y_2*y_3*x_6 + y_1*y_2^2*y_3*x_5*x_6 + y_3*y_4 + y_2*y_3*y_4*x_5 + y_2*y_4 + y_2*y_3*y_4*x_6 + y_2^2*y_3*y_4*x_5*x_6

ee = exponents(F1*F2*F3)
vv = ee/(e->e_{0,1,2,3,4,5}-e_{6,7,8,9,10,11}) -- all exponents appearing in F1*F2*F3
ker matrix vv -- we get two relations
\end{lstlisting}

So, $F_1F_2F_3$ depends only on the variables
\begin{equation}
    u_1\equiv \frac{x_1}{x_4},u_2\equiv\frac{x_2}{x_6},u_3\equiv\frac{x_3}{x_6},u_4\equiv\frac{x_5}{x_6}
\end{equation}
Now, $F_1F_2F_3$, regarded as a function in $u_1,...,u_4$, is just $G_1G_2G_3$, where $G_i$ is the Laurent polynomial in $\C[u_1^{\pm1},...,u_4^{\pm1}]$ given by putting $x_4=x_6=1$ in $F_i$, and then renaming the remaining variables. Explicitly, they are
\begin{equation}
\begin{aligned}
G_1 &= \frac{u_1 u_2 u_3}{u_4} + u_1 u_2 + \frac{u_2 u_3 }{u_4 } + u_1 + 1, \\
G_2 &= \frac{u_4 }{u_2} + u_2 + u_3 + u_4 + 1, \\
G_3 &= \frac{1}{u_1 u_2} + \frac{1}{u_1 u_2 u_3} + \frac{u_4 }{u_1 u_2^2 u_3} + \frac{1}{u_3} + \frac{u_4}{u_2 u_3} + \frac{1}{u_2} + \frac{1}{u_2 u_3} + \frac{u_4 }{u_2^2 u_3}.
\end{aligned}
\end{equation}
Clearly, we again have $a_n=CT((G_1G_2G_3)^n)$. Next, we verify that the Newton polytope of $G_1G_2G_3$ is a reflexive 4-dimensional polytope in $\mathbb{R}^4$ and hence, in particular, contains the origin in its interior. 
\begin{lstlisting}[style=macaulay2]
T = QQ[u_1..u_4, v_1..v_4]/ideal(apply(4, i->u_(i+1)*v_(i+1)-1)) -- coordinate ring of a torus

G1 = u_1*u_2*u_3*v_4 + u_1*u_2 + u_2*u_3*v_4 + u_1 + 1
G2 = u_4*v_2 + u_2 + u_3 + u_4 + 1
G3 = v_1*v_2 + v_1*v_2*v_3 + v_1*v_2^2*v_3*u_4 + v_3 + v_2*v_3*u_4 + v_2 + v_2*v_3 + v_2^2*v_3*u_4
E = exponents(G1*G2*G3)/(e -> e_{0,1,2,3} - e_{4,5,6,7})
NewtonPolytope = convexHull transpose matrix E
dim NewtonPolytope
isReflexive NewtonPolytope
\end{lstlisting}

So, by \cite[Proposition 7]{LGFano}, we have that $\Phi_0(z)$ is the principal period of the pencil $\{V(G_1G_2G_3-t)\subset(\C^*)^4)\}_{t\in \C}$ of threefolds. In other words, the Picard-Fuchs operator $D\in\C[z,\Theta]$ of this pencil annihilates $\Phi_0(z)$. It is well known that the degree of the (finite) discriminant locus of any semi-stable compactification of this pencil should be an upper bound for the $z$-degree of $D$. We consider the case where the fibers are compactified to subvarieties of $\P^4$ in the obvious way and the base is compactified to $\P^1$ in the obvious way. We implement this compactification in the following code and compute the degree of its discriminant locus, which we find to be equal to 5.
\begin{lstlisting}[style=macaulay2]
S = QQ[u_0..u_4, t_0, t_1] -- homogenous coordinate ring of the ambient space of the pencil.

Q = u_1*u_2^3*u_3*u_4
P = sub(G1*G2*G3*sub(Q,T), S)
pencil = homogenize(t_0*P - t_1*Q, u_0)

J = saturate((ideal(apply(5, i -> diff(u_i, pencil))) + ideal(pencil)), ideal(sub(pencil, {t_0=>0, t_1=>1}), sub(pencil, {t_0=>1, t_1=>0})));
discriminantLocus = ker map(S/J, QQ[t_0,t_1], matrix{{(t_0)_S,(t_1)_S}})
degree discriminantLocus -- 5
\end{lstlisting}
So, $D$ must have $z$-degree at most 5. Moreover, for $D$ to be the mirror Picard-Fuchs operator, it must satisfy the additional constraints of having $\Theta$-degree equal to 4 and having $0$ as the unique indicial root. Taking all these constraints into account, one finds that $D$ can be written in the form 
\begin{equation}D=z^5P_0(\Theta)+z^{4}P_1(\Theta)+z^3P_2(\Theta)+z^2P_3(\Theta)+zP_4(\Theta)+49\Theta^4
\end{equation}
where $P_k$ are some polynomials of degree at most 4. So, we see that $D$ is determined by $25$ many indeterminates, constituting the coefficients of the unknown polynomials $P_k$. By \Cref{lem:coefficientsOfSoln}, these indeterminates in turn are determined by the first $30$ many coefficients of its solution, $\Phi_0(z)$. Using the code below, we compute these coefficients and then solve the resulting linear equations in the indeterminates to compute $D$. We find that $D$ is indeed equal to the given operator $D_Y$, thus concluding the proof.
\begin{lstlisting}[style = macaulay2]
A = (n) -> (sub(last terms((G1*G2*G3)^n), ZZ))
fullCoefficientList = (f) -> (flatten entries basis(first degree f, ring f))/(m -> coefficient(m, f))

C = QQ[z, theta][c_(0,0)..c_(4,4)]
P = apply(5, i -> ((n) -> c_(i,0) + c_(i,1)*n + c_(i,2)*n^2 + c_(i,3)*n^3 + c_(i,4)*n^4))
a = apply(30, n -> A(n))

eqns = apply(25, n -> {a_n*P_0(n) + a_(n+1)*P_1(n+1) + a_(n+2)*P_2(n+2) + a_(n+3)*P_3(n+3) + a_(n+4)*P_4(n+4), -49*a_(n+5)*(n+5)^4}); -- affine relations among the indeterminates
eqnsAsMat = matrix(eqns/(e -> fullCoefficientList e_0))
rank eqnsAsMat -- 25
targetAsMat = transpose matrix{eqns/(e -> e_1)}
solns = inverse(eqnsAsMat)*targetAsMat

D = sub(z^5*P_0(theta) + z^4*P_1(theta) + z^3*P_2(theta) + z^2*P_3(theta) + z*P_4(theta) + 49*theta^4, transpose solns)
\end{lstlisting}

\subsection{The proof of \Cref{lem:ff}}
\label[appendix]{app:tiltingCode}
The proof of fully-faithfulness of $Li^*_-$ requires the vanishing of the expression:
\begin{equation}
    H^d\bigl(G(2,V_6),\ \E^\vee\otimes\F\otimes\Sym^j\bunNeg(i)\bigr)=0
\end{equation}
for all $d>0$ and $i\geq j\geq0$, over all $225$ ordered pairs $(\E, \F)$ of window bundles (see Equation \ref{eq:ff_proven_in_appendix}). This is handled by the script \verb|pretilting.py|, which can be found in \href{https://github.com/marcorampazzo/the-last-CY-pair}{\texttt{the-last-CY-pair}}.

However, one should observe that proving the claim for $i\geq j \geq 0$ would in principle require to compute an infinite number of vanishings. This task can be reduced to finitely many checks as follows.
\begin{itemize}
    \item If an irreducible summand of $\E^\vee\otimes\F$ has the shape $\Sym^r\bunNeg(c)$, then its tensor product with $\Sym^j\bunNeg(i)$ decomposes into $\Sym^{r+j-2q}\bunNeg(c+i+q)$, where $0\leq q\leq\min(r,j)$.
    \item For $i\geq\max(0,\max(-c))$, with the maximum over the summands of the fixed pair, all these weights are dominant, so there is no higher cohomology.
    \item The code checks the remaining cases by Borel--Weil--Bott. It reports $654$ finite cases, all of them displaying no higher cohomology.
\end{itemize} The finite computation can be reproduced by running \verb|pretilting.py|.

\subsection{Proof of \Cref{cor:cohomVanishings}}
\label[appendix]{app:cohomCode}

The script \verb|open_cohomology.py|, which also is available in
\href{https://github.com/marcorampazzo/the-last-CY-pair}{\texttt{the-last-CY-pair}},
computes the degrees carrying non-zero cohomology in
\begin{equation}
    H^\bullet\bigl(\kb,\O (ah+bH)\bigr).
\end{equation}
It produces the table appearing in the proof of
\Cref{cor:cohomVanishings} and checks all the vanishings stated there.
Let us explain how this computation follows from
\Cref{lem:cohomExpression}, and how the infinite sums appearing in
its proof reduce to finitely many checks.

The statement of \Cref{lem:cohomExpression} concerns degrees
$d\geq2$, under the assumption $a\geq-5$. To compute degrees $0$
and $1$ as well, let us recall the identity:
\begin{equation}
H^d\bigl(\kb,\O (ah+bH)\bigr)
=
\bigoplus_{i\geq0}
H^d\!\Bigl(
\blp,\,
\pi^*\Sym^i\Q^\vee
\otimes\O \bigl((i+a)h+(i+b)H\bigr)
\Bigr)
\end{equation}

For the relative twist $t=i+b$, the projective-bundle
pushforward formula shows that the range $-4<i+b<0$ does not contribute, and the remaining cases are precisely the two ranges denoted by $\Sigma_+$
and $\Sigma_-$ in the proof of \Cref{lem:cohomExpression}:
\[
i\geq\max\{0,-b\},
\qquad\text{and}\qquad
0\leq i\leq-b-4.
\]
The second range is empty when $b>-4$.

Let us call $d$ the cohomology degree, and write
\begin{equation}
H^d\bigl(\kb,\O (ah+bH)\bigr)
=\Sigma_+^d\oplus\Sigma_-^d,
\end{equation}
where
\begin{equation}
\begin{aligned}
\Sigma_+^d
&=
\bigoplus_{i\geq\max\{0,-b\}}
\ \bigoplus_{j=0}^{i+b}
H^d\!\Bigl(\P^5,\Sym^i\Q^\vee(i+a+j)\Bigr)
^{\oplus\binom{i+b-j+2}{2}},\\
\Sigma_-^d
&=
\bigoplus_{i=0}^{-b-4}
\ \bigoplus_{j=0}^{-i-b-4}
H^{d-3}\!\Bigl(\P^5,\Sym^i\Q^\vee(i+a-j-1)\Bigr)
^{\oplus\binom{-i-b-j-2}{2}}.
\end{aligned}
\end{equation}

Note that the expression for $\Sigma_-^d$ we use here, is slightly different from the one appearing in
\Cref{lem:cohomExpression}, which can easily be obtained by the former via Serre duality on $\P^5$. In particular, observe that the sum $\Sigma_-^d$ is finite: we can leave its evaluation to the script.\\
\\
To handle $\Sigma_+^d$,
write $m=i+a+j$. The Borel--Weil--Bott weight for
$\Sym^i\Q^\vee(m)$, after adding
$\rho=(5,4,3,2,1,0)$, is
\[
(m+5,\ i+4,\ 3,\ 2,\ 1,\ 0)
\]
(see \cite[Appendix A]{borisovcaldararuperry} for an explanation of the Borel--Weil--Bott algorithm for Grassmannians). If $-5\leq m\leq-2$, this weight has repeated entries,
so all cohomology vanishes. If $m\geq-1$, the last four
entries are smaller than the first two, and
Borel--Weil--Bott gives
\begin{equation}
\left\{
q\ :\ H^q\bigl(\P^5,\Sym^i\Q^\vee(m)\bigr)\neq0
\right\}
=
\begin{cases}
\{0\},&m\geq i,\\
\varnothing,&m=i-1,\\
\{1\},&-1\leq m\leq i-2.
\end{cases}
\end{equation}
Thus, in the range
of \Cref{cor:cohomVanishings}, all contributions in degrees
at least $2$ already come from the finite sum $\Sigma_-^d$.

It remains to account for degrees $0$ and $1$ in the infinite
sum $\Sigma_+^d$. For this purpose, the script sets
\[
N=\max\{0,-b,-a-1,-a-b\}.
\]
For every $i\geq N$, we have
\[
i\geq0,\qquad
i+b\geq0,\qquad
i+a\geq-1,\qquad
i+b\geq-a.
\]
The first two inequalities place $i$ in the range of
$\Sigma_+^d$. The third ensures that $m=i+a+j\geq-1$
for every $0\leq j\leq i+b$, so the preceding
Borel--Weil--Bott computation applies. The last inequality,
together with $i+b\geq0$, ensures that
$j=\max\{0,-a\}$ is an admissible choice.

This choice gives $m\geq i$, hence a non-zero contribution
in degree $0$. If $a\leq-2$, the admissible choice $j=0$
also gives $-1\leq m=i+a\leq i-2$, hence a non-zero
contribution in degree $1$. If $a\geq-1$, every admissible
$j$ gives $m\geq i-1$, so degree $1$ cannot occur.
Consequently, for every $i\geq N$, the combined contribution
of the summands indexed by $0\leq j\leq i+b$ has non-zero
cohomology precisely in degrees
\[
\begin{cases}
\{0\},&a\geq-1,\\
\{0,1\},&a\leq-2.
\end{cases}
\]

The script therefore computes the finitely many contributions
with $0\leq i<N$ using the two pushforward formulas above,
and adds the degrees contributed by all $i\geq N$ using
this explicit description. No arbitrary truncation of the
infinite sum is involved. Repeating the computation for
$-4\leq a\leq4$ and $-6\leq b\leq6$ gives the table
in the proof of \Cref{cor:cohomVanishings}. The script then
checks the thirteen ranges in its nine items, verifying
all the stated vanishings.

\bibliographystyle{alpha}
\bibliography{references}

@article{Seg2014,
  author  = {Addington, Nicolas and Donovan, Will and Segal, Ed},
  title   = {The {P}faffian-{G}rassmannian equivalence revisited},
  journal = {Algebraic Geometry},
  volume  = {2},
  number  = {3},
  pages   = {332--364},
  year    = {2015},
  doi     = {10.14231/AG-2015-015}
}

@article{Ship2010,
  author  = {Shipman, Ian},
  title   = {A geometric approach to Orlov's theorem},
  journal = {Compositio Mathematica},
  year    = {2012},
  volume  = {148},
  number  = {5},
  pages   = {1365--1389},
  doi     = {10.1112/S0010437X12000255},
  url     = {https://doi.org/10.1112/S0010437X12000255}
}

@article{Seg2011,
  author  = {Segal, Ed},
  title   = {Equivalences Between {GIT} Quotients of {L}andau-{G}inzburg {B}-Models},
  journal = {Communications in Mathematical Physics},
  volume  = {304},
  number  = {2},
  pages   = {411--432},
  year    = {2011},
  issn    = {1432-0916},
  doi     = {10.1007/s00220-011-1232-y},
  url     = {https://doi.org/10.1007/s00220-011-1232-y}
}

@article{HL,
  author    = {Halpern-Leistner, Daniel},
  title     = {The derived category of a {GIT} quotient},
  journal   = {Journal of the American Mathematical Society},
  volume    = {28},
  number    = {3},
  pages     = {871--912},
  year      = {2015},
  publisher = {American Mathematical Society},
  doi       = {10.1090/S0894-0347-2014-00815-8},
  url       = {https://doi.org/10.1090/S0894-0347-2014-00815-8}
}

@article {Hoskins,
    AUTHOR = {Hoskins, Victoria},
     TITLE = {Stratifications associated to reductive group actions on
              affine spaces},
   JOURNAL = {Q. J. Math.},
  FJOURNAL = {The Quarterly Journal of Mathematics},
    VOLUME = {65},
      YEAR = {2014},
    NUMBER = {3},
     PAGES = {1011--1047},
      ISSN = {0033-5606,1464-3847},
   MRCLASS = {14L24 (14R20 53D20)},
  MRNUMBER = {3261979},
MRREVIEWER = {Immanuel\ van Santen},
       DOI = {10.1093/qmath/hat046},
       URL = {https://doi.org/10.1093/qmath/hat046},
}

@book{HartshorneLocalCohom,
    author = {Hartshorne, Robin},
    title = {Local Cohomology},
    publisher = {Springer Berlin, Heidelberg},
    year = {1967},
    isbn = {978-3-540-03912-9},
    doi = {https://doi.org/10.1007/BFb0073971},
}

@article{KuzHyperplaneSections,
author = {Kuznetsov, A},
year = {2007},
month = {10},
pages = {447},
title = {Hyperplane sections and derived categories},
volume = {70},
journal = {Izvestiya: Mathematics},
doi = {10.1070/IM2006v070n03ABEH002318}
}

@article{ionue_ito_miura_CY3s,
author = {Inoue, Daisuke and Ito, Atsushi and Miura, Makoto},
year = {2019},
month = {06},
pages = {},
title = {Complete intersection Calabi--Yau manifolds with respect to homogeneous vector bundles on Grassmannians},
volume = {292},
journal = {Mathematische Zeitschrift},
doi = {10.1007/s00209-018-2163-5}
}

@article{KuzExcepColl,
author = {Kuznetsov, Alexander},
year = {2006},
month = {01},
pages = {},
title = {Exceptional collections for Grassmannians of isotropic lines},
volume = {97},
journal = {Proceedings of the London Mathematical Society},
doi = {10.1112/plms/pdm056}
}

@article{KapRam17,
  author  = {Kapustka, Micha{\l} and Rampazzo, Marco},
  title   = {Torelli problem for Calabi--Yau threefolds with {GLSM} description},
  journal = {Communications in Number Theory and Physics},
  year    = {2019},
  volume  = {13},
  number  = {4},
  pages   = {725--761},
  doi     = {10.4310/CNTP.2019.v13.n4.a2}
}

@book{lazarsfeld_positivity_II,
  author    = {Lazarsfeld, Robert},
  title     = {Positivity in Algebraic Geometry~{II}:
               Positivity for Vector Bundles, and Multiplier Ideals},
  series    = {Ergebnisse der Mathematik und ihrer Grenzgebiete. 3. Folge},
  volume    = {49},
  publisher = {Springer-Verlag},
  address   = {Berlin},
  year      = {2004},
  isbn      = {978-3-540-22531-7},
}

@phdthesis{tanturri_thesis,
  author  = {Tanturri, Fabio},
  title   = {On Degeneracy Loci of Morphisms between Vector Bundles},
  school  = {SISSA -- International School for Advanced Studies},
  address = {Trieste},
  year    = {2013},
}

@article{bondalorlovreconstruction,
    author = "Bondal, Alexei and Orlov, Dmitri",
    title = "{Reconstruction of a Variety from the Derived Category and Groups of Autoequivalences}",
    doi = "10.1023/a:1002470302976",
    journal = "Compos. Math.",
    volume = "125",
    number = "3",
    pages = "327--344",
    year = "2001"
}

@article {ottemrennemo,
    AUTHOR = {Ottem, John Christian and Rennemo, J\o rgen Vold},
     TITLE = {A counterexample to the birational {T}orelli problem for
              {C}alabi-{Y}au threefolds},
   JOURNAL = {J. Lond. Math. Soc. (2)},
  FJOURNAL = {Journal of the London Mathematical Society. Second Series},
    VOLUME = {97},
      YEAR = {2018},
    NUMBER = {3},
     PAGES = {427--440},
      ISSN = {0024-6107,1469-7750},
   MRCLASS = {14C34 (14F05 14J32)},
  MRNUMBER = {3816394},
MRREVIEWER = {Zhi\ Jiang},
       DOI = {10.1112/jlms.12111},
       URL = {https://doi.org/10.1112/jlms.12111},
}

@article{borisovcaldararu,
  author  = {Borisov, Lev and C{\u{a}}ld{\u{a}}raru, Andrei},
  title   = {The Pfaffian--Grassmannian Derived Equivalence},
  journal = {Journal of Algebraic Geometry},
  volume  = {18},
  number  = {2},
  pages   = {201--222},
  year    = {2009},
  doi     = {10.1090/S1056-3911-08-00496-7},
  url     = {https://doi.org/10.1090/S1056-3911-08-00496-7}
}

@article{borisovcaldararuperry,
author = {Borisov, Lev and Căldăraru, Andrei and Perry, Alexander},
year = {2018},
month = {05},
pages = {133-162},
title = {Intersections of two Grassmannians in $\mathbb{P}^9$},
volume = {2020},
journal = {Journal für die reine und angewandte Mathematik (Crelles Journal)},
doi = {10.1515/crelle-2018-0014}
}

@article{bridgeland_flops,
  author    = {Bridgeland, Tom},
  title     = {Flops and derived categories},
  journal   = {Inventiones Mathematicae},
  volume    = {147},
  number    = {3},
  pages     = {613--632},
  year      = {2002},
  doi       = {10.1007/s002220100185},
  eprint    = {math/0009053},
  archivePrefix = {arXiv}
}

@article{hosono-takagi,
  author    = {Hosono, Shinobu and Takagi, Hiromichi},
  title     = {Double quintic symmetroids, {R}eye congruences, and their derived equivalence},
  journal   = {Journal of Differential Geometry},
  volume    = {104},
  number    = {3},
  pages     = {443--497},
  year      = {2016},
  doi       = {10.4310/jdg/1478138549},
  eprint    = {1302.5883},
  archivePrefix = {arXiv}
}

@article{imou,
  author    = {Ito, Atsushi and Miura, Makoto and Okawa, Shinnosuke and Ueda, Kazushi},
  title     = {The class of the affine line is a zero divisor in the {G}rothendieck ring: via $G_2$-{G}rassmannians},
  journal   = {Journal of Algebraic Geometry},
  volume    = {28},
  number    = {2},
  pages     = {245--250},
  year      = {2019},
  doi       = {10.1090/jag/731},
  eprint    = {1606.04210},
  archivePrefix = {arXiv}
}

@article{kuznetsov_g2_cy3,
  author    = {Kuznetsov, Alexander},
  title     = {Derived equivalence of {I}to--{M}iura--{O}kawa--{U}eda {C}alabi--{Y}au 3-folds},
  journal   = {Journal of the Mathematical Society of Japan},
  volume    = {70},
  number    = {3},
  pages     = {1007--1013},
  year      = {2018},
  doi       = {10.2969/jmsj/76827682},
  eprint    = {1611.08386},
  archivePrefix = {arXiv}
}

@article{rennemo_hpd_sym2V,
  author    = {Rennemo, J{\o}rgen Vold},
  title     = {The homological projective dual of $\operatorname{Sym}^2\,\mathbb{P}(V)$},
  journal   = {Compositio Mathematica},
  volume    = {156},
  number    = {3},
  pages     = {476--525},
  year      = {2020},
  doi       = {10.1112/s0010437x19007772},
  eprint    = {1509.04107},
  archivePrefix = {arXiv}
}

@article{KapRam2025,
  author  = {Donovan, Will and Hara, Wahei and Kapustka, Micha{\l} and Rampazzo, Marco},
  title   = {Window categories for a simple 9-fold flop of {G}rassmannian type},
  journal = {arXiv preprint arXiv:2510.06184},
  year    = {2025},
  url     = {https://arxiv.org/abs/2510.06184}, 
}

@article{batyrevToricDegen,
      title={Toric Degenerations of Fano Varieties and Constructing Mirror Manifolds}, 
      author={Victor V. Batyrev},
      year={1997},
      journal = {arXiv preprint arXiv:alg-geom/9712034},
      url = {https://arxiv.org/abs/alg-geom/9712034}, 
}

@article{batyrevGKZ,
  author  = {Batyrev, Victor V. and van Straten, Duco},
  title   = {Generalized Hypergeometric Functions and Rational Curves on Calabi--Yau Complete Intersections in Toric Varieties},
  journal = {Communications in Mathematical Physics},
  year    = {1995},
  volume  = {168},
  number  = {3},
  pages   = {493--533},
  doi     = {10.1007/BF02101841},
  eprint  = {alg-geom/9307010},
  archivePrefix = {arXiv}
}

@article{miuraMirrorSymmetry,
author = {Miura, Makoto},
year = {2017},
month = {08},
pages = {},
title = {Minuscule Schubert Varieties and Mirror Symmetry},
journal = {Symmetry, Integrability and Geometry: Methods and Applications},
doi = {10.3842/SIGMA.2017.067}
}

@article{skmsDonovanWemyss,
    author = "Donovan, Will and Wemyss, Michael",
    title = {{Stringy K{\"a}hler moduli, mutation and monodromy}},
    eprint = "1907.10891",
    archivePrefix = "arXiv",
    primaryClass = "math.AG",
    doi = "10.4310/jdg/1736262124",
    journal = "J. Diff. Geom.",
    volume = "129",
    number = "1",
    pages = "115--164",
    year = "2025"
}

@article{skmsHLeistnerSam,
  author  = {Halpern-Leistner, Daniel and Sam, Steven V.},
  title   = {Combinatorial Constructions of Derived Equivalences},
  journal = {Journal of the American Mathematical Society},
  volume  = {33},
  number  = {3},
  pages   = {735--773},
  year    = {2020},
  doi     = {10.1090/jams/940},
  url     = {https://doi.org/10.1090/jams/940}
}

@book{calabiYauBook,
  author    = {Gross, Mark and Joyce, Dominic and Huybrechts, Daniel},
  editor    = {Ellingsrud, Geir and Ranestad, Kristian and Olson, Loren and Str{\o}mme, Stein A.},
  title     = {Calabi--Yau Manifolds and Related Geometries},
  subtitle  = {Lectures at a Summer School in Nordfjordeid, Norway, June 2001},
  publisher = {Springer-Verlag Berlin Heidelberg},
  year      = {2003},
  series    = {Universitext},
  issn      = {0172-5939},
  isbn      = {978-3-540-44059-8},
  doi       = {10.1007/978-3-642-19004-9}
}

@book{voisin,
  author    = {Voisin, Claire},
  title     = {Hodge Theory and Complex Algebraic Geometry I},
  series    = {Cambridge Studies in Advanced Mathematics},
  volume    = {76},
  publisher = {Cambridge University Press},
  year      = {2002},
  translator= {Schneps, Leila},
  isbn      = {0521802601}
}

@article{rodland,
  author  = {R{\o}dland, Einar Andreas},
  title   = {The Pfaffian Calabi--Yau, its Mirror, and their Link to the Grassmannian $G(2,7)$},
  journal = {Compositio Mathematica},
  year    = {2000},
  volume  = {122},
  number  = {2},
  pages   = {135--149},
  doi     = {10.1023/A:1001847914402}
}

@article{hori,
    author = "Herbst, Manfred and Hori, Kentaro and Page, David",
    title = "{Phases Of N=2 Theories In 1+1 Dimensions With Boundary}",
    eprint = "0803.2045",
    archivePrefix = "arXiv",
    primaryClass = "hep-th",
    reportNumber = "DESY-07-154, CERN-PH-TH-2008-048",
    month = "3",
    year = "2008"
}

@article{witten_glsm,
  author  = {Witten, Edward},
  title   = {Phases of $N=2$ theories in two dimensions},
  journal = {Nuclear Physics B},
  volume  = {403},
  number  = {1--2},
  pages   = {159--222},
  year    = {1993},
  doi     = {10.1016/0550-3213(93)90033-L},
  eprint  = {hep-th/9301042},
  archivePrefix = {arXiv}
}

@article{kuznetsov_grassmannians_of_lines,
    author = "Kuznetsov, Alexander",
    title = "{Homological projective duality for Grassmannians of lines}",
    eprint = "math/0610957",
    archivePrefix = "arXiv",
    primaryClass = "math.AG",
    year = "2006"
}

@article{LGFano,
  author  = {Przyjalkowski, Victor},
  title   = {Weak Landau--Ginzburg models for smooth Fano threefolds},
  journal = {Izvestiya: Mathematics},
  year    = {2013},
  volume  = {77},
  number  = {4},
  pages   = {772--794},
  doi     = {10.1070/IM2013v077n04ABEH002660},
  eprint  = {0902.4668},
  archivePrefix = {arXiv}
}

@article{MS_Kap,
author = {Kapustka, Michal},
year = {2013},
month = {10},
pages = {},
title = {Mirror symmetry for Pfaffian Calabi-Yau 3-folds via conifold transitions}
}

@article{fujiki-nakano-contraction-criterion,
  author  = {Fujiki, Akira and Nakano, Shigeo},
  title   = {Supplement to ``On the Inverse of Monoidal Transformation''},
  journal = {Publications of the Research Institute for Mathematical Sciences},
  year    = {1971},
  volume  = {7},
  number  = {3},
  pages   = {637--644}
}

@article{categorical_plucker_formula,
  author  = {Jiang, Qingyuan and Leung, Naichung Conan and Xie, Ying},
  title   = {Categorical Pl{\"u}cker Formula and Homological Projective Duality},
  journal = {Journal of the European Mathematical Society},
  year    = {2021},
  volume  = {23},
  number  = {6},
  pages   = {1859--1898},
  doi     = {10.4171/JEMS/1045},
  eprint  = {1704.01050},
  archivePrefix = {arXiv},
  primaryClass = {math.AG}
}

@article{SSY,
  author  = {Schenck, Hal and Stillman, Mike and Yuan, Beihui},
  title   = {Calabi--Yau threefolds in $\mathbb{P}^n$ and Gorenstein rings},
  journal = {Advances in Theoretical and Mathematical Physics},
  volume  = {26},
  number  = {3},
  pages   = {764--792},
  year    = {2022},
  doi     = {10.4310/ATMP.2022.v26.n3.a7},
  eprint  = {2011.10871},
  archivePrefix = {arXiv},
  primaryClass = {math.AG}
}

@article{Gerhardus_Jockers,
  author  = {Gerhardus, Andreas and Jockers, Hans},
  title   = {Dual Pairs of Gauged Linear Sigma Models and Derived Equivalences of Calabi--Yau Threefolds},
  journal = {Journal of Geometry and Physics},
  year    = {2017},
  volume  = {114},
  pages   = {223--259},
  doi     = {10.1016/j.geomphys.2016.12.005},
  eprint  = {1505.00099},
  archivePrefix = {arXiv},
  primaryClass = {hep-th}
}

@misc{galkin_talk,
    author = "Galkin, Sergey",
    title = "{An explicit construction of Miura's varieties}",
    note = "Talk presented at Tokyo University, Graduate School for Mathematical Sciences, Komaba Campus, February 20, 2014",
    year = "2014"
}

@article{laterveer,
  author  = {Laterveer, Robert},
  title   = {Motives and the {Pfaffian--Grassmannian} equivalence},
  journal = {Journal of the London Mathematical Society},
  volume  = {104},
  number  = {4},
  pages   = {1738--1764},
  year    = {2021},
  doi     = {10.1112/jlms.12473},
  url     = {https://doi.org/10.1112/jlms.12473}
}

@article{martin_grothendieck_ring,
  author  = {Martin, Nicolas},
  title   = {The class of the affine line is a zero divisor in the {Grothendieck} ring: An improvement},
  journal = {Comptes Rendus Math{\'e}matique},
  volume  = {354},
  number  = {9},
  pages   = {936--939},
  year    = {2016},
  doi     = {10.1016/j.crma.2016.05.016},
  url     = {https://doi.org/10.1016/j.crma.2016.05.016}
}

@misc{database,
  author       = {van Straten, Duco and Metelitsyn, Pavel},
  title        = {Calabi--Yau Differential Operator Database},
  howpublished = {\url{https://cydb.mathematik.uni-mainz.de/}},
  note         = {Version 3, Johannes Gutenberg University Mainz,
                  accessed 17 September 2026}
}

@book{kollar_mori,
  author    = {Koll{\'a}r, J{\'a}nos and Mori, Shigefumi},
  title     = {Birational Geometry of Algebraic Varieties},
  series    = {Cambridge Tracts in Mathematics},
  volume    = {134},
  publisher = {Cambridge University Press},
  year      = {1998},
  doi       = {10.1017/CBO9780511662560}
}

@misc{macaulay2,
  author       = {Grayson, Daniel R. and Stillman, Michael E.},
  title        = {Macaulay2, a software system for research in algebraic geometry},
  howpublished = {Available at \url{https://macaulay2.com/}}
}

@article{beilinson,
  author  = {Beilinson, Alexander A.},
  title   = {Coherent Sheaves on $\mathbb{P}^n$ and Problems of Linear Algebra},
  journal = {Functional Analysis and Its Applications},
  volume  = {12},
  number  = {3},
  pages   = {214--216},
  year    = {1978},
  doi     = {10.1007/BF01681436},
  url     = {https://doi.org/10.1007/BF01681436}
}

@unpublished{our_hpd_project,
  author = {Rampazzo, Marco and Mongardi, Giovanni and Martinelli, Luigi
            and Kapustka, Micha{\l} and Samal, Prajwal},
  title  = {Homological projective duality for the Degree-33 {S}chubert Variety
            in the {C}ayley Plane},
  note   = {Work in progress},
  year   = {2026}
}

@article{orlovblowup,
  author  = {Orlov, Dmitri},
  title   = {Projective bundles, monoidal transformations, and derived categories of coherent sheaves},
  journal = {Izvestiya: Mathematics},
  year    = {1993},
  volume  = {41},
  number  = {1},
  pages   = {133--141},
  doi     = {10.1070/IM1993v041n01ABEH002182},
  url     = {https://doi.org/10.1070/IM1993v041n01ABEH002182}
}

@incollection{monodromyCalc,
    AUTHOR = {van Enckevort, Christian and van Straten, Duco},
     TITLE = {Monodromy calculations of fourth order equations of
              {C}alabi-{Y}au type},
 BOOKTITLE = {Mirror symmetry. {V}},
    SERIES = {AMS/IP Stud. Adv. Math.},
    VOLUME = {38},
     PAGES = {539--559},
 PUBLISHER = {Amer. Math. Soc., Providence, RI},
      YEAR = {2006},
      ISBN = {978-0-8218-4251-5; 0-8218-4251-X},
   MRCLASS = {14J32 (14D05 32G20 32S40)},
  MRNUMBER = {2282974},
MRREVIEWER = {Yukiko\ Konishi},
       DOI = {10.1090/amsip/038/23},
       URL = {https://doi.org/10.1090/amsip/038/23},
}

\end{document}